\documentclass[10pt,a4paper]{article}

\usepackage[T1]{fontenc}
\usepackage{lmodern}
\usepackage{microtype}
\usepackage[a4paper,margin=25mm]{geometry}
\usepackage{amsmath,amssymb,amsthm,mathtools,mathrsfs}
\usepackage{enumitem}
\usepackage{tikz}

\usepackage[dvipsnames]{xcolor}

\usepackage{hyperref}

\hypersetup{
  hidelinks,
  pdftitle={Future Completeness, C0-Inextendibility and Cauchy Horizons in Homogeneous Einstein Spacetimes},
  pdfauthor={Bobby Eka Gunara},
  pdfsubject={Future causal completeness, continuous inextendibility, and Cauchy horizons in homogeneous Einstein spacetimes},
  pdfkeywords={future geodesic completeness, C0-inextendibility, homogeneous Einstein spacetime, nonlinear sigma model, two-step nilpotent group, compact nilmanifold, Bianchi II, Cauchy horizon}
}

\setlist{topsep=4pt,itemsep=2pt,parsep=0pt,leftmargin=*}
\allowdisplaybreaks
\numberwithin{equation}{section}

\newtheorem{theorem}{Theorem}[section]
\newtheorem{proposition}[theorem]{Proposition}
\newtheorem{lemma}[theorem]{Lemma}
\newtheorem{corollary}[theorem]{Corollary}
\theoremstyle{definition}
\newtheorem{definition}[theorem]{Definition}
\theoremstyle{remark}
\newtheorem{remark}[theorem]{Remark}
\newtheorem{example}[theorem]{Example}

\newcommand{\R}{\mathbb R}
\newcommand{\dd}{\mathrm d}
\newcommand{\diam}{\operatorname{diam}}
\newcommand{\Vol}{\operatorname{Vol}}
\newcommand{\Lip}{\operatorname{Lip}}
\newcommand{\cM}{\mathcal M}
\newcommand{\eps}{\varepsilon}
\newcommand{\past}{\partial^-}
\newcommand{\Ric}{\operatorname{Ric}}
\newcommand{\Ad}{\operatorname{Ad}}

\title{Future Completeness, $C^0$-Inextendibility and Cauchy Horizons\\
in Homogeneous Einstein Spacetimes}
\author{Bobby Eka Gunara\\[2mm]
\small Theoretical Physics Laboratory, Faculty of Mathematics and Natural Sciences,\\
\small Institut Teknologi Bandung, Jl. Ganesa 10, Bandung 40132, Indonesia\\
\small \texttt{bobby@itb.ac.id}}
\date{}

\begin{document}
\maketitle

\begin{abstract}
We prove the future-completeness conjecture of G\"odeke and Rendall for
expanding spatially homogeneous vacuum spacetimes in four spatial
dimensions.  More generally, an expansion-energy estimate gives future
timelike and null geodesic completeness for homogeneous Einstein equations
coupled to maps into complete Riemannian targets, through nine spatial
dimensions for vanishing potential.  In arbitrary dimension we obtain an
affine-length criterion which includes positive potential floors and
potentials that decay sufficiently slowly along the scalar trajectory.
For future-global homogeneous solutions, the critical nine-dimensional
bound also holds for a general stress tensor in an explicit normal
energy-condition window, including a broad range of perfect fluids.
At the opposite time end, we establish a quotient-stable
$C^0$-inextendibility criterion for two-step nilpotent cosmologies.  Its
proof combines boundary localization with an optimal, intrinsically
defined allocation of the central correction in timelike homotopies, and
continues to apply when the diameter of compact spatial slices collapses.
Exact Heisenberg solutions realize both conclusions.  On every fixed
compact Bianchi~II quotient, the expanding invariant vacuum data split
into an open dense set with globally $C^0$-inextendible developments and
a codimension-two locally rotationally symmetric locus with analytic
compact Cauchy horizons.  We determine the chronology-violating region of
the exceptional extensions and quantify the exponentially small
proper-time scale on which transverse anisotropy replaces the horizon by
a singular end.
\end{abstract}

\noindent\textbf{Keywords.} future geodesic completeness; continuous
inextendibility; homogeneous Einstein spacetime; nonlinear sigma model;
timelike homotopy; two-step nilpotent group; nilmanifold; Bianchi~II;
Cauchy horizon.

\medskip
\noindent\textbf{MSC 2020.} 53C50, 53C80, 83C75, 83C05.

\tableofcontents

\section{Introduction}

The two ends of a maximal globally hyperbolic spacetime pose different
regularity-sensitive questions.  At an expanding end one asks for
infinite affine length of every causal geodesic; at a finite-time end one
asks whether the metric itself extends continuously and nondegenerately.
The latter question cannot be decided by curvature alone, because
curvature is not defined for a merely continuous extension
\cite{GLS2018,Sbierski2018,SbierskiProof2018}.
G\"odeke and Rendall asked whether every expanding spatially homogeneous
vacuum spacetime with a connected, simply connected four-dimensional
symmetry group other than $\mathrm{SU}(2)\times\mathbb R$ is future
geodesically complete.  We prove their conjecture.  The proof is a
dimension-sensitive expansion estimate which requires neither
diagonalizability nor convergence or bounded oscillation of the
generalized Kasner exponents.  It also treats homogeneous spaces with
isotropy and nonlinear sigma models with complete Riemannian targets, as
arise in multifield cosmology \cite{Lazaroiu2022}.  Besides the massless
result through nine spatial dimensions, it gives an all-dimensional
affine-length criterion for nonnegative potentials, including positive
floors and a class of potentials decaying along the field trajectory.
For future-global homogeneous matter models, the same ninth-dimensional
bound follows from a normal energy-condition window which, for a perfect
fluid, ranges from $p=-(n-2)\rho/n$ to the stiff-fluid bound $p=\rho$.

At the opposite, finite-proper-time end, a continuous Lorentzian
extension preserves causal cones and Lorentzian length but need not
preserve the connection or curvature.  Timelike completeness and
spacelike diameter provide geometric obstructions, but spatial quotients
create a further difficulty: an entire compact slice may collapse in
diameter while every fixed open set in its universal cover expands.  We
show that this local expansion still excludes every continuous extension.

The resulting picture has three parts: complete future evolution,
$C^0$-singular past ends, and exceptional regular Cauchy horizons.  The
past obstruction applies to two-step nilpotent groups and every discrete
left quotient.  Exact Einstein solutions on every odd-dimensional
Heisenberg group realize it.  On a fixed compact Bianchi~II quotient, all
expanding invariant vacuum data are classified: an open dense set gives
globally $C^0$-inextendible developments, whereas a codimension-two
submanifold gives analytic compact Cauchy horizons.  This is a complete
dichotomy in the finite-dimensional space of invariant tensor data on the
fixed quotient; no equation, symmetry, or differentiability beyond
continuity is imposed on a candidate extension.

Our geometric setting is
\begin{equation}
 \cM=(0,t_+)\times X,\qquad
 g=-N(t)^2\dd t^2+h_t,\qquad X=\Gamma\backslash Y,
 \label{eq:intro-spacetime}
\end{equation}
where $Y=L/H$ has a complete invariant Riemannian metric $k$,
$\Gamma<L$ acts freely and properly discontinuously, and $h_t$ is
$L$-invariant.  Small quotient distances do not control winding between
covering sheets.  Instead we bound the full $k$-length of causal spatial
projections.  Vanishing length near the past end, local homogeneous
isometries and timelike filling confine a chronological past to one
regular boundary chart.  Its spacelike sections have uniformly Lipschitz
parametrizations; lifting these sections gives diameter and volume
obstructions on $Y$.  No lift of the extension itself is assumed.  An
irrational compact Kasner example has diameter tending to zero while
these lifted sections force continuous inextendibility.

For a simply connected two-step nilpotent group, put
$Z=Z(\mathfrak n)$ and $V=\mathfrak n/Z$.  The invariant metric has
the Schur form
\begin{equation}
 h_t=H_t(\dd v,\dd v)+
 A_t(\Theta+L_t\dd v,\Theta+L_t\dd v),\qquad
 \Theta=\dd z-\tfrac12\mathsf b(v,\dd v).
 \label{eq:intro-schur}
\end{equation}
Horizontal interpolation creates a central endpoint error equal to a
bracket area.  A distributed central correction cancels it while
preserving strict timelikeness.  Duality of integrated central ellipsoids
determines its least uniform bound.  This construction permits arbitrary
anisotropic $A_t$ and Schur coupling $L_t$.
Proposition~\ref{prop:einstein-optimal-separation} realizes the same
strict separation in vacuum and canonical Einstein-massless-scalar
developments on $H_3\times H_3$; flat products give examples in every
spatial dimension at least six.  Separate central corrections already
suffice for this family, so its significance is the failure of
area-direction allocation within a solution of the field equations.
The final obstruction also requires causal-length control and growth
in the full abelianization or in volume.  Bracket area and nilpotent
filling are classical \cite{Milnor1976,Eberlein1994,Young2013}; related
control questions occur in sub-Lorentzian Heisenberg geometry
\cite{Grochowski2006,SachkovSachkova2023}.
The causal geometry of fixed left-invariant Lorentz metrics on two-step
groups, including closed timelike geodesics on compact nilmanifolds and
global-hyperbolicity criteria, was studied by Guediri
\cite{GuediriLorentz2003,GuediriGlobal2003,GuediriCriterion2008}.  Our
chronology result instead concerns a time-dependent analytic Einstein
extension: it locates the entire chronology-violating region and gives
the sharp horizontal-length threshold in each constant-time slice.  The
quotient inextendibility theorem is Riemannian on the slices and applies
to arbitrary discrete left quotients, independently of this Lorentzian
classification.

The closest causal model is Miethke's Kasner argument
\cite{Miethke2024}.  The FLRW results of Sbierski and Ling
\cite{SbierskiFLRW,LingFLRW} treat a complementary singularity regime;
Gunara \cite{Gunara2026} and Mosani \cite{Mosani2026} use related localization for warped-product
black holes.  Chru\'sciel and Klinger \cite{ChruscielKlinger2018} obtain
restrictions for expanding homogeneous ends, while Lott
\cite[Proposition~13 and Example~6]{Lott2020} relates integrable inverse
scales to shrinking causal pasts.  Compact homogeneous models also carry
lattice and moduli data
\cite{KoikeTanimotoHosoya1994,TanimotoKoikeHosoya1997}.
The local translation argument is proved in
Lemma~\ref{lem:transporters} and
Proposition~\ref{thm:quotient-localization}.  The Heisenberg central-area
correction is derived in Proposition~\ref{thm:schur-filling}; its diagonal
form and a computable integral criterion are given in
Proposition~\ref{prop:diagonal-heisenberg}.  The anisotropic Taub vacuum
application is stated in Corollary~\ref{cor:taub-vacuum-specialization}.
These arguments lead to localization by local homogeneous isometries and
lifted slice embeddings, optimal anisotropic central allocation for
general two-step groups with Schur coupling, and an obstruction on
arbitrary discrete quotients.  The later sections establish the
higher-dimensional Einstein realizations and the vacuum-data
classification on each fixed compact quotient.

The Einstein constructions build on quiescent asymptotics
\cite{AnderssonRendall2001}, potential-dependent Kasner analysis
\cite{Ritchie2022}, and homogeneous scalar classification
\cite{Ringstrom2025}.  The future estimate refines expansion-eigenvalue
criteria of Rendall and G\"odeke-Rendall
\cite{Rendall1995Matter,GoedekeRendall2010}: in the vacuum case it removes
the imposed oscillation hypothesis of their Theorem~3 through eight
spatial dimensions and includes the ninth-dimensional endpoint.  It also
gives an affirmative answer to their four-spatial-dimensional conjecture;
see Corollary~\ref{cor:four-spatial-vacuum}.  This comparison does not
assert absence of dynamical oscillation.  Their pointwise
generalized-Kasner-exponent criterion and their reflection-symmetric
product theorem remain complementary.  Their concluding discussion
identifies homogeneous spaces with isotropy as a further direction.
The bound of nine is sufficient, and asserts no incompleteness above
that dimension.  For a positive potential floor, our completeness
conclusion requires only smoothness and the lower bound, whereas the
stronger asymptotic conclusions in three dimensions established by
Rendall \cite[Theorems~1-2]{Rendall2004Scalar} use further potential
hypotheses.

Exact Heisenberg metrics have substantial precedents.  Gibbons-L\"u-Pope-Stelle
\cite[Section~4.1.1]{GibbonsLuPopeStelle2002} construct a Riemannian
Ricci-flat metric with five-dimensional Heisenberg orbits;
Fr\'e-Rulik-Trigiante \cite[Section~3.1.1]{FreRulikTrigiante2004} give
a symmetric Bianchi II massless-scalar cosmology.  Curvature-wall
reflections belong to the cosmological billiard description
\cite{DamourHenneauxNicolai2003}.  Cort\'es and Saha
\cite[Remark~4.6]{CortesSaha2022} describe the symmetric Ricci-flat
Lorentzian family by analytic continuation.  Our conclusions concern
the exact Lorentzian transition law, affine lifetime, and continuous
extendibility on discrete quotients.  Compact homogeneous vacuum horizons
and their additional symmetry are also classical
\cite{ChruscielRendall1995}; that work already permits extensions without
field equations.  Here the extension regularity is lowered to an
arbitrary continuous nondegenerate Lorentzian metric, and the exceptional
set is identified intrinsically in regular invariant data on each fixed
compact quotient.  Siklos had already shown, in a broader classical
homogeneous analysis, that horizon solutions depend on two fewer
parameters than generic solutions \cite{Siklos1978}.  Accordingly, the
codimension-two count by itself is not claimed as new.  The contribution
here is its realization as an if-and-only-if $C^0$ extension criterion in
the nine-dimensional Bianchi~II constraint-data manifold, together with
the global future conclusion and the quantitative
anisotropy-to-curvature crossover.

The three main results are Theorems~\ref{thm:homogeneous-future},
\ref{thm:two-step-obstruction} and~\ref{thm:vacuum-bianchi-alternative}.
They give, respectively, the complete expanding end, the quotient-stable
past obstruction, and the sharp vacuum Bianchi~II alternative obtained by
combining the two.  The first is proved immediately below the introduction.
Boundary localization and timelike filling then lead to the past
obstruction, followed by the Einstein realizations and the invariant-data
classification.
Detailed filling constants, arithmetic refinements, metric comparisons
and additional horizon coordinates appear in the appendices.
The results do not include inhomogeneous genericity or the
all-positive-exponent regime of stable quiescent big-bang formation
\cite{FournodavlosRodnianskiSpeck2023,OudeGroenigerPetersenRingstrom}.
The critical-bracket example imposes no matter model.  The product
Heisenberg construction supplies Einstein realizations with distinct
central scales, while Corollary~\ref{cor:optimal-product} gives an exact
tensorization law and permits mixed, non-product discrete subgroups.

\subsection{Principal results and Einstein realizations}

For a spatially homogeneous map $\Phi(t)$ into a Riemannian target
$(\mathcal T,\mathsf G)$, the Einstein-scalar equations take the form
\begin{equation}
 \Ric(g)=\Phi^*\mathsf G+\frac{2\mathcal V(\Phi)}{n-1}g,
 \qquad D_t\dot\Phi+\theta\dot\Phi+
             \operatorname{grad}_{\mathsf G}\mathcal V=0,
 \qquad \theta=\tfrac12\operatorname{tr}_{h_t}\dot h_t,
 \label{eq:main-sigma-normalization}
\end{equation}
Here $D_t$ denotes covariant differentiation in the target.

\begin{theorem}[Future completeness and the G\"odeke-Rendall conjecture]
\label{thm:homogeneous-future}
Let $X=L/H$ be a connected homogeneous space of dimension $n\ge2$,
where $L$ is a connected Lie group and $H$ is closed.  Let
$(\mathcal T,\mathsf G)$ be a finite-dimensional connected complete
Riemannian manifold and let $\mathcal V\in C^\infty(\mathcal T)$.
Suppose that $g=-\dd t^2+h_t$, with $h_t$
$L$-invariant, and $\Phi=\Phi(t)$ form a maximal homogeneous
solution of~\eqref{eq:main-sigma-normalization}.
Assume that $\theta(t_0)=\theta_0>0$ and that
$R(h_t)\le0$ and $\mathcal V(\Phi(t))\ge0$ throughout the future
interval.  Then the future
proper-time endpoint is infinite.  The spacetime is future timelike
and null geodesically complete on $X$, and on every quotient
$\Gamma\backslash X$ for which $\Gamma<L$ acts freely and properly
discontinuously, in each of the following cases:
\begin{enumerate}[label=\textup{(\roman*)}]
\item $\mathcal V\equiv0$ and $2\le n\le9$;
\item $2\le n\le5$;
\item $n\ge3$ and
\[
 \int_{t_0}^{\infty}
       \mathcal V(\Phi(t))^{\beta_n/2}\,\dd t=\infty,
 \qquad \beta_n=\frac{(n-1)^2}{4n}.
\]
\end{enumerate}
For any future causal geodesic with affine parameter $\lambda$,
\[
 E(t):=\frac{\dd t}{\dd\lambda}
 \le E(t_0)\left(\frac{\theta_0}{\theta(t)}\right)^{\beta_n},
 \qquad
 \beta_2=0,\quad \beta_n=\frac{(n-1)^2}{4n}\quad(n\ge3),
\]
and
\[
 \lambda(t)-\lambda(t_0)\ge
 \frac{1}{E(t_0)\theta_0^{\beta_n}}
 \int_{t_0}^{t}\theta(s)^{\beta_n}\,\dd s.
\]
Thus divergence of the last integral is an all-dimensional completeness
criterion.  When $\mathcal V\equiv0$, one may replace $\beta_n$ by
$c_2=0$, $c_3=1/3$ and $c_n=(\sqrt n-1)/2$ for $n\ge4$.
If $\mathcal V(\Phi(t))\ge V_*>0$, then
$\theta(t)\ge\sqrt{2nV_*/(n-1)}$, so each geodesic has uniformly
bounded energy in every dimension.  In particular, for a connected,
simply connected four-dimensional Lie group $G\not\simeq
\mathrm{SU}(2)\times\mathbb R$, every homogeneous vacuum development on
$G$ which is expanding at one slice is future timelike and null
geodesically complete, as conjectured by G\"odeke and Rendall.
The real-scalar equations
\eqref{eq:main-einstein-normalization} correspond to
$(\mathcal T,\mathsf G)=(\R,\dd\phi^2)$; a constant map with
$\mathcal V=0$ gives vacuum.
\end{theorem}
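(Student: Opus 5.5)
The plan is an energy estimate along geodesics, closed by an algebraic inequality between the shear and the expansion that comes from the Raychaudhuri equation and the Hamiltonian constraint. Let $K_t=\tfrac12 h_t^{-1}\dot h_t$, so $\theta=\operatorname{tr}K$. A future causal geodesic $\gamma$ has $E=\dd t/\dd\lambda>0$ and spatial velocity $p$, with $h_t(p,p)=E^2-m$ and $m\in\{0,1\}$. The geodesic equation gives
\[
 \frac{\dd E}{\dd t}=-\frac{\dot h_t(p,p)}{2E}=-\frac{h_t(K_tp,p)}{E},
 \qquad\text{so}\qquad
 \frac{\dd\ln E}{\dd t}\le \max\{0,-\sigma_{\min}(t)\},
\]
where $\sigma_{\min}$ is the least eigenvalue of $K_t$. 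Only $E$ matters: homogeneity makes the $t$-component of the geodesic ODE autonomous in $t$. The spatial momentum is controlled by the conserved quantities of the Killing fields generating the left action, so I do not need to track the spatial motion. The same bounds then pass to every quotient $\Gamma\backslash X$, because geodesics project and affine parameters are preserved.

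\textbf{Step 1: field identities.} From \eqref{eq:main-sigma-normalization}, the $tt$-component gives
\[
 \dot\theta=-|K|^2-|\dot\Phi|^2_{\mathsf G}+\tfrac{2\mathcal V}{n-1}.
\]
The Hamiltonian constraint gives
\[
 \theta^2-|K|^2=-R(h_t)+|\dot\Phi|^2+2\mathcal V
\]
(up to the paper's normalization). Using $R\le0$ and $\mathcal V\ge0$, I eliminate $\mathcal V$ to obtain
\[
 -\dot\theta\ \ge\ \tfrac{n}{n-1}|K|^2-\tfrac{1}{n-1}\theta^2 .
\]
The constraint also gives $|K|^2+|\dot\Phi|^2\le\theta^2$. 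Hence $\theta$ stays positive and nonincreasing, $\dot\theta\ge-\theta^2$, and $|\dot\Phi|\le\theta$. These bounds rule out finite-time breakdown of the finite-dimensional ODE for $(h_t,\dot h_t,\Phi,\dot\Phi)$: the target is complete and $h_t$ cannot degenerate in finite time. So $t_+=\infty$ and
\[
 \theta\ge\frac{\theta_0}{1+\theta_0(t-t_0)}.
\]

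\textbf{Step 2: the key algebraic inequality.} I aim to show $-\sigma_{\min}\theta\le\beta\,(-\dot\theta)$. Fix $x=-\sigma_{\min}/\theta$ and minimize $|K|^2$ subject to one eigenvalue being $-x\theta$ and the trace being $\theta$. The minimum is attained when the remaining $n-1$ eigenvalues are equal, giving
\[
 |K|^2\ge x^2\theta^2+\frac{(1+x)^2\theta^2}{n-1}.
\]
Substituting into the Step 1 bound, the right side becomes
\[
 \frac{\beta\,\theta^2\,(nx+1)^2}{(n-1)^2},
\]
and the required inequality is $\beta(nx+1)^2/(n-1)^2\ge x$ for all $x$. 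The discriminant condition is exactly $\beta\ge(n-1)^2/(4n)$. Integrating then yields
\[
 E\le E(t_0)\,(\theta_0/\theta)^{\beta_n},\qquad
 \frac{\dd\lambda}{\dd t}=\frac1E,
\]
which gives the stated affine-length lower bound. For $n=2$, I expect $\beta_2=0$ to follow because $R\le0$ with the constraint forces $K$ to have no negative eigenvalue. In vacuum, or when $\mathcal V\equiv0$, the term $2\mathcal V/(n-1)$ is absent, so the Raychaudhuri equation reads $-\dot\theta=|K|^2+|\dot\Phi|^2$ and is not weakened by the constraint. Redoing the same minimization (with constraint $|K|^2\le\theta^2$) should give the sharper constants $c_n=(\sqrt n-1)/2$ and $c_3=1/3$. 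This sharper optimization, where the constraint bound $|K|\le\theta$ must also be used, is the step I expect to be the main obstacle.

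\textbf{Step 3: divergence of $\int\theta^{\beta}$.} Since $\theta\gtrsim 1/t$, the integral diverges whenever the exponent is at most $1$. This holds for $\beta_n\le1$, i.e.\ $(n-1)^2\le4n$, which is $n\le5$ and gives case (ii). It holds for $c_n\le1$, which is $n\le9$ and gives case (i). For case (iii) and the potential floor, I would use the Step 1 inequality $\theta^2\ge 2n\mathcal V/(n-1)$; I expect this to follow from the Raychaudhuri equation together with monotonicity, by a comparison argument using $\dot\theta\le0$. Then $\int\theta^{\beta_n}\gtrsim\int\mathcal V^{\beta_n/2}=\infty$, and under a floor $V_*$ the energy $E$ is uniformly bounded.

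\textbf{Step 4: the G\"odeke--Rendall corollary.} This is case (i) with $n=4$ and $\Phi$ constant. The remaining input is that every left-invariant metric on a connected, simply connected four-dimensional group other than $\mathrm{SU}(2)\times\R$ has $R\le0$. I would obtain this from Milnor's scalar-curvature results: positive scalar curvature for a left-invariant metric forces the universal cover to be compact-type, i.e.\ $\mathrm{SU}(2)$ times abelian. I would either cite this or check it through the four-dimensional Lie algebra classification. Finally, expansion at one slice gives $\theta_0>0$, so Step 1 applies and completeness follows.
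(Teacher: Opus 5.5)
Your proposal is correct and follows essentially the paper's route: the identity $\frac{\dd}{\dd t}\log E=-K(w,w)$ with $|w|_h\le1$, a least-eigenvalue bound from the Raychaudhuri equation combined with the Hamiltonian constraint, the lower bound $\theta\ge\theta_0/(1+\theta_0(t-t_0))$, and $\theta^2\ge2n\mathcal V/(n-1)$. Your minimization over $x=-\sigma_{\min}/\theta$ is equivalent to the paper's estimate $\lambda_{\min}(h^{-1}\Sigma)\ge-\sqrt{(n-1)/n}\,|\Sigma|$, and adding $|K|\le\theta$ in the massless case does give $c_2=0$, $c_3=1/3$, $c_n=(\sqrt n-1)/2$. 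The inequality $\theta^2\ge2n\mathcal V/(n-1)$ follows in one line from the constraint using $|K|^2\ge\theta^2/n$ and $R\le0$, so no comparison argument is needed.

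The one step you should add is excluding a future causal geodesic with bounded $t$ and finite affine length. The energy bound alone does not handle this case, and the Killing momenta are not what is needed. As in the paper, use three facts: invariant metrics on $L/H$ are complete, $h_t$ is uniformly comparable to $h_{t_0}$ on a finite slab, and $E$ is bounded there. Together these keep the geodesic in a compact subset of the tangent bundle, so it extends. Completeness on quotients then follows because quotient geodesics lift to $X$.
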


\begin{proof}
Proposition~\ref{prop:nonnegative-future-completeness} proves future
existence, the energy and affine-length estimates, and \textup{(ii)}.
Corollary~\ref{cor:decaying-potential-completeness} proves \textup{(iii)}.
Proposition~\ref{prop:massless-future-completeness} proves the improved
exponents and \textup{(i)}, including the ninth-dimensional endpoint.
Proposition~\ref{prop:positive-floor-future-completeness} proves
the positive-floor assertion.  Corollary~\ref{cor:four-spatial-vacuum}
proves the final vacuum statement.  The proofs use invariant tensors on
$L/H$ and do not require a simply transitive action or diagonal metrics.
\end{proof}

We use the normalization
\begin{equation}
 \Ric(g)=\dd\phi\otimes\dd\phi+
 \frac{2}{n-1}\mathcal V(\phi)g,
 \qquad \Box_g\phi=\mathcal V'(\phi)
 \label{eq:main-einstein-normalization}
\end{equation}
in $n+1$ spacetime dimensions.

\begin{corollary}[Geodesic lift to nonlinear sigma models]
\label{cor:target-geodesic-lift}
Let $(g,\phi)$ solve the Einstein-massless-scalar equations
\eqref{eq:main-einstein-normalization} with $\mathcal V=0$, and let
$\gamma:\mathbb R\to(\mathcal T,\mathsf G)$ be a unit-speed geodesic in
a complete Riemannian target.  Then
\[
 \Phi=\gamma\circ\phi
\]
together with the same metric $g$ solves
\[
 \Ric(g)=\Phi^*\mathsf G,\qquad D^\mu\partial_\mu\Phi=0.
\]
Consequently every massless-scalar Heisenberg development constructed
below, including its discrete quotients, gives a nonlinear sigma-model
development with exactly the same completeness, inextendibility and
horizon properties.  The statement is valid for noncompact and closed
target geodesics alike.
\end{corollary}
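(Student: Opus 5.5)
The proof is a direct computation using the composition rules for pullbacks and for the tension field. Write $\Phi=\gamma\circ\phi$ with $\phi:\mathcal M\to\R$ the scalar field.

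\emph{Einstein equation.} Since $\dd\Phi=\dot\gamma(\phi)\,\dd\phi$, for vectors $X,Y$ we have
\[
 (\Phi^*\mathsf G)(X,Y)=\mathsf G\bigl(\dot\gamma,\dot\gamma\bigr)\,\dd\phi(X)\,\dd\phi(Y).
\]
Unit speed gives $\mathsf G(\dot\gamma,\dot\gamma)=1$, so $\Phi^*\mathsf G=\dd\phi\otimes\dd\phi$. With $\mathcal V=0$, \eqref{eq:main-einstein-normalization} then reads $\Ric(g)=\Phi^*\mathsf G$.

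\emph{Wave map equation.} The second step is the chain rule for the wave-map operator. Using the pullback connection,
\[
 D^\mu\partial_\mu\Phi
 =\dot\gamma(\phi)\,\Box_g\phi
 +\bigl(\nabla^{\mathsf G}_{\dot\gamma}\dot\gamma\bigr)(\phi)\,g^{\mu\nu}\partial_\mu\phi\,\partial_\nu\phi .
\]
The first term vanishes because $\Box_g\phi=\mathcal V'(\phi)=0$. The second vanishes because $\gamma$ is a geodesic. We never divide by $|\dd\phi|_g^2$, so the identity holds even where $\dd\phi$ is null or zero.

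\emph{Transfer of the geometric properties.} Completeness, $C^0$-inextendibility, Cauchy horizons and chronology regions are statements about the Lorentzian metric $g$ on $\mathcal M$ and on its quotients. The metric is unchanged, so all of these carry over verbatim. The only point to check is that the sigma-model field is well defined on quotients. In the homogeneous developments $\phi=\phi(t)$ is $\Gamma$-invariant, so $\Phi=\gamma\circ\phi(t)$ descends to $\Gamma\backslash\mathcal M$.

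\emph{Closed geodesics.} If $\gamma$ is closed, $\gamma$ is defined on all of $\R$ but is periodic. Completeness of $\mathcal T$ guarantees that $\gamma$ exists on all of $\R$, so $\Phi$ is defined for every value of $\phi$, which may be unbounded as $t\to0$ or $t\to\infty$. The calculation above is local and never uses injectivity of $\gamma$.

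The main obstacle is minor: the correct identification of $D^\mu\partial_\mu\Phi$ with the pullback-connection divergence. This follows from the definition of $D_t$ in \eqref{eq:main-sigma-normalization}; in the homogeneous case it reduces to $D_t\dot\Phi+\theta\dot\Phi=\dot\gamma(\ddot\phi+\theta\dot\phi)+\ddot\gamma\,\dot\phi^2$ with $\ddot\gamma:=\nabla^{\mathsf G}_{\dot\gamma}\dot\gamma=0$, whose vanishing is again immediate.
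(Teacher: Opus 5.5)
Your proposal is correct and follows the paper's proof. Unit speed gives $\Phi^*\mathsf G=\dd\phi\otimes\dd\phi$, the composition rule with $\nabla^{\mathsf G}_{\dot\gamma}\dot\gamma=0$ reduces the map equation to $\dot\gamma(\phi)\,\Box_g\phi=0$, and the unchanged metric carries every causal and extension conclusion over; your remarks on descent to quotients and on closed geodesics only spell out points the paper leaves implicit.
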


\begin{proof}
Completeness of the target makes $\gamma$ defined on all of $\mathbb R$.
Since $|\dot\gamma|_{\mathsf G}=1$,
$\Phi^*\mathsf G=\dd\phi\otimes\dd\phi$.  The chain rule and
$D_{\dot\gamma}\dot\gamma=0$ give
\[
 D^\mu\partial_\mu(\gamma\circ\phi)
 =\dot\gamma(\phi)\,\Box_g\phi=0.
\]
Thus the stress tensor and the spacetime metric are unchanged, so all
causal and extension conclusions transfer verbatim.
\end{proof}

Write $H_{2m+1}$ for the simply
connected Heisenberg group with invariant coframe satisfying
\[
 \dd\omega^0=-\sum_{r=1}^m\omega^{2r-1}\wedge\omega^{2r},
 \qquad \dd\omega^i=0\quad(i>0).
\]
The later Einstein propositions construct exact developments on these
groups and their discrete quotients, determine the Kasner transition and
the admissible exponential potentials, and realize the strict gain from
central allocation within vacuum and massless-scalar solutions.  The
fixed-quotient vacuum classification culminates in
Theorem~\ref{thm:vacuum-bianchi-alternative}.  By
Corollary~\ref{cor:target-geodesic-lift}, every massless-scalar
construction also takes values along any complete unit-speed target
geodesic.  The exceptional extensions are analyzed further through their
chronology-violating region and the sharp horizontal-length threshold for
closed timelike curves.

\section{Future completeness of homogeneous Einstein-scalar systems}
\label{sec:homogeneous-future}

We prove Theorem~\ref{thm:homogeneous-future} on homogeneous spaces
with arbitrary isotropy and for homogeneous maps into a complete
Riemannian target.  The real-scalar and vacuum equations used later
are special cases.  The Hamiltonian constraint controls finite-time
continuation; a pointwise bound for the least expansion eigenvalue
then controls every causal geodesic.  This extends the setting of
the homogeneous completeness criteria in
\cite{Rendall1995Matter,GoedekeRendall2010,Lee2005}.

Throughout this section $X=L/H$, $(\mathcal T,\mathsf G)$ and
$\mathcal V$ satisfy the geometric and smoothness hypotheses of
Theorem~\ref{thm:homogeneous-future}.  A homogeneous solution means
$g=-\dd t^2+h_t$, with $h_t$ $L$-invariant and $\Phi=\Phi(t)$,
solving all equations~\eqref{eq:main-sigma-normalization}.  Maximality
refers to continuation as such a solution in proper time, without
imposing an additional sign condition on a possible continuation.
The target metric is positive definite; the argument does not apply
to scalar fields with negative kinetic energy.

\begin{proposition}[Nonnegative potentials and affine length]
\label{prop:nonnegative-future-completeness}
Let $(g,\Phi)$ be a maximal homogeneous solution as above, with
$\mathcal V(\Phi(t))\ge0$, $\theta(t_0)=\theta_0>0$ and $R(h_t)\le0$
throughout the future interval.  Its future proper-time endpoint is
infinite.  Put
\[
 \beta_2=0,\qquad \beta_n=\frac{(n-1)^2}{4n}\quad(n\ge3).
\]
Every future causal geodesic,
parametrized affinely by $\lambda$ and starting at time $t_0$,
satisfies
\begin{equation}
 E(t):=\frac{\dd t}{\dd\lambda}(t)
 \le E(t_0)\left(\frac{\theta_0}{\theta(t)}\right)^{\beta_n}
 \le E(t_0)\bigl(1+\theta_0(t-t_0)\bigr)^{\beta_n}.
 \label{eq:future-geodesic-energy}
\end{equation}
Moreover,
\begin{equation}
 \lambda(t)-\lambda(t_0)\ge
 \frac{1}{E(t_0)\theta_0^{\beta_n}}
 \int_{t_0}^{t}\theta(s)^{\beta_n}\,\dd s.
 \label{eq:future-affine-integral}
\end{equation}
Consequently the solution is future timelike and null geodesically
complete in every dimension for which
\begin{equation}
 \int_{t_0}^{\infty}\theta(t)^{\beta_n}\,\dd t=\infty.
 \label{eq:theta-affine-criterion}
\end{equation}
For $2\le n\le5$, the solution is future timelike and null
geodesically complete on $X$ and every quotient allowed in
Theorem~\ref{thm:homogeneous-future}.
\end{proposition}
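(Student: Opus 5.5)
The plan is to combine the Raychaudhuri equation with the Hamiltonian constraint. Monotonicity of $\theta$ will give global existence, and a pointwise lower bound on the least eigenvalue of the second fundamental form $k_t=\tfrac12\dot h_t$ will control geodesic energies. Write $\sigma$ for the trace-free part of $k$, so that $|k|^2=|\sigma|^2+\theta^2/n$. With the normalization \eqref{eq:main-sigma-normalization} one has $\Ric(\partial_t,\partial_t)=|\dot\Phi|^2-2\mathcal V/(n-1)$ and energy density $\tfrac12|\dot\Phi|^2+\mathcal V$. Hence
\[
 \dot\theta=-|k|^2-|\dot\Phi|^2+\frac{2\mathcal V}{n-1},\qquad
 \theta^2=|k|^2+|\dot\Phi|^2+2\mathcal V-R(h_t).
\]

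\textbf{Step 1: $\theta$ is positive and nonincreasing.} Since $R\le0$ and $\mathcal V\ge0$, the constraint gives $2\mathcal V\le\theta^2-|k|^2$. Substituting into Raychaudhuri yields
\[
 \dot\theta\le-\frac{n|k|^2-\theta^2}{n-1}=-\frac{n}{n-1}|\sigma|^2\le0 .
\]
Eliminating $|k|^2$ through the constraint instead gives $\dot\theta=-\theta^2-R+2\mathcal V+2\mathcal V/(n-1)\ge-\theta^2$. Comparison then gives $\theta(t)\ge\theta_0/(1+\theta_0(t-t_0))>0$.

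\textbf{Step 2: continuation to $t=\infty$.} Since $0<\theta\le\theta_0$, every nonnegative term in the constraint is bounded by $\theta_0^2$, so $|k|$, $|\dot\Phi|_{\mathsf G}$, $\mathcal V(\Phi)$ and $-R$ are uniformly bounded. Then $\dot h_t=2k_t$ with $|k|_{h_t}$ bounded, so in a fixed invariant frame $h_t$ and $h_t^{-1}$ stay bounded on finite intervals. Also $\Phi$ has bounded speed in the complete target, so it stays in a compact set. Standard ODE continuation for the finite-dimensional homogeneous system, using invariant tensors on $L/H$, then shows that the maximal proper-time endpoint is $+\infty$.

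\textbf{Step 3: energy estimate.} This is the main step. Along an affinely parametrized causal geodesic, the geodesic equation gives $\dd E/\dd\lambda=-k(\dot x,\dot x)$ with $|\dot x|_h^2=E^2+\langle\dot\gamma,\dot\gamma\rangle\le E^2$. Hence
\[
 \frac{\dd\log E}{\dd t}\le-\kappa_{\min}\,\mathbf 1_{\kappa_{\min}<0},
\]
where $\kappa_{\min}$ is the least eigenvalue of $k$ relative to $h$. For a trace-free symmetric form, $-\lambda_{\min}(\sigma)\le\sqrt{(n-1)/n}\,|\sigma|$, so
\[
 -\kappa_{\min}\le a|\sigma|-\theta/n,\qquad a=\sqrt{(n-1)/n}.
\]
To obtain $E\le E(t_0)(\theta_0/\theta)^{\beta}$ it suffices that $-\kappa_{\min}\le-\beta\dot\theta/\theta$. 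By Step 1 this follows from
\[
 \theta\bigl(a|\sigma|-\theta/n\bigr)\le\beta\,\frac{n}{n-1}|\sigma|^2 .
\]
This quadratic inequality in $|\sigma|$ holds for all $|\sigma|$ exactly when $a^2\theta^2\le4\beta\,\frac{n}{n-1}\,\frac{\theta^2}{n}$, that is when $\beta\ge(n-1)^2/(4n)=\beta_n$.

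For $n=2$ the constraint gives $\sum\kappa_i^2\le\theta^2=(\kappa_1+\kappa_2)^2$, hence $\kappa_1\kappa_2\ge0$. Together with $\theta>0$ this gives $\kappa_{\min}\ge0$, so $E$ is nonincreasing and $\beta_2=0$. The second inequality in \eqref{eq:future-geodesic-energy} follows from the lower bound on $\theta$ in Step 1. The delicate points are the sharp eigenvalue bound and the choice to pair it with the $|\sigma|^2$ form of $\dot\theta$ rather than with $-\theta^2$. This pairing is what yields $\beta_n$; I expect checking it cleanly, including degenerate isotropy, to be the main obstacle.

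\textbf{Step 4: affine length and completeness.} Since $\dd\lambda/\dd t=1/E\ge\theta^{\beta_n}/(E(t_0)\theta_0^{\beta_n})$, integration gives \eqref{eq:future-affine-integral}. Every future causal geodesic reaches all $t$, because $t$ is finite along it and the spacetime exists for all $t$. Under \eqref{eq:theta-affine-criterion} its affine length is therefore infinite. For $2\le n\le5$ one has $\beta_n\le\tfrac45<1$. Hence $\theta^{\beta_n}\ge\theta_0^{\beta_n}(1+\theta_0(t-t_0))^{-\beta_n}$ is not integrable at infinity, and \eqref{eq:theta-affine-criterion} holds. Finally, $\Gamma$ acts by isometries of every $h_t$, and geodesics on $\Gamma\backslash X$ lift to geodesics on $X$ with the same $E$ and $\lambda$, so completeness passes to all such quotients.
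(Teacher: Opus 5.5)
\textbf{Steps 1--3 are fine.} They are correct and essentially coincide with the paper's argument. The paper sets $d=-\dot\theta$ and uses $|\Sigma|^2\le\frac{n-1}{n}d$ to get $\lambda_{\min}(h^{-1}K)\ge(\theta-(n-1)\sqrt d)/n$. It then completes the square $(\theta-(n-1)\sqrt d/2)^2\ge0$. This is the same computation as your discriminant condition in $|\sigma|$. Your $n=2$ argument via $\kappa_1\kappa_2\ge0$ is equivalent to the paper's use of $d\le\theta^2$. In Step 2 you should also state that the Hamiltonian and momentum constraints propagate under the homogeneous evolution. That is what guarantees that continuing the ODE continues a solution of the full field equations, as the paper notes.

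\textbf{The gap is in Step 4.} The sentence ``every future causal geodesic reaches all $t$, because $t$ is finite along it and the spacetime exists for all $t$'' is not an argument. Estimate \eqref{eq:future-affine-integral} gives infinite affine length only along geodesics on which $t\to\infty$. You must still exclude a maximal future causal geodesic on $[\lambda_0,\lambda_*)$, $\lambda_*<\infty$, along which $t$ stays bounded. Global existence in $t$ does not rule this out: at bounded $t$ a geodesic could run off to spatial infinity if the slices were incomplete. This is exactly where homogeneity enters again.

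The missing ingredient is that every $L$-invariant Riemannian metric on connected $X$ is complete. Homogeneity gives a uniform existence time for unit-speed Riemannian geodesics at every point. The argument then runs as follows.
\begin{enumerate}
\item On the finite slab, Step 2 makes $h_t$ uniformly comparable to $h_{t_0}$.
\item Estimate \eqref{eq:future-geodesic-energy} bounds $E$ there, and $|\dd x/\dd\lambda|_{h_t}\le E$.
\item Hence the spatial projection has bounded $h_{t_0}$-length and the full tangent stays bounded.
\item By Hopf--Rinow the state lies in a compact subset of the tangent bundle over the slab.
\item The geodesic equation then extends the geodesic, contradicting maximality.
\end{enumerate}
Equivalently, you can first use completeness to show that the product slices are Cauchy. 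A geodesically inextendible geodesic with finite parameter range is inextendible as a causal curve, so it must meet every slice. Only after this step does divergence of $\int\theta^{\beta_n}\,\dd t$, or the bound $\beta_n\le 4/5<1$ for $n\le5$, yield completeness. The quotient step by lifting is then fine as you wrote it.
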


\begin{proof}
Write $K=\tfrac12\dot h$, $\Sigma=K-\theta h/n$,
$\psi=\dot\Phi$, and $R=R(h)$.  Spatial tensor norms are taken with
respect to $h$, and $|\psi|$ with respect to $\mathsf G$.
The Hamiltonian constraint and the normal evolution equation give
\begin{align}
 \theta^2&=|K|^2+|\psi|^2+2\mathcal V-R,\notag\\
 \dot\theta&=-|K|^2-|\psi|^2+\frac{2}{n-1}\mathcal V
 =-\frac n{n-1}(|\Sigma|^2+|\psi|^2)+\frac R{n-1}.
 \label{eq:higher-future-identities}
\end{align}
Thus $-\theta^2\le\dot\theta\le0$, and
\begin{equation}
 \frac{\theta_0}{1+\theta_0(t-t_0)}\le\theta(t)\le\theta_0.
 \label{eq:future-expansion-bounds}
\end{equation}
In particular, the lower differential inequality prevents the initially
positive expansion from reaching zero at finite time.
The constraint bounds $|K|$ and $|\psi|$ by $\theta_0$.
Consequently
\begin{equation}
 e^{-2\theta_0(t-t_0)}h_{t_0}\le h_t
 \le e^{2\theta_0(t-t_0)}h_{t_0},\qquad
 \operatorname{Length}_{\mathsf G}
       (\Phi|_{[t_0,t]})\le\theta_0(t-t_0).
 \label{eq:homogeneous-sigma-finite-slab}
\end{equation}

We justify continuation without choosing a global invariant frame.
Fix $o=eH$.  Evaluation at $o$ identifies invariant symmetric
two-tensors with the finite-dimensional vector space
$\operatorname{Sym}^2(T_o^*X)^H$; invariant metrics form its open
positive cone.  The Ricci tensor depends smoothly on this cone.
Indeed, local sections of $L\to L/H$ express a metric and its first
two spatial derivatives smoothly in its value at $o$; matrix
inversion is smooth on positive forms.  The evolution equations are
the smooth finite-dimensional system
\begin{equation}
 \dot h=2K,\qquad
 \dot K=-\Ric(h)-\theta K+2Kh^{-1}K+
                 \frac{2\mathcal V(\Phi)}{n-1}h,
 \qquad D_t\psi=-\theta\psi-
                       \operatorname{grad}_{\mathsf G}\mathcal V.
 \label{eq:homogeneous-sigma-ode}
\end{equation}
Here $\dot\Phi=\psi$, and $Kh^{-1}K$ denotes contraction of the
adjacent indices.  All spatial equations preserve invariant tensors.
The Hamiltonian and momentum constraints propagate: the contracted
Bianchi identity, together with the map equation, is a homogeneous
linear first-order system for the normal Einstein defects.  These
defects are invariant scalars and one-forms, so their spatial
covariant derivatives are linear maps between finite-dimensional
invariant tensor spaces, with smooth metric-dependent coefficients.
Their propagation is therefore a linear
ordinary differential system, whose solution with zero initial
defects is zero.  Hence an extension of
\eqref{eq:homogeneous-sigma-ode} extends the full field equations.

If the future endpoint $T$ were finite,
\eqref{eq:homogeneous-sigma-finite-slab} would keep $h,h^{-1},K$
in a compact subset of their coefficient domain.  The target path
lies in the closed ball of radius $\theta_0(T-t_0)$ about
$\Phi(t_0)$.  This ball is compact by completeness and the
Hopf-Rinow theorem.  The vectors $\psi$ lie in the compact disk
bundle $|\psi|\le\theta_0$ over it.  The full state thus remains in
a compact subset of the domain of the smooth system
\eqref{eq:homogeneous-sigma-ode}, which extends past $T$.
This contradicts maximality.  Future proper time is infinite in
every dimension.

If a trace-free symmetric endomorphism has least eigenvalue $-a$,
the remaining eigenvalues have sum $a$ and squared sum at least
$a^2/(n-1)$.  Therefore
$\lambda_{\min}(h^{-1}\Sigma)\ge-\sqrt{(n-1)/n}|\Sigma|$.
Set $d=-\dot\theta\ge0$.  By
\eqref{eq:higher-future-identities} and $R\le0$,
\[
 \lambda_{\min}(h^{-1}K)
 \ge\frac{\theta-(n-1)\sqrt d}{n}
 \ge-\beta_n\frac d\theta.
\]
For $n\ge3$, the last inequality is equivalent to
$(\theta-(n-1)\sqrt d/2)^2\ge0$.  If $n=2$, then
$d\le\theta^2$ makes the preceding lower bound nonnegative, which
proves the sharper value $\beta_2=0$.
For a causal geodesic let $w=\dd x/\dd t$.  Causality gives
$|w|_h\le1$, and its time equation yields
\[
 K(w,w)\ge\min\{0,\lambda_{\min}(h^{-1}K)\},\qquad
 \frac{\dd}{\dd t}\log E=-K(w,w)
 \le \beta_n\frac d\theta=-\beta_n\frac{\dot\theta}{\theta}.
\]
Integration proves~\eqref{eq:future-geodesic-energy}.
Taking reciprocals and integrating
$\dd\lambda/\dd t=E^{-1}$ gives
\eqref{eq:future-affine-integral} and
\eqref{eq:theta-affine-criterion}.

Every invariant Riemannian metric on connected $X$ is complete.
To see this, the compact unit sphere at $o$ gives a uniform local
existence time for unit-speed geodesics starting there.  Transitive
isometries give the same existence time at every point, so every
Riemannian geodesic continues indefinitely.  In particular,
$h_{t_0}$ is complete and its closed bounded balls are compact.
If a future causal geodesic had bounded $t$ and finite affine
length, the finite-slab metric comparison and the energy estimate
would bound its spatial length and its full tangent relative to
$h_{t_0}$.  Its state would remain in a compact subset of the
tangent bundle of a finite slab, so the geodesic equation would
extend it.  For $t\to\infty$, $\beta_n<1$ when $2\le n\le5$, and
\[
 \lambda(t)-\lambda(t_0)
 \ge\frac{(1+\theta_0(t-t_0))^{1-\beta_n}-1}
 {E(t_0)\theta_0(1-\beta_n)}\longrightarrow\infty.
\]
This proves causal completeness on $X$.  The action of $\Gamma$
preserves every $h_t$ and fixes time and $\Phi$.  It defines a
Lorentzian covering, and a quotient geodesic lifts to $I\times X$.
Completeness of the lift proves completeness of the quotient.
\end{proof}

\begin{proposition}[Potentials bounded below by a positive constant]
\label{prop:positive-floor-future-completeness}
Under the hypotheses of
Proposition~\ref{prop:nonnegative-future-completeness}, suppose that
\begin{equation}
 \mathcal V(\Phi(t))\ge V_*>0\quad\hbox{for }t\ge t_0.
 \label{eq:positive-potential-floor}
\end{equation}
Then future timelike and null geodesic completeness holds in every
spatial dimension $n\ge2$, on $X$ and all the stated quotients.
With $\theta_*:=\sqrt{2nV_*/(n-1)}$, one has
\begin{equation}
 E(t)\le E(t_0)\left(\frac{\theta_0}{\theta_*}\right)^{\beta_n},
 \qquad
 \lambda(t)-\lambda(t_0)
 \ge\frac{t-t_0}{E(t_0)}
       \left(\frac{\theta_*}{\theta_0}\right)^{\beta_n}.
 \label{eq:positive-floor-affine-length}
\end{equation}
\end{proposition}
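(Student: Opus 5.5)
The plan is to get a uniform positive lower bound on $\theta$ from the Hamiltonian constraint, and then feed it into the energy and affine-length estimates already proved in Proposition~\ref{prop:nonnegative-future-completeness}. All hypotheses of that proposition hold, so we may use its conclusions directly: the future proper-time endpoint is infinite, $\dot\theta\le0$, and \eqref{eq:future-geodesic-energy} and \eqref{eq:future-affine-integral} are valid.

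\textbf{Step 1: the floor $\theta\ge\theta_*$.} Write $K=\Sigma+\theta h/n$, so that $|K|^2=|\Sigma|^2+\theta^2/n$. Substituting into the constraint \eqref{eq:higher-future-identities} gives
\[
 \frac{n-1}{n}\theta^2=|\Sigma|^2+|\psi|^2+2\mathcal V-R .
\]
Every term on the right except $2\mathcal V$ is nonnegative, using $R\le0$. Hence $\theta^2\ge 2n\mathcal V(\Phi)/(n-1)\ge 2nV_*/(n-1)=\theta_*^2$. Since $\theta$ is continuous, starts positive at $t_0$, and never vanishes by \eqref{eq:future-expansion-bounds}, it stays positive, so $\theta(t)\ge\theta_*$ for all $t\ge t_0$. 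Evaluating this at $t_0$ also gives $\theta_0\ge\theta_*$, so the ratio below is at least $1$.

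\textbf{Step 2: bounded energy.} Insert $\theta\ge\theta_*$ into \eqref{eq:future-geodesic-energy}. This gives $E(t)\le E(t_0)(\theta_0/\theta_*)^{\beta_n}$, which is the first estimate in \eqref{eq:positive-floor-affine-length}.

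\textbf{Step 3: linear affine length.} Insert the same bound into \eqref{eq:future-affine-integral}. The integrand satisfies $\theta(s)^{\beta_n}\ge\theta_*^{\beta_n}$, so the affine length is at least $(t-t_0)(\theta_*/\theta_0)^{\beta_n}/E(t_0)$, which is the second estimate. Equivalently, $\dd\lambda/\dd t=E^{-1}$ together with Step 2 gives the same bound. The integral \eqref{eq:theta-affine-criterion} therefore diverges, so no restriction on $n$ is needed.

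\textbf{Step 4: completeness.} This step repeats the end of the proof of Proposition~\ref{prop:nonnegative-future-completeness}. A future causal geodesic cannot end at a bounded value of $t$, by the compactness and continuation argument there. For this, invariant metrics are complete, the finite-slab comparison \eqref{eq:homogeneous-sigma-finite-slab} holds, and the energy is now uniformly bounded. If instead $t\to\infty$, Step 3 forces $\lambda\to\infty$. Completeness on $\Gamma\backslash X$ follows by lifting geodesics to $X$ through the Lorentzian covering, as before.

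The only point requiring care is Step 1. One must use the trace-free decomposition to obtain the sharp constant $n/(n-1)$ instead of the cruder bound $\theta^2\ge2\mathcal V$, and one must note that the lower bound comes from the constraint rather than from the monotonicity of $\theta$.
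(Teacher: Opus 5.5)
Your proposal is correct and matches the paper's proof essentially step for step. In both, the trace-free form of the Hamiltonian constraint gives $\theta\ge\theta_*$, this bound is substituted into \eqref{eq:future-geodesic-energy} and into $\dd\lambda/\dd t=1/E$, and bounded-$t$ geodesics and quotients are handled exactly as in the proof of Proposition~\ref{prop:nonnegative-future-completeness}.
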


\begin{proof}
Future existence was proved without a dimension restriction.
The constraint gives
\[
 \frac{n-1}{n}\theta^2
 =|\Sigma|^2+|\psi|^2+2\mathcal V-R\ge2V_*,
\]
so $\theta\ge\theta_*$.  Substitution into
\eqref{eq:future-geodesic-energy} bounds $E$ as stated.
Integration of $\dd\lambda/\dd t=1/E$ gives the affine-length
bound.  Bounded-$t$ geodesics and quotient lifts are treated in the
preceding proof.
\end{proof}

\begin{corollary}[Slowly decaying potentials in arbitrary dimension]
\label{cor:decaying-potential-completeness}
Under the hypotheses of
Proposition~\ref{prop:nonnegative-future-completeness}, let $n\ge3$ and
$\beta_n=(n-1)^2/(4n)$.  If
\begin{equation}
 \int_{t_0}^{\infty}
       \mathcal V(\Phi(t))^{\beta_n/2}\,\dd t=\infty,
 \label{eq:potential-affine-criterion}
\end{equation}
then the spacetime is future timelike and null geodesically complete on
$X$ and on every allowed discrete quotient.  In particular, it is
complete if, for all sufficiently large $t$,
\begin{equation}
 \mathcal V(\Phi(t))\ge C(1+t)^{-\alpha},
 \qquad C>0,\qquad
 0\le\alpha\le\frac{8n}{(n-1)^2}.
 \label{eq:polynomial-potential-floor}
\end{equation}
No positive uniform lower bound for the potential is required.
\end{corollary}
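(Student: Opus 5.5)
The plan is to reduce the statement to the affine-length criterion \eqref{eq:theta-affine-criterion} of Proposition~\ref{prop:nonnegative-future-completeness}. For this we bound $\theta$ from below by a power of the potential along the trajectory. That proposition already supplies future existence, the energy bound, bounded-$t$ geodesics and the passage to quotients, so only divergence of $\int\theta^{\beta_n}\,\dd t$ remains to be shown.

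The key pointwise inequality comes from the Hamiltonian constraint, exactly as in the proof of Proposition~\ref{prop:positive-floor-future-completeness}. Since $R\le0$ and all remaining terms are nonnegative,
\[
 \frac{n-1}{n}\theta^2=|\Sigma|^2+|\psi|^2+2\mathcal V(\Phi)-R\ge 2\mathcal V(\Phi(t)),
\]
so $\theta(t)\ge\sqrt{2n/(n-1)}\,\mathcal V(\Phi(t))^{1/2}$. Here $\theta>0$ throughout by \eqref{eq:future-expansion-bounds}, and $\mathcal V\ge0$, so the square root is legitimate. Raising to the power $\beta_n>0$ gives
\[
 \theta(t)^{\beta_n}\ge\Bigl(\tfrac{2n}{n-1}\Bigr)^{\beta_n/2}\mathcal V(\Phi(t))^{\beta_n/2}.
\]
Hence \eqref{eq:potential-affine-criterion} implies \eqref{eq:theta-affine-criterion}. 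By \eqref{eq:future-affine-integral}, every future causal geodesic with $t\to\infty$ then has infinite affine length. The remaining cases, namely geodesics with bounded $t$ and quotients, are handled verbatim as in the proof of Proposition~\ref{prop:nonnegative-future-completeness}.

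For the polynomial sufficient condition, substitute \eqref{eq:polynomial-potential-floor} on some interval $[t_1,\infty)$. The integrand is then at least $C^{\beta_n/2}(1+t)^{-\alpha\beta_n/2}$. Since $\alpha\le 8n/(n-1)^2$ gives $\alpha\beta_n/2\le1$, the integral diverges, logarithmically at the endpoint. Any contribution from $[t_0,t_1]$ is finite and nonnegative, so it does not affect divergence.

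I expect no serious obstacle. The only point needing care is the endpoint case $\alpha\beta_n/2=1$, which still diverges because $\int(1+t)^{-1}\,\dd t=\infty$. One should also note that the energy and affine estimates require no uniform positive floor, which justifies the final sentence of the corollary.
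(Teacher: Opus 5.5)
Your proposal is correct and follows essentially the same route as the paper: the Hamiltonian constraint with $R\le0$ gives $\tfrac{n-1}{n}\theta^2\ge2\mathcal V(\Phi)$, which converts \eqref{eq:potential-affine-criterion} into \eqref{eq:theta-affine-criterion}. The polynomial case then reduces to divergence of $\int(1+t)^{-\alpha\beta_n/2}\,\dd t$ for $\alpha\beta_n/2\le1$, including the logarithmic endpoint.
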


\begin{proof}
The Hamiltonian constraint and $R\le0$ give
\[
 \frac{n-1}{n}\theta^2
 =|\Sigma|^2+|\dot\Phi|_{\mathsf G}^2
   +2\mathcal V(\Phi)-R
 \ge2\mathcal V(\Phi).
\]
Since $\theta>0$,
$\theta^{\beta_n}\ge
[2n/(n-1)]^{\beta_n/2}\mathcal V(\Phi)^{\beta_n/2}$.
Thus \eqref{eq:potential-affine-criterion} implies
\eqref{eq:theta-affine-criterion}.  Under
\eqref{eq:polynomial-potential-floor}, the integrand in
\eqref{eq:potential-affine-criterion} is bounded below by a positive
multiple of $(1+t)^{-\alpha\beta_n/2}$; its integral diverges in the
stated range, including the logarithmic endpoint.
\end{proof}

The positive lower bound prevents the mean curvature from tending
to zero.  No convergence of the field or of a rescaled spatial
metric is needed.  The hypothesis can also be imposed only along
the future field trajectory.  Stronger asymptotic conclusions need
additional properties of the potential; the quadratic real-scalar
case is treated in Appendix~\ref{app:massive-future}.

\begin{proposition}[Vacuum and massless fields through nine spatial dimensions]
\label{prop:massless-future-completeness}
Under the hypotheses of
Proposition~\ref{prop:nonnegative-future-completeness}, suppose that
$\mathcal V=0$.  Estimate~\eqref{eq:future-geodesic-energy} then
holds, in every spatial dimension, with $\beta_n$ replaced by
\begin{equation}
 c_2=0,\qquad c_3=\frac13,\qquad
 c_n=\frac{\sqrt n-1}{2}\quad(n\ge4).
 \label{eq:massless-future-exponent}
\end{equation}
For $2\le n\le9$ the spacetime is future timelike and null
geodesically complete on $X$ and all the stated quotients.
\end{proposition}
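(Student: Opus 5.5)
The plan is to redo the pointwise eigenvalue estimate from the proof of Proposition~\ref{prop:nonnegative-future-completeness}, but to exploit $\mathcal V=0$ in the identity for $d=-\dot\theta$. Write $s=|\Sigma|$ and $p=|\psi|$. With $\mathcal V=0$, the second line of \eqref{eq:higher-future-identities} reads $d=\frac{n}{n-1}(s^2+p^2)-\frac{R}{n-1}$. The Hamiltonian constraint gives $-R=\frac{n-1}{n}\theta^2-s^2-p^2$. Eliminating $R$ gives the identity
\[
 d=s^2+p^2+\frac{\theta^2}{n}\ \ge\ s^2+\frac{\theta^2}{n}.
\]
This lower bound on $d$ is stronger than the one used for general $\mathcal V\ge0$: there the potential and curvature contributions could only be discarded, whereas now $-R$ is traded for an explicit $\theta^2/n$. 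The constraint together with $R\le0$ still gives the range $0\le s\le\sqrt{(n-1)/n}\,\theta$.

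As in the earlier proof, $\frac{\dd}{\dd t}\log E=-K(w,w)$ with $|w|_h\le1$, and $\lambda_{\min}(h^{-1}K)\ge \theta/n-\sqrt{(n-1)/n}\,s$. It therefore suffices to show
\[
 \sqrt{\tfrac{n-1}{n}}\,s-\frac{\theta}{n}\le c_n\frac{d}{\theta}
\]
whenever the left side is positive. Since the right side increases with $d$, I will replace $d$ by $s^2+\theta^2/n$ and set $x=s/\theta\in[0,\sqrt{(n-1)/n}]$. The claim then becomes nonnegativity of the quadratic
\[
 q(x)=c\,x^2-\sqrt{\tfrac{n-1}{n}}\,x+\frac{1+c}{n}
\]
on that interval. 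This holds on all of $\R$ precisely when the discriminant is nonpositive, i.e.\ $4c^2+4c\ge n-1$, i.e.\ $c\ge(\sqrt n-1)/2$. That gives $c_n$ for $n\ge4$.

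For $n=3$ I will instead use the interval constraint. Taking $c=1/3$, the vertex $x=\sqrt{(n-1)/n}/(2c)$ lies to the right of the endpoint $\sqrt{2/3}$, so the minimum on the interval is attained at the endpoint, where $q=c-\tfrac{n-2}{n}=0$. For $n=2$ the earlier argument already gives $\beta_2=0$. With the pointwise bound $\frac{\dd}{\dd t}\log E\le -c_n\dot\theta/\theta$ established, integration yields \eqref{eq:future-geodesic-energy} with $\beta_n$ replaced by $c_n$. The reciprocal integration then gives the analogue of \eqref{eq:future-affine-integral}.

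For completeness I combine this with the lower bound $\theta\ge\theta_0/(1+\theta_0(t-t_0))$ from \eqref{eq:future-expansion-bounds}. The integral $\int\theta^{c_n}\,\dd t$ diverges as long as $c_n\le1$; the endpoint $c_n=1$ gives a logarithm, which still diverges. Since $c_n\le1$ exactly when $n\le9$, and $c_2,c_3<1$, this covers $2\le n\le9$. Bounded-$t$ geodesics and the passage to quotients are handled verbatim as in the proof of Proposition~\ref{prop:nonnegative-future-completeness}.

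The main obstacle is spotting the identity $d=s^2+p^2+\theta^2/n$. Without it, the pointwise optimization only recovers $\beta_n$, as one checks by AM--GM. Beyond that, the only delicate points are checking the vertex position for $n=3$ and the logarithmic borderline at $n=9$.
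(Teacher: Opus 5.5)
Your proposal is correct and follows the paper's proof. The key input is the same: with $\mathcal V=0$ one has $d=|K|^2+|\psi|^2$, hence the sharpened bound $|\Sigma|^2\le d-\theta^2/n$; this is inserted into the same least-eigenvalue estimate and the same integration, with the logarithmic endpoint at $n=9$.

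The difference is only cosmetic. The paper carries out the one-variable optimization in $y=d/\theta^2$ by calculus, which also shows the constants are optimal for that estimate. You instead substitute the lower bound for $d$ and check a quadratic in $|\Sigma|/\theta$, using its discriminant for $n\ge4$ and its value at the endpoint $\sqrt{(n-1)/n}$ for $n=2,3$.
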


\begin{proof}
For $\mathcal V=0$, \eqref{eq:higher-future-identities} gives
\begin{equation}
 d=|K|^2+|\psi|^2,\qquad
 \frac{\theta^2}{n}\le d\le\theta^2,\qquad
 |\Sigma|^2\le d-\frac{\theta^2}{n}.
 \label{eq:massless-expansion-constraint}
\end{equation}
Hence
\[
 \lambda_{\min}(h^{-1}K)
 \ge\frac{\theta-\sqrt{(n-1)(nd-\theta^2)}}{n}.
\]
For $y=d/\theta^2\in[1/n,1]$, the smallest nonnegative
coefficient in the resulting bound
$\lambda_{\min}(h^{-1}K)\ge-c_nd/\theta$ is
\[
 \max_{1/n\le y\le1}
 \frac{\sqrt{(n-1)(ny-1)}-1}{ny}.
\]
Put $z=\sqrt{(n-1)(ny-1)}\in[0,n-1]$.  The expression becomes
\[
 f_n(z)=\frac{(n-1)(z-1)}{z^2+n-1},\qquad
 f_n'(z)=\frac{(n-1)(-z^2+2z+n-1)}{(z^2+n-1)^2}.
\]
For $n=2,3$, the maximum occurs at $z=n-1$ and equals $0,1/3$.
For $n\ge4$, it occurs at $z=1+\sqrt n$ and equals
$(\sqrt n-1)/2$.  The preceding geodesic argument therefore
applies with $c_n$.  It gives divergent affine length for $n\le8$
because $c_n<1$.  At $n=9$, $c_9=1$ and
\[
 \lambda(t)-\lambda(t_0)
 \ge\frac{\log(1+\theta_0(t-t_0))}{E(t_0)\theta_0}
 \longrightarrow\infty.
\]
The finite-slab and covering arguments complete the proof.
\end{proof}

\begin{proposition}[Energy-condition completeness for general matter]
\label{prop:energy-condition-future}
Let $g=-\dd t^2+h_t$ be a spatially homogeneous solution of
$\operatorname{Ein}(g)=T$ on $[t_0,\infty)\times X$, where
$\dim X=n\ge2$.  Suppose that $\theta(t_0)>0$ and $R(h_t)\le0$.
With $\nu=\partial_t$, set
\[
 \rho=T(\nu,\nu),\qquad S=T|_{TX\times TX},\qquad
 \mathfrak q=\frac{(n-2)\rho+\operatorname{tr}_{h_t}S}{n-1},
\]
and assume along the solution that
\begin{equation}
 0\le\mathfrak q\le2\rho.
 \label{eq:matter-energy-window}
\end{equation}
Then every future causal geodesic satisfies
\begin{equation}
 E(t)\le E(t_0)
 \left(\frac{\theta(t_0)}{\theta(t)}\right)^{c_n},
 \label{eq:matter-future-energy}
\end{equation}
with the constants $c_n$ in
\eqref{eq:massless-future-exponent}.  Hence the spacetime is future
timelike and null geodesically complete for $2\le n\le9$, as is every
quotient by a freely and properly discontinuously acting group of common
spatial isometries.  For a perfect fluid, condition
\eqref{eq:matter-energy-window} is precisely
\begin{equation}
 -\frac{n-2}{n}\rho\le p\le\rho.
 \label{eq:perfect-fluid-energy-window}
\end{equation}
The hypothesis that the solution exists for all $t\ge t_0$ is explicit;
future continuation must be established from the evolution law of the
chosen matter model.
\end{proposition}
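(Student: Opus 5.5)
The approach is to reproduce the proof of Proposition~\ref{prop:massless-future-completeness}, replacing the scalar-field identities by their general-matter analogues. Write $K=\tfrac12\dot h$, $\Sigma=K-\theta h/n$ and $d=-\dot\theta$. The Hamiltonian constraint for $\operatorname{Ein}(g)=T$ gives
\[
 \theta^2=|K|^2+2\rho-R.
\]
The Raychaudhuri equation gives $\dot\theta=-|K|^2-\Ric(\nu,\nu)$. Since $\operatorname{tr}_g T=-\rho+\operatorname{tr}S$, trace reversal in $n+1$ dimensions yields
\[
 \Ric(\nu,\nu)=\rho+\frac{\operatorname{tr}_gT}{n-1}
 =\frac{(n-2)\rho+\operatorname{tr}S}{n-1}=\mathfrak q .
\]
Hence $d=|K|^2+\mathfrak q$. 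The lower half $\mathfrak q\ge0$ of \eqref{eq:matter-energy-window} gives $d\ge|K|^2\ge\theta^2/n$. The upper half $\mathfrak q\le2\rho$, together with $R\le0$, gives
\[
 d=|K|^2+\mathfrak q\le|K|^2+2\rho\le\theta^2 .
\]
Then $\dot\theta\le0$, $\dot\theta\ge-\theta^2$ and $\theta(t_0)>0$ keep $\theta$ positive for all $t\ge t_0$, as in \eqref{eq:future-expansion-bounds}. Existence on $[t_0,\infty)$ is assumed, so no continuation argument is needed.

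Next I need the key bound $|\Sigma|^2\le d-\theta^2/n$, which is the analogue of \eqref{eq:massless-expansion-constraint}. It follows from $|\Sigma|^2=|K|^2-\theta^2/n$ and $|K|^2=d-\mathfrak q\le d$, again using $\mathfrak q\ge0$. With $y=d/\theta^2\in[1/n,1]$, the trace-free eigenvalue estimate from the proof of Proposition~\ref{prop:nonnegative-future-completeness} gives
\[
 \lambda_{\min}(h^{-1}K)\ge\frac{\theta-\sqrt{(n-1)(nd-\theta^2)}}{n}.
\]
The optimization over $z$ in Proposition~\ref{prop:massless-future-completeness} then applies verbatim and gives $\lambda_{\min}(h^{-1}K)\ge-c_n d/\theta$. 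The geodesic equation gives $\frac{\dd}{\dd t}\log E=-K(w,w)$ with $|w|_h\le1$, which holds for any metric $-\dd t^2+h_t$ regardless of matter. Integrating $-c_n\dot\theta/\theta$ yields \eqref{eq:matter-future-energy}.

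For completeness, $\theta\ge\theta_0/(1+\theta_0(t-t_0))$ gives the affine-length lower bound. It diverges for $n\le8$ since $c_n<1$, and logarithmically at $n=9$. Bounded-$t$ geodesics are handled as in Proposition~\ref{prop:nonnegative-future-completeness}. On a finite slab, $|K|\le\theta_0$ by the constraint with $\rho\ge0$, which follows from $0\le\mathfrak q\le2\rho$. This gives exponential comparability of $h_t$ with $h_{t_0}$. Invariant metrics are complete, so the geodesic state stays compact and extends. Quotients are handled by lifting through the Lorentzian covering.

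Finally, for a perfect fluid, $S=p\,h$, so $\operatorname{tr}S=np$ and $\mathfrak q=((n-2)\rho+np)/(n-1)$. The condition $\mathfrak q\ge0$ is $p\ge-(n-2)\rho/n$. The condition $\mathfrak q\le2\rho$ is $(n-2)\rho+np\le2(n-1)\rho$, that is, $p\le\rho$. This is \eqref{eq:perfect-fluid-energy-window}.

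The main obstacle is getting the sign conventions and the trace-reversal identity for $\Ric(\nu,\nu)$ in $n+1$ dimensions exactly right. This identity is what makes both halves of the window translate into $\theta^2/n\le d\le\theta^2$ and $|\Sigma|^2\le d-\theta^2/n$. Everything after that is inherited unchanged from the massless case.
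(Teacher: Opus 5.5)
Your proposal is correct and follows the paper's own proof essentially step for step. In both, the Hamiltonian constraint and the Raychaudhuri equation give $d=|K|^2+\mathfrak q$, the window \eqref{eq:matter-energy-window} together with $R\le0$ yields $\theta^2/n\le d\le\theta^2$ and $|\Sigma|^2\le d-\theta^2/n$, and the massless eigenvalue optimization, affine-length divergence and covering argument then carry over unchanged. You also supply two details the paper leaves implicit: the trace-reversal computation showing $\Ric(\nu,\nu)=\mathfrak q$, and the treatment of bounded-$t$ geodesics using $\rho\ge0$.
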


\begin{proof}
The Hamiltonian constraint and the normal Raychaudhuri equation give
\[
 \theta^2=|K|^2+2\rho-R,
 \qquad d:=-\dot\theta=|K|^2+\mathfrak q.
\]
Condition \eqref{eq:matter-energy-window} and $R\le0$ imply
\[
 \frac{\theta^2}{n}\le d\le\theta^2,
 \qquad
 |\Sigma|^2\le d-\frac{\theta^2}{n}.
\]
The eigenvalue optimization in the proof of
Proposition~\ref{prop:massless-future-completeness} therefore applies
without change and proves \eqref{eq:matter-future-energy}.  Since
$d\le\theta^2$, initially positive $\theta$ remains positive and
$\theta(t)\ge\theta(t_0)/(1+\theta(t_0)(t-t_0))$.  The affine-length
argument diverges for $c_n\le1$, including logarithmically for $n=9$.
The covering argument gives the quotient conclusion.  For
$S=ph_t$, the two inequalities in
\eqref{eq:matter-energy-window} reduce to
\eqref{eq:perfect-fluid-energy-window}.
\end{proof}

The constants~\eqref{eq:massless-future-exponent} optimize the
displayed algebraic estimate; no optimality among all Einstein
evolutions is asserted for $n\ge4$.  The dimensional cutoffs are
sufficient conditions, not assertions of incompleteness above them.
In three dimensions, the vacuum Kasner metric
$-\dd t^2+t^{-2/3}\dd x^2+t^{4/3}(\dd y^2+\dd z^2)$ realizes
$c_3=1/3$: a null geodesic with conserved $x$ momentum $P\ne0$
and zero other momenta has $E=|P|t^{1/3}$ and $\theta=1/t$.

For comparison, \cite[Theorem~3]{GoedekeRendall2010} proves vacuum
completeness for $n<9$ under an oscillation bound on the least
generalized Kasner exponent.  Its Theorem~4 assumes $p_a\ge-1/n$,
and its Theorem~5 treats product groups with additional reflection
symmetries.  The estimate above removes the oscillation restriction
in the overlapping range and includes $n=9$.  It applies as well
to homogeneous spaces with isotropy and to arbitrary finite numbers
of scalar fields with a complete positive-definite target metric.
For potentials with a positive lower bound, it complements the
stronger three-dimensional real-scalar asymptotic results of
\cite[Theorems~1-2]{Rendall2004Scalar}.

\begin{corollary}[Four spatial dimensions]\label{cor:four-spatial-vacuum}
Let $G$ be a connected, simply connected four-dimensional Lie group
not isomorphic to $\mathrm{SU}(2)\times\mathbb R$.
Every maximal homogeneous vacuum development on $G$ that is
expanding at one slice is future timelike and null geodesically
complete.  The same conclusion holds on every discrete left quotient.
\end{corollary}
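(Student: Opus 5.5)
The plan is to reduce the corollary to Proposition~\ref{prop:massless-future-completeness} with $n=4$, $\mathcal V\equiv0$ and a constant map $\Phi$. The only hypothesis that needs work is the sign condition $R(h_t)\le0$ along the future interval. Write $X=G$, $L=G$ and $H=\{e\}$. A vacuum development corresponds to a constant map into any complete target (for instance a point, or $\R$ with $\phi$ constant), so the equations \eqref{eq:main-sigma-normalization} reduce to $\Ric(g)=0$. Expansion at one slice gives $\theta(t_0)>0$ after possibly reversing time orientation. The discrete-quotient statement is then the covering argument already contained in the cited proposition.

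The main step is to show that the scalar curvature of every left-invariant metric on $G$ is nonpositive, unless $G$ is one of the groups with a compact simple factor. Here I would use Milnor's dichotomy, extended to all dimensions by Milnor~\cite{Milnor1976}. If $\mathfrak g$ is not unimodular, or more generally if the Killing form is not negative definite on any complement, one expects $R\le0$. More precisely, the result I would invoke is: a connected Lie group admits a left-invariant metric with $R>0$ only if its Lie algebra is non-solvable with compact-type Levi factor issues. In the form needed here, every left-invariant metric has $R\le0$ whenever the universal cover is not a product containing $\widetilde{\mathrm{SU}(2)}$ in a specific way. For four-dimensional simply connected groups I would argue directly by the Levi decomposition $\mathfrak g=\mathfrak s\ltimes\mathfrak r$.

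\emph{Case 1: $\mathfrak s=0$.} Then $\mathfrak g$ is solvable, and Milnor's theorem that solvable groups have $R\le0$ for all left-invariant metrics (with equality only in the flat case) applies.

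\emph{Case 2: $\mathfrak s\neq0$.} Then $\dim\mathfrak s=3$ and $\mathfrak r=\R$. Since $\mathfrak s$ acts on the one-dimensional $\mathfrak r$ and $\mathfrak s$ is perfect, the action is trivial, so $\mathfrak g=\mathfrak s\oplus\R$.
\begin{itemize}
\item If $\mathfrak s=\mathfrak{sl}(2,\R)$, one must show that every left-invariant metric on $\widetilde{\mathrm{SL}}(2,\R)\times\R$, including non-product metrics, has $R\le0$. I would use Milnor's formula
\[
R=-\tfrac14\sum|[e_i,e_j]|^2-\tfrac12\sum B(e_i,e_i)-|H|^2,
\]
with $H$ the mean curvature vector (zero here by unimodularity). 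Equivalently, I would use the fact that $\R$ is central and apply the Riemannian submersion formula to the quotient by the central line. I expect this to be the main obstacle. My first attempt is to write a general metric in a basis adapted to the central direction; the center-mixing terms then only add negative contributions of type $-|A|^2$ via O'Neill's formula. This reduces the claim to three-dimensional $\mathfrak{sl}(2,\R)$, where $R\le0$ for all metrics is classical, since all three-dimensional unimodular groups except $\mathrm{SU}(2)$ have $R\le0$.
\item If $\mathfrak s=\mathfrak{su}(2)$, the group is excluded.
\end{itemize}

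With $R\le0$ established for all $t$, Proposition~\ref{prop:massless-future-completeness} with $n=4\le9$ (exponent $c_4=1/2$) yields future causal completeness on $G$ and on its discrete left quotients.
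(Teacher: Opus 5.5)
Your proof is correct. It differs from the paper's only at the one substantive step, the sign $R(h_t)\le0$. The paper does not prove this sign: it quotes the four-dimensional discussion in \cite[Section~5]{GoedekeRendall2010} for the fact that every left-invariant metric on such a $G$ has nonpositive scalar curvature, and then applies Proposition~\ref{prop:massless-future-completeness} with $n=4$, as you do. You instead derive the sign from the Levi decomposition. In the solvable case you use Jensen--Milnor \cite{Milnor1976}; otherwise $\mathfrak g=\mathfrak s\oplus\R$, and for $\mathfrak s=\mathfrak{sl}(2,\R)$ you reduce modulo the central line. Your route is self-contained and shows why $\mathfrak{su}(2)\oplus\R$ is the only exception: it is the only case with a compact Levi factor. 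It even gives $R<0$ on $\widetilde{\mathrm{SL}}(2,\R)\times\R$. The citation is shorter but leaves this classification to the reference.

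To make the central step airtight, justify why O'Neill's formula collapses to $R_G=R_{G/Z}-|A|^2$. The fibres are geodesics, because the central left-invariant field is also right-invariant, hence a Killing field of constant length. Alternatively, compute directly from the formula you quote, with $e_4$ the unit central vector:
\[
 R_G=R_{G/Z}-\tfrac14\sum_{i,j\le3}\langle[e_i,e_j],e_4\rangle^2 .
\]
This holds since both algebras are unimodular, $\operatorname{ad}_{e_4}=0$, and the Killing-form terms agree. Milnor's three-dimensional formula then gives $R_{G/Z}<0$.

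Also drop two things. First, the general paragraph on when $R\le0$ ``is expected'': it is not a correct theorem as stated, and your argument does not use it. Second, ``after possibly reversing time orientation'': expansion refers to the given future direction, and reversing it would prove past rather than future completeness.
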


\begin{proof}
The four-dimensional Lie-group discussion in
\cite[Section~5]{GoedekeRendall2010} shows that every left-invariant
Riemannian metric on $G$ has nonpositive scalar curvature.
Apply Proposition~\ref{prop:massless-future-completeness} with $n=4$.
\end{proof}

This gives the conclusion proposed in
\cite[Section~5]{GoedekeRendall2010}.

\begin{corollary}[Spaces without positive invariant scalar curvature]
\label{cor:structural-homogeneous-future}
Let $X=L/H$ be a connected homogeneous space of dimension
$2\le n\le9$ such that $R(h)\le0$ for every $L$-invariant Riemannian
metric $h$ on $X$.  Every maximal expanding homogeneous vacuum or
Einstein-massless-sigma-model development on $X$, with complete
Riemannian target, is future timelike and null geodesically complete.
The same holds on every quotient by common spatial isometries acting
freely and properly discontinuously.
\end{corollary}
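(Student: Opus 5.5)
This is a direct specialization of Proposition~\ref{prop:massless-future-completeness}, in the same way that Corollary~\ref{cor:four-spatial-vacuum} specializes it with $n=4$. Consider a maximal homogeneous development on $X$ that is expanding, meaning $\theta(t_0)=\theta_0>0$ at some slice. It is either vacuum or an Einstein-massless-sigma-model solution with complete target. The vacuum case is the constant map with $\mathcal V\equiv0$, and the sigma-model case is \eqref{eq:main-sigma-normalization} with $\mathcal V\equiv0$. In both cases the solution fits the setting of Proposition~\ref{prop:nonnegative-future-completeness} with vanishing potential, so $\mathcal V(\Phi(t))\ge0$ holds trivially.

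The only hypothesis of Proposition~\ref{prop:nonnegative-future-completeness} left to check is $R(h_t)\le0$ throughout the future interval. For each $t$ the slice metric $h_t$ is an $L$-invariant Riemannian metric on $X$, so the structural assumption of the corollary gives $R(h_t)\le0$ at once. No dynamical argument is needed. This is the one point where the corollary differs from Proposition~\ref{prop:massless-future-completeness}: there the sign of the scalar curvature is assumed along the solution, whereas here it is automatic from the geometry of $X$.

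Next I apply Proposition~\ref{prop:massless-future-completeness}. It gives an infinite future proper-time endpoint, the energy bound with exponent $c_n$, and future timelike and null geodesic completeness for $2\le n\le9$. The quotient statement is included in that proposition through the covering argument at the end of the proof of Proposition~\ref{prop:nonnegative-future-completeness}. If $\Gamma<L$ acts freely and properly discontinuously by isometries of every $h_t$, and fixes $t$ and $\Phi$, then the quotient is a Lorentzian covering quotient. A causal geodesic in the quotient lifts to $X$, and completeness of the lift gives completeness in the quotient.

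I expect no step to be a real obstacle. The only thing to handle with care is the phrase ``common spatial isometries''. If it is meant to allow isometries outside $L$, I would rerun the covering argument with the group of isometries common to all $h_t$ that preserve $\Phi$. That argument uses only that $\Gamma$ acts freely, properly discontinuously and isometrically on every slice while fixing the time function, so the lifting step goes through unchanged.
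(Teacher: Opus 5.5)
Your proposal is correct and follows the paper's proof. The paper notes that the structural hypothesis gives $R(h_t)\le0$ along the development and then applies Proposition~\ref{prop:massless-future-completeness} directly, with its covering argument handling the quotients. Your remark about isometries outside $L$ is a harmless extension: the lifting step only uses a free, properly discontinuous action by isometries of every $h_t$ that fixes $t$, and $\Phi=\Phi(t)$ is then automatically invariant.
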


\begin{proof}
The structural hypothesis supplies $R(h_t)\le0$ throughout the
development.  Proposition~\ref{prop:massless-future-completeness}
applies directly.
\end{proof}

\section{Boundary localization on homogeneous quotients}
\label{sec:quotient}

\subsection{Causal length and timelike filling}

A timelike homotopy with fixed endpoints is a continuous map from a
closed parameter rectangle into the spacetime whose curve slices are
piecewise $C^1$, future-directed timelike curves with the same endpoints.
This is the convention used in future one-connectedness and in the local
filling statements below.

Fix a base point $o\in Y$.  The least spatial scale relative to $k$ is
\begin{equation}
 m(t)^2=\min\{(\pi^*h_t)_o(v,v): |v|_k=1\}.
 \label{eq:m-def}
\end{equation}
Upstairs homogeneity makes the same lower bound valid at every point, and it
descends to $X$.  Define the full causal spatial length by
\begin{equation}
 \Lambda_k(T)=\sup_c L_k(\operatorname{pr}_X\circ c),
 \label{eq:causal-length}
\end{equation}
where $c$ ranges over future-directed causal curves contained in
$(0,T]\times X$.  The causal inequality gives
\begin{equation}
 \Lambda_k(T)\le \int_0^T\frac{N(t)}{m(t)}\,\dd t.
 \label{eq:causal-length-bound}
\end{equation}
In particular, finiteness of the improper integral on some past interval
implies $\Lambda_k(T)\to0$.

Every product slice is Cauchy.  Indeed, $t$ is strictly increasing on a
nonconstant future causal curve.  If the upper endpoint of its $t$-range
were an interior value $t_*<t_+$, then on a compact interval about $t_*$
the causal speed would satisfy $|\dot x|_k\le C$.  Thus $x(t)$ would have
a limit by completeness of $k$.  Appending a vertical timelike segment at
$(t_*,\lim x(t))$ would extend the causal curve.  The same argument applies
at its lower endpoint.  Every inextendible causal curve therefore meets
each product slice exactly once, and the spacetime is globally hyperbolic.

We use the standard notion of future one-connectedness: two piecewise $C^1$
future timelike curves with the same endpoints must be joined, with endpoints
fixed, through future timelike curves.  For quotient localization the
following weaker tail property is sufficient.

\begin{definition}[Axial timelike filling]
The past end of \eqref{eq:intro-spacetime} has \emph{axial timelike filling}
if, for every past-inextendible future timelike curve
$\gamma:(0,s_1]\to\cM$ with $t(\gamma(s))\to0$, there is $s_0>0$ such that,
whenever $0<s<s_+\le s_0$, every future timelike curve from $\gamma(s)$ to
$\gamma(s_+)$ is timelike-homotopic with fixed endpoints to the corresponding
segment of $\gamma$.
\end{definition}

Future one-connectedness on an early slab implies axial filling.  The latter
also descends through a quotient once winding has been excluded.

\begin{lemma}[Descent of axial filling]
\label{lem:filling-descent}
Assume $\Lambda_k(T)\to0$.  If the lifted spacetime
\[
 (0,t_+)\times Y,\qquad -N^2\dd t^2+\pi^*h_t,
\]
has axial timelike filling, then the quotient spacetime has axial timelike
filling.
\end{lemma}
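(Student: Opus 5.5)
The plan is to lift curves and homotopies through the Lorentzian covering
\[
 \widetilde\pi=\mathrm{id}\times\pi:(0,t_+)\times Y\to(0,t_+)\times X=\cM .
\]
The covering is a local isometry and commutes with $t$, so it maps timelike homotopies to timelike homotopies. The real content is to show that a quotient timelike curve between two points of $\gamma$ lifts to a curve whose endpoints are the corresponding points of \emph{one} lift $\widetilde\gamma$, rather than a deck translate of one of them. The hypothesis $\Lambda_k(T)\to0$ is what excludes this winding.

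Let $\gamma:(0,s_1]\to\cM$ be past-inextendible and future timelike with $t(\gamma(s))\to0$, and fix a lift $\widetilde\gamma$. The lift is past-inextendible and timelike, and $t\circ\widetilde\gamma\to0$. Apply axial filling upstairs to obtain $\widetilde s_0$.

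Next choose the injectivity scale. Take a point $p$ in the image of $\operatorname{pr}_Y\circ\widetilde\gamma$ on an early interval. Because $\Gamma$ acts freely and properly discontinuously and $k$ is $L$-invariant, I would like a uniform
\[
 \delta=\inf\{d_k(y,\sigma y):\ y\ \text{near the relevant region},\ \sigma\ne e\}>0 .
\]
The cleaner route avoids needing that infimum. Since $\Lambda_k(T)\to0$, the $k$-length of $\operatorname{pr}_X\circ\gamma|_{(0,s]}$ tends to zero, so $\operatorname{pr}_Y\widetilde\gamma(s)$ converges to a point $y_0$ as $s\to0$. Proper discontinuity then gives $r>0$ such that the balls $B_k(\sigma y_0,2r)$ are pairwise disjoint for $\sigma\in\Gamma$. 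Choose $T$ with $\Lambda_k(T)<r/2$, and choose $s_0\le\widetilde s_0$ so that $t(\gamma(s))<T$ for $s\le s_0$ and $\operatorname{pr}_Y\widetilde\gamma((0,s_0])\subset B_k(y_0,r/2)$.

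Now let $0<s<s_+\le s_0$ and let $c$ be a future timelike curve in $\cM$ from $\gamma(s)$ to $\gamma(s_+)$. Its $t$-values lie in $(0,T]$, because $t$ increases along $c$. Lift $c$ to $\widetilde c$ starting at $\widetilde\gamma(s)$. The spatial projection of $\widetilde c$ has $k$-length equal to that of $\operatorname{pr}_X\circ c$, which is at most $\Lambda_k(T)<r/2$. Hence the endpoint of $\widetilde c$ lies over $\gamma(s_+)$ and within $k$-distance $r$ of $y_0$. By the disjointness of the translated balls, it must equal $\widetilde\gamma(s_+)$.

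Upstairs axial filling then supplies a timelike homotopy with fixed endpoints from $\widetilde c$ to $\widetilde\gamma|_{[s,s_+]}$. Composing with $\widetilde\pi$ gives the required homotopy downstairs, and its curve slices stay piecewise $C^1$ and future timelike.

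I expect the main obstacle to be the convergence step and the exact meaning of $\Lambda_k$. One must check that $\Lambda_k(T)$ bounds the length of $\operatorname{pr}_X\circ\gamma$ on the whole tail $(0,s]$, which is a causal curve in $(0,T]\times X$. One must also check that $k$-length in $X$ equals $k$-length of lifts, since $\pi$ is a local isometry for $k$. Given both, the tail of $\operatorname{pr}_Y\widetilde\gamma$ is Cauchy in complete $(Y,k)$ and has a limit, and the rest of the argument is routine.
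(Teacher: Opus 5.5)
Your proof is correct and follows essentially the same route as the paper. Both arguments use vanishing causal spatial length to make the tail converge, then use local triviality of the covering near the limit to show that a short competitor cannot end at a deck translate, and finally apply upstairs filling and project back down. The only difference is bookkeeping: the paper works downstairs, taking an evenly covered neighborhood $O$ of the limit $x_0$ and choosing $\Lambda_k(T(s_0))<\operatorname{dist}_k(\overline O_0,X\setminus O)$ so that the whole lifted competitor stays in one sheet. You work upstairs and use separation of the orbit $\Gamma y_0$, which is equivalent.
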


\begin{proof}
Let $\gamma(s)=(T(s),x(s))$ be a past-inextendible future timelike curve with
$T(s)\downarrow0$.  Vanishing causal spatial length makes $x(s)$ a
$k$-Cauchy curve.  Completeness of the descended metric gives
$x(s)\to x_0$.  Choose an evenly covered neighborhood $O$ of $x_0$, a
connected $O_0\Subset O$ containing $x_0$, and one sheet $\widehat O$ over
$O$.  A sufficiently short tail of $\gamma$ lies in $O_0$ and lifts to
$\widehat O$.

Choose the terminal threshold $s_0$ within the lifted filling range and so
small that
\[
 \Lambda_k(T(s_0))<\operatorname{dist}_k(\overline O_0,X\setminus O).
\]
Every such competitor lies in $(0,T(s_0)]\times X$, because $t$ is
strictly increasing along it.  Its spatial length is therefore at most
$\Lambda_k(T(s_0))$, and it remains in $O$.  The lift beginning on the
chosen component $\widehat O$ stays in that component and ends at the
corresponding lifted terminal point, rather than at a deck translate.
Lifted axial filling supplies the
fixed-endpoint timelike homotopy, and projection gives the required homotopy
on $X$.
\end{proof}

\subsection{Local homogeneous transporters}

The following local isometries align nearby points in a fixed covering
sheet.  They transport the compact separator constructed below.  On a Lie
group with trivial quotient, they are the usual left translations; on a
general quotient, their domains and inverse identities retain the
covering-sheet information needed by the proof.

\begin{lemma}[Local homogeneous transporters]
\label{lem:transporters}
Let $O\subset X$ be evenly covered, let
$s:O\to\widehat O\subset Y$ select one sheet, and choose connected domains
\[
 U_0\Subset U_1\Subset U_2\Subset O.
\]
There is a symmetric identity neighborhood $V_*\subset L$ such that, for
$a\in V_*$,
\begin{equation}
 \vartheta_a:U_2\longrightarrow O,\qquad
 \vartheta_a(x)=\pi(a\cdot s(x)),
 \label{eq:transporter}
\end{equation}
is a local isometry of $k$ and every $h_t$, maps $U_1$ into $U_2$, and
obeys
\begin{equation}
 s(\vartheta_a(x))=a\cdot s(x)\quad(x\in U_2),
 \qquad
 \vartheta_{a^{-1}}(\vartheta_a(x))=x\quad(x\in U_1).
 \label{eq:transporter-inverse}
\end{equation}
Moreover, if $C\Subset U_1$ and $V\subset V_*$ is an identity neighborhood,
there is $\eta>0$ such that
\begin{equation}
 x,y\in C,\quad d_k(x,y)<\eta
 \quad\Longrightarrow\quad
 \vartheta_a(x)=y\ \text{for some }a\in V.
 \label{eq:transporter-alignment}
\end{equation}
\end{lemma}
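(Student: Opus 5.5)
The plan is to build everything from continuity of the action $L\times Y\to Y$ together with the compactness of $\overline U_2$ in the chosen sheet. Write $\pi:Y\to X$ for the covering, $\widehat U_j=s(U_j)$, and recall that $s:O\to\widehat O$ is a homeomorphism with inverse $\pi|_{\widehat O}$. The key fact is that $\overline{\widehat U_2}$ is a compact subset of the open set $\widehat O$. By continuity of the action and compactness (the tube lemma), there is a symmetric identity neighborhood $V_*$ such that $a\cdot\overline{\widehat U_2}\subset\widehat O$ for all $a\in V_*$.

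Shrinking $V_*$ further, again by the tube lemma applied to $\overline{\widehat U_1}\subset\widehat U_2$, we also get $a\cdot\widehat U_1\subset\widehat U_2$ for all $a\in V_*$. We then verify the listed properties one at a time.
\begin{enumerate}
\item \emph{Domain and range.} For $x\in U_2$ the point $a\cdot s(x)$ lies in $\widehat O$. Hence $\vartheta_a(x)=\pi(a\cdot s(x))\in O$, and $s(\vartheta_a(x))=a\cdot s(x)$ because $s\circ\pi=\mathrm{id}$ on $\widehat O$. This is the first identity in \eqref{eq:transporter-inverse}.
\item \emph{Mapping $U_1$ into $U_2$.} For $x\in U_1$ we have $a\cdot s(x)\in\widehat U_2$, so $\vartheta_a(x)\in U_2$.
\item \emph{Inverse identity.} For $x\in U_1$, using $a^{-1}\in V_*$,
\[
\vartheta_{a^{-1}}(\vartheta_a(x))=\pi\bigl(a^{-1}a\cdot s(x)\bigr)=x .
\]
\item \emph{Local isometry.} Locally $\vartheta_a=\pi\circ\ell_a\circ s$. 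Here $s$ is an inverse of the local isometry $\pi$ (the metrics on $X$ being the descents of $k$ and $\pi^*h_t$), and $\ell_a$ is an isometry of $k$ and of every $\pi^*h_t$ by $L$-invariance. The composite is therefore a local isometry for all these metrics simultaneously.
\end{enumerate}

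For the alignment property \eqref{eq:transporter-alignment}, fix $C\Subset U_1$ and $V\subset V_*$. Let $\widehat C=s(C)$, which is compact. The orbit map $\beta_p:V\to Y$, $a\mapsto a\cdot p$, is open onto a neighborhood of $p$ for each $p$. This holds because $L$ acts transitively on $Y=L/H$ and the projection $L\to L/H$ is open: the set $Vp=\pi_H(Vg)$, where $p=gH$, is open. So for each $p\in\widehat C$ we can choose a smaller symmetric neighborhood $W$ with $W^2\subset V$, and a radius $r_p>0$ such that $B_k(p,r_p)\subset Wp$.

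Cover $\widehat C$ by finitely many balls $B_k(p_i,r_{p_i}/2)$ and let $\eta_0$ be a Lebesgue-type constant: the minimum of the $r_{p_i}/2$. If $\hat x,\hat y\in\widehat C$ satisfy $d_k(\hat x,\hat y)<\eta_0$, then both lie in some $B_k(p_i,r_{p_i})$. Writing $\hat x=b\,p_i$ and $\hat y=c\,p_i$ with $b,c\in W$, the element $a=cb^{-1}\in V$ satisfies $a\hat x=\hat y$.

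The remaining point is to convert $d_k$ on $X$ into $d_k$ on $Y$ for points of $C$. Since $\pi$ is distance nonincreasing and is an isometry at small scales on the evenly covered set, there is $\eta_1>0$ such that, for $x,y\in C$ with $d_k(x,y)<\eta_1$, we have $d_k(s(x),s(y))=d_k(x,y)$. To see this, choose $\eta_1$ below half the $k$-distance from $\overline U_1$ to $X\setminus O$. Then a near-minimizing path from $x$ to $y$ stays in $O$ and lifts into $\widehat O$. Taking $\eta=\min(\eta_0,\eta_1)$, we get $\vartheta_a(x)=\pi(a\,s(x))=\pi(s(y))=y$.

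The main obstacle is this last step, keeping the chosen covering sheet under control. The subtle point is that nearness in $X$ must produce nearness of the lifts in the same sheet $\widehat O$ rather than in a deck translate, which is exactly what the distance-to-the-complement choice of $\eta_1$ secures. The rest of the argument is tube-lemma bookkeeping.
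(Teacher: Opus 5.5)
Your proposal is correct and follows essentially the same route as the paper. It uses the tube-lemma choice of a symmetric $V_*$ and the sheet identity $s\circ\pi=\mathrm{id}$ on $\widehat O$ to obtain both parts of \eqref{eq:transporter-inverse}, and it gets the alignment property from local transitivity plus compactness, followed by lifting a short curve into the selected sheet. The differences are cosmetic: the paper gets local transitivity from openness of the submersion $(a,y)\mapsto(a\cdot y,y)$ and lifts a minimizing geodesic kept inside $U_2$, whereas you use openness of $L\to L/H$ with a $W^2\subset V$ Lebesgue-number cover and near-minimizing paths, which avoids needing minimizing geodesics to exist.
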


\begin{proof}
Because $s(\overline U_2)\Subset\widehat O$, continuity of the action gives a
symmetric $V_*$ such that
\[
 a^{\pm1}s(\overline U_2)\subset\widehat O,
 \qquad a^{\pm1}s(\overline U_1)\subset s(U_2).
\]
Injectivity of $\pi$ on $\widehat O$ gives the first identity in
\eqref{eq:transporter-inverse}; applying it successively to $a$ and $a^{-1}$
gives the second.  Thus no deck transformation is hidden in the inverse.
The maps in \eqref{eq:transporter} are local isometries because the sheet
map, the $L$-action, and the covering projection are local isometries.

For the last assertion, the map
$L\times Y\to Y\times Y$, $(a,y)\mapsto(a\cdot y,y)$, is a submersion:
transitivity makes the derivative in the $L$ variable surjective onto
the first tangent factor, and the $Y$ variable supplies the second.
It is therefore open.  The image of $V\times Y$ contains an open
neighborhood of the diagonal.  Compactness of $s(\overline C)$ gives
$\eta_{\rm tr}>0$ such that every pair of its points at distance less
than $\eta_{\rm tr}$ belongs to this image.  Such a pair is related by
an element of $V$.  Shrink $\eta$ below both $\eta_{\rm tr}$ and
$\operatorname{dist}_k(C,X\setminus U_2)$.  A minimizing $k$-geodesic between
$x$ and $y$ then stays in $U_2$, lifts to the selected sheet, and has the
same length.  The upstairs transporter projects to the required
$\vartheta_a$.
\end{proof}

\subsection{The boundary chart and localization}

A $C^0$ extension consists of a connected Hausdorff second-countable smooth
manifold $\widetilde\cM$ of the same dimension, a continuous Lorentzian
metric $\widetilde g$, and a smooth isometric embedding
$\iota:(\cM,g)\to(\widetilde\cM,\widetilde g)$ whose image is a proper
open subset.  In particular its boundary is nonempty.  No field equation,
global causal condition, or spatial symmetry is required of the extension.

A point of $\partial\iota(\cM)$ belongs to $\partial^-\iota(\cM)$ if it
is the initial endpoint of a smooth future-directed timelike curve in the
extension whose remaining points lie in $\iota(\cM)$.  The future boundary
$\partial^+\iota(\cM)$ is defined with a terminal endpoint.  Timelikeness
at the endpoint is understood in the extension metric.

\begin{definition}[One-chart localizable past end]
\label{def:one-chart}
The past end is \emph{one-chart localizable} if every $C^0$ extension
$\iota:\cM\to\widetilde\cM$ with nonempty past boundary admits a chart
$\widetilde\varphi:\widetilde U\to Q=(-\eps_0,\eps_0)\times B$, where
$B=(-\eps_1,\eps_1)^d$ and $\eps_0,\eps_1>0$, a Lipschitz
function $f:B\to(-\eps_0,\eps_0)$, a number
$c\in(\sup_Bf,\eps_0)$, an open set $W\Subset Q$, and $q\in\cM$ such that,
with $E_f=\{(s,x)\in Q:s>f(x)\}$,
\begin{align}
 \widetilde\varphi(\iota(\cM)\cap\widetilde U)&\supset E_f,
 \notag\\
 \widetilde\varphi^{-1}(\{(f(x),x):x\in B\})&\subset
 \past\iota(\cM),
 \notag\\
 \iota(I^-(q))&\subset\widetilde\varphi^{-1}(W),&
 \widetilde\varphi(\iota(I^-(q)))&\subset E_f,
 \label{eq:one-chart-data}\\
 \widetilde\varphi^{-1}(\{c\}\times B)&\subset\iota(I^+(q)).
 \notag
\end{align}
The chart is chosen so that $\partial_0$ is future timelike and fixed inner
and outer Euclidean cones bound the causal cones throughout $Q$.
\end{definition}

For the standard regular boundary chart, originating in the boundary-graph
construction of \cite[Proposition 2.2]{SbierskiProof2018}, we use the form
in \cite[Lemma 2.16]{Miethke2024}: a globally hyperbolic smooth spacetime with
a past boundary point has such a chart with $f(0)=0$, achronal boundary
graph, full epigraph in the spacetime image, and metric coefficients
arbitrarily close to Minkowski coefficients.  The terminal curve is smooth and timelike in the extension manifold \cite[Lemma~2.17]{Sbierski2018}.
As in the proof of \cite[Lemma~2.16]{Miethke2024}, first straighten it
to the positive time axis, with its endpoint at the origin.  A linear
normalization preserving that axis makes the metric Minkowskian at the
origin.  Carry out the boundary-graph construction in these adapted
coordinates.  The graph then has $f(0)=0$, and shrinking the chart gives
the required uniform cone bounds.  These are local chart
properties, not the global confinement required in
Definition~\ref{def:one-chart}.  General continuous-metric causality is
treated in \cite{ChruscielGrant2012,Samann2016}.

Only a local time orientation in the chart is needed.  If the extension
is not time orientable, pull it back to its time-orientation double cover
and select the sheet over the time-oriented interior.  This retains the
boundary curve and an isometric copy of the interior, so the local
arguments below and the one-sided completeness obstruction still apply.

Two elementary consequences of the graph property will be used repeatedly.
First, a continuous curve in the spacetime image, contained in the chart
and ending in $E_f$, stays in $E_f$: otherwise the continuous function
$x^0-f(x)$ vanishes at an interior point of the spacetime, contrary to the
boundary-graph property.  Second, a future timelike chart curve starting
in $E_f$ cannot leave it.  The vertical segment from the graph to its
initial point, followed by a first crossing of the graph, would join two
graph points by a timelike curve, contradicting achronality.

For a boundary chart $Q$, let $I_Q^\pm$ denote chronological sets defined by
timelike curves that remain in $Q$, and write $0=(0,0)$ for the origin.

\begin{lemma}[Boundary lens and timelike separator]
\label{lem:boundary-lens}
Let a $C^0$ extension have nonempty past boundary, and let $\gamma$ be a
future timelike terminal curve contained in a tail with axial timelike
filling.  Its product-time coordinate tends to zero and is strictly
increasing toward the future.  After restricting the curve and a regular
boundary chart, reparametrize it by the boundary-chart coordinate $x^0$
and denote that parameter again by $s$.  Decreasing the inherited axial
filling threshold $s_0$ if necessary, one may arrange
\begin{equation}
 \begin{aligned}
 \widetilde\varphi(\iota(\gamma(s)))&=(s,0),&
 0&<s_-<s_+<s_b<c<\eps_0,\quad s_b\le s_0,\\
 q^\pm&=\gamma(s_\pm),&
 y^\pm&=(s_\pm,0),\qquad b=(s_b,0).
 \end{aligned}
 \label{eq:lens-axis-data}
\end{equation}
and find a relatively compact open lens $W\Subset Q$ with the following
properties, where $\sup_B f<c$:
\begin{align}
 \widetilde\varphi^{-1}\!\left(
 I_Q^+((s,0))\cap I_Q^-(y^+)\right)
 &=\iota\!\left(I^+(\gamma(s))\cap I^-(q^+)\right),
 &&0<s<s_+,
 \label{eq:local-global-diamond}\\
 \overline{I_Q^+(0)\cap I_Q^-(y^+)}&\subset W,&
 \{c\}\times B&\subset I_Q^+(y^+).
 \label{eq:lens-properties}
\end{align}
Every point of $E_f\cap\partial W\cap I_Q^-(y^+)$ can be joined to
$b$ by a future timelike curve in $E_f\cap(Q\setminus W)$.  Moreover, with
\begin{equation}
 A=\bigcup_{0<s<s_+}\bigl(I^+(\gamma(s))\cap I^-(q^+)\bigr),
 \qquad K=\overline A^{\,\cM}\setminus I^-(q^-),
 \label{eq:boundary-lens-separator}
\end{equation}
$K$ timelike-separates $\gamma((0,s_-))$ from $I^+(q^+)$: every future
timelike curve from any point of the first set to any point of the second
meets $K$.
\end{lemma}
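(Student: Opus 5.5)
Starting from a past boundary point $p\in\past\iota(\cM)$, I will take the regular boundary chart of \cite[Lemma~2.16]{Miethke2024}, with the terminal curve straightened to the positive $x^0$-axis and ending at the origin. So $\widetilde\varphi(\iota(\gamma(s)))=(s,0)$, $f(0)=0$, and the cones are pinched between fixed Euclidean cones of slopes close to one. Global hyperbolicity of $\cM$ (every product slice is Cauchy) shows that the product time is strictly increasing along $\gamma$. It tends to zero, because a past-inextendible timelike curve whose time stayed bounded below would converge to a point of $\cM$ by the Cauchy-curve argument in the proof of Lemma~\ref{lem:filling-descent}, contradicting that its endpoint lies on $\partial\iota(\cM)$.

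\emph{Choice of parameters.} Fix $c$ with $\sup_B f<c<\eps_0$ after shrinking $B$; continuity of $f$ with $f(0)=0$ permits this. Choose $s_+$ small, and $s_b$ with $s_+<s_b\le s_0$. The cone bounds make $I_Q^+(y^+)$ contain a Euclidean cone of fixed aperture, so $\{c\}\times B\subset I_Q^+(y^+)$ once $B$ is thin compared with $c-s_+$. The closure of the small diamond $I_Q^+(0)\cap I_Q^-(y^+)$ is a compact set near the axis, with diameter $O(s_+)$. I take $W$ to be a slightly larger open lens, for instance $I_Q^+((-\delta,0))\cap I_Q^-((s_++\delta,0))$, which is still relatively compact in $Q$. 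For a point of $E_f\cap\partial W\cap I_Q^-(y^+)$, I move first along the outer boundary of $W$, or directly by a steep timelike segment that leaves $W$, and then enter the future cone of $b$. Such a curve stays in $E_f$ because future timelike chart curves starting in $E_f$ never leave it. It avoids $W$ because $W$ lies in the past of $(s_++\delta,0)$, while $b$ lies strictly to the future with $s_b>s_++\delta$.

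\emph{Identity \eqref{eq:local-global-diamond}.} The inclusion ``$\subset$'' holds because timelike curves in $Q$ between points of $E_f$ stay in $E_f\subset\widetilde\varphi(\iota(\cM))$ and pull back to timelike curves in $\cM$. For ``$\supset$'', take a timelike curve $\sigma$ in $\cM$ from $\gamma(s)$ to $q^+$ through a point $r$. A priori $\sigma$ could leave the chart. I will use the fact that $\sigma$ lies in $(0,T(s_+)]\times X$. There Lemma~\ref{lem:filling-descent} and the filling hypothesis give a fixed-endpoint timelike homotopy from $\sigma$ to the axial segment. Along the homotopy, consider the set of parameters whose curve stays inside $W$. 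It is open, and it contains the axis. It is also closed: a curve of the family touching $\partial W$ would, by the previous paragraph, be chained with a curve to $b$. This would produce a timelike curve from a point of the diamond past $\partial W$ that nonetheless ends at $q^+\ll b$. The contradiction comes from achronality of the lens boundary relative to $y^+$, namely that points of $\partial W$ are not in $I_Q^-(y^+)$ along a $W$-avoiding route. This connectivity argument is the step I expect to be the \textbf{main obstacle}. It must ensure that no curve in the homotopy escapes $W$ through the boundary graph or the lateral faces. I will handle it using the two graph consequences stated before the lemma together with the outer cone bounds.

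\emph{Separator.} Let $\alpha$ be a future timelike curve from $\gamma(s')$, with $s'<s_-$, to a point of $I^+(q^+)$. Since $t\circ\alpha$ is increasing and $\alpha$ starts inside $A$, consider the last parameter at which $\alpha$ lies in $\overline A^{\,\cM}$. That point lies in $\overline A$. It is not in $I^-(q^-)$: otherwise $\alpha$ would pass strictly later through $I^+(\gamma(s'))\cap I^-(q^+)$ near $q^-$, a set contained in $A$, contradicting maximality. Hence $\alpha$ meets $K$, because $\alpha$ must exit $A$ before entering $I^+(q^+)$, which is disjoint from $I^-(q^+)$ by global hyperbolicity.
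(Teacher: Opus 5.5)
There is a genuine gap in your choice of lens. For $W=I_Q^+(a')\cap I_Q^-(b')$ with $a'=(-\delta,0)$ and $b'=(s_++\delta,0)$, the escape assertion is false in general. Take a point $z\in\partial W\cap I_Q^-(y^+)$. Since $y^+\ll b'$, $z$ lies in the open set $I_Q^-(b')$. As $z\notin W$, it follows that $z\in\overline{I_Q^+(a')}\setminus I_Q^+(a')$.

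Now let $\sigma$ be any future timelike chart curve from $z$. For each $u>0$, the open set $I_Q^-(\sigma(u))$ contains $z$, so it meets $I_Q^+(a')$, and hence $\sigma(u)\in I_Q^+(a')$. For small $u$ one also has $\sigma(u)\in I_Q^-(b')$. So every such curve enters $W$ at once, and no curve from $z$ to $b$ in $Q\setminus W$ exists. Such points $z$ do lie in $E_f$; this already happens for Minkowski with $f\equiv0$ once $\delta<s_+$. Your remark that $b$ lies to the future of $b'$ ignores the initial piece of the curve.

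The boundary of a chronological diamond is achronal, whereas the escape requires a lens with \emph{timelike} boundary. This is why the paper uses the Euclidean lens $W=(a+\mathring C_\alpha^+)\cap(b+\mathring C_\alpha^-)$, with $a=(-s_a,0)$ and $C_\alpha^+$ consisting of vectors that are timelike throughout $Q$. From a boundary point one runs along the outward generator of the lower cone to the ridge, then along an upper generator to the vertex $b$. These segments stay on $\partial W\subset Q\setminus W$ with tangents in $C_\alpha^+$, and the graph argument keeps them in $E_f$. The localization proposition relies on exactly this property to trap $I^-(q)$ in $W_{\cM}$, so it cannot be bypassed.

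The closedness step you flag for \eqref{eq:local-global-diamond} is also not established. Concatenating a touching curve with an escape curve to $b$ contradicts nothing, and the asserted ``achronality of the lens boundary relative to $y^+$'' is never proved. In fact no escape is needed at this point. A homotopy curve whose image lies in the chart is a chart timelike curve from $(s,0)$ to $y^+$. It therefore lies in $S^*=\overline{I_Q^+(0)\cap I_Q^-(y^+)}$, which is compact and contained in $W\Subset Q$.

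Because $\widetilde\cM$ is Hausdorff, $\widetilde\varphi^{-1}(S^*)$ is closed, so pointwise limits of such curves stay in it. Hence the set of homotopy parameters whose curves stay in the chart is nonempty, open and closed. The same reasoning repairs your version with curves staying in $W$: a limit curve lies in the closed set $\widetilde\varphi^{-1}(\overline W)$, hence in the chart, hence in $S^*\subset W$.

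Your separator argument, using the last parameter at which the curve lies in $\overline A^{\,\cM}$, is correct and is equivalent to the paper's use of the two suprema $u_\pm$.
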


\begin{proof}
Choose $\alpha>1$ such that every nonzero vector in
$C_\alpha^+=\{(v^0,v):v^0\ge\alpha|v|,\ v^0>0\}$ is future timelike
throughout a sufficiently small regular boundary chart.  Put
$C_\alpha^-=-C_\alpha^+$.  Choose $0<c<\eps_0$ and then narrow $B$ so that
$\sup_{\overline B}f<c$ and $\alpha\sup_B|x|<c/2$.
Choose $r_B>0$ with $\overline B_{\rm E}(0,r_B)\subset B$, and positive
$s_a,s_b$ such that $s_b<c/2$, $s_a<\eps_0$, and
$(s_a+s_b)/(2\alpha)<r_B$.  With $a=(-s_a,0)$, define
\[
 W=(a+\mathring C_\alpha^+)\cap(b+\mathring C_\alpha^-),
 \qquad b=(s_b,0).
\]
Its closure lies in
$[-s_a,s_b]\times\overline B_{\rm E}(0,(s_a+s_b)/(2\alpha))\Subset Q$,
and the origin is an interior point of $W$.  The outer cone bound implies
that the closure of $I_Q^+(0)\cap I_Q^-((s,0))$ has Euclidean diameter
tending to zero with $s$.  Choose $s_+>0$ small enough to place this
closure in $W$, with $s_+<s_b$, and choose $s_-\in(0,s_+)$.
Straight segments from $y^+$ to the top slab have slope in
$\mathring C_\alpha^+$, proving the second assertion in
\eqref{eq:lens-properties}.

Every point on the lower cone surface can be followed along its outward
generator to the ridge of $\partial W$ and then along an inward upper
generator to $b$.  From a point on the upper surface only the second
segment is needed.  At the lower vertex choose any generator.  All these
segments have future tangents in $C_\alpha^+$ and lie on $\partial W$.
They are strictly timelike for the extension metric.  If their initial
point belongs to $E_f$, the preceding achronality argument keeps them in
$E_f$.  This proves the escape assertion.

For the local-to-global inclusion in \eqref{eq:local-global-diamond},
concatenate local timelike curves through the chosen point.  They start
at $(s,0)\in E_f$, hence remain in $E_f$ and pull back to the spacetime.
Conversely, concatenate any two interior timelike curves through a point
of the global diamond.  Axial filling gives a fixed-endpoint timelike
homotopy $H:[0,1]^2\to\cM$ between this curve and the axis segment.  Put
\[
 \mathcal S=\{\vartheta\in[0,1]:
 \iota(H(\vartheta,[0,1]))\subset\widetilde\varphi^{-1}(Q)\}.
\]
The set $\mathcal S$ is nonempty and open, by compactness of each curve
image and continuity of $H$.  It is also closed:
every such curve has its image, including its endpoints, in
\[
 S^*=\overline{I_Q^+(0)\cap I_Q^-(y^+)}\Subset W.
\]
If parameters in $\mathcal S$ converge, pointwise continuity and closedness of the
compact set $\widetilde\varphi^{-1}(S^*)$ in the Hausdorff extension put
the limiting curve in the same set.  Thus $\mathcal S$ is consequently
all of $[0,1]$, proving the reverse inclusion.  Compactness here comes
from the fixed conical chart, not from a limit-curve assertion about the
extension.

Finally let $\sigma:[0,1]\to\cM$ be future timelike, starting on
$\gamma((0,s_-))$ and ending in $I^+(q^+)$.  Define
\[
 u_\pm=\sup\{u\in[0,1]:\sigma(u)\in I^-(q^\pm)\}.
\]
These suprema satisfy $0<u_-<u_+<1$.  Indeed, global
hyperbolicity gives causal simplicity and
\[
 \overline{I^-(q^-)}=J^-(q^-)\subset I^-(q^+),
\]
where the inclusion is the smooth interior push-up property.  For
$u\in(u_-,u_+)$, $\sigma(u)$ is in the future of its initial axis point
and in $I^-(q^+)$, but not in $I^-(q^-)$.  Thus $\sigma(u)\in K$.
\end{proof}

\begin{proposition}[Boundary localization on homogeneous quotients]
\label{thm:quotient-localization}
Assume
\begin{equation}
 \Lambda_k(T)\longrightarrow0\qquad(T\downarrow0)
 \label{eq:vanishing-length}
\end{equation}
and suppose that the lifted spacetime has axial timelike filling.  Then the
past end of \eqref{eq:intro-spacetime} is one-chart localizable.  The proof
uses local isometries on $X$ only; no lift of the candidate extension is
required.
\end{proposition}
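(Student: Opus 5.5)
The plan is to assemble the one-chart data of Definition~\ref{def:one-chart} from Lemma~\ref{lem:boundary-lens}, after first obtaining a suitable terminal curve and axial filling on the quotient. Fix a $C^0$ extension with a point $p\in\past\iota(\cM)$, and let $\gamma$ be a smooth future timelike curve in the extension with initial endpoint $p$ and remaining points in $\iota(\cM)$. Pulled back to $\cM$, $\gamma$ is past-inextendible in $\cM$. Global hyperbolicity and the Cauchy property of product slices show that its product time $t$ is strictly monotone. Its limit cannot be a positive value $t_*$, because then, as in the Cauchy-slice argument, $x(t)$ would converge and $\gamma$ would extend inside $\cM$. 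Hence $t\to0$. By \eqref{eq:vanishing-length} and Lemma~\ref{lem:filling-descent}, the quotient has axial timelike filling. So $\gamma$ lies in a tail with axial filling, and Lemma~\ref{lem:boundary-lens} applies. It supplies the regular boundary chart $Q$, the graph $f$ with boundary points in $\past\iota(\cM)$, the axis data \eqref{eq:lens-axis-data}, the lens $W$, and the separator $K$.

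I would take $q=q^-$ and keep $c$ as in the lemma. The inclusion $\widetilde\varphi^{-1}(\{c\}\times B)\subset\iota(I^+(q))$ then follows from $\{c\}\times B\subset I_Q^+(y^+)$, since local curves from $y^+\in E_f$ stay in $E_f$, together with $q^+\in I^+(q^-)$. The remaining requirement, and the main obstacle, is global confinement: $\iota(I^-(q))$ must lie in $\widetilde\varphi^{-1}(W)$ with image in $E_f$. A past-directed timelike curve from $q$ could, a priori, leave through far-away regions of $X$, or wind around the quotient, and re-enter the chart elsewhere.

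To handle this, let $z\in I^-(q^-)$ and a timelike curve $\sigma$ from $z$ to $q^-$. I first want $z$ to lie in the future of some early axis point $\gamma(s)$. Here homogeneity enters through Lemma~\ref{lem:transporters}. Since $\Lambda_k(T)\to0$, $\sigma$ has small spatial $k$-length, so $z$ and $\gamma(s)$ project to points within $\eta$ in a compact $C\Subset U_1$ around $x_0=\lim x(s)$. A transporter $\vartheta_a$ with $a$ near the identity then aligns them. Because $\vartheta_a$ is an isometry of every $h_t$ and preserves time, it maps $\gamma$ to a timelike curve through $z$. Comparing with the vertical axis, small $a$ (or the openness of chronological relations combined with the time gap $t(z)<t(q^-)$) should give $z\in I^+(\gamma(s'))$ for some sufficiently small $s'$. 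The covering-sheet identities \eqref{eq:transporter-inverse} ensure that no deck translate intervenes. Once $z\in I^+(\gamma(s'))\cap I^-(q^+)$, \eqref{eq:local-global-diamond} gives $\widetilde\varphi(\iota(z))\in I_Q^+((s',0))\cap I_Q^-(y^+)\subset E_f$, and by \eqref{eq:lens-properties} this set lies in $W$.

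If the direct alignment only works in a neighbourhood, I would fall back on the separator. Suppose some point of $I^-(q)$ has image outside $\widetilde\varphi^{-1}(W)$. Then a connecting timelike curve would exit through $E_f\cap\partial W\cap I_Q^-(y^+)$. The escape assertion of Lemma~\ref{lem:boundary-lens} would then produce a timelike curve to $b$, which lies in $I^+(q^+)$, from an early axis point that avoids $K\subset\widetilde\varphi^{-1}(W)$. This contradicts the separation property. The delicate bookkeeping is to check that the transported curves remain inside the fixed sheet $\widehat O$, so that the separator argument really takes place in $\cM$. That is precisely what the smallness of $\Lambda_k$ relative to $\operatorname{dist}_k(\overline O_0,X\setminus O)$ guarantees, as in the proof of Lemma~\ref{lem:filling-descent}.
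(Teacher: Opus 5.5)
Your setup (terminal curve, $t\to0$, descent of filling via Lemma~\ref{lem:filling-descent}, application of Lemma~\ref{lem:boundary-lens}, and the top-slab inclusion) is fine. The confinement step, however, has a genuine gap.

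\emph{Why your first route fails.} You claim that every $z\in I^-(q^-)$ lies in $I^+(\gamma(s'))$ for some early $s'$. This is false in general. Take the slab $(0,\infty)\times\R^d$ of Minkowski space; it satisfies all the hypotheses. With axis $x=0$ and $q^-=(s_-,0)$, any point $(t_z,x_z)$ with $t_z<|x_z|<s_--t_z$ lies in $I^-(q^-)$ but in the future of no axis point. The transporter does not rescue this. $\Theta_a$ carries the axis to a curve through $z$, so $z$ lies in the future of the translated points $\Theta_a(\gamma(s'))$, not of $\gamma(s')$. But \eqref{eq:local-global-diamond} is only available for the latter. In addition, once $q=q^-$ is fixed, $\sigma$ may have spatial length up to $\Lambda_k(T(s_-))$. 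That is a fixed number, and nothing forces it below the alignment radius.

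\emph{Why the fallback fails.} It has the same defect. The escape curve starts at the exit point $z_Q\in\partial W$, not on $\gamma((0,s_-))$. The separation property of $K$ in the lemma only concerns curves starting at early axis points. Prepending a curve from the axis to $z$ requires the false step above, and such a prefix could meet $K$ anyway.

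\emph{The missing idea.} You need to propagate the separator to all starting points of $I^-(q)$, and this forces $q$ to be chosen last. The paper proceeds as follows.
\begin{enumerate}
\item It first proves that $K$ is compact. Transporters applied to the axis give a tube $\{T(s)\}\times B_k(X(s),\rho)\subset I^-(q^-)$ for early $s$. Limit points of $A$ at small times fall into this tube, so $K\subset[T_1,T(s_+)]\times\overline B_k(x_0,2r)$. Hausdorffness then places $\widetilde\varphi(\iota(K))$ in $S^*\Subset W$.
\item Compactness yields a symmetric identity neighbourhood $V$ with $\Theta_{a^{\pm1}}(K)\subset W_{\cM}$ and $\Theta_a(\gamma(s_b))\in I^+(q^+)$ for all $a\in V$.
\item Let $\eta$ be the alignment radius for $V$. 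Only now is $q=\gamma(s_*)$ chosen, with $2\Lambda_k(T(s_*))<\eta$.
\item For $p\in I^-(q)$, some $\Theta_{a_p}$ with $a_p\in V$ moves $p$ onto the axis at the same time. It maps any timelike curve from $p$ to $\gamma(s_b)$, which stays in $B_k(x_0,3r)\Subset U_1$, to a curve from $\gamma((0,s_-))$ into $I^+(q^+)$. That image curve meets $K$.
\item The exact inverse identity \eqref{eq:transporter-inverse} then shows that the original curve meets $W_{\cM}$.
\end{enumerate}
This strengthened separation, for curves from arbitrary points of $I^-(q)$ to $\gamma(s_b)$, is what the first-exit point and the escape curve contradict.
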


\begin{proof}
Suppose that a $C^0$ extension has nonempty past boundary.  Let
$\gamma(s)=(T(s),X(s))$ be a terminal timelike curve obtained from a regular
boundary graph chart.  Its time coordinate tends to zero at the past
endpoint.  Indeed, a positive interior limiting time would give a uniform
spatial speed bound on a compact time interval; completeness of $k$ would
then give an interior endpoint of $\gamma$.  Its image in the extension
would coincide with the prescribed boundary endpoint by Hausdorffness,
which is impossible.  Lemma~\ref{lem:filling-descent} supplies axial filling
on the quotient.  By \eqref{eq:vanishing-length}, $X(s)\to x_0$.  Since $t$
is a time function, $T$ is strictly increasing and has a continuous inverse
on $(0,T(s_b)]$ after the final choice of the retained endpoint.

Choose an evenly covered connected neighborhood $O$ and $r>0$ such that
$\overline B_k(x_0,4r)\subset O$.  Restrict the chart and terminal curve far
enough toward the past that $X(s)\in B_k(x_0,r)$ and
$\Lambda_k(T(s))<r$ throughout the retained interval.  Apply
Lemma~\ref{lem:boundary-lens} to this restriction, thereby fixing all
the data in \eqref{eq:lens-axis-data}-\eqref{eq:lens-properties}.  In
particular,
\begin{equation}
 X(s)\in B_k(x_0,r),\qquad
 \Lambda_k(T(s_b))<r
 \label{eq:short-terminal}
\end{equation}
for every retained parameter $s\le s_b$.  Every timelike curve between two
retained axis points then remains in $B_k(x_0,2r)$.  Points in the early
chronological past used in the separator construction lie in the same ball,
and curves from such points to $\gamma(s_b)$ remain in $B_k(x_0,3r)$.
Choose connected domains
\begin{equation}
 B_k(x_0,2r)\Subset U_0\Subset U_1,\qquad
 \overline B_k(x_0,3r)\Subset U_1\Subset U_2\Subset O.
 \label{eq:germ-domains}
\end{equation}

Let $A$ and $K$ be the sets in
\eqref{eq:boundary-lens-separator}.  The boundary-lens lemma gives the
local/global diamond identity and shows that $K$ timelike-separates the early
axis from $I^+(q^+)$.  The path-length bound puts
$\operatorname{pr}_X(\overline A^{\cM})\subset\overline B_k(x_0,2r)
\Subset U_0$: use a timelike connector from a point of $A$ to $q^+$,
whose spatial length is less than $r$, and then take limits in $\cM$.
To see compactness, choose
$s_\flat<s_-$ and a spatial neighborhood $O_-$ of $X(s_\flat)$ with
$\{T(s_\flat)\}\times O_-\subset I^-(q^-)$.  Shrink an identity neighborhood
$V$ so that $\vartheta_a(X(s_\flat))\in O_-$ for $a\in V$.  The alignment part of
Lemma~\ref{lem:transporters} gives $\rho>0$ for
$C=\overline B_k(x_0,3r)$; reduce $\rho$ so that $\rho<r$.
Every point of $B_k(X(s),\rho)$ is in $C$, and the entire retained axis
segment is in $U_1$.  Applying the spacetime local isometry
\[
 \Theta_a(t,x)=(t,\vartheta_a(x))
\]
to the retained axis segment shows that
\begin{equation}
 \{T(s)\}\times B_k(X(s),\rho)\subset I^-(q^-)
 \qquad(0<s\le s_\flat).
 \label{eq:translated-tube}
\end{equation}
Choose $0<T_1<T_2<T(s_\flat)$ with $2\Lambda_k(T_2)<\rho$.  Comparing a curve
from the early axis to a point $(t,x)\in A$ with the corresponding axis
segment gives
$d_k(x,X(T^{-1}(t)))\le2\Lambda_k(T_2)<\rho$ whenever $t\le T_2$.
If $p_j=(t_j,x_j)\in A$ converges to $p=(t,x)\in\cM$, then $t>0$.
When $t\le T_1$, eventually $t_j\le T_2$, and the preceding
$2\Lambda_k(T_2)$ estimate passes to the limit.  Hence
$d_k(x,X(T^{-1}(t)))<\rho$, so $p$ lies in the open tube
\eqref{eq:translated-tube} and cannot belong to $K$.  Consequently
\[
 K\subset[T_1,T(s_+)]\times\overline B_k(x_0,2r).
\]
Hopf-Rinow and closedness of $K$ prove compactness.  To place $\iota(K)$
in the lens, take $p_j\in A$ converging to $p\in K$.  Their chart images
lie in $S^*$ by the local/global diamond identity.  A subsequence converges
in this compact set, and uniqueness of limits in the Hausdorff extension
identifies its image with $\iota(p)$.  Hence
$\widetilde\varphi(\iota(K))\subset S^*\Subset W$.

It remains to propagate this separator.  By compactness, shrink a symmetric
$V\Subset V_*$ so that
\begin{equation}
 \Theta_{a^{\pm1}}(K)\subset W_{\cM},
 \qquad \Theta_a(\gamma(s_b))\in I^+(q^+)
 \quad(a\in V),
 \label{eq:separator-propagation}
\end{equation}
where
\[
 W_{\cM}:=\iota^{-1}\!\left(
 \widetilde\varphi^{-1}(W)\cap\iota(\cM)\right)
\]
is the part of the lens lying in $\cM$.  Let $\eta$ be the
alignment radius supplied by Lemma~\ref{lem:transporters} for
$C=\overline B_k(x_0,3r)$ and the final neighborhood $V$ in
\eqref{eq:separator-propagation}, reduced below $r$, and choose $s_*<s_-$ with
$2\Lambda_k(T(s_*))<\eta$.  Put $q=\gamma(s_*)$.  For
$p=(t_p,x_p)\in I^-(q)$, comparison with the axis segment at the same time
gives
\[
 d_k(x_p,X(T^{-1}(t_p)))\le2\Lambda_k(T(s_*))<\eta.
\]
Thus some $a_p\in V$ satisfies
$\vartheta_{a_p}(x_p)=X(T^{-1}(t_p))$.  Here $x_p\in B_k(x_0,2r)$.
Every timelike curve from $p$ to $\gamma(s_b)$ has spatial length less
than $r$, and so stays in $B_k(x_0,3r)\Subset U_1$.
The map $\Theta_{a_p}$ is therefore defined along its entire image.
The transformed curve starts on $\gamma((0,s_-))$ and ends in
$I^+(q^+)$ by \eqref{eq:separator-propagation}; it meets
$K$.  The exact inverse identity \eqref{eq:transporter-inverse} shows that the
original curve meets $W_{\cM}$.  This is precisely the step at which sheet
control is essential.

Suppose $p\in I^-(q)$ lies outside $W_{\cM}$.  Follow a past timelike
curve from $q$ to $p$ and let $z\in\cM$ be its first exit from
$W_{\cM}$.  Compactness of $\overline W$ and uniqueness of limits put
$\iota(z)$ in the chart, with chart coordinates $z_Q\in\partial W$.
The segment from $z$ to $q$ is in the spacetime image, stays in the chart,
and ends in $E_f$.  It therefore lies in $E_f$.  Concatenation with the
axis segment $q\ll q^+$ gives
$z_Q\in E_f\cap\partial W\cap I_Q^-(y^+)$.
The escape assertion of Lemma~\ref{lem:boundary-lens} now supplies a
future timelike curve from $z_Q$ to $b$ in $E_f\cap(Q\setminus W)$.
It pulls back to a curve from $z\in I^-(q)$ to $\gamma(s_b)$ avoiding
$W_{\cM}$, contrary to the separation just proved.
Thus $I^-(q)\subset W_{\cM}$.  For any point of $I^-(q)$, a timelike
connector to $q$ stays in $I^-(q)$ apart from its endpoint; the
spacetime-image argument places its chart image in $E_f$.
Finally the top-slab curves from $y^+$ remain in $E_f$ by achronality,
and $q\ll q^+$, proving all the requirements of
Definition~\ref{def:one-chart}.
\end{proof}

\section{Metric consequences of one-chart localization}
\label{sec:lipschitz}

Let $(X,h)$ be a Riemannian $d$-manifold.  For $U\subset X$, define the
$d$-dimensional Lipschitz covering content by
\begin{equation}
 \mathfrak L_d(U,h)=\inf\left\{\Lip(F):
 F:[-1,1]^d\longrightarrow(X,d_h),\quad
 U\subset F([-1,1]^d)\right\},
 \label{eq:lipschitz-content}
\end{equation}
with value $+\infty$ if there is no admissible map.  For $\rho>0$, let
$\mathsf P_\rho(U;h)$ be the supremum of the cardinalities of finite
$\rho$-separated subsets of $U$, using the ambient distance of $X$, and set

\begin{equation}
 \mathscr P_h(U)=\sup_{0<\rho\le1}\rho^d\mathsf P_\rho(U;h).
 \label{eq:normalized-packing}
\end{equation}

\begin{lemma}[Consequences of a Lipschitz cover]
\label{lem:lipschitz-certificates}
There is a dimensional constant $C_d$ such that every nonempty Borel set
$U\subset X$ satisfies
\begin{align}
 \diam_h(U;X)&\le2\sqrt d\,\mathfrak L_d(U,h),
 \label{eq:lipschitz-diameter}\\
 \Vol_h(U)&\le C_d\mathfrak L_d(U,h)^d,
 \label{eq:lipschitz-volume}\\
 \mathscr P_h(U)&\le C_d(1+\mathfrak L_d(U,h))^d.
 \label{eq:lipschitz-packing}
\end{align}
\end{lemma}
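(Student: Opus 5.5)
Throughout, fix an admissible map $F:[-1,1]^d\to(X,d_h)$ with $U\subset F([-1,1]^d)$ and $\Lip(F)\le \mathfrak L_d(U,h)+\delta$ for some $\delta>0$. If no admissible map exists, $\mathfrak L_d=+\infty$ and all three inequalities hold trivially. Each bound is first proved with $\mathfrak L_d(U,h)+\delta$ on the right-hand side and then $\delta\downarrow0$.

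\emph{Diameter.} For $x,y\in U$ pick preimages $u,v$ in the cube. Then $d_h(x,y)\le\Lip(F)|u-v|\le 2\sqrt d\,\Lip(F)$, because the Euclidean diameter of $[-1,1]^d$ is $2\sqrt d$. This gives \eqref{eq:lipschitz-diameter}.

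\emph{Volume.} Use the area formula, or more elementarily the fact that a Lipschitz map into a Riemannian $d$-manifold multiplies $d$-dimensional Hausdorff measure by at most $\Lip(F)^d$. Since $\Vol_h=\mathcal H^d_{d_h}$ on Borel sets (with the standard normalization), we get
\[
\Vol_h(U)\le\mathcal H^d(F([-1,1]^d))\le\Lip(F)^d\,\mathcal H^d([-1,1]^d)=2^d\Lip(F)^d .
\]
The only delicate point is the identification of Riemannian volume with Hausdorff measure of the length metric. I expect to cite this as standard; alternatively, one can cover the cube by small subcubes, use $\Vol_h(B_h(x,r))\le C r^d$ for small $r$ locally, and let the mesh tend to zero. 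A uniform constant is not available globally, but letting the mesh go to zero makes the local ball-volume asymptotics $\omega_d r^d(1+o(1))$ suffice on the compact image.

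\emph{Packing.} This is the only step needing care, and I would argue purely metrically. Let $S\subset U$ be $\rho$-separated, with $0<\rho\le1$. Subdivide $[-1,1]^d$ into $N^d$ congruent subcubes of side $2/N$, choosing $N=\lceil 2\sqrt d\,\Lip(F)/\rho\rceil+1$. Each subcube then has image of $d_h$-diameter $<\rho$ by the diameter step, so it contains at most one point of $S$. Hence $\#S\le N^d\le(2\sqrt d\,\Lip(F)/\rho+2)^d$, and
\[
\rho^d\#S\le(2\sqrt d\,\Lip(F)+2\rho)^d\le(2\sqrt d+2)^d(1+\Lip(F))^d .
\]
Taking the supremum over $S$ and $\rho$ gives \eqref{eq:lipschitz-packing}. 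Finally choose $C_d$ as the maximum of $2^d$ and $(2\sqrt d+2)^d$.
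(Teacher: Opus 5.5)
Your proposal is correct and follows essentially the paper's route: the diameter bound is immediate, the volume bound comes from the Lipschitz/area-formula estimate for $\mathcal H^d_h$ of the image, and the packing bound comes from a Euclidean grid count in the cube. The only cosmetic difference is that you push a subdivision of the cube forward, whereas the paper pulls back $\rho$-separated points to $\rho/L$-separated preimages; your version also avoids treating $L=0$ as a separate case.
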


\begin{proof}
Suppose $U\subset F([-1,1]^d)$ and $\Lip(F)\le L$.  The first estimate is
immediate.  If $L=0$, the image is a single point and all the assertions
follow directly.  Suppose henceforth that $L>0$.
The metric area formula \cite{Kirchheim1994} gives
$\mathcal H_h^d(F([-1,1]^d))\le C_dL^d$, proving
\eqref{eq:lipschitz-volume}.  If $E\subset U$ is $\rho$-separated, choose one
preimage of each point.  The chosen points are $\rho/L$-separated in the
cube, and a Euclidean grid count gives
\[
 \#E\le C_d(1+L/\rho)^d.
\]
Multiply by $\rho^d$, take the supremum for $0<\rho\le1$, and then take the
infimum over $F$.
\end{proof}

\begin{proposition}[Uniform metric bounds in a localized chart]
\label{prop:chart-bounds}
Fix the extension and chart data $Q,f,c,W,q$ in
Definition~\ref{def:one-chart}.  There
is $C<\infty$ such that, for every product slice
$\Sigma_t=\{t\}\times X$ meeting $I^-(q)$, a smooth map
\begin{equation}
 F_t:[-1,1]^d\longrightarrow(\Sigma_t,d_{h_t})
 \label{eq:uniform-map}
\end{equation}
has image containing $I^-(q)\cap\Sigma_t$ and satisfies $\Lip(F_t)\le C$.
Consequently, uniformly in $t$,
\begin{align}
 \diam_{h_t}(I^-(q)\cap\Sigma_t;X)&\le C,
 \notag\\
 \Vol_{h_t}(I^-(q)\cap\Sigma_t)&\le C,
 \notag\\
 \mathsf P_\rho(I^-(q)\cap\Sigma_t;h_t)&\le C(1+\rho^{-1})^d,
 \qquad 0<\rho\le1.
 \label{eq:localized-bounds}
\end{align}
\end{proposition}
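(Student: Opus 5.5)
The plan is to parametrize $I^-(q)\cap\Sigma_t$ by the boundary-chart spatial coordinates. Once a uniformly Lipschitz map $F_t$ is available, the three bounds in \eqref{eq:localized-bounds} follow from Lemma~\ref{lem:lipschitz-certificates}.

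\emph{Step 1: slices as graphs.} In the chart $Q$, I would consider the level set of the pulled-back product time. Because $t$ is a smooth time function on $\cM$, $\iota$ is an isometric embedding, and $\partial_0$ is future timelike in $Q$, the function $\tau=t\circ\iota^{-1}\circ\widetilde\varphi^{-1}$ is strictly increasing along every vertical line in $E_f$. Hence each point of $\widetilde\varphi(\iota(I^-(q)\cap\Sigma_t))\subset E_f\cap W$ is the unique point on its vertical line where $\tau=t$. This lets me write
\[
\widetilde\varphi(\iota(\Sigma_t))\cap E_f\cap W\subset\{(\sigma_t(x),x):x\in D_t\}
\]
for some set $D_t\subset B$, with $D_t$ contained in a fixed compact projection $\overline{B_{\rm E}(0,R)}\subset B$ of $\overline W$.

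\emph{Step 2: uniform Lipschitz bounds.} The hypersurface $\Sigma_t$ is spacelike, so the graph $\sigma_t$ is achronal in the chart wherever it is defined. The fixed inner cone $C_\alpha^+$ then forces $|\sigma_t(x)-\sigma_t(y)|\le|x-y|/\alpha$, a Lipschitz bound independent of $t$. On the compact set $\overline W$ the continuous metric $\widetilde g$ is bounded by a fixed multiple of the Euclidean metric. The induced length of the graph curve over a Euclidean segment $[x,y]$ is therefore at most $C|x-y|$, and $h_t$-distances are dominated by these induced lengths. This gives $\Lip\le C$ from Euclidean to $d_{h_t}$, uniformly in $t$.

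\emph{Step 3: defining $\sigma_t$ on a whole cube.} I expect this to be the main obstacle: $\sigma_t$ must be defined, and must land in $\Sigma_t$, on a set containing a full Euclidean segment between any two points of $D_t$. Only then is the $h_t$-length comparison valid without leaving the spacetime. I would first try to show that $D_t$ is handled by extending along vertical lines. For $x$ in the cube $\overline{B_{\rm E}(0,R)}$, I would move up the vertical line from the graph of $f$ to the top slab $\{c\}\times B\subset\iota(I^+(q))$. On that line $\tau$ runs continuously from $0$ (near the graph of $f$, using that $f$-graph points are past-boundary points approached only as $t\to0$) to values at least $t(q)$. So whenever $t<t(q)$ there is a unique crossing, and $\sigma_t$ is defined on the entire cube and lies in $E_f$.

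The delicate point is that $\tau\to0$ at the graph of $f$. I would argue this from the fact that a vertical timelike line starting at a past-boundary point is past-inextendible in $\cM$, so its product time tends to zero, exactly as in the proof of Proposition~\ref{thm:quotient-localization}. If only $t<t(q)$ slices meet $I^-(q)$, this covers every relevant $t$.

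\emph{Step 4: conclusion.} The Lipschitz bound of Step 2 then applies on the convex cube. I would precompose with an affine rescaling of $[-1,1]^d$ onto the cube and set $F_t(u)=\iota^{-1}\widetilde\varphi^{-1}(\sigma_t(Ru),Ru)$. By Step 1, the image of $F_t$ contains $I^-(q)\cap\Sigma_t$. Smoothness of $F_t$ can be arranged by mollifying, or by noting that $\tau$ is smooth on $E_f$ with $\partial_0\tau>0$ and invoking the implicit function theorem. Lemma~\ref{lem:lipschitz-certificates} then yields the diameter, volume and packing bounds in \eqref{eq:localized-bounds}.
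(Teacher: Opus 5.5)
Your proposal is correct and follows the paper's proof. Each chart-vertical line from the boundary graph to the top slab $\{c\}\times B\subset\iota(I^+(q))$ meets $\Sigma_t$ exactly once. You get this from monotonicity of $\tau$ and the intermediate value theorem with $\tau\to0$ at the graph, whereas the paper concatenates with a product vertical curve and uses the Cauchy property and achronality of $\Sigma_t$. In both versions, the implicit function theorem then gives a smooth graph over all of $B$, the inner cone bounds its slope, and coefficient bounds over a sub-box containing the projection of $\overline W$ give the uniform Lipschitz map.

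There are two small slips to fix. First, the inner cone $C_\alpha^+=\{v^0\ge\alpha|v|\}$ gives $|D\sigma_t|<\alpha$, not $1/\alpha$. Second, the coefficient bound must be taken on a compact set containing the whole graph, such as $[\min_{\overline{B'}}f,c]\times\overline{B'}$, because the graph over the cube need not stay in $\overline W$.
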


\begin{proof}
Identify $\cM$ with its image in the extension and write the boundary chart
as $Q=(-\eps_0,\eps_0)\times B$.  Its metric is uniformly close to the
Minkowski metric, $\partial_0$ is future timelike, and the epigraph of the
boundary graph lies in the spacetime image.  By one-chart localizability a
horizontal slab $\{c\}\times B$ lies to the future of $q$.

For each boundary-chart point $x\in B$, the vertical timelike line between
the boundary graph and the top slab is past-inextendible in $\cM$.
Write $(t_1,x'(x))$ for its top-slab endpoint in product coordinates and
continue it by the product vertical curve $t\mapsto(t,x'(x))$,
$t_1\le t<t_+$.  The concatenation
is a future-inextendible piecewise smooth timelike curve of $\cM$.  Since every $\Sigma_t$ is Cauchy, the
resulting inextendible curve meets $\Sigma_t$ once.  Choose
$p\in I^-(q)\cap\Sigma_t$.  Every top-slab point lies in $I^+(q)$, hence
in $I^+(p)$.  An intersection on or beyond that slab would contradict
achronality of $\Sigma_t$.  The unique intersection is therefore on the
chart-vertical portion.  If
$\tau=t\circ\iota^{-1}\circ\widetilde\varphi^{-1}$ on $E_f$, then
$\dd\tau(\partial_0)>0$.  Hence $\tau(u,x)=t$ has a unique smooth solution
$u=u_t(x)$ by the implicit-function theorem.  Thus
$\Sigma_t$ contains a graph
\[
 \Psi_t(x)=(u_t(x),x),\qquad x\in B,
\]
whose image contains $I^-(q)\cap\Sigma_t$.

Fix inner and outer Euclidean cones valid throughout $Q$.  A tangent vector
$(D u_t(x)v,v)$ to the spacelike graph cannot lie in the fixed inner
timelike cone, so $|D u_t(x)|\le L_Q$ with one chart-dependent constant.
Uniform coefficient bounds for the continuous extension metric then give
\begin{equation}
 \Psi_t^*(g|_{\Sigma_t})\le C_Q\delta_{\rm E}
 \label{eq:graph-quadratic-bound}
\end{equation}
independently of $t$, where $\delta_{\rm E}$ is the standard Euclidean metric
on $B$.  Choose one closed sub-box $B'\Subset B$ containing the spatial
projection of $\overline W$ in its interior.  Affine rescaling of $B'$ gives
\eqref{eq:uniform-map}.  Lemma~\ref{lem:lipschitz-certificates} proves the
three estimates after increasing the constant once.
\end{proof}

\begin{proposition}[Lifting localized slices]
\label{prop:lifted-slices}
Let $Q,f,c,W,q$ be the localized chart data.  Suppose $U\Subset X$ is
connected, evenly covered by $\pi:Y\to X$, and
$(0,T_*]\times U\subset I^-(q)$.  For any fixed component
$\widehat U$ of $\pi^{-1}(U)$ there is a constant $C$ such that
\[
 \mathfrak L_d(\widehat U,\pi^*h_t)\le C,
 \qquad \diam_{\pi^*h_t}(\widehat U;Y)\le2\sqrt d\,C,
 \qquad 0<t\le T_*.
\]
\end{proposition}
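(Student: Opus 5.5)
The plan is to lift the Lipschitz slice parametrizations of Proposition~\ref{prop:chart-bounds} through the sheet $\widehat U$. For $0<t\le T_*$ the slice $\{t\}\times U$ lies in $I^-(q)\cap\Sigma_t$. Proposition~\ref{prop:chart-bounds} therefore gives a smooth graph map $\Psi_t:B'\to\Sigma_t$, affinely rescaled to $F_t:[-1,1]^d\to(\Sigma_t,d_{h_t})$, with $\Lip(F_t)\le C$ independent of $t$ and image containing $\{t\}\times U$. The aim is a lift $\widehat F_t:[-1,1]^d\to Y$ with $\pi\circ\widehat F_t=F_t$, $\widehat U\subset\widehat F_t([-1,1]^d)$, and Lipschitz constant bounded by $C$ for the lifted distance $d_{\pi^*h_t}$ on $Y$. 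The infimum in \eqref{eq:lipschitz-content} then gives $\mathfrak L_d(\widehat U,\pi^*h_t)\le C$, and \eqref{eq:lipschitz-diameter} of Lemma~\ref{lem:lipschitz-certificates} gives the diameter bound $2\sqrt d\,C$.

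Two points need care: the lift must exist, and it must reach the whole chosen sheet. The cube is simply connected, so $F_t$, viewed as a map into $X$ through the slice identification, lifts through the covering $\pi$. The choice of lift is fixed by requiring one point of $F_t^{-1}(U)$ to map into $\widehat U$. Because $U$ is connected and evenly covered, the lifted image of the relevant region lies in a single component of $\pi^{-1}(U)$.

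For surjectivity onto $\widehat U$, consider the set $D_t=F_t^{-1}(U)$. It is identified, through the chart, with the graph region over the part of $B'$ whose product-time-$t$ intersection lies in $U$. I would argue that $D_t$ is connected, or at least that each connected component of $D_t$ maps onto $U$. The cleanest route is to show that $F_t$ restricted to the relevant region is a homeomorphism onto an open subset of $\Sigma_t$. This holds because the vertical chart curves meet $\Sigma_t$ exactly once, so $\Psi_t$ is injective, and it is a continuous open embedding by invariance of domain. Then $F_t^{-1}(U)$ is homeomorphic to $U$, hence connected. Its lift is a continuous section over $U$ lying in one sheet, so it equals $(\pi|_{\widehat U})^{-1}(U)=\widehat U$.

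The Lipschitz bound for the lift is the main obstacle, since $\pi$ is only a local isometry. The plan is to use path lengths. Joining two cube points by a straight segment gives $F_t$-image length at most $C|x-y|$, because the pullback bound \eqref{eq:graph-quadratic-bound} controls the length of every smooth curve, not just its endpoints. The lifted segment has the same $\pi^*h_t$-length, since $\pi$ is a local isometry and preserves lengths of curves. Hence $d_{\pi^*h_t}(\widehat F_t x,\widehat F_t y)\le C|x-y|$, with the same constant for all $t\le T_*$ and no dependence on the collapse of quotient distances. A residual worry is whether $\Sigma_t\cap\iota^{-1}(\widetilde\varphi^{-1}(Q))$ has several components, but the graph description in Proposition~\ref{prop:chart-bounds} already excludes this for the vertical-curve region used here.
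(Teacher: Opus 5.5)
Your proposal is correct and follows essentially the same route as the paper: injectivity of the graph parametrization makes $F_t^{-1}(U)$ homeomorphic to $U$, simple connectivity of the cube gives a lift normalized at one point of $\widehat U$, and connectedness together with the sheet property forces the lifted image of $F_t^{-1}(U)$ to be exactly $\widehat U$. The uniform Lipschitz bound then comes from lengths of straight cube segments, which the local isometry $\pi$ preserves; your use of invariance of domain is only a slightly different justification of the embedding property that the paper reads off from the smooth graph structure.
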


\begin{proof}
Identify $\Sigma_t$ with $X$.  The map $F_t$ constructed above is an
embedding of a cube: the graph map is injective, the extension embedding
is injective, and projection of a product slice onto $X$ is a
diffeomorphism.  Its image contains $U$ in the image of the interior of
the cube, because the spatial projection of $\overline W$ was placed
strictly inside the chosen box.  The restriction
$F_t|_{F_t^{-1}(U)}$ is a homeomorphism onto $U$, so $F_t^{-1}(U)$
is connected.
Choose $x_t\in F_t^{-1}(U)$ and the unique point
$\widehat x_t\in\widehat U$ above $F_t(x_t)$.  Since the cube is simply
connected, $F_t$ has a unique lift
$\widehat F_t:[-1,1]^d\to Y$ through $\pi$ with
$\widehat F_t(x_t)=\widehat x_t$.
Its differential has the same norm as that of $F_t$, since $\pi$ is a
local isometry.  Integrating along straight cube segments gives
$\Lip(\widehat F_t)\le C_0$ with the same uniform constant.
The connected set $\widehat F_t(F_t^{-1}(U))$ lies in the fixed component
$\widehat U$.  It projects onto all of $U$; since
$\pi|_{\widehat U}$ is a diffeomorphism, it equals $\widehat U$.
Consequently $\diam_{\pi^*h_t}(\widehat U;Y)\le2\sqrt d C_0$, proving
both asserted bounds without quotient shortcuts.
\end{proof}

\begin{corollary}[Metric growth on the cover]
\label{cor:cover-transfer}
Under the hypotheses of Proposition~\ref{thm:quotient-localization}, suppose
that every nonempty relatively compact open $\widehat U\subset Y$
satisfies
\[
 \limsup_{t\downarrow0}\mathfrak L_d(\widehat U,\pi^*h_t)=\infty.
\]
Then the quotient past end is $C^0$-inextendible.  Compactness of $X$ is
not required.  In particular the hypothesis follows from divergence,
on every such open set, of diameter, volume, or normalized packing
as in Lemma~\ref{lem:lipschitz-certificates}.
\end{corollary}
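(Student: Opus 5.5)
The plan is to argue by contradiction, combining one-chart localization with the lifted-slice bound. Suppose a $C^0$ extension $\iota:\cM\to\widetilde\cM$ exists. Since the past end is the end in question, I would first note that the past boundary $\past\iota(\cM)$ is nonempty. This is the standard reduction: a boundary point is reached by a timelike curve from the interior, and if only future boundary points occurred, the future-directed version of the argument would apply. In the setting intended here, the claim is the inextendibility of the past end, i.e.\ that no extension has nonempty past boundary. Granting this, Proposition~\ref{thm:quotient-localization} applies, because its hypotheses are assumed. It supplies the chart data $Q,f,c,W,q$ of Definition~\ref{def:one-chart}, with $q\in\cM$.

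Next I need an evenly covered connected $U\Subset X$ and a $T_*>0$ with $(0,T_*]\times U\subset I^-(q)$. Write $q=(t_q,x_q)$. By \eqref{eq:causal-length-bound} and \eqref{eq:vanishing-length}, $\Lambda_k(T)\to0$. Fix $T_*<t_q$. The vertical segment $(t,x_q)$ for $t<t_q$ lies in $I^-(q)$. Choose $U=B_k(x_q,r)$ with $r$ below the injectivity radius of the covering, so that $U$ is evenly covered. I would show that each point $(t,x)$ with $t\le T_*$ and $x\in U$ is in $I^-(q)$. The naive route is a short timelike curve from $(t,x)$ to $(T_*,x_q)$, but that needs the $h$-distance controlled by the proper-time gap, which fails near $t=0$.

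The better route is to use homogeneity. A transporter $\Theta_a$ as in Lemma~\ref{lem:transporters} moves $x_q$ to a nearby $x$ and preserves the time coordinate. Since $q$ is fixed and $\Theta_a$ moves it, I would compare instead with a slightly later point. Take $q'=(t_q',x_q)\ll q$ with $t_q'<t_q$. Then $I^+(q')$ contains an open neighbourhood $\{t_q\}\times B_k(x_q,r_0)$ of $q$ in its slice. For $a$ in a small identity neighbourhood, the point $\Theta_a(q')$ lies in $I^-(q)$, because a neighbourhood of $q'$ is contained in $I^-(q)$. Hence the vertical segment below $\Theta_a(q')$ also lies in $I^-(q)$. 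By the alignment statement \eqref{eq:transporter-alignment}, the spatial points $\vartheta_a(x_q)$ cover a $k$-ball $U$ around $x_q$. This gives $(0,t_q']\times U\subset I^-(q)$, so I can take $T_*=t_q'$.

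Proposition~\ref{prop:lifted-slices} now gives $\mathfrak L_d(\widehat U,\pi^*h_t)\le C$ for all $0<t\le T_*$, where $\widehat U$ is a fixed component over $U$. This $\widehat U$ is nonempty, open and relatively compact in $Y$, so the hypothesis $\limsup_{t\downarrow0}\mathfrak L_d=\infty$ contradicts this bound. Hence the past end is $C^0$-inextendible. No compactness of $X$ was used: the argument only needs $U$ to be relatively compact and evenly covered.

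For the final sentence, if diameter, volume or normalized packing of $\widehat U$ diverges along a sequence $t\downarrow0$, then Lemma~\ref{lem:lipschitz-certificates}, applied on $(Y,\pi^*h_t)$, forces $\mathfrak L_d\to\infty$ along that sequence. The main obstacle is the second step: producing an open tube $(0,T_*]\times U$ inside $I^-(q)$ that reaches all the way down to $t=0$. I expect the transporter argument to handle it, because the local isometries preserve time and carry whole vertical segments.
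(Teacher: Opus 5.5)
Your proof is correct and follows the paper's argument. You assume an extension with nonempty past boundary, obtain the localized chart, place a tube $(0,T_*]\times U$ inside $I^-(q)$, bound the lifted content by Proposition~\ref{prop:lifted-slices}, and contradict the growth hypothesis, with Lemma~\ref{lem:lipschitz-certificates} giving the final sentence. The transporter detour in your second step is unnecessary, however: openness of $I^-(q)$ already gives $\{t_q'\}\times B_k(x_q,r)\subset I^-(q)$ for small $r$, and the vertical timelike segments you use anyway then produce the tube directly, which is exactly how the paper does it.
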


\begin{proof}
A past extension supplies $q$ and the localized chart.  Choose
$(T_*,x)\in I^-(q)$ and a connected evenly covered neighborhood
$U\Subset X$ with $\{T_*\}\times\overline U\subset I^-(q)$.
Vertical timelike segments imply
$(0,T_*]\times U\subset I^-(q)$.  Choose $U$ small enough that a
component of its inverse image is relatively compact in $Y$.
Proposition~\ref{prop:lifted-slices} contradicts the assumed growth.
\end{proof}

This argument does not lift the extension and does not estimate the
ambient diameter downstairs.  In particular, shortcuts winding around
the quotient do not invalidate the conclusion.  It is the embedding
property of a localized slice parametrization, not a general lifting
property of arbitrary Lipschitz covers, that supplies connected preimages.

The preceding propositions convert localized metric growth into an
inextendibility statement.  Its local formulation is important on a
quotient, where different spatial regions need not be related by global
isometries.

\begin{proposition}[Quotient metric-size obstruction]
\label{thm:quotient-obstruction}
Under the hypotheses of Proposition~\ref{thm:quotient-localization}, suppose
there is a dense set $\mathcal D\subset X$ such that, for every
$x\in\mathcal D$ and every neighborhood $V$ of $x$, some nonempty
relatively compact open $U\Subset V$ satisfies
\begin{equation}
 \limsup_{t\downarrow0}\mathfrak L_d(U,h_t)=\infty.
 \label{eq:local-content-divergence}
\end{equation}
Then the past end is $C^0$-inextendible.  If the spacetime is also future
timelike geodesically complete, it is globally $C^0$-inextendible.
\end{proposition}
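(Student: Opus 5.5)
We argue by contradiction and work on the quotient $X$ itself, without lifting to $Y$. Suppose a $C^0$ extension with nonempty past boundary exists. Proposition~\ref{thm:quotient-localization} then supplies the one-chart data $Q,f,c,W,q$ of Definition~\ref{def:one-chart}. Proposition~\ref{prop:chart-bounds} gives a constant $C$ and smooth maps $F_t:[-1,1]^d\to(\Sigma_t,d_{h_t})$ with $\Lip(F_t)\le C$ whose images contain $I^-(q)\cap\Sigma_t$, for every slice meeting $I^-(q)$. It therefore suffices to find a nonempty open $U$ with $(0,T_*]\times U\subset I^-(q)$ for some $T_*>0$, and with $U$ inside a neighborhood of a point of $\mathcal D$ as the hypothesis demands.

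\textbf{Step 1: locating $U$.} Pick any point $(T_*,y)\in I^-(q)$. Since $I^-(q)$ is open, there is an open neighborhood $V$ of $y$ with $\{T_*\}\times V\subset I^-(q)$. Density of $\mathcal D$ gives $x\in\mathcal D\cap V$, and $V$ is a neighborhood of $x$. The hypothesis then yields a nonempty relatively compact open $U\Subset V$ satisfying \eqref{eq:local-content-divergence}. Vertical segments $s\mapsto(s,z)$ with $s<T_*$ are timelike, so $(0,T_*]\times U\subset I^-(q)$, exactly as in the proof of Corollary~\ref{cor:cover-transfer}.

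\textbf{Step 2: comparing contents.} For $0<t\le T_*$, identify $\Sigma_t$ with $X$ carrying $h_t$. Then $U\subset I^-(q)\cap\Sigma_t\subset F_t([-1,1]^d)$, and $F_t$ is an admissible map in \eqref{eq:lipschitz-content}. Here the content is measured with the ambient quotient distance $d_{h_t}$ on $X$, which is the same distance used in Proposition~\ref{prop:chart-bounds}. Hence $\mathfrak L_d(U,h_t)\le C$ for all $t\le T_*$. This contradicts $\limsup_{t\downarrow0}\mathfrak L_d(U,h_t)=\infty$, so the past boundary is empty.

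Unlike Corollary~\ref{cor:cover-transfer}, the hypothesis here is stated downstairs, so neither even covering nor the lifting of Proposition~\ref{prop:lifted-slices} is needed. The main point to check carefully is that the content in \eqref{eq:local-content-divergence} is computed with the quotient metric, so that the bound of Proposition~\ref{prop:chart-bounds} applies directly. If it were instead intended upstairs, Step~2 would be replaced by Proposition~\ref{prop:lifted-slices}, after shrinking $U$ to be evenly covered. The divergence hypothesis is inherited by subsets, since the content is monotone under inclusion, so this shrinking is harmless.

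\textbf{Step 3: the global statement.} Assume also future timelike geodesic completeness, and take any $C^0$ extension. Its boundary is nonempty, and the standard Galloway--Ling--Sbierski dichotomy \cite{GLS2018,Sbierski2018} gives a point of $\partial^+\iota(\cM)$ or of $\partial^-\iota(\cM)$. A future boundary point forces a future-inextendible timelike curve of finite length in $\cM$. With the product time function and global hyperbolicity, this contradicts future timelike geodesic completeness via the usual argument of \cite{GLS2018}, which is where the cited result must be invoked with some care. So the boundary point is past, and Steps~1--2 exclude it. If the extension is not time orientable, we pass to the double cover as described before Lemma~\ref{lem:boundary-lens}.
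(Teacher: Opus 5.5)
Your argument is correct and is essentially the paper's proof. Propositions~\ref{thm:quotient-localization} and~\ref{prop:chart-bounds} localize the extension. Openness of $I^-(q)$, density of $\mathcal D$ and vertical timelike segments then place $(0,T_*]\times U$ inside $I^-(q)$, the uniform Lipschitz constant bounds $\mathfrak L_d(U,h_t)$, and \cite{GLS2018} together with \cite[Lemma~2.17]{Sbierski2018} gives the global statement.

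One small slip appears in your unneeded aside. The content is monotone in the opposite direction: $U'\subset U$ gives $\mathfrak L_d(U',h)\le\mathfrak L_d(U,h)$, so divergence passes to supersets, not subsets. If you wanted $U$ evenly covered, you should instead choose $V$ evenly covered before invoking the hypothesis.
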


\begin{proof}
If a past extension existed, Proposition~\ref{thm:quotient-localization} and
Proposition~\ref{prop:chart-bounds} would give a point $q$ and a uniform
Lipschitz bound for $I^-(q)\cap\Sigma_t$.  Openness of $I^-(q)$ and the
product structure give $T_*>0$ and a nonempty open set $V\Subset X$ with
$(0,T_*]\times V\subset I^-(q)$.  Choose $x\in\mathcal D\cap V$ and then
$U\Subset V$ as in \eqref{eq:local-content-divergence}.  For every
$t\le T_*$, restriction of the map in \eqref{eq:uniform-map} covers $U$, so
$\mathfrak L_d(U,h_t)\le C$, a contradiction.

Future timelike completeness excludes a future boundary by the theorem of
Galloway-Ling-Sbierski \cite{GLS2018}.  Every proper extension has a past
or future boundary \cite[Lemma~2.17]{Sbierski2018}, which gives the global conclusion.
\end{proof}

Two elementary tests are useful on compact quotients.

\begin{lemma}[Invariant density and circle calibration]
\label{lem:quotient-tests}
Suppose $X=\Gamma\backslash Y$ carries a family of descended invariant
metrics $h_t$ and a fixed descended invariant metric $k$.

\begin{enumerate}[label=\textup{(\alph*)}]
\item If $\dd\Vol_{h_t}=J(t)\dd\Vol_k$ and
$\limsup_{t\downarrow0}J(t)=\infty$, then
\eqref{eq:local-content-divergence} holds.

\item Let $0\ne\alpha$ be a closed one-form with integral periods and let
$f:X\to\R/\mathbb Z$ be its path-integration map, defined after choosing
a base point, so $f^*(\dd\theta)=\alpha$.
Equip $\R/\mathbb Z$ with the quotient length metric of circumference one and
write
$\operatorname{osc}_{\R/\mathbb Z}(f|_U)
=\sup_{x,y\in U}d_{\R/\mathbb Z}(f(x),f(y))$.
For every nonempty $U\subset X$,
\begin{equation}
 \diam_{h_t}(U;X)\ge
 \frac{\operatorname{osc}_{\R/\mathbb Z}(f|_U)}
 {\|\alpha\|_{L^\infty(h_t)}}.
 \label{eq:circle-calibration}
\end{equation}
\end{enumerate}
\end{lemma}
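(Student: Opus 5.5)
For part (a) I would pass from volume density to Lipschitz content using \eqref{eq:lipschitz-volume} of Lemma~\ref{lem:lipschitz-certificates}. Fix any $x\in X$ and any neighborhood $V$ of $x$, and choose a nonempty relatively compact open $U\Subset V$. Then $U$ has positive, finite $k$-volume. Since $\dd\Vol_{h_t}=J(t)\dd\Vol_k$, we have $\Vol_{h_t}(U)=J(t)\Vol_k(U)$, and \eqref{eq:lipschitz-volume} gives
\[
 \mathfrak L_d(U,h_t)\ge\bigl(J(t)\Vol_k(U)/C_d\bigr)^{1/d}.
\]
Taking $\limsup_{t\downarrow0}$ yields \eqref{eq:local-content-divergence}, with $\mathcal D=X$. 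The only point to check is the case where no admissible cover exists. There $\mathfrak L_d=+\infty$ by convention, so the bound is trivial.

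For part (b) I would use a calibration argument. Take $x,y\in U$ and any piecewise $C^1$ curve $c$ from $x$ to $y$. Lift $f\circ c$ to $\R$. Because $f^*\dd\theta=\alpha$, the displacement of the lift equals $\int_c\alpha$. The circle distance $d_{\R/\mathbb Z}(f(x),f(y))$ is at most the absolute value of any real lift of the difference, so
\[
 d_{\R/\mathbb Z}(f(x),f(y))\le\Bigl|\int_c\alpha\Bigr|\le\|\alpha\|_{L^\infty(h_t)}\,L_{h_t}(c).
\]
Taking the infimum over curves turns the right side into $\|\alpha\|_{L^\infty(h_t)}\,d_{h_t}(x,y)$. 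Then take the supremum over $x,y\in U$. If $\|\alpha\|_{L^\infty(h_t)}=0$ then $\alpha=0$, which is excluded. Connectedness of $X$ guarantees that the ambient distance is finite and realized as an infimum over curves.

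I expect no real obstacle. The main care needed is to check that $f$ is well defined. Integral periods make path integration well defined modulo $\mathbb Z$, and this is exactly the fact the curve-lifting identity relies on. A further point is that $\|\alpha\|_{L^\infty(h_t)}$ is a constant when $\alpha$ is invariant, but the inequality above does not need that.
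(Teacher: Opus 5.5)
Your proposal is correct and follows the paper's proof. In part (a) you feed $\Vol_{h_t}(U)=J(t)\Vol_k(U)$ into the volume bound \eqref{eq:lipschitz-volume}, and in part (b) you use the same calibration inequality along an arbitrary curve, minimized over curves and then maximized over pairs in $U$; the only cosmetic difference is that you bound the circle distance by $\bigl|\int_c\alpha\bigr|$, whereas the paper uses the circle length $L_{\R/\mathbb Z}(f\circ c)$, which gives the same estimate.
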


\begin{proof}
For (a), every nonempty open $U$ has
$\Vol_{h_t}(U)=J(t)\Vol_k(U)$, and Lemma~\ref{lem:lipschitz-certificates}
applies.  For (b), every piecewise $C^1$ curve $c$ from $x$ to $y$ satisfies
\[
 L_{\R/\mathbb Z}(f\circ c)
 \le\|\alpha\|_{L^\infty(h_t)}L_{h_t}(c).
\]
Minimizing over $c$ and then taking the supremum over $x,y\in U$ proves
\eqref{eq:circle-calibration}.
\end{proof}

\subsection{A collapsing Kasner torus}
\label{subsec:irrational-kasner}

The distinction between lifted and quotient diameter already occurs in
vacuum Bianchi~I geometry.  The following example is an application of
Corollary~\ref{cor:cover-transfer}, independent of the two-step estimates
below.  It complements the simply connected Kasner result
\cite{Miethke2024} by making the effect of the lattice explicit.

\begin{proposition}[Continuous inextendibility with collapsing spatial diameter]
\label{prop:irrational-kasner}
Put $\vartheta=(1+\sqrt5)/2$, $c_\vartheta=\sqrt{1+\vartheta^2}$, and define
the constant one-forms
\[
 \eta_1=\frac{\dd x+\vartheta\dd y}{c_\vartheta},\qquad
 \eta_2=\frac{-\vartheta\dd x+\dd y}{c_\vartheta}
\]
on $\mathbb T^3=\R^3/\mathbb Z^3$.  The vacuum spacetime
\begin{equation}
 \cM=(0,\infty)\times\mathbb T^3,\qquad
 g=-\dd t^2+h_t,\qquad
 h_t=t^{-2/3}\eta_1^2+t^{4/3}(\eta_2^2+\dd z^2)
 \label{eq:irrational-kasner}
\end{equation}
is globally $C^0$-inextendible.  Nevertheless, there are constants
$0<c<C<\infty$ such that
\begin{equation}
 c t^{1/6}\le\diam(\mathbb T^3,h_t)\le C t^{1/6},\qquad
 \Vol(\mathbb T^3,h_t)=t,
 \qquad 0<t\le1.
 \label{eq:irrational-kasner-collapse}
\end{equation}
On the universal cover, every nonempty relatively compact open set
$\widehat U\subset\R^3$ instead satisfies
\begin{equation}
 \diam_{\pi^*h_t}(\widehat U;\R^3)
 \ge c_{\widehat U}t^{-1/3}
 \label{eq:irrational-kasner-lifted}
\end{equation}
for some $c_{\widehat U}>0$.
\end{proposition}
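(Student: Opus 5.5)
The plan is to treat the three assertions separately: inextendibility via Corollary~\ref{cor:cover-transfer} together with future completeness, the lifted bound \eqref{eq:irrational-kasner-lifted} by direct computation on $\R^3$, and the collapse \eqref{eq:irrational-kasner-collapse} by a covering argument on the torus.

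First, the metric is flat Kasner with exponents $(-1/3,2/3,2/3)$ in the orthonormal constant coframe $(\eta_1,\eta_2,\dd z)$. Hence it is vacuum and $\det h_t=t^{2}$ relative to the flat metric $k=\dd x^2+\dd y^2+\dd z^2$, so the volume equals $t$. The vanishing-length hypothesis \eqref{eq:vanishing-length} follows from \eqref{eq:causal-length-bound}: the least scale is $m(t)=t^{2/3}$ for $t\le1$, so $\int_0^T t^{-2/3}\,\dd t<\infty$. Axial timelike filling on the lift holds because $\R^3$ Kasner is a globally hyperbolic, simply connected slab. I expect it is future one-connected (the argument cited from \cite{Miethke2024}), or more simply, the spacetime is a conformally trivial product after the change of time, and fixed-endpoint timelike homotopies can be constructed by straight-line interpolation in a convex coordinate slab. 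The time function $\tau=\int\dd t/t^{2/3}$ makes all directions have speed at most one, but the metric is not conformally flat. I would therefore interpolate spatial projections linearly and reparametrize time, using that the timelike condition is a convex constraint on velocities for each $t$.

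On $\R^3$, any nonempty open $\widehat U$ contains a segment of $k$-length $\ell$ in the $\eta_1$ direction, whose $h_t$-length is $t^{-1/3}\ell$. Its endpoints have $\pi^*h_t$-distance at least $t^{-1/3}\ell$, because the translation-invariant form $\eta_1$ satisfies $|\eta_1|_{h_t}=t^{1/3}$, so that $\int\eta_1$ along any curve is bounded by $t^{1/3}$ times its $h_t$-length. This gives \eqref{eq:irrational-kasner-lifted}. Then Corollary~\ref{cor:cover-transfer} gives past $C^0$-inextendibility. Future timelike completeness is immediate, since $E$ is controlled by Proposition~\ref{prop:massless-future-completeness} with $n=3$, or directly because the geodesics of this flat Kasner spacetime are explicit. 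Galloway--Ling--Sbierski together with \cite[Lemma~2.17]{Sbierski2018} then yield the global statement, as in Proposition~\ref{thm:quotient-obstruction}.

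The main obstacle is the two-sided bound $\diam\asymp t^{1/6}$, which uses the irrationality of $\vartheta$. The lower bound follows from the volume: a ball of $h_t$-radius $D$ has volume at most $CD^3$ only if the injectivity structure is favorable, so the volume alone is not enough. Instead I would project onto the $z$-circle, whose scale is $t^{2/3}$, and onto the $(x,y)$-torus. A cleaner lower bound uses Lemma~\ref{lem:quotient-tests}(b): take $\alpha=m\,\dd x+n\,\dd y$ with $\|\alpha\|_{L^\infty(h_t)}$ minimized. We have $\|\alpha\|^2=t^{2/3}(m+\vartheta n)^2/c_\vartheta^2+t^{-4/3}(-\vartheta m+n)^2/c_\vartheta^2$ (the inverse metric). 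By badly approximable properties of the golden ratio, $|{-\vartheta m+n}|\gtrsim 1/|m|$, so $\|\alpha\|\gtrsim \min_m\bigl(t^{1/3}|m|+t^{-2/3}/|m|\bigr)\gtrsim t^{-1/6}$, attained at $|m|\sim t^{-1/2}$. This gives $\diam\gtrsim t^{1/6}$.

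For the upper bound, the plane $(x,y)$ with metric $t^{-2/3}\eta_1^2+t^{4/3}\eta_2^2$ modulo $\mathbb Z^2$ is a flat 2-torus of area $t^{1/3}$. Choosing a Fibonacci convergent $(q_k,p_k)$ with $q_k\sim t^{-1/2}$, the lattice vectors $(q_k,p_k)$ have length $\lesssim t^{-1/3}t^{1/2}\cdot$(small)$+t^{2/3}q_k\sim t^{1/6}$. Because the Fibonacci basis is unimodular, two such consecutive vectors form a reduced lattice basis of lengths $\sim t^{1/6}$. The torus diameter is therefore $\lesssim t^{1/6}$, and adding the $z$-circle of length $t^{2/3}\le t^{1/6}$ preserves this bound.
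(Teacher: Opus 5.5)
Most of your plan matches the paper's proof: the Kasner form in the rotated coframe, $m(t)=t^{2/3}$ and vanishing causal length, the lifted bound from $|\eta_1|_{h_t}=t^{1/3}$ on the cover, Corollary~\ref{cor:cover-transfer}, and future completeness from Theorem~\ref{thm:homogeneous-future}(i) with \cite{GLS2018}. Your upper diameter bound is also the paper's: a consecutive Fibonacci pair has determinant $\pm1$, hence is a basis of $\mathbb Z^2$, and both of its vectors have length $O(t^{1/6})$. Your filling paragraph ends with exactly the paper's argument. You parametrize both curves by $t$ and use $X_s=(1-s)X_0+sX_1$, which is timelike because the $h_t$-unit ball is convex. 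The earlier appeals to global hyperbolicity, simple connectivity and "conformal triviality" prove nothing and should be dropped. Also, the $(-1/3,2/3,2/3)$ Kasner metric is Ricci-flat, not flat.

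The genuine gap is the lower bound $\diam(\mathbb T^3,h_t)\ge ct^{1/6}$, where your inequality runs backwards.
\begin{itemize}
\item Lemma~\ref{lem:quotient-tests}(b) with $U=\mathbb T^3$ gives $\diam(\mathbb T^3,h_t)\ge 1/(2\|\alpha\|_{L^\infty(h_t)})$ for each nonzero integral $\alpha$. So you need one integral covector with an \emph{upper} bound $\|\alpha\|\le Ct^{-1/6}$.
\item Bad approximability gives the opposite: a \emph{lower} bound $\|\alpha\|\ge ct^{-1/6}$ for every integral covector. That yields no lower diameter bound. Combined with the planar estimate \eqref{eq:planar-dual-diameter}, it would instead reprove the \emph{upper} bound.
\item A correct choice is $\alpha=F_k\,\dd x+F_{k+1}\,\dd y$ with $\vartheta^k\le t^{-1/2}<\vartheta^{k+1}$. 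Since $|F_{k+1}-\vartheta F_k|=\vartheta^{-k}$ and $F_k+\vartheta F_{k+1}=\vartheta^{k+1}$, your norm formula gives $\|\alpha\|^2\le 2\vartheta^2c_\vartheta^{-2}t^{-1/3}$. Minkowski's theorem for the dual lattice, whose covolume is $t^{-1/3}$, gives the same.
\item The paper's route is simpler. The $(x,y)$ factor is a flat torus of area $t^{1/3}$. It is covered by the projection of a Euclidean disk whose radius is its diameter $D_t$, so $t^{1/3}\le\pi D_t^2$. The product structure then gives $\diam(\mathbb T^3,h_t)\ge D_t$.
\end{itemize}
This covering inequality needs no injectivity. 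Your three-dimensional volume argument fails only because it yields $t^{1/3}$ instead of $t^{1/6}$, not for the reason you give. It also uses no arithmetic: the lower bound holds for every slope, and only the upper bound depends on $\vartheta$.
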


\begin{proof}
The linear coordinates with differentials $\eta_1,\eta_2,\dd z$ put the
lifted metric in Kasner form with exponents $(-1/3,2/3,2/3)$.  Their sum
and sum of squares are both one, so $\Ric(g)=0$.  Relative to the standard
flat metric $k$ on $\mathbb T^3$, the least spatial scale on $0<t\le1$
is $m(t)=t^{2/3}$.  Thus
\[
 \Lambda_k(T)\le\int_0^Tt^{-2/3}\,\dd t=3T^{1/3}\longrightarrow0.
\]
The lifted spacetime is future one-connected.  Indeed, parametrize two
future timelike curves with common endpoints by $t$ on $[t_0,t_1]$ and
write their spatial parts as $X_0(t),X_1(t)\in\R^3$.
The reparametrization is itself a fixed-endpoint timelike homotopy, as
shown explicitly after \eqref{eq:two-step-spacetime}.  The interpolation
$X_s=(1-s)X_0+sX_1$ preserves endpoints and is timelike, because
\[
 |\dot X_s(t)|_{h_t}
 \le(1-s)|\dot X_0(t)|_{h_t}+s|\dot X_1(t)|_{h_t}<1.
\]
The affine function $q(x,y,z)=(x+\vartheta y)/c_\vartheta$ on the cover
satisfies $\dd q=\eta_1$ and
$|\dd q|_{(\pi^*h_t)^{-1}}=t^{1/3}$.  Every curve from $P$ to $Q$ in
the cover therefore has length at least $t^{-1/3}|q(P)-q(Q)|$.
Since $q$ is nonconstant on every nonempty open set, this proves
\eqref{eq:irrational-kasner-lifted}.  Corollary~\ref{cor:cover-transfer}
excludes a past $C^0$ extension.

To prove the upper diameter bound, let $F_0=0$, $F_1=1$ and
$F_{n+2}=F_{n+1}+F_n$.  The integer vectors
$v_n=(F_{n+1},-F_n)$ obey
\[
 \det(v_n,v_{n+1})=(-1)^{n+1},\qquad
 \eta_1(v_n)=\frac{(-1)^n\vartheta^{-n}}{c_\vartheta},\qquad
 \eta_2(v_n)=-\frac{\vartheta^{n+1}}{c_\vartheta}.
\]
These identities follow directly from the Fibonacci recurrence and
$\vartheta^2=\vartheta+1$.  Consequently $v_n,v_{n+1}$ form a basis
of $\mathbb Z^2$, and
\[
 |v_n|_{h_t}^2
 =c_\vartheta^{-2}\bigl(t^{-2/3}\vartheta^{-2n}
                +t^{4/3}\vartheta^{2n+2}\bigr).
\]
For $0<t\le1$, choose the integer $n\ge0$ with
$\vartheta^n\le t^{-1/2}<\vartheta^{n+1}$.  The last formula, for
$n$ and $n+1$, gives $|v_n|_{h_t}+|v_{n+1}|_{h_t}\le C t^{1/6}$.
Every point of $\mathbb T^3$ has a representative in the centered
parallelotope generated by $(v_n,0),(v_{n+1},0),(0,0,1)$.
The last generator has length $t^{2/3}\le t^{1/6}$.
Translation invariance and the triangle inequality give the upper bound
in \eqref{eq:irrational-kasner-collapse}.

For the lower bound, the $(x,y)$ factor is a flat two-torus of area
$t^{1/3}$.  If its diameter is $D_t$, projection of the Euclidean disk
of radius $D_t$, measured in its lifted flat metric, covers the torus.
Since this projection is a local isometry, the area formula gives
$t^{1/3}\le\pi D_t^2$.  The full metric is the Riemannian product of
this torus and a circle, so its diameter is at least $D_t$.
Finally $\eta_1\wedge\eta_2=\dd x\wedge\dd y$, and the three scale
factors have product $t$, proving the volume assertion.

Theorem~\ref{thm:homogeneous-future}\textup{(i)} gives future causal
completeness, since the metric is expanding and Ricci-flat with flat
spatial slices.  The completeness obstruction \cite{GLS2018} excludes
a future boundary, proving the global assertion.
\end{proof}

The compact slices in this example converge to a point in diameter,
although no continuous spacetime extension exists.  The obstruction is
detected by fixed open sets in the cover, not by the ambient size of the
compact slice.  The irrational direction is essential to the displayed
diameter collapse: the lattice bases controlling the shortest
representatives vary with $t$.

The behavior for a general slope is determined in
Appendix~\ref{app:kasner-arithmetic}.  Every irrational slope has collapse
subsequences, while a Diophantine approximation constant determines whether
the diameter tends to zero, has a finite positive upper limit, or has
unbounded excursions.  Rational slopes instead have a divergent
power-law asymptotic.  In every case the local vacuum geometry and
continuous-inextendibility conclusion are unchanged.

\section{Timelike filling on two-step groups}
\label{sec:two-step}

Let $-\infty\le t_-<t_+\le\infty$.  Let $\mathfrak n$ be a finite-dimensional
nilpotent Lie algebra of step exactly two, let $Z=Z(\mathfrak n)$, and put
$V=\mathfrak n/Z$.  The bracket induces
\begin{equation}
 \mathsf b:\Lambda^2V\longrightarrow Z,
 \qquad \mathsf b(\overline X,\overline Y)=[X,Y].
 \label{eq:quotient-bracket}
\end{equation}
Choose a linear section $s:V\to\mathfrak n$.  The exponential map identifies
the simply connected group $\mathsf N$ with $V\oplus Z$, where
\begin{equation}
 (v,z)(v',z')=(v+v',z+z'+\tfrac12\mathsf b(v,v')),
 \qquad \Theta=\dd z-\tfrac12\mathsf b(v,\dd v).
 \label{eq:group-law}
\end{equation}

\begin{lemma}[Schur decomposition]
\label{lem:schur}
Every positive-definite inner product $h_t$ on $\mathfrak n$ has a unique
representation
\begin{equation}
 h_t=H_t(\dd v,\dd v)+
 A_t(\Theta+L_t\dd v,\Theta+L_t\dd v),
 \label{eq:schur-form}
\end{equation}
where $H_t$ and $A_t$ are positive-definite forms on $V$ and $Z$,
respectively, and $L_t:V\to Z$.  The forms $H_t$ and $A_t$ are independent
of the section.  If $s'=s+K$, then
$L_t'=L_t+K$ and $\Theta'=\Theta-K\dd v$, so the last term in
\eqref{eq:schur-form} is unchanged.
\end{lemma}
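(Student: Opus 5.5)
The plan is to treat \eqref{eq:schur-form} as a block Cholesky (Schur complement) factorization of a quadratic form. The coframe $(\dd v,\Theta)$ is a basis of left-invariant one-forms on $\mathsf N$, since $\Theta$ differs from $\dd z$ only by a term linear in $\dd v$. Hence $h_t$ can be written as a quadratic form in the pair $(\dd v,\Theta)$, with blocks $P$ on $V\times V$, $M$ on $V\times Z$, and $A$ on $Z\times Z$. Positive definiteness makes $A$ invertible. I will complete the square in the $Z$-variables: set $L_t=A^{-1}M^{\mathsf T}$, regarded as a map $V\to Z$, and $H_t=P-M A^{-1}M^{\mathsf T}$, the Schur complement. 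A direct expansion then gives \eqref{eq:schur-form}.

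Positivity is checked on two kinds of covectors. Evaluating on covectors with $\dd v$-component zero shows $A_t>0$. Evaluating on covectors with $\Theta+L_t\dd v=0$ shows $H_t>0$.

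For uniqueness, suppose two representations agree. Comparing the $\Theta\otimes\Theta$ blocks forces equal $A$. Comparing the mixed blocks then forces equal $L$, using invertibility of $A$. The remaining $V\times V$ block gives equal $H$.

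The intrinsic description is what shows independence of the section. By duality, $A_t^{-1}$ is the restriction of the dual metric $h_t^{-1}$ to the annihilator of $Z$ in $\mathfrak n^*$, which is spanned by $\dd v$. Equivalently, $A_t$ is the restriction of $h_t$ to $Z$. This holds because on vectors tangent to $Z$ the form $\dd v$ vanishes and $\Theta$ restricts to the identity on $Z$. Similarly, $H_t$ is the quotient metric on $V=\mathfrak n/Z$, namely the minimum of $h_t(X,X)$ over the lifts $X$ of $\overline X$, and this minimization is what produces the Schur complement. Since the restriction to $Z$ and the quotient norm on $\mathfrak n/Z$ refer only to $Z$ and $h_t$, both forms are independent of the section $s$.

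Finally I will check the transformation rule for a change of section $s'=s+K$ with $K:V\to Z$. The induced coordinates change by $z'=z-Kv$, while $v$ is unchanged. I need to verify that the group law retains the form \eqref{eq:group-law}, which holds because $K$ takes values in the center and brackets ignore central terms. It then follows that $\Theta'=\Theta-K\dd v$. Substituting into the unique representation gives $L'_t=L_t+K$, so that $\Theta'+L'_t\dd v=\Theta+L_t\dd v$ and the last term is unchanged.

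The main obstacle is the careful bookkeeping of the exponential coordinates under a change of section. One has to confirm that the Baker–Campbell–Hausdorff form and the expression for $\Theta$ transform exactly as stated, including sign conventions.
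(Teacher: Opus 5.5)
Your proof is correct and follows the paper's route: your $L_t=A^{-1}M^{\mathsf T}$ and Schur complement are exactly the paper's definitions $A_t(L_tv,z)=h_t(sv,z)$ and $H_t(v,w)=h_t(sv,sw)-A_t(L_tv,L_tw)$, and you check positivity on $Z$ and on $sv-L_tv$ as the paper does (these are vectors, not ``covectors''); your block-comparison uniqueness and your section independence via $h_t|_Z$ and the quotient metric \eqref{eq:quotient-characterization} simply make explicit what the paper deduces from the transformation rule. One remark is false and should be dropped: the restriction of $h_t^{-1}$ to $\operatorname{Ann}(Z)=\operatorname{span}\dd v$ is $H_t^{-1}$, not $A_t^{-1}$ (it is the dual form of the quotient characterization; compare the upper-left block of $G_t^{-1}$ in Proposition~\ref{prop:trace-cone}), although nothing in your argument depends on it.
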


\begin{proof}
Set $A_t=h_t|_Z$ and define $L_t$ by
$A_t(L_tv,z)=h_t(sv,z)$.  Then
\[
 H_t(v,w)=h_t(sv,sw)-A_t(L_tv,L_tw).
\]
For $v\ne0$,
$H_t(v,v)=h_t(sv-L_tv,sv-L_tv)>0$.  Expanding the right-hand side of
\eqref{eq:schur-form} recovers all entries of $h_t$.  The stated
transformation rule follows by replacing $s$ with $s+K$, and proves section
independence.
\end{proof}

Consider the orthogonal invariant spacetime
\begin{equation}
 g=-N(t)^2\dd t^2+h_t
 \quad\text{on}\quad (t_-,t_+)\times\mathsf N,
 \qquad N>0,
 \label{eq:two-step-spacetime}
\end{equation}
with $\partial_t$ future directed.  A future timelike curve can be
reparametrized by $t$.  We assume $N$ and $t\mapsto h_t$ smooth throughout
the open interval.  To justify the reparametrization, write a curve on
$[0,1]$ as $\Gamma(r)$ with time coordinate $t(r)$ and endpoints $a,b$.
The maps
$\psi_\theta(r)=(1-\theta)r+\theta t^{-1}(a+(b-a)r)$ are increasing
piecewise $C^1$ diffeomorphisms with positive derivatives.
Then $\Gamma\circ\psi_\theta$ is a fixed-endpoint timelike homotopy to
the coordinate-time parametrization.  We shall work with curves of the form
$\Gamma(t)=(t,v(t),z(t))$.

Fix $T\in(t_-,t_+)$.  For $I=[a,b]\Subset(t_-,T)$ and
$\xi\in W^{1,2}_0(I;V)$, define its bracket area by
\begin{equation}
 Q_I(\xi)=\frac12\int_a^b\mathsf b(\xi,\dot\xi)\,\dd t.
 \label{eq:bracket-area}
\end{equation}
When $Q_I(\xi)\ne0$, put
\begin{equation}
 J_I(\xi)=\int_a^b
 \frac{H_t(\dot\xi,\dot\xi)}
 {N(t)\sqrt{A_t(Q_I(\xi),Q_I(\xi))}}\,\dd t,
 \qquad
 \kappa_{\rm an}(T)=
 \sup_{I\Subset(t_-,T)}\sup_{Q_I(\xi)\ne0}J_I(\xi)^{-1}.
 \label{eq:kappa-an}
\end{equation}
We use the convention $\sup\varnothing=0$.  The quotient metric $H_t$, the
central metric $A_t$, and $Q_I$ are intrinsic, so $\kappa_{\rm an}$ does not
depend on the section.  It is also unchanged by an orientation-preserving
time reparametrization.

A convenient sufficient bound is obtained from a fixed inner product on
$Z$.  Let $|\cdot|_*$ be its norm, set
\begin{equation}
 a_+(t)=\sup_{z\ne0}\frac{\sqrt{A_t(z,z)}}{|z|_*},
 \qquad
 \mathcal E_I(\xi)=\int_a^b
 \frac{H_t(\dot\xi,\dot\xi)}{N(t)a_+(t)}\,\dd t,
 \label{eq:computable-energy}
\end{equation}
and define
\begin{equation}
 \kappa_{\rm Sch}(T)=
 \sup_{I\Subset(t_-,T)}\sup_{0\ne\xi\in W^{1,2}_0(I;V)}
 \frac{|Q_I(\xi)|_*}{\mathcal E_I(\xi)}.
 \label{eq:kappa-schur}
\end{equation}
Since $\sqrt{A_t(Q,Q)}\le a_+(t)|Q|_*$,
\begin{equation}
 \kappa_{\rm an}(T)\le\kappa_{\rm Sch}(T).
 \label{eq:kappa-comparison}
\end{equation}

The constant one half enters through the following finite-dimensional
estimate.

\begin{lemma}[Pointwise timelike convexity]
\label{lem:convexity}
Let $U,W$ be finite-dimensional inner-product spaces.  Suppose
$(u_j,w_j)\in U\oplus W$, $j=0,1$, satisfy
$|u_j|^2+|w_j|^2<1$.  For $0\le\theta\le1$, set
\[
 \alpha=\theta(1-\theta),\quad
 \overline u=(1-\theta)u_0+\theta u_1,\quad
 \overline w=(1-\theta)w_0+\theta w_1,\quad
 e=|u_1-u_0|^2.
\]
If $|D|\le\kappa\alpha e$ with $0\le\kappa\le\tfrac12$, then
\begin{equation}
 |\overline u|^2+|\overline w+D|^2<1.
 \label{eq:convexity-conclusion}
\end{equation}
\end{lemma}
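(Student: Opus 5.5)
The plan is to combine the parallelogram (convexity-defect) identity in $U\oplus W$ with a completed-square estimate. The constant $\kappa\le\frac12$ then appears exactly as the coefficient of the first-order Taylor bound $\sqrt{1-y}\le1-y/2$.

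\emph{Step 1: convexity defect.} Put $x_j=(u_j,w_j)\in U\oplus W$ and $\overline x=(1-\theta)x_0+\theta x_1=(\overline u,\overline w)$. Expanding the square gives the standard identity
\[
 |\overline x|^2=(1-\theta)|x_0|^2+\theta|x_1|^2-\alpha|x_1-x_0|^2 .
\]
Since $|x_j|^2<1$ and $|x_1-x_0|^2\ge|u_1-u_0|^2=e$, this yields
\[
 S:=|\overline u|^2+|\overline w|^2<1-\alpha e .
\]
The strict inequality holds even when $\alpha=0$, because then $\overline x$ is one of the $x_j$.

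\emph{Step 2: absorb the central correction.} By the triangle inequality in $U\oplus W$ (or by Cauchy--Schwarz together with $|\overline w|\le\sqrt S$),
\[
 |\overline u|^2+|\overline w+D|^2\le S+2\sqrt S\,|D|+|D|^2=(\sqrt S+|D|)^2 .
\]
The hypothesis $|D|\le\kappa\alpha e\le\alpha e/2$ then gives
\[
 |\overline u|^2+|\overline w+D|^2<\bigl(\sqrt{1-\alpha e}+\alpha e/2\bigr)^2 .
\]
This uses that both $\sqrt{1-\alpha e}$ and $\alpha e$ are nonnegative. The first is well defined because $\alpha\le\frac14$ and $e\le(|u_0|+|u_1|)^2<4$, so $y:=\alpha e<1$.

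\emph{Step 3: the scalar inequality.} It remains to show $\sqrt{1-y}\le1-y/2$ for $0\le y\le1$. Both sides are nonnegative there, and squaring gives $1-y\le1-y+y^2/4$, which is true. Hence the right-hand side of Step~2 is at most $1$, and the strict inequality from Step~1 propagates to give \eqref{eq:convexity-conclusion}.

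The only delicate point is Step~2. The naive bound $2|\overline w||D|+|D|^2\le2|D|+|D|^2$ loses the quadratic term and fails for $\kappa=\frac12$. The fix is to keep $|\overline w|\le\sqrt S$ and complete the square, so that the whole defect $\alpha e$ from Step~1 is used. This also shows that $\frac12$ is the sharp constant produced by this argument, since the tangent-line bound $\sqrt{1-y}\le1-y/2$ has exactly slope $\frac12$ at $y=0$.
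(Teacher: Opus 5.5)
Your proof is correct and follows the same route as the paper's: the parallelogram identity produces the defect $\alpha e$, and the cross term is bounded while retaining $|\overline w|^2\le S<1-\alpha e$. The only difference is the final step, where the paper shows with $r=|\overline w|$ that $-1+2\kappa r+\kappa^2\alpha e\le-\tfrac14(1-r)(3-r)<0$, whereas you complete the square and use $\sqrt{1-y}\le 1-y/2$; this is the same estimate written differently.
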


\begin{proof}
For $0<\theta<1$, the parallelogram identity gives
\[
 |\overline u|^2+|\overline w|^2
 <1-\alpha e-\alpha|w_1-w_0|^2.
\]
Put $r=|\overline w|$.  Then $r<1$ and $\alpha e<1-r^2$.  Therefore
\begin{align*}
 |\overline u|^2+|\overline w+D|^2
 &<1-\alpha|w_1-w_0|^2\\
 &\quad+\alpha e(-1+2\kappa r+\kappa^2\alpha e).
\end{align*}
For $\kappa\le1/2$, the last bracket is at most
\[
 -1+r+\tfrac14(1-r^2)
 =-\tfrac14(1-r)(3-r)<0.
\]
The endpoint cases have $D=0$ and are immediate.
\end{proof}

\subsection{Optimal allocation of the central correction}\label{subsec:optimal-allocation}
For a compact interval $I=[a,b]$ and a nonzero based loop
$\xi\in W^{1,2}_0(I;V)$, put
\[
 f_\xi(t)=\frac{H_t(\dot\xi,\dot\xi)}{N(t)},\qquad
 \kappa_{\rm opt}(T)=\sup_{I\Subset(t_-,T)}\sup_{\xi\ne0}k_I(\xi).
\]
\begin{equation}\label{eq:kappa-opt}
 k_I(\xi)=\sup_{0\ne\lambda\in Z^*}
 \frac{|\lambda(Q_I(\xi))|}
 {\displaystyle\int_I f_\xi(t)
          \sqrt{A_t^{-1}(\lambda,\lambda)}\,\dd t}.
\end{equation}
Here $A_t^{-1}$ is the dual quadratic form.  The denominator is positive
for every nonzero $\lambda$, since $f_\xi\ge0$ is not zero almost
everywhere.  These quantities are independent of a central basis, a
horizontal section, and an increasing time reparametrization.

\begin{lemma}[Allocation of a central correction]
\label{lem:central-allocation}
Let $A_t$ be a smooth positive metric on a finite-dimensional space $Z$
over a compact interval $I$, and let $f\in L^1(I)$ be nonnegative with
$\int_I f>0$.  For $q\in Z$ define
\[
 k(q;f,A)=\sup_{\lambda\ne0}
 \frac{|\lambda(q)|}{\int_I f(t)
                  \sqrt{A_t^{-1}(\lambda,\lambda)}\,\dd t}.
\]
For $k\ge0$ there exists a measurable $c:I\to Z$ satisfying
\[
 \int_I c(t)\,\dd t=q,\qquad |c(t)|_{A_t}\le kf(t)
 \quad\hbox{almost everywhere}
\]
if and only if $k\ge k(q;f,A)$.  Whenever the condition holds, one may
choose $c(t)=f(t)d(t)$ with $d$ smooth on $I$ and
$|d(t)|_{A_t}\le k$.  Thus, for each fixed triple $(q,f,A)$, the
infimum over admissible uniform bounds is attained; no attainment of the
later supremum over intervals and loops is asserted.
\end{lemma}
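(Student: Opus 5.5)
The proof is a finite-dimensional convex duality: the set of achievable integrals is a convex body whose support function is exactly the denominator in $k(q;f,A)$.

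\emph{Necessity.} Suppose $c$ is admissible with bound $k$. For any $\lambda\ne0$, the dual-norm inequality $|\lambda(c(t))|\le\sqrt{A_t^{-1}(\lambda,\lambda)}\,|c(t)|_{A_t}$ gives
\[
 |\lambda(q)|=\Bigl|\int_I\lambda(c)\Bigr|\le k\int_I f\sqrt{A_t^{-1}(\lambda,\lambda)}\,\dd t .
\]
Taking the supremum over $\lambda$ yields $k\ge k(q;f,A)$.

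\emph{Sufficiency.} Define the set of reachable integrals
\[
 \mathcal K=\Bigl\{\int_I f(t)d(t)\,\dd t : d \text{ measurable},\ |d(t)|_{A_t}\le 1\Bigr\}\subset Z .
\]
This set is convex and symmetric. I will compute its support function: for each $\lambda$,
\[
 h_{\mathcal K}(\lambda)=\sup_{x\in\mathcal K}\lambda(x)=\int_I f\sqrt{A_t^{-1}(\lambda,\lambda)}\,\dd t .
\]
The supremum is attained by the pointwise maximizer $d_\lambda(t)=A_t^{-1}\lambda/\sqrt{A_t^{-1}(\lambda,\lambda)}$, where $A_t^{-1}\lambda$ denotes the vector dual to $\lambda$; this $d_\lambda$ is smooth in $t$.

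Next I need $\mathcal K$ to be closed. The map $d\mapsto\int f d$ is weak-$*$ continuous on the $L^\infty$ ball, that ball is weak-$*$ compact, so the image $\mathcal K$ is compact. Its support function is strictly positive for $\lambda\ne0$, so $\mathcal K$ has nonempty interior. The hypothesis $k\ge k(q;f,A)$ says exactly that $\lambda(q)\le k\,h_{\mathcal K}(\lambda)$ for all $\lambda$. By the Hahn--Banach separation theorem in finite dimensions, this gives $q\in k\mathcal K$. When $k=0$, the condition forces $q=0$ and $c=0$ works.

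\emph{Smoothness of $d$.} This is the main obstacle, because a point of $k\mathcal K$ is a priori realized only by a measurable $d$. My plan is to show that the smooth realizations already fill $k\mathcal K$, and for this I treat interior and boundary points separately.

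\emph{Interior points.} Every extreme point of $\mathcal K$ in an exposed direction $\lambda$ is realized by the smooth $d_\lambda$. Hence the convex hull of the smooth realizations is dense in $\mathcal K$. By convexity of the pointwise constraint, the smooth realizations themselves form a convex set: convex combinations of smooth $d$'s are smooth and still satisfy the bound. A convex dense subset of a convex body contains its interior, so every interior point is realized smoothly.

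\emph{Boundary points.} If $q\in\partial(k\mathcal K)$ with $q\ne0$, there is a supporting $\lambda$ with $\lambda(q)=k\,h_{\mathcal K}(\lambda)$. Any admissible $d$ achieving $q$ must then satisfy $\lambda(d(t))=\sqrt{A_t^{-1}(\lambda,\lambda)}$ on the set $\{f>0\}$. Strict convexity of the $A_t$-ball forces $d=d_\lambda$ there, and off $\{f>0\}$ the value of $d$ does not matter. So $d_\lambda$ itself is a smooth realization.

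Attainment of the infimum over admissible bounds then follows from the "if" direction applied with $k=k(q;f,A)$.
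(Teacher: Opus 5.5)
Your proof is correct, but your route to sufficiency differs from the paper's; your necessity step is the same pairing argument.

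\textbf{The paper's route.} The paper never forms the attainable set of measurable controls. It defines $K=\{x:\lambda(x)\le h(\lambda)\ \text{for all }\lambda\}$ with $h(\lambda)=\int_I f\sqrt{A_t^{-1}(\lambda,\lambda)}\,\dd t$, so $q/k\in K$ is just a restatement of the hypothesis. No Banach--Alaoglu compactness and no separation theorem are needed.
\begin{itemize}
\item Boundary points: $h$ is differentiable off the origin, so each supporting face is the single point $\nabla h(\lambda)=\int_I f d_\lambda\,\dd t$. This is the same information as your equality case in Cauchy--Schwarz.
\item Interior points: the paper minimizes the coercive smooth function $\int_I f\sqrt{1+A_t^{-1}(\lambda,\lambda)}\,\dd t-\lambda(q/k)$. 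The critical-point equation gives one explicit smooth control, $d=kA_t^{-1}\lambda_0/\sqrt{1+A_t^{-1}(\lambda_0,\lambda_0)}$, with the strict bound $|d|_{A_t}<k$.
\end{itemize}

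\textbf{Your route.} You identify the attainable set as a compact, strictly convex body whose exposed points are realized by the saturated controls $d_\lambda$. You then realize interior points as finite convex combinations of these controls.
\begin{itemize}
\item What it buys: it is more conceptual, and it shows directly that measurable and smooth controls reach the same set.
\item What it costs: it needs infinite-dimensional weak-$*$ compactness and a finite-dimensional convexity lemma. The paper's argument stays finite dimensional and constructive, and yields a single closed-form control.
\end{itemize}

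\textbf{Two small remarks.}
\begin{itemize}
\item The density of $\operatorname{conv}\{\int_I f d_\lambda\,\dd t\}$ in $\mathcal K$ is most cleanly justified by noting that this convex set already has support function $h_{\mathcal K}$, so its closure is $\mathcal K$. Your phrase about extreme points in exposed directions tacitly invokes Straszewicz's theorem.
\item Once your boundary step is proved, you can drop the density lemma entirely. Every interior point of a compact convex body lies on a chord whose endpoints are boundary points. So a convex combination of two saturated controls $d_{\lambda_1},d_{\lambda_2}$ already realizes it.
\end{itemize}
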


\begin{proof}
Necessity follows by pairing the integral with any covector and applying
the dual Cauchy-Schwarz inequality.  If $q=0$, take $c=0$.
Otherwise the condition forces $k>0$.  Fix auxiliary Euclidean
coordinates solely for this proof and define
\[
 h(\lambda)=\int_I f(t)
                    \sqrt{A_t^{-1}(\lambda,\lambda)}\,\dd t,
 \qquad
 K=\{x\in Z:\lambda(x)\le h(\lambda)\ \hbox{for all }\lambda\in Z^*\}.
\]
Uniform positive definiteness on $I$ implies
$a|\lambda|\le h(\lambda)\le b|\lambda|$ for constants $a,b>0$.
Thus $K$ is a compact, convex, centrally symmetric body with nonempty
interior.  The hypothesis says $y=q/k\in K$.

Uniform positive definiteness of $A_t$ makes the relevant difference
quotients uniformly dominated on the auxiliary unit sphere.  Hence, for
$\lambda\ne0$, differentiation under the integral gives
\[
 \nabla h(\lambda)=\int_I f(t)
 \frac{A_t^{-1}\lambda}{\sqrt{A_t^{-1}(\lambda,\lambda)}}\,\dd t.
\]
For every covector $\mu$, pointwise dual Cauchy-Schwarz gives
$\mu(\nabla h(\lambda))\le h(\mu)$, whereas
$\lambda(\nabla h(\lambda))=h(\lambda)$.  Consequently $h$ is the
support function of $K$.  If $y\in\partial K$, choose a nonzero
supporting covector $\lambda$.  Differentiability of $h$ implies that
its supporting face consists of the single point $\nabla h(\lambda)$:
indeed $\mu(y)\le [h(\lambda+s\mu)-h(\lambda)]/s$ for $s>0$,
and the inequality with $-\mu$ gives equality after $s\downarrow0$.
Thus
\[
 d(t)=k\frac{A_t^{-1}\lambda}
                 {\sqrt{A_t^{-1}(\lambda,\lambda)}}
\]
is smooth, has $A_t$-norm $k$, and satisfies $\int_I fd=q$.

If $y$ lies in the interior of $K$, consider instead
\[
 F(\lambda)=\int_I f(t)
                  \sqrt{1+A_t^{-1}(\lambda,\lambda)}\,\dd t.
\]
Because $y$ is interior, there is $\delta>0$ such that
$h(\lambda)-\lambda(y)\ge\delta|\lambda|$.  Since $F\ge h$,
the smooth function $F(\lambda)-\lambda(y)$ is coercive and attains a
minimum at some $\lambda_0$.  Its first derivative vanishes, so
\[
 y=\int_I f(t)
       \frac{A_t^{-1}\lambda_0}
            {\sqrt{1+A_t^{-1}(\lambda_0,\lambda_0)}}\,\dd t.
\]
Taking $d(t)$ to be $k$ times the integrand after the factor $f(t)$
gives a smooth function with $|d(t)|_{A_t}<k$ and $\int_I fd=q$.
This proves sufficiency, including the boundary and optimal cases.
\end{proof}

For $Q\ne0$ the direction-preserving choice
\[
 c(t)=\frac{f_\xi(t)Q}{J_I(\xi)\sqrt{A_t(Q,Q)}}
\]
has integral $Q$ and norm $|c(t)|_{A_t}=f_\xi(t)/J_I(\xi)$.
The minimality part of Lemma~\ref{lem:central-allocation} therefore
gives $k_I(\xi)\le J_I(\xi)^{-1}$.
Taking suprema gives
\begin{equation}\label{eq:optimal-anisotropic-comparison}
 \kappa_{\rm opt}(T)\le\kappa_{\rm an}(T)\le\kappa_{\rm Sch}(T).
\end{equation}
If $A_t=a(t)^2A_0$, duality of the fixed central norm instead gives
\[
 k_I(\xi)=\frac{|Q_I(\xi)|_{A_0}}
 {\int_I H_t(\dot\xi,\dot\xi)/(N(t)a(t))\,\dd t}
 =J_I(\xi)^{-1},
\]
with both expressions zero when $Q_I(\xi)=0$.  Thus
$\kappa_{\rm opt}=\kappa_{\rm an}$ for a conformal central metric.

\begin{proposition}[Timelike filling for arbitrary Schur metrics]
\label{thm:schur-filling}
If $\kappa_{\rm opt}(T)\le\tfrac12$, then
$(t_-,T)\times\mathsf N$ is future one-connected.  No additional bound on
$L_t$ or on the condition number of $A_t$ is required.  By
\eqref{eq:optimal-anisotropic-comparison}, either
$\kappa_{\rm an}(T)\le\tfrac12$ or
$\kappa_{\rm Sch}(T)\le\tfrac12$ also suffices.
\end{proposition}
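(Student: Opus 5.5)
Given two future timelike curves $\Gamma_0,\Gamma_1$ in $(t_-,T)\times\mathsf N$ with common endpoints, I first reparametrize both by coordinate time on a common interval $I=[a,b]\Subset(t_-,T)$, using the explicit homotopy $\Gamma\circ\psi_\theta$ described after \eqref{eq:two-step-spacetime}. I then write $\Gamma_j(t)=(t,v_j(t),z_j(t))$. Timelikeness becomes the pointwise condition
\[
 H_t(\dot v_j,\dot v_j)+A_t(\dot\Theta_j,\dot\Theta_j)<N(t)^2,
\]
where $\dot\Theta_j=\dot z_j-\tfrac12\mathsf b(v_j,\dot v_j)+L_t\dot v_j$. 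Dividing by $N$, set $u_j=H_t^{1/2}\dot v_j/N$ and $w_j=A_t^{1/2}(\dot\Theta_j)/N$ in orthonormal frames for $H_t$ and $A_t$. Then $|u_j|^2+|w_j|^2<1$, which is exactly the setting of Lemma~\ref{lem:convexity}.

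For $\theta\in[0,1]$ I define the horizontal interpolation $v_\theta=(1-\theta)v_0+\theta v_1$ and $\xi=v_1-v_0\in W^{1,2}_0(I;V)$. Naively interpolating the central component $\Theta$-velocity introduces an endpoint error in $z$. Expanding $\tfrac12\mathsf b(v_\theta,\dot v_\theta)$ bilinearly shows that the cross terms produce exactly $-\alpha\,\tfrac12\mathsf b(\xi,\dot\xi)$ plus exact derivatives, so after integration the defect is $\alpha Q_I(\xi)$ with $\alpha=\theta(1-\theta)$. The $L_t$ term is linear in $\dot v$ and interpolates without error, which is why no bound on $L_t$ is needed.

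To cancel this defect I define the central velocity of the interpolated curve as the average of the $\dot\Theta_j$ plus a correction $\alpha c(t)$, where $\int_I c=\pm Q_I(\xi)$, sign fixed by the computation. By Lemma~\ref{lem:central-allocation}, applied with $f=f_\xi=H_t(\dot\xi,\dot\xi)/N$ and $q=Q_I(\xi)$, the hypothesis $k_I(\xi)\le\kappa_{\rm opt}(T)\le\tfrac12$ yields $c$ with $|c(t)|_{A_t}\le\tfrac12 f_\xi(t)$ almost everywhere, chosen of the form $f_\xi d$ with $d$ smooth. After dividing by $N$, the correction satisfies $D=\alpha c/N$ with $|D|\le\tfrac12\alpha|u_1-u_0|^2=\tfrac12\alpha e$, since $e=H_t(\dot\xi,\dot\xi)/N^2$. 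Lemma~\ref{lem:convexity} with $\kappa=\tfrac12$ then gives strict timelikeness for every $\theta$. Integrating the central velocity recovers $z_\theta$ with the correct endpoints, and the map is continuous in $\theta$. If $\xi=0$ or $Q_I(\xi)=0$, I take $c=0$.

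The main obstacle I expect is bookkeeping. First, the exact cross-term identity for the bracket defect must be checked, including its sign and the boundary terms, which vanish because $\xi$ vanishes at the endpoints. Second, the interpolated curves must remain piecewise $C^1$. Since $c=f_\xi d$ with $d$ smooth and $\dot v_j$ piecewise continuous, $c$ is piecewise continuous, so $z_\theta$ is piecewise $C^1$. Continuity in $\theta$ holds because a single $c$, fixed independently of $\theta$, multiplies $\alpha$. Composing the reparametrization homotopies with this homotopy completes the proof. The final sentence of the proposition then follows from \eqref{eq:optimal-anisotropic-comparison}.
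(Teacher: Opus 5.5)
Your proposal is correct and follows essentially the same route as the paper. The shared steps are coordinate-time reparametrization, affine interpolation of $v$ and of the body controls $\eta_j=\dot z_j-\tfrac12\mathsf b(v_j,\dot v_j)+L_t\dot v_j$, cancellation of the bracket defect by a correction $\theta(1-\theta)f_\xi d$ supplied by Lemma~\ref{lem:central-allocation}, and strict timelikeness from Lemma~\ref{lem:convexity} with $\kappa=\tfrac12$. One small sharpening: the defect identity is pointwise, $\mathsf b(v_\theta,\dot v_\theta)=(1-\theta)\mathsf b(v_0,\dot v_0)+\theta\mathsf b(v_1,\dot v_1)-\theta(1-\theta)\mathsf b(\xi,\dot\xi)$, so no exact-derivative terms arise and the correction must have integral $+\theta(1-\theta)Q_I(\xi)$.
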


\begin{proof}
Let $\Gamma_j(t)=(t,v_j(t),z_j(t))$, $j=0,1$, be two piecewise $C^1$
future timelike curves with common endpoints on $I=[a,b]\Subset(t_-,T)$.
Refine their decompositions to a common subdivision and put
$\xi=v_1-v_0\in W^{1,2}_0(I;V)$.  Define the central body controls
\begin{equation}
 \zeta_j=\dot z_j-\tfrac12\mathsf b(v_j,\dot v_j),
 \qquad \eta_j=\zeta_j+L_t\dot v_j.
 \label{eq:body-controls}
\end{equation}
Timelikeness is equivalent to
\begin{equation}
 \frac{H_t(\dot v_j,\dot v_j)}{N^2}
 +\frac{A_t(\eta_j,\eta_j)}{N^2}<1.
 \label{eq:timelike-controls}
\end{equation}

Let $Q=Q_I(\xi)$.  For $Q\ne0$, Lemma~\ref{lem:central-allocation}
gives a smooth $d:I\to Z$ with
$\int_I f_\xi d\,\dd t=Q$ and
$|d(t)|_{A_t}\le k_I(\xi)\le\kappa_{\rm opt}(T)$.  Set
\begin{equation}
 c_\theta(t)=\theta(1-\theta)N(t)e_\xi(t)d(t),\qquad
 e_\xi(t)=\frac{H_t(\dot\xi,\dot\xi)}{N(t)^2}.
 \label{eq:distributed-correction}
\end{equation}
If $Q=0$, set $c_\theta=0$.  Define
\begin{align}
 v_\theta&=(1-\theta)v_0+\theta v_1,
 \label{eq:v-theta}\\
 \eta_\theta&=(1-\theta)\eta_0+\theta\eta_1+c_\theta,
 \label{eq:eta-theta}\\
 z_\theta(t)&=z_0(a)+\int_a^t\left(
 \eta_\theta-L_r\dot v_\theta
 +\tfrac12\mathsf b(v_\theta,\dot v_\theta)\right)\dd r.
 \label{eq:z-theta}
\end{align}
The correction has integral $\theta(1-\theta)Q$.  On the other hand,
bilinearity and antisymmetry give
\begin{equation}
 \frac12\int_a^b\mathsf b(v_\theta,\dot v_\theta)\,\dd t
 -(1-\theta)\frac12\int_a^b\mathsf b(v_0,\dot v_0)\,\dd t
 -\theta\frac12\int_a^b\mathsf b(v_1,\dot v_1)\,\dd t
 =-\theta(1-\theta)Q.
 \label{eq:bracket-defect}
\end{equation}
The two terms cancel.  Since the $L_t$ term is affine in $\dot v_\theta$,
$z_\theta(b)$ is the common terminal coordinate.  The initial endpoint is
fixed by construction.

At a fixed time, use $H_t$ and $A_t$ as inner products and put
\[
 u_j=\dot v_j/N,\qquad w_j=\eta_j/N.
\]
The interpolating normalized controls are
\[
 u_\theta=(1-\theta)u_0+\theta u_1,\qquad
 w_\theta=(1-\theta)w_0+\theta w_1+D,\qquad D=c_\theta/N.
\]
If $Q\ne0$, the allocated correction gives
\begin{equation}
 |D|_{A_t}=\theta(1-\theta)e_\xi(t)|d(t)|_{A_t}
 \le\theta(1-\theta)\kappa_{\rm opt}(T)e_\xi(t),
 \label{eq:correction-bound}
\end{equation}
and the estimate is trivial for $Q=0$.  Since
$e_\xi=|u_1-u_0|_{H_t}^2$, Lemma~\ref{lem:convexity} proves that every
interpolating curve is future timelike.  The formulas depend continuously on
$(\theta,t)$ and respect one-sided tangents at subdivision points.  They
therefore define the required fixed-endpoint timelike homotopy.
Concatenating this coordinate-time homotopy with the two fixed-endpoint
reparametrization homotopies used above to make $t$ the common parameter
gives a homotopy of the original curves.
Only a homotopy for each fixed pair of curves is required; no continuous
choice of $d$ as the pair varies is assumed.
\end{proof}

\begin{corollary}[Tensorization of optimal filling and mixed quotients]
\label{cor:optimal-product}
For $1\le j\le r$, let $\mathsf N_j$ be a simply connected two-step
nilpotent group with invariant Schur data
$(H_{j,t},A_{j,t},L_{j,t})$ on a common interval and with the same lapse
$N$.  On $\mathsf N=\prod_{j=1}^r\mathsf N_j$ equip the spatial slices
with the orthogonal product metric $h_t=\bigoplus_{j=1}^r h_{j,t}$.
Then
\begin{equation}
 \kappa_{\rm opt}^{\mathsf N}(T)
 =\max_{1\le j\le r}\kappa_{\rm opt}^{(j)}(T).
 \label{eq:optimal-product}
\end{equation}
In particular, future one-connectedness is preserved under finite
orthogonal products with the same sharp sufficient threshold $1/2$.

Choose the product auxiliary metric and let $m_j(t)$ be the least scale
of $h_{j,t}$.  Suppose, for every $j$, that
\[
 \kappa_{\rm opt}^{(j)}(T)\le\frac12,
 \qquad
 \int_{t_-}^{T}\frac{N(t)}{m_j(t)}\,\dd t<\infty.
\]
If the quotient metric on the abelianization of at least one factor has
unbounded maximal eigenvalue as $t\to t_-$, then the product spacetime is
past $C^0$-inextendible on $\mathsf N$ and on
$\Gamma\backslash\mathsf N$ for every discrete subgroup
$\Gamma<\mathsf N$.  The subgroup need not split as a product or
preserve the factors.  Future timelike geodesic completeness promotes
the conclusion to global $C^0$-inextendibility.
\end{corollary}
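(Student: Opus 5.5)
The equality \eqref{eq:optimal-product} is a block-diagonal computation based on Lemma~\ref{lem:central-allocation}. For the orthogonal product, $Z=\bigoplus_jZ_j$ and $V=\bigoplus_jV_j$, the bracket $\mathsf b$ is block diagonal, and the Schur data are $H_t=\bigoplus H_{j,t}$, $A_t=\bigoplus A_{j,t}$, $L_t=\bigoplus L_{j,t}$. For a loop $\xi=(\xi_j)$ this gives $Q_I(\xi)=(Q_I(\xi_j))_j$ and $f_\xi=\sum_jf_{\xi_j}$.

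\emph{Lower bound.} Take $\xi$ supported in a single factor $j$ and restrict $\lambda$ to $Z_j^*$. Then $k_I(\xi)\ge k_I^{(j)}(\xi_j)$, so $\kappa_{\rm opt}^{\mathsf N}\ge\max_j\kappa_{\rm opt}^{(j)}$.

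\emph{Upper bound.} Put $\kappa=\max_j\kappa^{(j)}_{\rm opt}(T)$. For each $j$ with $\xi_j\ne0$, Lemma~\ref{lem:central-allocation} gives $c_j$ with $\int_Ic_j=Q_I(\xi_j)$ and $|c_j|_{A_{j,t}}\le\kappa f_{\xi_j}$. If $\xi_j=0$, set $c_j=0$. Then $c=(c_j)$ has integral $Q_I(\xi)$ and
\[
 |c|_{A_t}=\Bigl(\sum_j|c_j|_{A_{j,t}}^2\Bigr)^{1/2}\le\kappa\Bigl(\sum_jf_{\xi_j}^2\Bigr)^{1/2}\le\kappa f_\xi .
\]
The necessity half of the lemma then yields $k_I(\xi)\le\kappa$. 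Taking suprema over $I$ and $\xi$ gives \eqref{eq:optimal-product}. Proposition~\ref{thm:schur-filling} then gives future one-connectedness under the threshold $1/2$.

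\emph{Inextendibility: localization.} Take $Y=L=\mathsf N$ with trivial isotropy and the product auxiliary metric $k$. The least scale satisfies $m(t)=\min_jm_j(t)$, so $N/m\le\sum_jN/m_j$. The causal-length bound \eqref{eq:causal-length-bound} then gives $\Lambda_k(T)\to0$ for $\Gamma\backslash\mathsf N$, for any discrete $\Gamma$, which acts freely and properly by left translation. Future one-connectedness of the lifted spacetime implies axial filling. Hence Proposition~\ref{thm:quotient-localization} applies, and I would conclude via Corollary~\ref{cor:cover-transfer}. Nothing about $\Gamma$ beyond discreteness is used, since that corollary never lifts the extension.

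\emph{Inextendibility: metric growth.} It remains to show that every relatively compact open $\widehat U\subset\mathsf N$ has unbounded lifted diameter. Let factor $j$ have $\limsup$ of the maximal eigenvalue of $H_{j,t}$ infinite. Choose $t_i\downarrow0$ and covectors $\lambda_i\in V_j^*$ that are unit for a fixed norm, with $|\lambda_i|_{H_{j,t_i}^{-1}}^2=\lambda_{\min}(H_{j,t_i}^{-1})\to0$. The function $\lambda_i(v_j)$ on $\mathsf N$ has differential $\lambda_i(\dd v_j)$. By the Schur form, $h_t\ge H_t(\dd v,\dd v)$, so the $h_t$-dual norm of this differential is at most $|\lambda_i|_{H_{j,t_i}^{-1}}$. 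The open set $\widehat U$ contains a coordinate ball, so the oscillation of $\lambda_i(v_j)$ over $\widehat U$ is at least $c_{\widehat U}>0$, uniformly over unit $\lambda_i$. Exactly as in Lemma~\ref{lem:quotient-tests}(b), but on the cover with a real-valued function, this gives
\[
 \diam_{h_{t_i}}(\widehat U;\mathsf N)\ge c_{\widehat U}/|\lambda_i|_{H^{-1}_{j,t_i}}\to\infty .
\]
By \eqref{eq:lipschitz-diameter} the Lipschitz content then diverges, so Corollary~\ref{cor:cover-transfer} excludes a past extension.

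\emph{Global statement and main obstacle.} The global statement follows as in Proposition~\ref{thm:quotient-obstruction}: \cite{GLS2018} excludes a future boundary, and \cite[Lemma~2.17]{Sbierski2018} shows every proper extension has a past or future boundary. The step I expect to need the most care is this choice of varying eigen-covectors. A fixed covector need not see the growth, and one must check the Schur inequality $|\alpha|_{h^{-1}}\le|\lambda|_{H^{-1}}$ for horizontal one-forms independently of $L_t$ and $A_t$.
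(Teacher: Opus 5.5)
\paragraph{The tensorization part.} Your proof of the equality is sound. You build allocations $c_j$ factor by factor from the sufficiency half of Lemma~\ref{lem:central-allocation} and then apply its necessity half on the product. This is the primal form of the paper's one-line dual estimate $|\lambda(Q)|\le\sum_j|\lambda_j(Q_j)|\le\kappa_*\int_I\bigl(\sum_jf_j\bigr)\bigl(\sum_ja_j^2\bigr)^{1/2}\,\dd t$. The paper's version uses only the definition of $\kappa_{\rm opt}$, not the convex-body construction. Your cone estimate and your localization chain are also fine; the paper simply cites Theorem~\ref{thm:two-step-obstruction} for the latter.

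\paragraph{The gap: metric growth.} You assume that $\lambda_{\max}(H_{j,t})$ is unbounded. The corollary assumes only that the quotient metric $\overline h_{j,t}$ on the abelianization $\mathfrak n_j/[\mathfrak n_j,\mathfrak n_j]$ has unbounded maximal eigenvalue. Since $[\mathfrak n_j,\mathfrak n_j]\subset Z_j$, the space $V_j=\mathfrak n_j/Z_j$ is a further quotient of the abelianization. Hence horizontal growth implies abelianization growth, but not conversely; the two agree only when $Z_j=[\mathfrak n_j,\mathfrak n_j]$.

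The paper's example \eqref{eq:rotating-abelian-example} on $\mathbb H_3\times\R^2$, a legitimate two-step factor, has constant $H_t$ yet $\lambda_{\max}(\overline h_t)=t^{-2a}\to\infty$. A product containing such a factor satisfies the corollary's hypotheses, but your argument yields nothing for it. So as written you prove only a special case.

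\paragraph{How to repair it.} Your covector method works if you take the covectors from $\operatorname{Ann}[\mathfrak n_j,\mathfrak n_j]$ instead of $V_j^*$.
\begin{enumerate}
\item Extended by zero, these annihilate $[\mathfrak n,\mathfrak n]$. Each therefore integrates to a homomorphism $\ell_\alpha(\exp X)=\alpha(X)$ whose differential is the left-invariant form $\alpha$.
\item Orthogonality of the product together with \eqref{eq:abelianization-dual} gives $h_t^{-1}(\alpha,\alpha)=\overline h_{j,t}^{-1}(\alpha,\alpha)$.
\item Choose unit $\alpha_i$ realizing $\lambda_{\min}(\overline h_{j,t_i}^{-1})\to0$. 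Your oscillation argument then gives the diameter divergence exactly as you wrote it.
\end{enumerate}
Equivalently, observe that the abelianization metric of the orthogonal product is the orthogonal sum of the factors' abelianization metrics. Then apply Proposition~\ref{prop:abelianization-growth} and Theorem~\ref{thm:two-step-obstruction}\textup{(i)}, which is the paper's route.

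\paragraph{On the obstacle you flagged.} The Schur inequality for horizontal forms is not the relevant one. For annihilator covectors with a nonzero central component, the coupling $L_t$ does enter $h_t^{-1}(\alpha,\alpha)$. The identity \eqref{eq:abelianization-dual} is what controls it.
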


\begin{proof}
Write $\xi=(\xi_1,\ldots,\xi_r)$,
$Q=(Q_1,\ldots,Q_r)$ and
$\lambda=(\lambda_1,\ldots,\lambda_r)$, and put
\[
 f_j=\frac{H_{j,t}(\dot\xi_j,\dot\xi_j)}{N},\qquad
 a_j=\sqrt{A_{j,t}^{-1}(\lambda_j,\lambda_j)},\qquad
 \kappa_*=\max_j\kappa_{\rm opt}^{(j)}(T).
\]
The defining dual inequality in each factor gives
\[
 |\lambda(Q)|
 \le\sum_j|\lambda_j(Q_j)|
 \le\kappa_*\int_I\sum_j f_ja_j\,\dd t
 \le\kappa_*\int_I
       \Big(\sum_jf_j\Big)\Big(\sum_ja_j^2\Big)^{1/2}\,\dd t.
\]
The last integral is the dual denominator for the product Schur metric,
so $\kappa_{\rm opt}^{\mathsf N}(T)\le\kappa_*$.  Loops and covectors
supported in one factor give the reverse inequality and prove
\eqref{eq:optimal-product}.

For the product auxiliary metric,
\[
 m_h(t)=\min_jm_j(t),\qquad
 \frac{N}{m_h}=\max_j\frac{N}{m_j}
 \le\sum_j\frac{N}{m_j}.
\]
Thus the product cone integral is finite.  Its abelianization metric is
the orthogonal direct sum of the factor metrics, so its maximal
eigenvalue is the maximum of their maximal eigenvalues.
Theorem~\ref{thm:two-step-obstruction} now applies to every discrete left
quotient, independently of whether $\Gamma$ respects the factors.
\end{proof}

\begin{remark}
The value $1/2$ is sharp for Lemma~\ref{lem:convexity}.  For unit vectors
$e_U,e_W$, take $u_0=-a e_U$, $u_1=a e_U$ and
$w_0=w_1=\sqrt{1-a^2-a^4}\,e_W$, with $a>0$ small.  At
$\theta=1/2$, the admissible correction $D=\kappa a^2 e_W$ yields
$|\overline w+D|=1+(\kappa-1/2)a^2+O(a^4)>1$ for any
$\kappa>1/2$ and sufficiently small $a$.
This does not assert that $1/2$ is necessary for
future one-connectedness of every particular spacetime; it is the optimal
constant for this pointwise construction.
\end{remark}

\section{Inextendibility on covers and compact quotients}
\label{sec:obstruction}

Fix auxiliary inner products on $V$ and $Z$ and let
\begin{equation}
 k=|\dd v|_V^2+|\Theta|_Z^2
 \label{eq:auxiliary-metric}
\end{equation}
be the corresponding complete left-invariant metric.  All eigenvalues,
determinants, and operator norms in this section refer to these fixed inner
products and the fixed section.  Define the intrinsic least scale
\begin{equation}
 m_h(t)^2=\min_{|X|_k=1}h_t(X,X).
 \label{eq:intrinsic-scale}
\end{equation}
The following section-dependent estimate is useful for explicit examples:
\begin{equation}
\begin{aligned}
 \underline\lambda(t)&=
 \min\{\lambda_{\min}(H_t),\lambda_{\min}(A_t)\},\\
 C_L(t)&=\max\{2,1+2\|L_t\|^2\},&
 m_{\rm Sch}(t)&=\sqrt{\underline\lambda(t)/C_L(t)}.
\end{aligned}
 \label{eq:m-schur}
\end{equation}

\begin{lemma}[Cone width and metric size]
\label{lem:schur-size}
The Schur metric satisfies
\begin{equation}
 h_t\ge m_h(t)^2k\ge m_{\rm Sch}(t)^2k.
 \label{eq:schur-lower-bound}
\end{equation}
For every nonempty open $U\Subset\mathsf N$, constants $c_U,C_U>0$
independent of $t$ satisfy
\begin{align}
 \diam_{h_t}(U;\mathsf N)&\ge
 c_U\sqrt{\lambda_{\max}(H_t)},
 \label{eq:horizontal-size}\\
 \Vol_{h_t}(U)&=C_U\sqrt{\det H_t\det A_t}.
 \label{eq:schur-volume}
\end{align}
\end{lemma}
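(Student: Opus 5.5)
Everything will be done in the left-invariant coframe $(\dd v,\Theta)$ adapted to the fixed section, where both $k$ and $h_t$ have constant coefficients. The three assertions are then linear algebra plus one calibration argument.

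\emph{Lower bound \eqref{eq:schur-lower-bound}.} The first inequality holds by definition of $m_h$ in \eqref{eq:intrinsic-scale}, since $h_t$ and $k$ are both left-invariant; the inequality at the identity therefore holds everywhere. For the second inequality, write a vector as $(x,y)$ with $x=\dd v(X)$ and $y=\Theta(X)$, so $|X|_k^2=|x|^2+|y|^2$. The Schur form gives
\[
 h_t(X,X)\ge\underline\lambda(t)\bigl(|x|^2+|y+L_tx|^2\bigr).
\]
The elementary inequality $|y|^2\le 2|y+L_tx|^2+2\|L_t\|^2|x|^2$ then yields
\[
 |x|^2+|y|^2\le(1+2\|L_t\|^2)|x|^2+2|y+L_tx|^2\le C_L(t)\bigl(|x|^2+|y+L_tx|^2\bigr).
\]
Combining the two displays gives $h_t\ge(\underline\lambda/C_L)\,k$. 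This step is routine.

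\emph{Volume \eqref{eq:schur-volume}.} The coframe $(\dd v,\Theta+L_t\dd v)$ is related to $(\dd v,\Theta)$ by a unipotent block-triangular matrix, which has determinant one. Hence $\dd\Vol_{h_t}=\sqrt{\det H_t\det A_t}\,\dd\Vol_k$, with determinants taken relative to the fixed inner products. Integrating over $U$ gives \eqref{eq:schur-volume} with $C_U=\Vol_k(U)$. Note that $\dd\Vol_k$ is left-invariant, being the Haar measure in these coordinates.

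\emph{Horizontal diameter \eqref{eq:horizontal-size}.} This is the only step with content, and I will use a calibration in the spirit of Lemma~\ref{lem:quotient-tests}(b). Let $e$ be a unit $H$-eigenvector of $H_t$ for $\lambda_{\max}(H_t)$, and let $\ell=\langle e,\cdot\rangle_V$. The function $\phi=\ell\circ v$ on $\mathsf N$ satisfies $\dd\phi=\ell(\dd v)$. Since $h_t\ge H_t(\dd v,\dd v)$, its dual norm obeys
\[
 |\dd\phi|_{h_t^{-1}}\le\sqrt{H_t^{-1}(\ell,\ell)}=\lambda_{\max}(H_t)^{-1/2}.
\]
Here the dual of $h_t$ restricted to horizontal covectors is $H_t^{-1}$, because the annihilator of the horizontal covectors is spanned by $\Theta+L_t\dd v$. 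Thus every curve between $P,Q\in U$ has $h_t$-length at least $\lambda_{\max}(H_t)^{1/2}|\phi(P)-\phi(Q)|$, so
\[
 \diam_{h_t}(U;\mathsf N)\ge\lambda_{\max}(H_t)^{1/2}\operatorname{osc}_U(\ell\circ v).
\]
The main obstacle is that $e$ depends on $t$. I will resolve it by taking
\[
 c_U=\inf_{|e|=1}\operatorname{osc}_U(\langle e,v\rangle).
\]
This quantity is positive: $v(U)$ is a nonempty open subset of $V$, because the projection $v$ is a submersion, so every unit linear functional has positive oscillation on it. Moreover, oscillation is continuous in $e$ on the compact unit sphere. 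This makes $c_U$ independent of $t$.
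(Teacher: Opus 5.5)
Your proposal is correct and follows the paper's proof essentially step for step: the same Schur lower bound via the elementary inequality, the same determinant-one triangular coframe change for the volume, and for the diameter your calibration by $\langle e,v\rangle$ is just the paper's length-decreasing projection $\mathsf N\to(V,H_t)$ followed by projection onto the top eigenline, with your infimum of oscillations playing the role of the paper's fixed Euclidean ball in $v(U)$. The side remark about annihilators is garbled and unnecessary, since $h_t\ge H_t(\dd v,\dd v)$ already gives the dual-norm bound you use.
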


\begin{proof}
For a body vector $(x,z)\in V\oplus Z$,
\[
 h_t(x,z)\ge\underline\lambda(t)
 \bigl(|x|_V^2+|z+L_tx|_Z^2\bigr).
\]
The elementary inequality
\[
 |x|_V^2+|z|_Z^2
 \le(1+2\|L_t\|^2)|x|_V^2+2|z+L_tx|_Z^2
\]
proves \eqref{eq:schur-lower-bound}.  The quotient map
$\mathsf N\to V=\mathsf N/Z$ is length decreasing from $h_t$ to $H_t$.
A fixed coordinate box inside $U$ projects onto a Euclidean ball in $V$;
two points separated in a maximal-eigenvalue direction give
\eqref{eq:horizontal-size}.  Finally, the coframe changes
$(\dd v,\dd z)\mapsto(\dd v,\Theta)$ and
$(\dd v,\Theta)\mapsto(\dd v,\Theta+L_t\dd v)$ are triangular with
determinant one.  The Gram determinant is therefore
$\det H_t\det A_t$, proving \eqref{eq:schur-volume}.
\end{proof}

\begin{proposition}[An equivalent Schur cone test]
\label{prop:trace-cone}
Relative to the fixed auxiliary inner products, put
\[
 D_h(t)=\operatorname{tr}(H_t^{-1})+
 \operatorname{tr}(A_t^{-1})+
 \operatorname{tr}(L_tH_t^{-1}L_t^*),\qquad d=\dim\mathfrak n.
\]
Then
\begin{equation}
 \sqrt{D_h(t)/d}\le m_h(t)^{-1}\le\sqrt{D_h(t)}.
 \label{eq:trace-cone}
\end{equation}
Consequently the intrinsic cone integral is finite if and only if
$\int_{t_-}^T N\sqrt{D_h}\,\dd t$ is finite.
\end{proposition}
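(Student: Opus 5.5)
The plan is to identify $m_h(t)^{-2}$ with the largest eigenvalue of the inverse metric $h_t^{-1}$ taken relative to $k$, and to identify $D_h(t)$ with the trace of that inverse. Once both identifications are in hand, the bounds $\lambda_{\max}\le\operatorname{tr}\le d\,\lambda_{\max}$ for a positive operator on a $d$-dimensional space give \eqref{eq:trace-cone} at once.

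First, by \eqref{eq:intrinsic-scale}, $m_h(t)^2$ is the least eigenvalue of the $k$-self-adjoint operator $k^{-1}h_t$ on $\mathfrak n$. Hence $m_h(t)^{-2}=\lambda_{\max}(h_t^{-1}k)$, the largest eigenvalue of the dual form $h_t^{-1}$ measured against $k^{-1}$. For a positive operator $P$ on a $d$-dimensional inner-product space one has $\lambda_{\max}(P)\le\operatorname{tr}P\le d\,\lambda_{\max}(P)$. Applying this to $P=h_t^{-1}k$ gives
\[
 m_h^{-2}\le\operatorname{tr}_k(h_t^{-1})\le d\,m_h^{-2}.
\]
Taking square roots yields \eqref{eq:trace-cone}, provided $\operatorname{tr}_k(h_t^{-1})=D_h(t)$.

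The main step is computing this trace. I would work in the body coframe $(\dd v,\Theta)$, in which $k$ is the block-diagonal orthogonal sum of the fixed inner products on $V$ and $Z$. There $h_t$ has the block-triangular factorization
\[
 h_t=P^*\operatorname{diag}(H_t,A_t)P,\qquad
 P=\begin{pmatrix}1&0\\ L_t&1\end{pmatrix},
\]
which is exactly \eqref{eq:schur-form}. Inverting gives $h_t^{-1}=P^{-1}\operatorname{diag}(H_t^{-1},A_t^{-1})P^{-*}$, with
\[
 P^{-1}=\begin{pmatrix}1&0\\ -L_t&1\end{pmatrix}.
\]
The $V$-block of $h_t^{-1}$ is $H_t^{-1}$. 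The $Z$-block is $A_t^{-1}+L_tH_t^{-1}L_t^*$. The trace against the orthonormal auxiliary bases is therefore $\operatorname{tr}H_t^{-1}+\operatorname{tr}A_t^{-1}+\operatorname{tr}(L_tH_t^{-1}L_t^*)=D_h(t)$. The only care needed is the sign and transpose conventions for $L_t$ and $L_t^*$ with respect to the fixed inner products. Those conventions do not affect the trace, since $\operatorname{tr}(L_tH_t^{-1}L_t^*)=\|H_t^{-1/2}L_t^*\|_{\rm HS}^2$.

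For the final claim, \eqref{eq:trace-cone} gives the pointwise comparison
\[
 d^{-1/2}N\sqrt{D_h}\le N/m_h\le N\sqrt{D_h}
\]
on $(t_-,T)$. Both integrands are positive and continuous on compact subintervals, since the data are smooth on the open interval. The improper integrals $\int_{t_-}^TN/m_h\,\dd t$ and $\int_{t_-}^TN\sqrt{D_h}\,\dd t$ are therefore finite or infinite together.
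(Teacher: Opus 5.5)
Your proposal is correct and follows essentially the same route as the paper. The paper writes the Gram matrix $G_t$ in $V\oplus Z$ and its explicit inverse, reads off $\operatorname{tr}(G_t^{-1})=D_h(t)$, and combines $m_h^{-2}=\lambda_{\max}(G_t^{-1})$ with the bounds $\operatorname{tr}/d\le\lambda_{\max}\le\operatorname{tr}$; your block-triangular factorization $h_t=P^*\operatorname{diag}(H_t,A_t)P$ is simply a tidier way of producing that same inverse.
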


\begin{proof}
The Gram matrix and its inverse in $V\oplus Z$ are
\[
 G_t=\begin{pmatrix}H_t+L_t^*A_tL_t&L_t^*A_t\\A_tL_t&A_t\end{pmatrix},
 \quad
 G_t^{-1}=\begin{pmatrix}
 H_t^{-1}&-H_t^{-1}L_t^*\\
 -L_tH_t^{-1}&A_t^{-1}+L_tH_t^{-1}L_t^*
 \end{pmatrix}.
\]
Thus $\operatorname{tr}(G_t^{-1})=D_h(t)$.  For a positive $d$ by $d$
matrix the largest eigenvalue lies between its trace divided by $d$ and
its trace.  Since $m_h^{-2}=\lambda_{\max}(G_t^{-1})$, this proves
\eqref{eq:trace-cone} and the equivalence of the integrals.
\end{proof}

The coupling is weighted here by the horizontal inverse metric, rather
than by the smaller of the horizontal and central eigenvalues.  This
distinction gives a strictly larger range of admissible unbounded
couplings in the power-law applications below.

Let
$\operatorname{Ann}[\mathfrak n,\mathfrak n]\subset\mathfrak n^*$ be the
annihilator of the commutator algebra.  Every
$\alpha$ in this space integrates on the simply connected group to a
homomorphism $\ell_\alpha:\mathsf N\to\R$.  If $\Gamma<\mathsf N$ is a
lattice, call $\alpha$ \emph{$\Gamma$-rational} if, after a nonzero
rescaling, $\ell_\alpha(\Gamma)\subset\mathbb Z$.  The corresponding
left-invariant closed one-form then descends with integral periods to
$\Gamma\backslash\mathsf N$.

\begin{proposition}[The metric on the abelianization]
\label{prop:abelianization-growth}
Let $\mathfrak a=\mathfrak n/[\mathfrak n,\mathfrak n]$, fix an auxiliary
inner product on $\mathfrak a$, and let $\overline h_t$ be the quotient
metric induced by $h_t$:
\[
 \overline h_t(\overline X,\overline X)
 =\inf_{Z\in[\mathfrak n,\mathfrak n]}h_t(X+Z,X+Z).
\]
Every nonempty open $U\Subset\mathsf N$ admits $c_U>0$, independent of $t$,
such that
\begin{equation}
 \diam_{h_t}(U;\mathsf N)
 \ge c_U\sqrt{\lambda_{\max}(\overline h_t)}.
 \label{eq:abelianization-diameter}
\end{equation}
Identifying $\mathfrak a^*$ with
$\operatorname{Ann}[\mathfrak n,\mathfrak n]$, one has
\begin{equation}
 \overline h_t^{-1}
 =h_t^{-1}\big|_{\operatorname{Ann}[\mathfrak n,\mathfrak n]}.
 \label{eq:abelianization-dual}
\end{equation}
Consequently
\begin{equation}
 \limsup_{t\to t_-}\lambda_{\max}(\overline h_t)=\infty
 \label{eq:abelianization-growth}
\end{equation}
is equivalent to the least eigenvalue of this restricted dual form tending
to zero along a sequence.  It follows either from
$\limsup\lambda_{\max}(H_t)=\infty$ or from
\begin{equation}
 h_{t_j}^{-1}(\alpha,\alpha)\longrightarrow0
 \quad\text{for some fixed }0\ne\alpha\in
 \operatorname{Ann}[\mathfrak n,\mathfrak n],\quad t_j\to t_-.
 \label{eq:fixed-closed-growth}
\end{equation}
\end{proposition}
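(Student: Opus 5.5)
The plan has three parts, taken in order: the duality formula \eqref{eq:abelianization-dual}, the diameter bound \eqref{eq:abelianization-diameter}, and the two sufficient conditions for \eqref{eq:abelianization-growth}.

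\emph{The duality formula.} This is pure linear algebra. Put $\mathfrak c=[\mathfrak n,\mathfrak n]$ and $\mathfrak a=\mathfrak n/\mathfrak c$. The quotient norm $\overline h_t$ is the $h_t$-norm of the $h_t$-orthogonal projection onto $\mathfrak c^{\perp_{h_t}}$, so $(\mathfrak a,\overline h_t)$ is isometric to $\mathfrak c^{\perp}$ with the restriction of $h_t$. The dual of a quotient norm is the restriction of the dual norm to the annihilator. To see this, take $\alpha\in\operatorname{Ann}\mathfrak c$. Then
\[
\sup_{\overline X\ne0}\frac{\alpha(X)^2}{\overline h_t(\overline X,\overline X)}
=\sup_{X\notin\mathfrak c}\ \sup_{Z\in\mathfrak c}\frac{\alpha(X+Z)^2}{h_t(X+Z,X+Z)}
=h_t^{-1}(\alpha,\alpha).
\]
This gives \eqref{eq:abelianization-dual}. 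Hence $\lambda_{\max}(\overline h_t)$ is the reciprocal of the least eigenvalue of $h_t^{-1}$ restricted to $\operatorname{Ann}\mathfrak c$, both measured against the fixed auxiliary inner product and its dual. This yields the stated equivalence.

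\emph{The diameter bound.} I would follow the proof of Lemma~\ref{lem:schur-size}. The projection $\mathsf N\to\mathsf N/[\mathsf N,\mathsf N]\cong\mathfrak a$ is a homomorphism onto a vector group whose differential at every point is the quotient map $\mathfrak n\to\mathfrak a$, by left invariance. Hence it is length-nonincreasing from $h_t$ to the flat metric $\overline h_t$. An open $U$ projects onto a set containing a fixed auxiliary Euclidean ball of radius $r_U$, since the projection is a submersion, so open. That ball contains two points differing by $2r_U e$, where $e$ is a unit maximal-eigenvector of $\overline h_t$. Their $\overline h_t$-distance is $2r_U\sqrt{\lambda_{\max}(\overline h_t)}$, which gives \eqref{eq:abelianization-diameter} with $c_U=2r_U$.

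\emph{The sufficient conditions.} Condition \eqref{eq:fixed-closed-growth} is immediate: the least eigenvalue of the restricted dual form is at most $h_{t_j}^{-1}(\alpha,\alpha)/|\alpha|^2\to0$. For the condition on $H_t$, note that $\mathfrak c\subset Z$ and $V=\mathfrak n/Z$ is a further quotient of $\mathfrak a$. The quotient metric of $h_t$ on $V$ is $H_t$, because the Schur form minimized over $z$ leaves $H_t(\dd v,\dd v)$. The metric $H_t$ is in turn the quotient of $\overline h_t$ under $\mathfrak a\to V$, and quotient norms do not exceed the original norms. Choosing the auxiliary inner product on $V$ as the quotient of the one on $\mathfrak a$, we get $\lambda_{\max}(H_t)\le\lambda_{\max}(\overline h_t)$, since for any norm the quotient's sup-ratio is at most the original's.

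The only delicate point is keeping the auxiliary inner products compatible so that the eigenvalue comparisons hold. Fixed constants from changing auxiliary inner products are harmless in any case.
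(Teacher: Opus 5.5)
Your proposal is correct and follows the paper's proof essentially step by step: the length-nonincreasing homomorphism onto the vector group $\mathfrak a$ for the diameter bound, the identification of the dual quotient norm with the restricted dual form (your sup computation is equivalent to the paper's orthogonal-complement lift), the further-quotient comparison giving $\lambda_{\max}(H_t)\lesssim\lambda_{\max}(\overline h_t)$, and the Rayleigh bound for a fixed covector. The only cosmetic point is that an open ball of radius $r_U$ contains points differing by $r_U e$ rather than $2r_U e$, which merely changes the value of $c_U$.
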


\begin{proof}
The map $P:\mathsf N\to\mathfrak a$, $P(\exp X)=\overline X$, is a
surjective homomorphism and is length decreasing for these metrics.
The open set $P(U)$ contains a ball of fixed positive radius in the
auxiliary inner product.  Choose two points in that ball separated by
a fixed multiple of a unit maximal eigenvector of $\overline h_t$, and
choose preimages in $U$.  Projection of every connecting curve gives
\eqref{eq:abelianization-diameter}.

For fixed $t$, the lift to the $h_t$-orthogonal complement of
$[\mathfrak n,\mathfrak n]$ is an isometry for the quotient metric.
A covector annihilating the commutator has the same dual norm on that
complement, proving \eqref{eq:abelianization-dual}.  Reciprocity of
positive-matrix eigenvalues proves the equivalence.  Since $H_t$ is the
further quotient of $\overline h_t$ by $Z/[\mathfrak n,\mathfrak n]$,
$\lambda_{\max}(H_t)\le C\lambda_{\max}(\overline h_t)$ for a fixed
comparison constant.  Finally,
\[
 \lambda_{\min}(\overline h_t^{-1})
 \le h_t^{-1}(\alpha,\alpha)/|\alpha|^2
\]
proves the fixed-covector implication.
\end{proof}

\begin{theorem}[Continuous inextendibility on two-step quotients]
\label{thm:two-step-obstruction}
Suppose $\kappa_{\rm opt}(T)\le\tfrac12$ and
\begin{equation}
 \int_{t_-}^{T}\frac{N(t)}{m_h(t)}\,\dd t<\infty.
 \label{eq:schur-cone-integral}
\end{equation}
The spacetime on $\mathsf N$, and on $\Gamma\backslash\mathsf N$ for
every discrete subgroup $\Gamma<\mathsf N$, is past
$C^0$-inextendible if at least one of the following holds as $t\to t_-$:
\begin{enumerate}[label=\textup{(\roman*)}]
\item $\limsup\lambda_{\max}(\overline h_t)=\infty$;
\item $\limsup\sqrt{\det H_t\det A_t}=\infty$.
\end{enumerate}
The first condition includes horizontal growth and the fixed-covector
test \eqref{eq:fixed-closed-growth}.  No rationality condition relative
to the discrete subgroup is required.  Future timelike geodesic
completeness implies global $C^0$-inextendibility.  The stronger condition
$\int_{t_-}^T N/m_{\rm Sch}<\infty$ is sufficient for the cone hypothesis.
With a strict margin in the filling bound, the conclusion is stable under
the two-sided metric comparisons of Proposition~\ref{thm:stability}; it
also extends to invariant shifts under the hypotheses of
Proposition~\ref{prop:shift}.
\end{theorem}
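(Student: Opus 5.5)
The plan is to reduce Theorem~\ref{thm:two-step-obstruction} to the quotient metric-size obstruction, Proposition~\ref{thm:quotient-obstruction}, or more directly to Corollary~\ref{cor:cover-transfer}, with $Y=\mathsf N$, $L=\mathsf N$, trivial isotropy, $X=\Gamma\backslash\mathsf N$ and $k$ the auxiliary metric \eqref{eq:auxiliary-metric}. Since $\mathsf N$ acts on itself simply transitively, every discrete subgroup acts freely and properly discontinuously by left translations, and the invariant metrics $h_t$ and $k$ descend. Three inputs are needed: vanishing causal spatial length \eqref{eq:vanishing-length}, axial timelike filling of the lifted spacetime, and divergence of the Lipschitz content on every relatively compact open set of the cover.

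First, vanishing length. By definition \eqref{eq:intrinsic-scale}, $m_h$ is exactly the least scale \eqref{eq:m-def} relative to $k$. The bound \eqref{eq:causal-length-bound}, after shifting time so that $t_-$ plays the role of $0$, gives $\Lambda_k(T')\le\int_{t_-}^{T'}N/m_h\,\dd t$, and this tends to zero as $T'\downarrow t_-$ by \eqref{eq:schur-cone-integral}. By Lemma~\ref{lem:schur-size} the integral with $m_{\rm Sch}$ dominates this one, which gives the stated sufficient condition. Second, filling. Proposition~\ref{thm:schur-filling} with $\kappa_{\rm opt}(T)\le\tfrac12$ gives future one-connectedness of $(t_-,T)\times\mathsf N$. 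This implies axial filling: two timelike curves between axis points are joined by a fixed-endpoint timelike homotopy. Proposition~\ref{thm:quotient-localization} then makes the quotient past end one-chart localizable.

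Third, growth on the cover. Let $\widehat U\Subset\mathsf N$ be nonempty, open and relatively compact. Under (i), Proposition~\ref{prop:abelianization-growth} gives $\diam_{h_t}(\widehat U;\mathsf N)\ge c_{\widehat U}\sqrt{\lambda_{\max}(\overline h_t)}$, which is unbounded along a sequence $t\to t_-$. Under (ii), \eqref{eq:schur-volume} gives $\Vol_{h_t}(\widehat U)=C_{\widehat U}\sqrt{\det H_t\det A_t}$, again unbounded. In either case Lemma~\ref{lem:lipschitz-certificates} yields $\limsup\mathfrak L_d(\widehat U,h_t)=\infty$, and Corollary~\ref{cor:cover-transfer} gives past $C^0$-inextendibility of $\Gamma\backslash\mathsf N$; the case $\Gamma$ trivial covers $\mathsf N$ itself. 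No rationality is needed, because the estimates live on the cover and Proposition~\ref{prop:lifted-slices} handles the transfer. The global statement then follows as at the end of the proof of Proposition~\ref{thm:quotient-obstruction}: future timelike completeness excludes a future boundary by \cite{GLS2018}, and \cite[Lemma~2.17]{Sbierski2018} says every proper extension has a past or future boundary.

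The main obstacle is bookkeeping rather than a new estimate. One must check that the time interval $(t_-,T)$, possibly with $t_-=-\infty$ or $N\ne1$, fits the framework \eqref{eq:intro-spacetime} of Section~\ref{sec:quotient}. This is a monotone reparametrization of time, under which $\kappa_{\rm opt}$ and the cone integral are invariant. One must also check that the limsup conditions are taken along the past end. The stability and shift clauses refer to later propositions, Propositions~\ref{thm:stability} and~\ref{prop:shift}, and I would defer them to those results: a strict margin $\kappa_{\rm opt}<\tfrac12$ absorbs the comparison constants in $f_\xi$ and $A_t^{-1}$, while the cone integral and the growth conditions are preserved up to constants.
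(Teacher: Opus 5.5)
Your proposal is correct and follows the paper's proof essentially step for step. The steps match: the time reparametrization, vanishing causal spatial length from the cone integral (with $m_h\ge m_{\rm Sch}$ for the stronger sufficient condition), future one-connectedness from Proposition~\ref{thm:schur-filling}, Corollary~\ref{cor:cover-transfer} combined with Proposition~\ref{prop:abelianization-growth} in case (i), and the Galloway--Ling--Sbierski one-sided obstruction for the future end. The only deviation is in case (ii): the paper argues downstairs through Lemma~\ref{lem:quotient-tests}(a) and Proposition~\ref{thm:quotient-obstruction}, while you use the volume divergence upstairs and apply Corollary~\ref{cor:cover-transfer} again, which is equally valid because \eqref{eq:schur-volume} holds on every relatively compact open subset of the cover.
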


\begin{proof}
An increasing time reparametrization reduces the left endpoint to zero.
The factors $N\,\dd t$, $H_t(\dot\xi,\dot\xi)\dd t/N$, and the bracket
area are invariant under this change; hence so are the cone and filling
conditions.  Equation \eqref{eq:schur-lower-bound} and the cone integral
give vanishing full causal spatial length.  Proposition~\ref{thm:schur-filling}
gives future one-connectedness on the lifted past tail.

In case \textup{(i)}, Proposition~\ref{prop:abelianization-growth} gives
unbounded diameter along a sequence approaching the past endpoint
for every fixed nonempty open set upstairs.  Corollary~\ref{cor:cover-transfer}
then excludes a past boundary on every discrete quotient, including the
trivial quotient; this is why no rational direction is required.  In case
\textup{(ii)}, equation \eqref{eq:schur-volume} gives unbounded volume along a sequence approaching the past endpoint
on each nonempty coordinate open set, both upstairs and downstairs.
Lemma~\ref{lem:quotient-tests}(a) and
Proposition~\ref{thm:quotient-obstruction} give the same conclusion.
The complete invariant auxiliary metric makes every product slice Cauchy,
so the spacetime is globally hyperbolic and time oriented.  Future
timelike completeness excludes a future boundary by the one-sided
obstruction of \cite{GLS2018}; for a non-time-orientable candidate
extension one uses the time-orientation double cover described above.
This yields global inextendibility.  The stability and shift assertions
are Propositions~\ref{thm:stability} and~\ref{prop:shift}.
\end{proof}

Growth in the full abelianization is strictly more general than horizontal
growth and the fixed-covector test when the center contains an abelian direct
factor. Here is an explicit example, requiring no field equation.
On $\mathsf N=\mathbb H_3\times\mathbb R^2$ use coordinates
$(x,y,z,u,v)$ and put
\[
 \Theta=\dd z-\tfrac12(x\dd y-y\dd x),\qquad
 \theta(t)=\frac1{\log(1/t)},\qquad
 \begin{pmatrix}\eta_1\cr\eta_2\end{pmatrix}
 =\begin{pmatrix}\cos\theta&\sin\theta\cr
 -\sin\theta&\cos\theta\end{pmatrix}
 \begin{pmatrix}\dd u\cr\dd v\end{pmatrix}.
\]
Fix $0<a<b<1$. For $0<t<T<e^{-2}$ consider
\begin{equation}
 g=-\dd t^2+\dd x^2+\dd y^2+\Theta^2
       +t^{-2a}\eta_1^2+t^{2b}\eta_2^2.
 \label{eq:rotating-abelian-example}
\end{equation}
Here $H_t$ is constant, $L_t=0$, and the volume density is
$t^{b-a}\to0$. Relative to
$k=\dd x^2+\dd y^2+\Theta^2+\dd u^2+\dd v^2$, the least spatial scale
is $m_h(t)=t^b$, so the cone integral is finite. Every bracket area lies
in the $z$-axis, where $A_t$ is constant. For $I=[r,s]$ and
$\xi\in W^{1,2}_0(I;\mathbb R^2)$, Cauchy-Schwarz and the Dirichlet
Poincar\'e inequality give
\[
 |Q_I(\xi)|\le\tfrac12\|\xi\|_{L^2}\|\dot\xi\|_{L^2}
 \le\frac{s-r}{2\pi}\int_I|\dot\xi|^2\,\dd t.
\]
Hence $\kappa_{\rm an}(T)\le T/(2\pi)<1/2$. Since
$\lambda_{\max}(\overline h_t)=t^{-2a}$,
Theorem~\ref{thm:two-step-obstruction} proves past
$C^0$-inextendibility on every discrete quotient.

Neither constant $H_t$ nor the vanishing volume density yields the
horizontal or volume growth test. Nor does any fixed closed invariant covector:
for $\alpha=A\dd x+B\dd y+C\dd u+D\dd v$,
\[
 h_t^{-1}(\alpha,\alpha)=A^2+B^2
 +t^{2a}(C\cos\theta+D\sin\theta)^2
 +t^{-2b}(-C\sin\theta+D\cos\theta)^2.
\]
If $(C,D)\ne(0,0)$, the last term tends to infinity: for $D\ne0$ its
parenthesis tends to $D$, and for $D=0$ it is asymptotic to
$-C/\log(1/t)$. Otherwise a nonzero $\alpha$ has constant positive norm.
Thus no fixed covector satisfies \eqref{eq:fixed-closed-growth}, even along
a sequence. The varying maximal direction in the full abelianization
detects the obstruction.

Quantitative persistence under two-sided metric comparison and the
removal of invariant shifts are proved in Appendix~\ref{sec:stability}.
The power-law applications below also include the comparison estimates
needed for the Einstein asymptotics.

\subsection{Optimal allocation beyond the area direction}

The optimal constant can satisfy the obstruction criterion when its
area-direction upper bound is infinite.  The following example proves
this while retaining both the cone and metric-growth hypotheses.

\begin{proposition}[Strict enlargement by optimal central allocation]
\label{prop:strict-optimal-allocation}
On $\mathsf N=H_3\times\mathbb R$, choose a horizontal basis
$e_1,e_2$ and a central basis $z_1,z_2$ with
$\mathsf b(e_1,e_2)=z_1$.  In the corresponding Schur decomposition set
\begin{equation}
 N=1,\qquad H_t=tI_2,\qquad L_t=0,\qquad
 A_t=\begin{pmatrix}
 1&t^{-5/2}\\ t^{-5/2}&t^{-5}+t^{-4}
 \end{pmatrix},\qquad 0<t<1.
 \label{eq:strict-optimal-metric}
\end{equation}
Then, for every $0<T\le1$,
\begin{equation}
 \kappa_{\rm an}(T)=\infty,\qquad
 \kappa_{\rm opt}(T)
 \le\frac{\sqrt T}{\pi}
       \left[1+4\log\!\left(\frac2{\sqrt T}\right)\right]
 \longrightarrow0\quad(T\downarrow0).
 \label{eq:strict-optimal-bound}
\end{equation}
The spacetime has a finite past cone integral and divergent invariant
volume density.  In particular it is past $C^0$-inextendible on every
discrete left quotient, although the condition
$\kappa_{\rm an}\le1/2$ fails on every past tail.
\end{proposition}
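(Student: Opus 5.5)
The proof has four parts: compute the dual central form, exhibit loops showing $\kappa_{\rm an}=\infty$, bound $\kappa_{\rm opt}$ by a weighted one-dimensional estimate, and then check the hypotheses of Theorem~\ref{thm:two-step-obstruction}.

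\emph{Dual form and $\kappa_{\rm an}$.} Since $\det A_t=t^{-4}$, inverting gives
\[
 A_t^{-1}(\lambda,\lambda)=\lambda_1^2+\bigl(\lambda_1t^{-1/2}-\lambda_2t^{2}\bigr)^2 .
\]
Every bracket area is $Q_I(\xi)=q\,z_1$, and $A_t(z_1,z_1)=1$. Hence
\[
 J_I(\xi)^{-1}=\frac{|q|}{\int_I t|\dot\xi|^2\,\dd t}.
\]
To show $\kappa_{\rm an}(T)=\infty$, fix a circle in $V$ of perimeter $P$ enclosing area $q$. Traverse it on $I=[a,b]\subset(0,T)$ with arclength $s(t)$ chosen to minimise $\int_I t\,\dot s^2$ subject to $\int_I\dot s=P$. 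By Cauchy--Schwarz this minimum equals $P^2/\log(b/a)$. Therefore $J_I^{-1}=q\log(b/a)/P^2\to\infty$ as $a\downarrow0$ with $b$ fixed.

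\emph{Upper bound for $\kappa_{\rm opt}$.} Take $\lambda$ with $\lambda_1\neq0$; otherwise $\lambda(Q)=0$. Write $\mu=\lambda_2/\lambda_1$ and $t_0=\mu^{-2/5}$ when $\mu>0$. Since $\sqrt{1+w^2}\ge\max\{1,|w|\}$, the denominator in \eqref{eq:kappa-opt} dominates $|\lambda_1|\int_I\rho_\mu|\dot\xi|^2\,\dd t$, where
\[
 \rho_\mu(t)=t\max\bigl\{1,\;t^{-1/2}\,\bigl|1-(t/t_0)^{5/2}\bigr|\bigr\}.
\]
So it suffices to bound $|q|$ by $\kappa\int\rho_\mu|\dot\xi|^2$, uniformly in $\mu$ and in the loop.

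For a based loop, $\sup|\xi|$ is at most half its total variation, and $|q|\le\frac12\sup|\xi|\int|\dot\xi|$. Cauchy--Schwarz with weight $\rho_\mu$ then gives
\[
 |q|\le\tfrac14\Bigl(\int_I\rho_\mu|\dot\xi|^2\Bigr)\Bigl(\int_0^T\rho_\mu^{-1}\Bigr).
\]
Replacing the crude $\sup|\xi|$ step by the Dirichlet--Poincar\'e constant, as in the rotating example above, should produce the factor $1/\pi$.

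\emph{Main step: the weight integral.} The key task is to show $\int_0^T\rho_\mu^{-1}\,\dd t\le C\sqrt T\,(1+\log(1/\sqrt T))$ uniformly in $\mu$. I would split $(0,T)$ into a far region and a window around $t_0$.
\begin{itemize}
 \item Far region, where $|1-(t/t_0)^{5/2}|\ge\frac12$: here $\rho_\mu^{-1}\le2t^{-1/2}$, which integrates to $O(\sqrt T)$.
 \item Window around $t_0$: I would write $1-(t/t_0)^{5/2}\approx\frac52(t_0-t)/t_0$, so $\rho_\mu^{-1}\le\min\bigl\{t^{-1},\,t^{-1/2}t_0/|t-t_0|\bigr\}$. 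The crossover occurs at $|t-t_0|\sim t_0^{3/2}$, and integrating the second branch out to distance $\sim t_0$ produces a logarithm $\log(t_0^{-1/2})$. With $t_0\le T$, this gives the stated factor $\log(2/\sqrt T)$ times $\sqrt{t_0}\le\sqrt T$.
 \item For $\mu\le0$ there is no cancellation, so $\rho_\mu\ge\sqrt t$ everywhere.
\end{itemize}
Tracking constants here is the real obstacle. The qualitative conclusion $\kappa_{\rm opt}(T)\to0$ suffices for the application.

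\emph{Remaining hypotheses.} The Gram matrix is block diagonal with blocks $tI_2$ and $A_t$, and $\lambda_{\min}(A_t)\approx\det A_t/\operatorname{tr}A_t\sim t$. Hence $m_h(t)\sim\sqrt t$ by Proposition~\ref{prop:trace-cone}, and $\int_0 N/m_h<\infty$. The volume density is $\sqrt{\det H_t\det A_t}=t^{-1}\to\infty$. Finally, choose $T$ with $\kappa_{\rm opt}(T)\le\frac12$. Theorem~\ref{thm:two-step-obstruction}(ii) then gives past $C^0$-inextendibility on every discrete left quotient, while $\kappa_{\rm an}=\infty$ on every tail.
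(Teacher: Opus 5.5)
Your argument follows essentially the same route as the paper's proof:
\begin{itemize}
\item the same dual form $A_t^{-1}(\lambda,\lambda)$;
\item the same logarithmically divergent circle for $\kappa_{\rm an}$ (your optimal arclength $\dot s\propto1/t$ is the paper's change of variable $u=\int w^{-1}$);
\item the same reduction of $\kappa_{\rm opt}$ to a bound on a weight integral, uniform in $\sigma=\lambda_2/\lambda_1$ (your $\mu$), with a logarithmic window at the zero of $t^{-1/2}-\sigma t^2$ (your $t_0$ is the paper's $r^{-2}$ with $r=\sigma^{1/5}$);
\item the same cone and volume checks.
\end{itemize}
Once the window estimate is made rigorous (see the end), it yields $\kappa_{\rm opt}(T)\le C\sqrt T\,(1+\log(1/\sqrt T))\to0$. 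That is all the inextendibility conclusion needs. What you do not prove is the explicit inequality displayed in the statement, and you leave exactly that open.

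Two ingredients are missing. First, the area constant. Your $\sup|\xi|$ argument gives $|q|\le\frac14\bigl(\int|\dot\xi|\bigr)^2$. The Dirichlet--Poincar\'e route you suggest gives only $\frac1{2\pi}\int_I w^{-1}$. The displayed constant rests on the sharp value $\frac1{4\pi}$: after $u=\int w^{-1}$, subtract the mean of the closed loop and apply the periodic Wirtinger inequality (equivalently, $|q|\le L^2/(4\pi)$ followed by weighted Cauchy--Schwarz). Keep the exact weight $w_\sigma(t)=t\sqrt{1+(t^{-1/2}-\sigma t^2)^2}$ rather than $\rho_\mu$. Interchanging the suprema then gives the identity
\[
 \kappa_{\rm opt}(T)=\frac1{4\pi}\sup_{\sigma\in\R}\int_0^T\frac{\dd t}{w_\sigma(t)} .
\]
So the display amounts to $\int_0^T w_\sigma^{-1}\,\dd t\le4\sqrt T\,[1+4\log(2/\sqrt T)]$ uniformly in $\sigma$. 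With your losses (a factor $\pi$ from the area constant, and up to $\sqrt2$ from replacing $\sqrt{1+w^2}$ by $\max\{1,|w|\}$), even importing this weight bound would give only $\sqrt2\,\sqrt T\,[1+4\log(2/\sqrt T)]$.

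Second, your window estimate is heuristic (``$\approx$'', ``$\sim$''), and its last step is not correct as written. For $t_0<T$ the window contributes about $\sqrt{t_0}\log(1/\sqrt{t_0})$, whose logarithm exceeds $\log(1/\sqrt T)$. You therefore need that $\sqrt x\,(1+\log(1/\sqrt x))$ is increasing on $(0,1)$. Also, the window meets $(0,T)$ for every $t_0<2^{2/5}T$, not only for $t_0\le T$.

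The paper makes this rigorous in the variables $x=t^{-1/2}$, $a=T^{-1/2}$ and $F(x)=x-r^5/x^4$:
\begin{itemize}
\item on $x\ge2r$ one has $F(x)\ge\frac{31}{32}x$;
\item the piece $a\le x\le r/2$ contributes $O(1/r)$;
\item on $[r/2,2r]$, $F'\ge1$ gives $|F(x)|\ge|x-r|$ and a bound $\frac4r\operatorname{arsinh}(r)$;
\item finally, $r\ge a/2$ (otherwise the window is absent) and the monotone decrease of $\operatorname{arsinh}(r)/r$ give $\int_a^\infty\frac{\dd x}{x\sqrt{1+F^2}}\le\frac1a[2+8\log(2a)]$.
\end{itemize}
This last bound is exactly the displayed estimate.
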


\begin{proof}
The matrix $A_t$ is positive definite, since its leading entry is one
and its determinant is $t^{-4}$.  Direct inversion gives
\begin{equation}
 A_t^{-1}=\begin{pmatrix}
 1+t^{-1}&-t^{3/2}\\ -t^{3/2}&t^4
 \end{pmatrix},\qquad
 \sqrt{\det H_t\det A_t}=t^{-1},\qquad
 D_h(t)=\frac3t+1+t^4.
 \label{eq:strict-optimal-cone-volume}
\end{equation}
For $t\le1$, Proposition~\ref{prop:trace-cone} therefore yields
\[
 \int_0^T\frac{\dd t}{m_h(t)}
 \le\int_0^T\sqrt{\frac3t+1+t^4}\,\dd t
 \le2\sqrt{5T}<\infty.
\]
Equation~\eqref{eq:strict-optimal-cone-volume} also gives the required
volume divergence.

We first record the elementary weighted planar area formula.  If
$w>0$ is smooth on $I=[a,b]$ and
$Q(\xi)=\frac12\int_I(\xi_1\dot\xi_2-\xi_2\dot\xi_1)\,\dd t$,
then
\begin{equation}
 \sup_{0\ne\xi\in W^{1,2}_0(I;\mathbb R^2)}
 \frac{|Q(\xi)|}{\int_Iw(t)|\dot\xi|^2\,\dd t}
 =\frac1{4\pi}\int_I\frac{\dd t}{w(t)}.
 \label{eq:weighted-planar-area}
\end{equation}
Indeed, the change of variable $u=\int_a^t w(r)^{-1}\,\dd r$
preserves area and transforms the denominator to
$\int_0^\ell|\xi'(u)|^2\,\dd u$, where
$\ell=\int_Iw^{-1}$.
Subtracting the mean of the closed loop does not change its area.
The periodic Wirtinger inequality and Cauchy-Schwarz then bound its
absolute area by $\ell(4\pi)^{-1}\int_0^\ell|\xi'|^2$.
A circle traversed once at constant speed, translated so that its
endpoints are zero, attains equality.

Every bracket area is $Q(\xi)z_1$, and $A_t(z_1,z_1)=1$.
Formula~\eqref{eq:weighted-planar-area} with $w=t$ gives
\[
 \sup_{\xi:Q(\xi)\ne0}J_I(\xi)^{-1}
 =\frac1{4\pi}\log\frac ba.
\]
Taking intervals with fixed $b<T$ and $a\downarrow0$ proves
$\kappa_{\rm an}(T)=\infty$.

For $\lambda=(\lambda_1,\lambda_2)\in Z^*$, the numerator in
\eqref{eq:kappa-opt} vanishes when $\lambda_1=0$.  Otherwise scale
$\lambda$ to $(1,\sigma)$.  The dual central norm is
\[
 A_t^{-1}((1,\sigma),(1,\sigma))
 =1+\bigl(t^{-1/2}-\sigma t^2\bigr)^2.
\]
Interchanging the suprema over covectors and loops, and applying
\eqref{eq:weighted-planar-area}, gives the exact expression
\begin{align}
 \kappa_{\rm opt}(T)
 &=\frac1{4\pi}\sup_{\sigma\in\mathbb R}
   \int_0^T\frac{\dd t}
    {t\sqrt{1+(t^{-1/2}-\sigma t^2)^2}}\notag\\
 &=\frac1{2\pi}\sup_{\sigma\in\mathbb R}
   \int_{T^{-1/2}}^\infty
   \frac{\dd x}{x\sqrt{1+(x-\sigma/x^4)^2}}.
 \label{eq:strict-optimal-exact}
\end{align}
The passage to $(0,T)$ uses monotone convergence along increasing
compact intervals; all the integrands are nonnegative.

Put $a=T^{-1/2}\ge1$ and denote the last integral by $I(a,\sigma)$.
If $\sigma\le0$, its integrand is at most $x^{-2}$, so
$I(a,\sigma)\le a^{-1}$.  Suppose $\sigma>0$ and write
$r=\sigma^{1/5}$.  For $x\ge2r$,
\[
 x-r^5/x^4\ge\frac{31}{32}x,\qquad
 \int_{\max(a,2r)}^\infty
 \frac{\dd x}{x\sqrt{1+(x-r^5/x^4)^2}}
 \le\frac{32}{31\max(a,2r)}.
\]
In particular this bounds the entire integral by $32/(31a)$ if
$r<a/2$.  For $r\ge a/2$, the portion with $a\le x\le r/2$,
when nonempty, is at most
\[
 \frac{32}{31r^5}\int_0^{r/2}x^3\,\dd x
 =\frac1{62r}\le\frac1{31a}.
\]
On $[r/2,2r]$, the function $F(x)=x-r^5/x^4$ satisfies
$F(r)=0$ and $F'(x)\ge1$, so $|F(x)|\ge|x-r|$.
The middle portion is thus at most
\[
 \frac2r\int_{r/2}^{2r}\frac{\dd x}{\sqrt{1+(x-r)^2}}
 =\frac2r\bigl(\operatorname{arsinh}(r/2)
                         +\operatorname{arsinh}(r)\bigr)
 \le\frac4r\operatorname{arsinh}(r).
\]
The function $\operatorname{arsinh}(r)/r$ decreases for $r>0$:
its derivative has the sign of
$r/\sqrt{1+r^2}-\operatorname{arsinh}(r)<0$.
Since $r\ge a/2$, this last bound is at most
$8\operatorname{arsinh}(a/2)/a$.  Combining the three portions gives,
uniformly in $\sigma$,
\[
 I(a,\sigma)
 \le\frac1a\left[\frac{33}{31}
                   +8\operatorname{arsinh}(a/2)\right]
 \le\frac1a\bigl[2+8\log(2a)\bigr],
\]
where $\operatorname{arsinh}(a/2)\le\log(2a)$ for $a\ge1$.
Together with \eqref{eq:strict-optimal-exact}, this proves
\eqref{eq:strict-optimal-bound}.
For $T\le1/64$ the right side is at most
$(1+4\log16)/(8\pi)<1/2$; its monotonicity in $T$ on that interval
follows by differentiation.  Proposition~\ref{thm:schur-filling}
and Theorem~\ref{thm:two-step-obstruction} therefore apply.
\end{proof}

\begin{remark}[Field-equation scope of the example]
The preceding example separates two geometric hypotheses of the
inextendibility criterion.  No matter model is prescribed.
It violates the timelike convergence condition: for the unit normal
$\partial_t$ and the second fundamental form $K=\frac12\dot h_t$,
\[
 \theta=\operatorname{tr}_{h_t}K
        =\frac{\dd}{\dd t}\log\sqrt{\det h_t}=-\frac1t,
 \qquad
 \operatorname{Ric}_g(\partial_t,\partial_t)
       =-\dot\theta-|K|_{h_t}^2
       =-\frac1{t^2}-|K|_{h_t}^2<0.
\]
Thus it cannot be a vacuum or a minimally coupled massless-scalar
solution with zero cosmological constant.  Its purpose is to show that
optimal central allocation enlarges the geometric theorem even when
all the other obstruction hypotheses are retained.
\end{remark}

\section{Massless Bianchi II cosmologies}
\label{sec:einstein-scalar}

On $H_3$ use the invariant coframe
\begin{equation}
 \sigma^1=\dd z-x\,\dd y,\qquad
 \sigma^2=\dd x,\qquad \sigma^3=\dd y,\qquad
 \dd\sigma^1=-\sigma^2\wedge\sigma^3.
 \label{eq:heisenberg-coframe}
\end{equation}
The massless normalization is
\begin{equation}
 \Ric(g)=\dd\phi\otimes\dd\phi,\qquad \Box_g\phi=0,
 \label{eq:einstein-scalar}
\end{equation}
following from $\frac12\int(R-|\dd\phi|_g^2)\,\dd\Vol_g$.
A homogeneous scalar has stiff-fluid stress tensor
$\rho=p=\dot\phi^2/2$ in proper time.  The vacuum family is
Taub's solution \cite{Taub1951}; Bianchi II integration and
automorphism methods are discussed in
\cite{ChristodoulakisEtAl2001}.  We retain its customary parameters
to describe the horizon and scalar regimes explicitly.

Choose $A,B,C,N_0,k,\beta,\gamma>0$ and $\sigma\in\R$ with
\begin{equation}
 k=\frac{N_0A}{BC},\qquad
 \sigma^2=2\beta\gamma-\frac{k^2}{2}\ge0.
 \label{eq:scalar-parameters}
\end{equation}
On $\R_\tau\times H_3$ set
\begin{align}
 g&=-N^2\dd\tau^2+a^2(\sigma^1)^2+
 b^2(\sigma^2)^2+c^2(\sigma^3)^2,\notag\\
 a&=A\cosh(k\tau)^{-1/2},&
 b&=B e^{\beta\tau}\cosh(k\tau)^{1/2},\notag\\
 c&=C e^{\gamma\tau}\cosh(k\tau)^{1/2},&
 N&=N_0e^{(\beta+\gamma)\tau}\cosh(k\tau)^{1/2},\notag\\
 \phi&=\sigma\tau+\phi_0.
 \label{eq:scalar-family}
\end{align}
These fields descend to every discrete left quotient.

\begin{proposition}[Massless scalar parameters and continuous inextendibility]
\label{prop:einstein-scalar}
\label{prop:scalar-parameter-regions}
The fields~\eqref{eq:scalar-family} solve~\eqref{eq:einstein-scalar}
and are future timelike and null geodesically complete.
Put
\begin{equation}
 \delta=\frac{\beta-\gamma}{k},\qquad
 \nu=\frac{\sigma}{k},\qquad
 u=\frac{\beta+\gamma}{k}=\sqrt{1+\delta^2+2\nu^2}.
 \label{eq:scalar-dimensionless}
\end{equation}
The past endpoint has finite proper time.  Choosing $t=0$ there,
\begin{equation}
 a_i(t)=c_i t^{p_i}(1+O(t^{4p_1})),\qquad
 \phi(t)=q_\phi\log t+\phi_*+O(t^{4p_1}),
 \label{eq:scalar-kasner-asymptotics}
\end{equation}
with the same errors after two applications of $t\partial_t$, where
\begin{equation}
 (p_1,p_2,p_3)
 =\frac{(1,u+\delta-1,u-\delta-1)}{2u-1},\qquad
 q_\phi=\frac{2\nu}{2u-1},
 \label{eq:scalar-kasner-exponents}
\end{equation}
and $c_i>0$.  In particular,
\begin{equation}
 \sum_i p_i=1,\qquad \sum_i p_i^2+q_\phi^2=1.
 \label{eq:scalar-kasner-relations}
\end{equation}
The condition
\begin{equation}
 \min\{\beta,\gamma\}<k/2
 \label{eq:scalar-expanding}
\end{equation}
is equivalent to
\begin{equation}
 |\delta|>\nu^2.
 \label{eq:scalar-parameter-region}
\end{equation}
Under these equivalent conditions, exactly one horizontal exponent is
negative, and every
discrete left quotient is globally $C^0$-inextendible, without
conditions on a candidate extension.  If $|\delta|<\nu^2$, all
three exponents are positive; if $|\delta|=\nu^2>0$, precisely one
horizontal exponent vanishes.  At $(\delta,\nu)=(0,0)$ the
exponents are $(1,0,0)$, the symmetric vacuum horizon datum.
\end{proposition}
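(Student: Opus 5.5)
We would verify the field equations in the lapse-gauge time $\tau$, where the Bianchi~II system separates. With $a^2=e^{2\alpha_1}$, $b^2=e^{2\alpha_2}$, $c^2=e^{2\alpha_3}$ and the lapse $N=abc$, the vacuum-plus-stiff-scalar equations reduce to a Toda-type system. Two Liouville-type equations are linear in $\tau$: $(\alpha_1+\alpha_2)''=0$ and $(\alpha_1+\alpha_3)''=0$, giving $ab\propto e^{\beta\tau}$ and $ac\propto e^{\gamma\tau}$. The third equation is $\alpha_1''=-\tfrac12 e^{4\alpha_1}(N_0/BC)^2$, whose solution is $a=A\cosh(k\tau)^{-1/2}$ with $k=N_0A/(BC)$. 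The scalar equation $(\phi' abc/N)'=0$ is satisfied by $\phi=\sigma\tau+\phi_0$. The Hamiltonian constraint then reduces to $\sigma^2=2\beta\gamma-k^2/2$.

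\textbf{Asymptotics.} For $\tau\to-\infty$, the lapse is $N\sim N_0e^{(u-1/2)k\tau}$, so proper time satisfies $t=\int_{-\infty}^\tau N\,\dd\tau'\sim \frac{N_0}{(u-1/2)k}e^{(u-1/2)k\tau}$, which is finite. Moreover $a\sim e^{k\tau/2}$, $b\sim e^{(\beta-k/2)\tau}$ and $c\sim e^{(\gamma-k/2)\tau}$. Dividing each exponent by $(u-\tfrac12)k$ gives \eqref{eq:scalar-kasner-exponents}. The corrections come from $\cosh(k\tau)^{\pm1/2}=e^{\mp k\tau/2}(1+O(e^{2k\tau}))$, and $e^{2k\tau}\sim t^{2/(u-1/2)}=t^{4p_1}$. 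Derivative control follows because these corrections are analytic in $e^{2k\tau}$. The Kasner relations are algebraic: the first uses $u$, and the second uses $u^2=1+\delta^2+2\nu^2$. The equivalence \eqref{eq:scalar-expanding}$\Leftrightarrow$\eqref{eq:scalar-parameter-region} is the statement that $u-|\delta|<1$, which after squaring is $1+2\nu^2<(1+|\delta|)^2$. Here one must be careful: $u-|\delta|-1<0$ is equivalent to $1+\delta^2+2\nu^2<1+2|\delta|+\delta^2$, that is, $\nu^2<|\delta|$. The cases of equality, and the case $(\delta,\nu)=(0,0)$ with $u=1$, are read off directly.

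\textbf{Future completeness.} As $\tau\to+\infty$ we check that $\theta>0$, for instance by computing the volume $abc\propto e^{(\beta+\gamma)\tau}\cosh(k\tau)^{1/2}$, which is increasing. Proper time is infinite. Bianchi~II metrics have $R(h)\le0$, so Theorem~\ref{thm:homogeneous-future}(i) with $n=3$ and $\mathcal V=0$ applies, and descends to quotients.

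\textbf{Past inextendibility.} We apply Theorem~\ref{thm:two-step-obstruction} in proper time near $t=0$. The Schur data are diagonal: $H_t=\operatorname{diag}(b^2,c^2)$, $A_t=a^2$ and $L_t=0$. Suppose $p_2<0$. The cone integral $\int dt/m_h$ with $m_h\sim t^{\max p_i}$ is finite, because every $p_i<1$ when exactly one exponent is negative; by the first Kasner relation the others are then less than $1$. Condition (i) holds since $\lambda_{\max}(H_t)\sim t^{2p_2}\to\infty$. The filling bound $\kappa_{\rm an}(T)\le\tfrac12$ is where the main work lies. Since $A_t$ is conformal, we use the anisotropic weighted-area estimate. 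By Cauchy--Schwarz in the form $|\xi_1\dot\xi_2|\le$ weighted products, the estimate is
\[
|Q_I|\le \tfrac12\Bigl(\int b^2|\dot\xi_2|^2\Bigr)^{1/2}\Bigl(\int |\xi_1|^2/b^2\Bigr)^{1/2}\text{-type}.
\]
We then use a weighted Hardy/Poincaré inequality with weight $b^2c^2/(a\,N)$ in proper time. This reduces the question to smallness of $\int_0^T a\,\dd t/(bc)\cdot(\cdot)$, which behaves like $T^{p_1-p_2-p_3+1}=T^{2p_1}\to0$. We expect this weighted Poincaré computation to be the obstacle; if it is not clean, we would instead use the cruder sufficient bound $\kappa_{\rm Sch}$ with explicit power weights. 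Finally, global inextendibility follows from future completeness.
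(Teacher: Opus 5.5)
Your outline is correct and uses the same ingredients as the paper, but it is organized more directly.

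The paper does not integrate \eqref{eq:scalar-family} by hand. It sets $\widehat\tau=\alpha\tau+\log d$ with $\alpha=\beta+\gamma-k/2$ and identifies the family exactly with the $m=1$ case of \eqref{eq:h5-exact-family}. It then imports:
\begin{enumerate}
\item the field equations, proper-time range and differentiated past asymptotics from Proposition~\ref{prop:h5-exact-massless};
\item future completeness from the explicit future Kasner asymptotics of Proposition~\ref{prop:heisenberg-scattering};
\item the past obstruction via Corollary~\ref{cor:all-two-step-anisotropic-powers}.
\end{enumerate}
Your harmonic-gauge verification is self-contained for $m=1$, with two corrections. You must also note that the off-diagonal equations and the momentum constraint vanish, for instance by the sign automorphisms in Proposition~\ref{prop:h5-einstein}. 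And the Liouville constant is $\eta^2=(N_0/ABC)^2=k^2/A^4$, not $(N_0/BC)^2$.

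Getting future completeness from Theorem~\ref{thm:homogeneous-future}\textup{(i)} is fully valid and shorter than the paper's route. It uses $R(h)=-\tfrac12(a/bc)^2<0$ and $\partial_\tau\log(abc)=\beta+\gamma+\tfrac k2\tanh(k\tau)\ge k/2>0$. The paper's route additionally yields the future Kasner transition, which this statement does not need. Finally, $\max_ip_i<1$ follows from the quadratic Kasner relation ($p_j^2\le1-p_2^2<1$ for $j\ne2$), not the linear one.

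The step you flag as the obstacle is exactly the ordered Hardy (Hilbert--Schmidt) estimate in the proof of Proposition~\ref{prop:asymptotic-transfer}. As written, however, your weights are garbled, and the estimate only works with the right ordering. In proper time, let $\xi_1,\xi_2$ be the $b$- and $c$-components. Then $Q_I=\int\xi_1\dot\xi_2\,\dd t$ (no factor $\tfrac12$), and $J_I=E/|Q_I|$ with $E=\int(b^2\dot\xi_1^2+c^2\dot\xi_2^2)/a\,\dd t$. Cauchy--Schwarz gives
\[
 |Q_I|\le\Bigl(\int\frac{c^2}{a}\dot\xi_2^2\,\dd t\Bigr)^{1/2}
 \Bigl(\int\frac{a}{c^2}\xi_1^2\,\dd t\Bigr)^{1/2},
 \qquad
 \xi_1(t)^2\le\Bigl(\int_0^t\frac{a}{b^2}\,\dd r\Bigr)
 \Bigl(\int\frac{b^2}{a}\dot\xi_1^2\,\dd r\Bigr).
\]
Hence $\kappa_{\rm an}(T)\le\tfrac12V(T)$, where $V(T)^2=\int_0^T\frac a{c^2}\int_0^t\frac a{b^2}\,\dd r\,\dd t$. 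Since $a_i\asymp t^{p_i}$ on an early tail, the integrand is $\asymp t^{1+2p_1-2p_2-2p_3}=t^{4p_1-1}$, so $V(T)=O(T^{2p_1})$.

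The inner integral converges because $1+p_1-2p_2>0$, which is automatic when $p_2<0$. So apply the Hardy step to the coordinate of the negative exponent. The reversed ordering and the isotropic Wirtinger bound $R_{\rm diag}/(4\pi)$ of Proposition~\ref{prop:diagonal-heisenberg} can both be infinite: for $p=(2/7,-1/7,6/7)$ one has $1+p_1-2p_3=-3/7$.

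Your proposed fallback gains nothing. With a one-dimensional center, $a_+=a$ after normalizing the generator, so $\kappa_{\rm Sch}=\kappa_{\rm an}$, and power-law majorants only change constants. With the bound above, Theorem~\ref{thm:two-step-obstruction} applies exactly as you describe.
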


\begin{proof}
We give the exact conversion to the general Heisenberg family,
whose field equations and completeness are proved independently
in Propositions~\ref{prop:h5-exact-massless} and
\ref{prop:heisenberg-scattering}.
Set $\alpha=\beta+\gamma-k/2>0$, $d=N_0/(\sqrt2\alpha)$, and
$\widehat\tau=\alpha\tau+\log d$.  Define
\[
 (p_1,p_2,p_3)=\frac{(k/2,\beta-k/2,\gamma-k/2)}{\alpha},
 \qquad q_\phi=\frac\sigma\alpha,\qquad \chi=2p_1=\frac k\alpha.
\]
The parameter constraint gives the Kasner relations and
$\chi>0$.  With
\[
 (\widehat A_1,\widehat A_2,\widehat A_3)
 =(\sqrt2 A d^{-p_1},\,B d^{-p_2}/\sqrt2,
                                      \,C d^{-p_3}/\sqrt2),
 \qquad \phi_* =\phi_0-q_\phi\log d,
\]
one has
\[
 \frac{\widehat A_1}{\widehat A_2\widehat A_3}
   =2\chi d^{-\chi},\qquad
 F=1+e^{2k\tau}=1+d^{-2\chi}e^{2\chi\widehat\tau},
 \qquad \frac N\alpha=e^{\widehat\tau}F^{1/2}.
\]
The identity
$\cosh(k\tau)=\tfrac12e^{-k\tau}(1+e^{2k\tau})$
now identifies~\eqref{eq:scalar-family} exactly with
\eqref{eq:h5-exact-family} for $m=1$, relabeling its central
index $0$ as $1$.  Its proved conclusions give the field equations,
proper-time range and differentiated asymptotics, with
$2\chi=4p_1$.  The formula
$(\beta+\gamma)^2=(\beta-\gamma)^2+k^2+2\sigma^2$
gives~\eqref{eq:scalar-dimensionless} and hence the displayed
exponents.  Finally
$2\min\{\beta,\gamma\}/k=u-|\delta|$; squaring
$u<1+|\delta|$ gives precisely $\nu^2<|\delta|$.
The sign of $u-|\delta|-1$ gives all remaining sign statements.
The negative-horizontal-exponent conclusion follows from the
past obstruction in Proposition~\ref{prop:h5-exact-massless}
and future completeness.  The origin is the exceptional vacuum
corner of Proposition~\ref{prop:heisenberg-vacuum-dichotomy}.
\end{proof}

\begin{corollary}[The Taub vacuum specialization]
\label{cor:taub-vacuum-specialization}
Set $\sigma=0$ in~\eqref{eq:scalar-family}, so that
$\beta\gamma=k^2/4$.  Every discrete left quotient of this vacuum
spacetime is future timelike and null geodesically complete.
If $\beta\ne\gamma$, it is globally $C^0$-inextendible.
If $\beta=\gamma=k/2$, its past end admits the analytic horizon
extensions constructed in
Proposition~\ref{prop:heisenberg-vacuum-dichotomy}.
\end{corollary}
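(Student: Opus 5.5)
The corollary should follow by specializing Proposition~\ref{prop:einstein-scalar} to $\sigma=0$ and reading off the three assertions in turn. With $\sigma=0$ the parameter constraint~\eqref{eq:scalar-parameters} becomes $\beta\gamma=k^2/4$, and $\phi$ is constant. The massless equations~\eqref{eq:einstein-scalar} then reduce to $\Ric(g)=0$, so the metric is Taub's vacuum Bianchi~II family. Future timelike and null geodesic completeness on $H_3$ and on every discrete left quotient is part of the conclusion of Proposition~\ref{prop:einstein-scalar}. Alternatively, it follows from Theorem~\ref{thm:homogeneous-future}(i) with $n=3$. That route needs two checks: the slices are expanding at one time, which is clear from $N\to\infty$ and the growth of $b,c$, and $R(h_t)\le0$, which holds because every left-invariant metric on $H_3$ has negative scalar curvature. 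The covering argument then transfers completeness to quotients.

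For $\beta\ne\gamma$, I would work with the dimensionless parameters. Since $\nu=0$, we get $u=\sqrt{1+\delta^2}$ and $\delta=(\beta-\gamma)/k\ne0$. The region condition~\eqref{eq:scalar-parameter-region}, $|\delta|>\nu^2=0$, therefore holds. Proposition~\ref{prop:einstein-scalar} then states that exactly one horizontal Kasner exponent is negative and that every discrete left quotient is globally $C^0$-inextendible. As a consistency check, $\min\{\beta,\gamma\}<k/2$ follows directly from $\beta\gamma=k^2/4$ with $\beta\ne\gamma$, by AM--GM applied to $\beta$ and $\gamma$. This agrees with the equivalent condition~\eqref{eq:scalar-expanding}. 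No new geometric input is needed; all the work lies in the past obstruction of Proposition~\ref{prop:h5-exact-massless}, which the scalar proposition already invokes.

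For $\beta=\gamma$, the constraint forces $\beta=\gamma=k/2$, so $(\delta,\nu)=(0,0)$. The exponents~\eqref{eq:scalar-kasner-exponents} are then $(1,0,0)$, the symmetric vacuum horizon datum. I would identify this metric, through the explicit conversion in the proof of Proposition~\ref{prop:einstein-scalar} (with $m=1$ and $\chi=2p_1=2$), with the locally rotationally symmetric member of the Heisenberg vacuum family. I would then cite Proposition~\ref{prop:heisenberg-vacuum-dichotomy} for the analytic horizon extensions. The main obstacle is checking that the exceptional case matches the hypotheses of that later proposition exactly: equal horizontal scale constants up to the normalization $\widehat A_2=B d^{-p_2}/\sqrt2$ and $\widehat A_3=Cd^{-p_3}/\sqrt2$, with $p_2=p_3=0$. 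If that proposition requires $B=C$, I would first apply a horizontal automorphism of $H_3$, a diagonal rescaling $(x,y)\mapsto(\mu x,\mu^{-1}y)$ that preserves $\sigma^1$, to equalize them. Such an automorphism descends to a quotient only after the lattice is replaced by its image, which is harmless for a statement quantified over all discrete subgroups.
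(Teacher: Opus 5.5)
Your proposal is correct and follows essentially the paper's route. You specialize Proposition~\ref{prop:einstein-scalar} to $\sigma=0$, use $|\delta|>0=\nu^2$ (equivalently, unequal $\beta,\gamma$ with $\beta\gamma=k^2/4$ lie on opposite sides of $k/2$) for global $C^0$-inextendibility, and cite Proposition~\ref{prop:heisenberg-vacuum-dichotomy} with $m=1$ at the datum $(1,0,0)$. The paper re-derives the past obstruction through Proposition~\ref{prop:asymptotic-transfer} and \cite{GLS2018} where you quote the scalar proposition's stated conclusion directly. Your fallback of equalizing $B$ and $C$ by a horizontal automorphism is unnecessary, since the exceptional metric \eqref{eq:heisenberg-exceptional-metric} already allows arbitrary horizontal amplitudes $A_1,A_2$.
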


\begin{proof}
The vacuum equations and future completeness follow from
Proposition~\ref{prop:einstein-scalar}.  Since $\beta,\gamma>0$ and
$\beta\gamma=k^2/4$, unequal $\beta,\gamma$ lie on opposite sides of
$k/2$.  Thus~\eqref{eq:scalar-expanding} holds.  Equivalently, the past
Kasner exponents have $p_1>0$, exactly one negative horizontal exponent,
and $\max_i p_i<1$.  Their sum is one, so
$1+p_1-p_2-p_3=2p_1>0$.  The differentiated asymptotics in
\eqref{eq:scalar-kasner-asymptotics} imply the metric comparison in
Proposition~\ref{prop:asymptotic-transfer}, which excludes a past
$C^0$ extension on the cover and every discrete quotient.  Future
timelike completeness excludes a future boundary by~\cite{GLS2018}.
In the equal-parameter case the past exponents are $(1,0,0)$;
Proposition~\ref{prop:heisenberg-vacuum-dichotomy}, with $m=1$, supplies
the stated analytic extensions directly.
\end{proof}

For example, $k=1$, $\beta=1/4$, $\gamma=2$,
$\sigma=1/\sqrt2$ and $N_0A=BC$ give
$p=(2/7,-1/7,6/7)$ and $q_\phi=4/(7\sqrt2)$.
Every discrete quotient is globally $C^0$-inextendible.

Figure~\ref{fig:scalar-parameter-range} displays the distinction
between the negative-exponent region and the remaining scalar
sector.  Away from the origin, all $p_i<1$ and
$\kappa_{\rm an}(t)=O(t^{2/(2u-1)})\to0$ by the one-bracket
power estimate.  Thus the missing hypothesis for
$|\delta|\le\nu^2$ is spatial metric-size growth.  Indeed,
$abc=(ABC/N_0)N$ and
\begin{equation}
 abc\sim\frac{ABC\alpha}{N_0}\,t,
 \qquad \alpha=k(u-1/2).
 \label{eq:scalar-volume-collapse}
\end{equation}
The volume alternative supplies no additional cases.  Neither
the all-positive sector nor its nonvacuum boundary is decided
by this criterion; curvature divergence alone does not settle
continuous metric extendibility.

The scalar-energy limitation is explicit.  If $p_j<0$, then
\begin{equation}
 1-q_\phi^2=\sum_i p_i^2
 \ge p_j^2+\frac{(1-p_j)^2}{2}>\frac12,
 \qquad q_\phi^2<\frac12.
 \label{eq:negative-exponent-scalar-bound}
\end{equation}
For $H_{\rm av}=\frac13\sum_i\dot a_i/a_i$ and
$\Omega_\phi=\rho_\phi/(3H_{\rm av}^2)$, in units $8\pi G=1$,
\begin{equation}
 H_{\rm av}=\frac{1+o(1)}{3t},\qquad
 \rho_\phi=\frac{q_\phi^2+o(1)}{2t^2},\qquad
 \lim_{t\downarrow0}\Omega_\phi=\frac32q_\phi^2.
 \label{eq:scalar-density-limit}
\end{equation}
This also holds for the differentiated asymptotics with a
subleading potential below.  The obstruction therefore requires
$\lim\Omega_\phi<3/4$, whereas a limit above $3/4$ forces all
exponents to be positive.  A limit below $3/4$ does not conversely
force a negative exponent.

\begin{figure}[tb]
\centering
\begin{tikzpicture}[x=2.05cm,y=1.55cm,font=\small]
 \begin{scope}
  \clip (-2,-1.5) rectangle (2,1.5);
  \fill[black!12] (-2,-1.41421356237) -- (-2,1.41421356237)
   -- plot[domain=1.41421356237:-1.41421356237,samples=101]
       ({-\x*\x},\x) -- cycle;
  \fill[black!12] (2,-1.41421356237) -- (2,1.41421356237)
   -- plot[domain=1.41421356237:-1.41421356237,samples=101]
       ({\x*\x},\x) -- cycle;
  \draw[thick] plot[domain=-1.5:1.5,samples=100] ({\x*\x},\x);
  \draw[thick] plot[domain=-1.5:1.5,samples=100] ({-\x*\x},\x);
 \end{scope}
 \draw[black!45] (-2,-1.5) rectangle (2,1.5);
 \draw[->,black!55] (-2.08,0) -- (2.22,0) node[right,black] {$\delta$};
 \draw[->,black!55] (0,-1.56) -- (0,1.75) node[above,black] {$\nu$};
 \node[fill=white,inner sep=2pt] at (0,1.03) {unresolved};
 \node[fill=white,inner sep=2pt] at (0,-1.03) {unresolved};
 \node at (-1.38,0.31) {inextendible};
 \node at (1.38,0.31) {inextendible};
 \fill (0,0) circle[radius=1.6pt];
 \draw[->] (2.04,-0.89) -- (0.045,-0.035);
 \node[anchor=west] at (2.12,-0.97) {Taub horizon};
 \foreach \x in {-2,-1,1,2}
  {\draw (\x,0.025) -- (\x,-0.025);
   \node[below,inner sep=2pt] at (\x,-0.025) {$\x$};}
\end{tikzpicture}
\caption{Parameter range of the exact massless Bianchi II family.
The shaded regions $|\delta|>\nu^2$ have globally $C^0$-inextendible
developments.  The origin is the symmetric vacuum solution with analytic
Taub horizons.  The unshaded regions and the curves
$|\delta|=\nu^2$ away from the origin are unresolved by this criterion;
the curves do not represent a proved extendibility transition.}
\label{fig:scalar-parameter-range}
\end{figure}

\section{Einstein developments on Heisenberg groups}
\label{sec:scalar-potentials}

\subsection{Scalar potentials and prescribed past asymptotics}

In three spatial dimensions the normalization is
\begin{equation}
 \Ric(g)=\dd\phi\otimes\dd\phi+\mathcal V(\phi)g,
 \qquad \Box_g\phi=\mathcal V'(\phi),
 \label{eq:potential-equations}
\end{equation}
from $\frac12\int(R-|\dd\phi|_g^2-2\mathcal V(\phi))\,\dd\Vol_g$.
Potential-dependent Kasner regimes are treated in
\cite{Ritchie2022,Ringstrom2025}.  The following specializations of
the higher-dimensional construction retain the leading amplitudes
and the differentiated estimates needed for continuous
inextendibility.

\begin{proposition}[Einstein-scalar solutions with a potential]
\label{prop:potential-solutions}
Let $\mathcal V\in C^\infty(\R)$ satisfy
\begin{equation}
 |\mathcal V(s)|+|\mathcal V'(s)|+|\mathcal V''(s)|
 \le C_Ve^{c|s|}\quad(s\in\R),\qquad C_V>0,
 \label{eq:potential-growth}
\end{equation}
with $c\ge0$.  Fix $A_i>0$, $\phi_0\in\R$ and data
\begin{equation}
 \sum_i p_i=1,\qquad \sum_i p_i^2+q^2=1,
 \qquad p_1>0,\qquad c|q|<2.
 \label{eq:potential-data}
\end{equation}
There is a smooth diagonal Bianchi II solution on $(0,t_*)$ with
\begin{equation}
 g=-\dd t^2+\sum_i a_i(t)^2(\sigma^i)^2,\qquad
 a_i=A_it^{p_i}(1+O(t^\mu)),\qquad
 \phi=q\log t+\phi_0+O(t^\mu),
 \label{eq:potential-asymptotics}
\end{equation}
where $\mu=\min\{4p_1,2-c|q|\}>0$, including the same errors
after two applications of $t\partial_t$.  If a horizontal
exponent is negative, the past end is $C^0$-inextendible on
every discrete left quotient, also in any smooth development
containing this tail.  If moreover $\mathcal V\ge0$, the maximal
homogeneous development is globally $C^0$-inextendible and future
causally geodesically complete.  Its scalar stress tensor obeys
the weak and dominant energy conditions.
\end{proposition}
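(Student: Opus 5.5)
The plan has three stages: construct the solution by a Fuchsian or fixed-point argument around the Kasner data, obtain past $C^0$-inextendibility from the two-step obstruction theory, and add the future part when $\mathcal V\ge0$.

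\textbf{Construction.} Write the diagonal Bianchi~II equations in logarithmic time $\tau=\log t$. Use the unknowns $a_i=A_it^{p_i}e^{u_i}$ and $\phi=q\log t+\phi_0+w$, so that the Kasner relations \eqref{eq:potential-data} cancel the leading terms. There are exactly two perturbing sources. The first is the Bianchi~II curvature term, of relative size $(a_1/(a_2a_3))^2t^2\sim t^{2+2p_1-2p_2-2p_3}=t^{4p_1}$. The second is the potential term $t^2\mathcal V(\phi)$. By \eqref{eq:potential-growth} the potential term and its first two derivatives are $O(t^{2-c|q|})$, since $e^{c|\phi|}\lesssim t^{-c|q|}$ once $w$ is bounded.

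With these sources, the system for $(u,w,t\partial_tu,t\partial_tw)$ is of Fuchsian form $t\partial_tU+\mathcal A U=t^\mu F(t,U)$. Here $\mathcal A$ has nonnegative spectrum. I would solve it by a contraction in the weighted space $\sup t^{-\mu}|U|$ on a short interval $(0,t_*)$, integrating from $t=0$ with zero data. Two points need care. First, the Hamiltonian constraint must be imposed via the Kasner relations and then propagated, as in the proof of Proposition~\ref{prop:nonnegative-future-completeness}. Second, the differentiated estimates follow from the equation itself. I expect this to be routine if the higher-dimensional Heisenberg construction referred to in the text is invoked. Failing that, one can cite the standard quiescent theory \cite{AnderssonRendall2001,Ritchie2022}.

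\textbf{Past inextendibility.} Suppose a horizontal exponent, say $p_2$, is negative. Then $p_3=1-p_1-p_2$ and $\sum p_i^2<1$ force $p_i<1$ for all $i$, and $p_1>0$. I would then apply the comparison Proposition~\ref{prop:asymptotic-transfer}, exactly as in Corollary~\ref{cor:taub-vacuum-specialization}, using the differentiated asymptotics. Directly, the argument runs as follows.
\begin{itemize}
\item The cone integral $\int_0 t^{-\max p_i}\,dt$ is finite because every $p_i<1$.
\item The horizontal metric has $\lambda_{\max}(H_t)\sim t^{2p_2}\to\infty$, which is case (i) of Theorem~\ref{thm:two-step-obstruction}.
\item For the filling bound, $\kappa_{\rm an}(T)\to0$ by the one-bracket power estimate. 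Concretely, the weighted planar area formula \eqref{eq:weighted-planar-area} with weight $w=\min(a_2^2,a_3^2)/a_1$ gives $\kappa\lesssim\int_0^T a_1/\min(a_2^2,a_3^2)\,dt$. The integrand behaves like $t^{p_1-2\max(p_2,p_3)}$, and the exponent exceeds $-1$ because $1+p_1-2p_3=p_1+(1-p_3)-p_3$. This exponent needs checking; if $p_3$ is large, the comparison must be made with the actual pair $(a_2a_3)^{-1}$ in the bracket, namely $\int a_1/(a_2a_3)\,dt\sim\int t^{2p_1-1}\,dt<\infty$.
\end{itemize}
Hence $\kappa_{\rm an}(T)\le1/2$ for small $T$, and Theorem~\ref{thm:two-step-obstruction} applies on every discrete quotient. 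The argument uses only the tail, so it applies in any development containing it.

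\textbf{Future part.} When $\mathcal V\ge0$, continue the solution to its maximal homogeneous development. Bianchi~II has $R\le0$. For expansion, note that $\theta\sim1/t>0$ near the past end and that $\dot\theta\le0$. Theorem~\ref{thm:homogeneous-future}(ii) with $n=3$ then gives future existence and causal completeness. Global inextendibility follows by \cite{GLS2018}. The energy conditions follow from $\rho=\tfrac12\dot\phi^2+\mathcal V\ge|\text{momentum}|$ and $|p|\le\rho$.

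The main obstacle I expect is the Fuchsian existence step, with uniform control of the potential's exponential growth, together with verifying the filling estimate uniformly from the asymptotics.
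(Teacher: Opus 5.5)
Your proposal is correct and follows essentially the paper's route. The paper specializes Proposition~\ref{prop:h5-einstein} to $m=1$, where $\chi_1=1+p_1-p_2-p_3=2p_1$. It differs from your plan only in these details:
\begin{itemize}
\item Its construction is a contraction for the harmonic-time Volterra operator $\int_{-\infty}^{\tau}(\tau-s)F\,\dd s$ with lapse $N=\eta e^S$, rather than your proper-time Fuchsian system.
\item The Hamiltonian constraint is a conserved quantity whose limit vanishes by the Kasner relations.
\item The off-diagonal and momentum equations are forced by sign automorphisms. Your sketch omits this, though it is automatic for diagonal Bianchi~II.
\item The past obstruction comes from Corollary~\ref{cor:all-two-step-anisotropic-powers}, whose three-dimensional form is the Proposition~\ref{prop:asymptotic-transfer} you invoke.
\end{itemize}
The future and energy-condition parts are as you describe.

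You should, however, discard your ``direct'' unpacking of the filling bound.
\begin{itemize}
\item The isotropic weight $\min(a_2^2,a_3^2)/a_1$ is the $R_{\rm diag}$ bound of Proposition~\ref{prop:diagonal-heisenberg}. It leads to $\int_0 t^{p_1-2\max(p_2,p_3)}\,\dd t$, which diverges for the paper's own example $p=(2/7,-1/7,6/7)$, where the exponent is $-10/7$.
\item Your fallback $\kappa_{\rm an}\lesssim\int a_1/(a_2a_3)\,\dd t$ is not a valid inequality for general diagonal metrics. Let $a_1/a_2^2$ and $a_1/a_3^2$ alternate in antiphase between large and small values on successive subintervals. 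A rectangular loop that moves $\xi_x$ and $\xi_y$ only where each is cheap then has a large area-to-energy ratio, while the geometric mean $a_1/(a_2a_3)$ stays small.
\end{itemize}

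What does hold is the ordered triangular-kernel (Hilbert--Schmidt) estimate in the proof of Proposition~\ref{prop:asymptotic-transfer}. Set $D_j=1+p_1-2p_j$. Then $D_2+D_3=2\chi=4p_1>0$, so some $D_j$ is positive. Use that index as the inner variable in $Q=\int\xi_x\dot\xi_y=-\int\xi_y\dot\xi_x$. This gives $\kappa_{\rm an}(T)=O(T^{2p_1})$, the order your heuristic predicts, but justified by the kernel bound rather than by the planar area formula. Since your primary step cites that proposition, the proof is unaffected.
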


\begin{proof}
Apply Proposition~\ref{prop:h5-einstein} with $m=1$, replacing
its indices $(0,1,2)$ by $(1,2,3)$.  The Kasner sum gives
$\chi_1=1+p_1-p_2-p_3=2p_1$, so all its hypotheses and the
stated error exponent agree exactly.  That proof verifies the
complete Einstein system, differentiated estimates, quotient
obstruction and future conclusion.  Finally
$\rho=\dot\phi^2/2+\mathcal V$ and
$p=\dot\phi^2/2-\mathcal V$ satisfy $\rho\ge|p|$ when
$\mathcal V\ge0$, proving the energy conditions.
\end{proof}

\begin{corollary}[Polynomial potentials and a cosmological constant]
\label{cor:polynomial-potential}
Every nonnegative polynomial potential admits the preceding
globally $C^0$-inextendible, future causally complete developments
for every generalized Kasner datum with $p_1>0$ and a negative
horizontal exponent, and every choice of leading amplitudes.
A past $C^0$-inextendible end also exists for every constant
potential $\mathcal V\equiv\Lambda\in\R$ with those data.
When $q=0$, the latter solutions are vacuum solutions of
$\Ric(g)=\Lambda g$.
\end{corollary}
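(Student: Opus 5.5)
The corollary has three claims, and each will follow from Proposition~\ref{prop:potential-solutions} once its hypotheses are checked. Write $\mathcal V$ for the potential in each case.

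\emph{Nonnegative polynomial potentials.} Let $\mathcal V$ be a polynomial of degree $D$ with $\mathcal V\ge0$. Then $\mathcal V$, $\mathcal V'$ and $\mathcal V''$ all grow at most like $C(1+|s|)^{D}$. Since $(1+|s|)^D\le C_\eps e^{\eps|s|}$ for every $\eps>0$, the growth bound \eqref{eq:potential-growth} holds with any $c>0$. Now fix a generalized Kasner datum $(p_i,q)$ with $p_1>0$ and a negative horizontal exponent. The constraint $\sum p_i^2+q^2=1$ gives $|q|\le1$, so a small choice such as $c=1$ satisfies $c|q|<2$. We may even take $c=0$: it is not literally allowed for polynomials, but any $c<2$ works. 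The data condition \eqref{eq:potential-data} therefore holds for arbitrary amplitudes $A_i$ and arbitrary $\phi_0$. Proposition~\ref{prop:potential-solutions} then gives a past tail with the stated asymptotics, together with past $C^0$-inextendibility on every discrete quotient. Because $\mathcal V\ge0$, the last part of that proposition also gives global $C^0$-inextendibility and future causal completeness of the maximal development.

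\emph{Constant potentials.} For $\mathcal V\equiv\Lambda$, take $C_V=|\Lambda|+1$ and $c=0$. The bound \eqref{eq:potential-growth} is immediate, $\mu=\min\{4p_1,2\}>0$, and $c|q|=0<2$. The proposition gives past $C^0$-inextendibility of the tail on every discrete left quotient, for either sign of $\Lambda$. Only the past end is claimed here, since the future conclusion needs $\mathcal V\ge0$. The constant zero potential is a special case. When $\Lambda\ge0$ it is in fact covered by the first claim.

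\emph{The vacuum case $q=0$.} I expect this step to need the most care. Set $\phi\equiv\phi_0$. Then $\dd\phi=0$ and $\mathcal V'(\phi)=0$, so the wave equation holds trivially and \eqref{eq:potential-equations} reduces to $\Ric(g)=\Lambda g$. To make sure the solution produced is genuinely this vacuum solution, I would argue as follows. The scalar field equation with a constant potential has $\mathcal V'=0$, so it is $\Box_g\phi=0$. For homogeneous $\phi$ this says $\partial_t(\sqrt{\det h}\,\dot\phi)=0$, hence $\dot\phi=Q/\sqrt{\det h}$ for some constant $Q$. The asymptotics $\phi=q\log t+\phi_0+O(t^\mu)$ with $q=0$, together with $\sqrt{\det h}\sim Ct$, force $Q=0$. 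So $\phi$ is constant, and the metric solves $\Ric(g)=\Lambda g$ while keeping the past-inextendibility conclusion.
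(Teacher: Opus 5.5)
Your proposal is correct and follows the paper's own proof. You verify \eqref{eq:potential-growth} with any $c>0$ for polynomials (any $c\in(0,2)$ then works because $|q|\le1$) and with $c=0$ for constants, and then apply Proposition~\ref{prop:potential-solutions}. For $q=0$, your proper-time conservation law $\sqrt{\det h}\,\dot\phi=\mathrm{const}$ is the same as the paper's harmonic-time equation $\phi''=0$, and together with the past data it gives $\phi\equiv\phi_0$.

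One small correction: the aside that one ``may even take $c=0$'' for a nonconstant polynomial is false and should be deleted, though nothing in the argument depends on it.
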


\begin{proof}
A polynomial and its first two derivatives satisfy
\eqref{eq:potential-growth} with any $c>0$; choose $c|q|<2$.
A constant potential allows $c=0$.  For constant potential and
$q=0$, the harmonic scalar equation $\phi''=0$ and the past
data give $\phi\equiv\phi_0$.
\end{proof}

For example, $p=(2/7,-1/7,6/7)$ and $q=4/(7\sqrt2)$ admit
$\mathcal V=\Lambda+M^2\phi^2/2$ for all $\Lambda,M^2\ge0$.
Negative constant potentials retain the past conclusion.

\begin{corollary}[Finite sums of exponential potentials]
\label{cor:exponential-potential-sector}
Let $\mathcal V(\phi)=\sum_{j=1}^{m}V_j e^{\lambda_j\phi}$,
where $m\ge1$, $V_j>0$ and $\lambda_j\in\R$.  Fix $A_i>0$,
$\phi_0\in\R$ and generalized Kasner data with $p_1>0$.
There is a diagonal Bianchi II solution with
$a_i=A_it^{p_i}(1+o(1))$, $\phi=q\log t+\phi_0+o(1)$ and
$t\dot a_i/a_i\to p_i$, $t\dot\phi\to q$ if and only if
\begin{equation}
 2+\lambda_jq>0\qquad(1\le j\le m).
 \label{eq:one-sided-potential-condition}
\end{equation}
When these hold, the solution satisfies
\eqref{eq:potential-asymptotics} with
$\mu=\min\{4p_1,2+\lambda_1q,\ldots,2+\lambda_mq\}>0$
through two applications of $t\partial_t$; in particular,
\begin{equation}
 t\frac{\dot a_i}{a_i}=p_i+O(t^\mu),\qquad
 t\dot\phi=q+O(t^\mu).
 \label{eq:exponential-differentiated-asymptotics}
\end{equation}
If a horizontal exponent is negative, its maximal homogeneous
development is globally $C^0$-inextendible and future causally
geodesically complete on every discrete left quotient.
\end{corollary}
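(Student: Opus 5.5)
Sufficiency comes from Proposition~\ref{prop:potential-solutions} once the potential's growth is controlled along the trajectory rather than over all of $\R$; necessity comes from a direct asymptotic argument on the scalar and constraint equations. The two directions are treated separately below, followed by the quotient and future conclusions.

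\textbf{Sufficiency.} Assume \eqref{eq:one-sided-potential-condition}. The global growth bound \eqref{eq:potential-growth} fails for mixed-sign $\lambda_j$ in general, so I would modify $\mathcal V$ away from the relevant half-line. Since $\phi=q\log t+\phi_0+o(1)$, only the tail $q\phi\to-\infty$ matters. If $q=0$, I would replace $\mathcal V$ outside a compact interval by a smooth bounded function. If $q\ne0$, I would cut $\mathcal V$ off on the half-line where $q\phi$ is large and positive. On the remaining side each term satisfies $e^{\lambda_j\phi}\le C e^{c_j|\phi|}$ with $c_j|q|=\max(0,-\lambda_jq)$. Taking $c=\max_j\max(0,-\lambda_j\operatorname{sgn}q)$ gives $c|q|<2$ precisely because $2+\lambda_jq>0$.

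Proposition~\ref{prop:potential-solutions}, applied to the modified potential, then yields the asymptotics and the differentiated estimates. The solution stays in the region where the modification is inactive, so it solves the original equations on a short tail. I would sharpen the error exponent to $\min_j(2+\lambda_jq)$ by re-reading the fixed-point estimate. The key point is that the potential term in the $t^2$-weighted equations is $t^2e^{\lambda_j\phi}\sim t^{2+\lambda_jq}$, termwise. This gives $\mu$ and \eqref{eq:exponential-differentiated-asymptotics}.

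\textbf{Necessity.} Suppose a solution with these asymptotics exists but $2+\lambda_jq\le0$ for some $j$. Since each $V_j>0$, every term of $\mathcal V$ is positive, so that term alone forces $\mathcal V(\phi(t))\ge V_je^{\lambda_j\phi}\gtrsim t^{\lambda_jq}\ge c\,t^{-2}$. I would feed this into the Hamiltonian constraint, written in $t^2$-normalized form as $(t\theta)^2=t^2|K|^2+t^2\dot\phi^2+2t^2\mathcal V-t^2R$. The curvature term is $O(t^{4p_1})$ and the kinetic terms converge to the Kasner values, which already saturate $\sum p_i^2+q^2=1=(\sum p_i)^2$.

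Two cases follow. If $2+\lambda_jq<0$, then $t^2\mathcal V\to\infty$, which contradicts boundedness of the other terms. In the borderline case $2+\lambda_jq=0$, $t^2\mathcal V$ has a positive limit, and the constraint identity in the limit gives $1=1+2\lim t^2\mathcal V$, again a contradiction. I expect this borderline case, and making the limits rigorous from only $o(1)$ and $t\partial_t$-convergence hypotheses, to be the main obstacle. There I would use the exact limiting identity rather than differential inequalities.

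\textbf{Quotient and future conclusions.} With a negative horizontal exponent, the past $C^0$-inextendibility on every discrete quotient comes from Proposition~\ref{prop:potential-solutions}. Since $\mathcal V>0$, Theorem~\ref{thm:homogeneous-future} applies with $R\le0$ for Bianchi II and expanding slices, which gives future completeness. \cite{GLS2018} then gives global inextendibility, exactly as in that proposition.
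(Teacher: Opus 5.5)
Your proposal is correct. The necessity half and the future and global conclusions coincide with the paper's proof, which specializes Proposition~\ref{prop:heisenberg-exponential-admissibility} to $n=3$ and there multiplies the proper-time Hamiltonian constraint by $t^2$ exactly as you do. The obstacle you anticipate does not arise. The hypotheses give $t\theta\to1$, $t^2|K|^2\to\sum_ip_i^2$ and $t^2\dot\phi^2\to q^2$ directly. Since $-t^2R\ge0$ and $t^2\mathcal V\ge0$ have a sum tending to zero, neither the borderline case nor the curvature rate needs separate treatment.

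The real difference is in sufficiency. The paper never imposes a growth bound. It runs the contraction with the exact terms $e^{2S}V_je^{\lambda_j\phi}=V_je^{\lambda_j\phi_0}e^{(2+\lambda_jq)\tau}\exp(2\sum_aw_a+\lambda_jv)$. Each of these is $O(e^{(2+\lambda_jq)\tau})$ for $|Y|\le1$, whatever the sign of $\lambda_j$, so existence and the sharp exponent $\mu$ come out in one step.

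Your one-sided cutoff reduction to Proposition~\ref{prop:potential-solutions} is valid. Your computation that $c=\max_j\max(0,-\lambda_j\operatorname{sgn}q)$ gives $c|q|<2$ exactly under \eqref{eq:one-sided-potential-condition} explains nicely why only the side reached by the past trajectory matters. One small correction: the two-sided growth bound always holds with $c=\max_j|\lambda_j|$. What fails is $c|q|<2$, and it already fails for a single exponential with $\lambda q>2$, not only for mixed signs.

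However, the reduction only delivers the exponent $\min\{4p_1,2-c|q|\}\le2$. This is strictly below the claimed $\mu$ when $p_1>1/2$ and every $\lambda_jq>0$. Your sharpening step therefore has to reopen the fixed-point estimate termwise, and at that point you have reproduced the paper's direct argument and the cutoff is superfluous. The black-box route buys reuse of an existing existence result; the direct route gives the sharp error exponent without modifying $\mathcal V$.

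You are right to apply Theorem~\ref{thm:homogeneous-future}\textup{(ii)} to the maximal development of the original potential. The global conclusion of Proposition~\ref{prop:potential-solutions} for the cut-off potential concerns a different maximal development. Its past conclusion still transfers, because it holds for any development containing the tail.
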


\begin{proof}
This is Proposition~\ref{prop:heisenberg-exponential-admissibility}
with $n=3$ and $\chi_1=2p_1$.  Positivity of every coefficient
is exactly its necessity hypothesis; the same specialization
gives the differentiated error and global conclusion.
\end{proof}

For one exponential the condition is $\lambda q>-2$, with no
upper restriction: large slopes are permitted when the potential
decays along the past scalar trajectory.  This agrees with the
potential-mode eigenvalue in \cite[equation~(3.12)]{BillyardEtAl1999}.
Equality excludes the stated differentiated Kasner regime, not
other leading balances.  The Bianchi II equilibria in
\cite[Section~III.A, item~3]{BillyardEtAl1999} give classical
examples with a leading-order exponential potential.

\subsection{Existing homogeneous developments}

\begin{corollary}[Nonlinear scalar fields]\label{cor:nonlinear-scalar-existing}
Let $\mathcal V\in C^\infty(\R)$ be nonnegative and suppose that, for some
$C<\infty$ and $0<\alpha<1$,
\begin{equation}\label{eq:potential-ringstrom-growth}
 |\mathcal V(s)|+|\mathcal V'(s)|
 \le C\exp(\sqrt6\,\alpha|s|),\qquad s\in\R.
\end{equation}
Consider the maximal homogeneous proper-time development of expanding,
Bianchi~II initial data without local rotational symmetry for
\eqref{eq:potential-equations}.  The scalar field is spatially homogeneous,
and expanding means positive mean curvature at the initial slice.
If the limiting
expansion-normalized Weingarten map has a negative eigenvalue, then the
development is globally $C^0$-inextendible and future timelike and null
geodesically complete, both on the simply connected Heisenberg group
and on every lattice quotient.
\end{corollary}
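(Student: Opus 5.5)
The plan is to split the two conclusions. The future half comes from the completeness results already proved. The past half comes from Ringström's homogeneous scalar classification \cite{Ringstrom2025}, used as a black box to produce convergent Kasner asymptotics, which are then fed into the asymptotic comparison machinery.

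\emph{Future part.} In three spatial dimensions every left-invariant metric on $H_3$ has $R(h)<0$, because Bianchi II is nonunimodular-free and of class A with a single nonzero structure constant. Hence $R(h_t)\le0$ along the whole development. Since $\mathcal V\ge0$ and the initial slice has positive mean curvature, Theorem~\ref{thm:homogeneous-future}\textup{(ii)} with $n=3$ gives infinite future proper time. It also gives future timelike and null geodesic completeness on $H_3$ and on every lattice quotient. By the Galloway--Ling--Sbierski obstruction \cite{GLS2018}, this excludes a future boundary of any $C^0$ extension.

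\emph{Past part.} I will invoke Ringström's results for Bianchi class A with nonlinear scalar fields satisfying \eqref{eq:potential-ringstrom-growth}. For non-LRS Bianchi~II data, the expansion-normalized Weingarten map converges toward the past to a limit with eigenvalues $(p_1,p_2,p_3)$ and scalar limit $q$ satisfying the Kasner relations. The potential is asymptotically negligible, and the limit lies in the region where the Bianchi~II curvature term decays; this amounts to $p_1>0$ for the central direction and supplies convergence at a power rate in the mean-curvature time. I then convert mean-curvature time to proper time. The aim is the asymptotics \eqref{eq:potential-asymptotics}, or at least a two-sided comparison $a_i\asymp t^{p_i}$ with differentiated control, of the kind required by Proposition~\ref{prop:asymptotic-transfer}.

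Assume one eigenvalue is negative. The earlier argument forces it to be horizontal: the central exponent $p_1$ is positive, and the sum rule leaves at most one negative entry. Then $\max p_i<1$ and $2p_1>0$. With these, Proposition~\ref{prop:asymptotic-transfer}, equivalently the cone, filling and growth hypotheses of Theorem~\ref{thm:two-step-obstruction}, applies on every lattice quotient.
\begin{itemize}
\item The cone integral $\int t^{-\max(p_i,0)\cdot}$ is finite because every $p_i<1$.
\item $\kappa_{\rm an}\to0$ by the one-bracket power estimate.
\item $\lambda_{\max}(H_t)\sim t^{2p_j}\to\infty$ for the negative exponent $p_j$.
\end{itemize}
Combining the past and future conclusions yields global $C^0$-inextendibility.

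The main obstacle is extracting from \cite{Ringstrom2025} quantitative proper-time rates strong enough for the comparison proposition. Mere convergence of the normalized Weingarten map only gives $a_i=t^{p_i+o(1)}$, which is insufficient for the filling and cone estimates if exponents sit near thresholds. My first attempt is to use the exponential convergence in the logarithmic mean-curvature time that Ringström's analysis provides. If that is unavailable, I would fall back on an alternative. The observation is that strict inequalities $p_j<0$, $\max p_i<1$ and $p_1>0$ leave room for an $\epsilon$ loss. Bounds $c\,t^{p_i+\epsilon}\le a_i\le C\,t^{p_i-\epsilon}$ with small $\epsilon$ already satisfy the cone integral, the bracket-power filling bound and horizontal growth, and these follow from $o(1)$ convergence of $t\dot a_i/a_i$.
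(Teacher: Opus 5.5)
Your proposal is correct and follows essentially the paper's route: the exponential convergence $\ell_i=p_i+O(e^{\mu\tau})$, $\theta=\theta_\infty e^{-3\tau}(1+O(e^{\mu\tau}))$ from \cite[Lemma~4.8]{Ringstrom2025}, with $\sigma_+<1/2$ giving $p_1>0$, is integrated to $a_i=A_it^{p_i}(1+O(t^{\mu/3}))$ and fed into Proposition~\ref{prop:asymptotic-transfer}, while the future half uses Proposition~\ref{prop:nonnegative-future-completeness} and \cite{GLS2018}. The one step you leave implicit, which the paper makes explicit, is that the momentum constraint gives a common eigenframe for the Weingarten map and the structure matrix that the evolution preserves, so $h_t$ stays diagonal in a fixed invariant coframe; this coframe is normalized to the standard Heisenberg brackets by a Lie group isomorphism carrying lattices to lattices, and without it the scale factors $a_i$ in your comparison are not defined.
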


\begin{proof}
We give the conversion from the normalized asymptotics in
\cite[Lemma~4.8]{Ringstrom2025} to the spatial metric estimates needed
here.  The momentum constraint gives a common orthonormal eigenframe for the
Weingarten endomorphism and the symmetric class-A structure matrix
\cite[Lemma~2.4]{Ringstrom2025}.  Their diagonal form is preserved by
the homogeneous evolution; integrating the frame coefficients gives
a fixed diagonal invariant coframe $\eta^i$, with
$\dd\eta^1=-c\eta^2\wedge\eta^3$, $c\ne0$, and
$\dd\eta^2=\dd\eta^3=0$
\cite[proof of Proposition~1.31, equation~(3.13)]{Ringstrom2025}.
The bound \eqref{eq:potential-ringstrom-growth} is precisely
$\mathcal V\in\mathfrak P^1_\alpha$ in
\cite[Definition~1.44]{Ringstrom2025}, which is the derivative order
required in Lemma~4.8 of that reference.  Here and in the subsequent
applications of its results we use the numbering of version~1.
Write $h_t=\sum_i a_i(t)^2(\eta^i)^2$ in proper time and put
$\theta=\sum_i\dot a_i/a_i$.  Introduce $\tau$ by
$\dd\tau/\dd t=\theta/3$ and define
$\ell_i=\dot a_i/(\theta a_i)$.
The cited asymptotic result gives constants $\mu>0$, $\theta_\infty>0$
and $\sigma_\pm$ such that, as $\tau\to-\infty$,
\begin{equation}\label{eq:normalized-to-metric}
 \theta=\theta_\infty e^{-3\tau}(1+O(e^{\mu\tau})),
 \qquad \ell_i=p_i+O(e^{\mu\tau}),
\end{equation}
where
\begin{equation}\label{eq:ringstrom-exponent-identification}
 p_1=\frac{1-2\sigma_+}{3},\qquad
 p_2=\frac{1+\sigma_++\sqrt3\sigma_-}{3},\qquad
 p_3=\frac{1+\sigma_+-\sqrt3\sigma_-}{3}.
\end{equation}
Moreover, $\sigma_+<1/2$, and the asymptotic constraint reads
$\sum_i p_i=1$, $\sum_i p_i^2+q_\phi^2=1$, where
$q_\phi=\lim\dot\phi/\theta$.  In particular, $p_1>0$.
The negative eigenvalue is therefore horizontal, and the constraint
implies $\max_i p_i<1$.

Fix the additive constant in proper time by placing the past endpoint
at zero.  Integrating $\dd t/\dd\tau=3/\theta$ and
$\dd\log a_i/\dd\tau=3\ell_i$ gives
\begin{equation}\label{eq:proper-time-ringstrom}
 t=\theta_\infty^{-1}e^{3\tau}(1+O(e^{\mu\tau})),\qquad
 a_i=A_i t^{p_i}(1+O(t^{\mu/3})),\quad A_i>0.
\end{equation}
Consequently $h_t$ has two-sided relative error $O(t^{\mu/3})$ with
respect to $\sum_i A_i^2t^{2p_i}(\eta^i)^2$.  A fixed rescaling
normalizes the nonzero bracket coefficient, retaining fixed positive
metric coefficients.  Proposition~\ref{prop:asymptotic-transfer} now
applies.  A fixed Lie-group isomorphism transports any lattice to a
lattice, so no additional arithmetic hypothesis is needed.
Proposition~\ref{prop:nonnegative-future-completeness} gives future
causal geodesic completeness.  Since the product slices are Cauchy,
\cite{GLS2018} excludes a future extension boundary, while the preceding
argument excludes the past boundary.  This proves global
$C^0$-inextendibility.
\end{proof}

\begin{remark}\label{rem:nonlinear-open-sectors}
Suppose, in addition, that for every integer $m\ge0$ there is $C_m<\infty$
such that
$\sum_{j=0}^m|\mathcal V^{(j)}(s)|\le C_m e^{\sqrt6\alpha|s|}$.
Then the developments in Corollary~\ref{cor:nonlinear-scalar-existing}
contain a nonempty open subset of the non-LRS homogeneous
development classes: the asymptotic-data map is a diffeomorphism by
\cite[Corollary~1.88]{Ringstrom2025}, and a negative horizontal eigenvalue
is an open condition.  Nonemptiness follows, for example, from the
admissible singularity data
\[
 (p_1,p_2,p_3)=\left(\frac12,\frac34,-\frac14\right),
 \qquad q_\phi=\frac{1}{2\sqrt2}.
\]
At any fixed mean-curvature level met by one of these developments,
their regular initial-data classes form a nonempty open subset of
the corresponding homogeneous data space
\cite[Proposition~1.70]{Ringstrom2025}.  This is a finite-dimensional
homogeneous statement, not stability under inhomogeneous perturbations.
Examples of admissible potentials are nonnegative polynomials and
$\mathcal V(s)=V_0e^{\lambda s}$ with $V_0\ge0$ and $|\lambda|<\sqrt6$.
\end{remark}

\subsection{Einstein-scalar developments on Heisenberg groups}
\label{subsec:h5-einstein}

Let $n=2m+1\ge3$ and let $H_n$ have left-invariant coframe
$\omega^0,\ldots,\omega^{2m}$ with
\begin{equation}
 \dd\omega^0=-\sum_{r=1}^m\omega^{2r-1}\wedge\omega^{2r},
 \qquad \dd\omega^i=0\quad(1\le i\le2m).
 \label{eq:h5-coframe}
\end{equation}
Its center and derived algebra both equal $\R e_0$, so its Lie algebra
cannot split as a direct sum of two nonzero nilpotent Lie algebras.
The construction below retains all $m$ brackets.  Generalized Kasner
asymptotics for higher-dimensional Einstein-matter systems have a much
broader Fuchsian theory \cite{DamourHenneauxRendallWeaver2002}; here the
explicit curvature terms also yield a complete massless evolution and
metric inextendibility on every discrete quotient.
The dimensionally trace-reversed field equations are
\begin{equation}
 \Ric(g)=\dd\phi\otimes\dd\phi+
       \frac{2}{n-1}\mathcal V(\phi)g,
 \qquad \Box_g\phi=\mathcal V'(\phi).
 \label{eq:h5-field-equations}
\end{equation}

\begin{proposition}[Prescribed Heisenberg singularity data]
\label{prop:h5-einstein}
Let $\mathcal V\in C^\infty(\R)$ satisfy \eqref{eq:potential-growth}, and choose
$A_a>0$, $\phi_0\in\R$, and real data $(p_0,\ldots,p_{2m},q)$ with
\begin{equation}
 \sum_{a=0}^{2m}p_a=1,\qquad
 \sum_{a=0}^{2m}p_a^2+q^2=1,\qquad c|q|<2,\qquad
 \chi_r=1+p_0-p_{2r-1}-p_{2r}>0\quad(1\le r\le m).
 \label{eq:h5-kasner-data}
\end{equation}
There is a smooth diagonal solution of \eqref{eq:h5-field-equations}
on $(0,t_*)\times H_n$ with
\begin{align}
 g&=-\dd t^2+\sum_{a=0}^{2m}a_a(t)^2(\omega^a)^2,
 &a_a(t)&=A_at^{p_a}(1+O(t^\mu)),\notag\\
 \phi(t)&=q\log t+\phi_0+O(t^\mu),
 &\mu&=\min\{2\chi_1,\ldots,2\chi_m,2-c|q|\}>0.
 \label{eq:h5-asymptotics}
\end{align}
The error estimates hold after one and two applications of
$t\partial_t$.  If a horizontal exponent is negative, its past end is
$C^0$-inextendible on every discrete left quotient of $H_n$.
Such data give a future causally complete, globally $C^0$-inextendible
maximal homogeneous development if $n\in\{3,5\}$ and $\mathcal V\ge0$,
or if $\mathcal V\ge V_*>0$ in any odd spatial dimension.
\end{proposition}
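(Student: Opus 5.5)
The proof has three parts: a Fuchsian construction of the tail with prescribed Kasner data, an application of Theorem~\ref{thm:two-step-obstruction} (via an asymptotic comparison) for the past obstruction, and the future estimates of Section~\ref{sec:homogeneous-future} for the global claim.

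\emph{Reduction to ODEs.} For the diagonal ansatz in the coframe \eqref{eq:h5-coframe}, I will compute the spatial Ricci tensor of $h_t=\sum_a a_a^2(\omega^a)^2$. Only the structure constants $c^0_{2r-1,2r}=-1$ contribute. The standard Bianchi-type formula gives a diagonal Ricci tensor, so the momentum constraint holds identically:
\[
 \Ric_{00}=\tfrac12\sum_r\frac{a_0^4}{a_{2r-1}^2a_{2r}^2},\qquad
 \Ric_{2r-1,2r-1}=\Ric_{2r,2r}=-\tfrac12\frac{a_0^2}{a_{2r-1}^2a_{2r}^2}
\]
(in orthonormal components). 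The field equations \eqref{eq:h5-field-equations} then reduce to a first-order ODE system for $\ell_a=t\dot a_a/a_a$, $\psi=t\dot\phi$ and $\log a_a$, together with the Hamiltonian constraint.

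\emph{Past construction.} In logarithmic time $s=\log t$ I will write $\ell_a=p_a+u_a$, $\psi=q+w$, $\log a_a=\log A_a+p_as+v_a$. The forcing terms are of two kinds. The curvature terms are $t^2a_0^2/(a_{2r-1}^2a_{2r}^2)\sim t^{2\chi_r}$. The potential terms are $t^2\mathcal V(\phi)$, $t^2\mathcal V'(\phi)$; by \eqref{eq:potential-growth} they are $O(t^{2-c|q|})$ as long as $|\phi-q\log t-\phi_0|$ stays bounded. I will solve the resulting Fuchsian system $t\partial_tU=\mathcal N(t,U)$, whose nonlinearity is $O(t^\mu)$, by a contraction in the weighted space $\sup t^{-\mu}|U|$ on $(0,t_*)$. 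The solution is $U(t)=\int_0^t\mathcal N(s,U)\,\dd s/s$, which converges because $\mu>0$. This yields the errors $O(t^\mu)$ for $U$ and, through the equation, after one and two applications of $t\partial_t$.

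The Hamiltonian constraint deserves care, because the evolution system alone does not enforce it. I will use the constraint-propagation argument from the proof of Proposition~\ref{prop:nonnegative-future-completeness}: the defect satisfies a linear Fuchsian equation $t\partial_t\mathcal C=(\text{coefficient}+O(t^\mu))\mathcal C$. The asymptotic constraint $\sum p_a^2+q^2=1$ makes the defect $O(t^\mu)$, and Gronwall then forces $\mathcal C\equiv0$. Alternatively, I can solve for one $\ell_a$ from the constraint. I expect this step, the nonlinear fixed point with the potential's exponential growth together with constraint propagation, to be the main obstacle; the rest is routine.

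\emph{Past obstruction.} Suppose the horizontal exponent $p_j<0$ with $j\ge1$. Then $H_t$ has an eigenvalue $\sim t^{2p_j}\to\infty$, which is case (i) of Theorem~\ref{thm:two-step-obstruction}. For the cone integral, the Kasner relations give $\max_a p_a<1$, so $m_h\gtrsim t^{\max p_a}$ and $\int_0 dt/m_h<\infty$. For the filling bound, $A_t\sim t^{2p_0}$ and the bracket in each pair $r$ has weight $H\sim t^{2p_{2r-1}}$, $t^{2p_{2r}}$. I will bound $\kappa_{\rm an}$ on each bracket by the weighted planar area formula \eqref{eq:weighted-planar-area}, with $w=t^{p_{2r-1}+p_{2r}-p_0}$. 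This gives $\int_0^T t^{-(p_{2r-1}+p_{2r}-p_0)}\,\dd t\sim T^{\chi_r}\to0$; to be safe I will use the $\max(p_{2r-1},p_{2r})$ weight bound. Summing over the brackets gives $\kappa_{\rm an}(T)\to0$, and the error factors $1+O(t^\mu)$ only change constants. I will pass to the true metric either through Proposition~\ref{prop:asymptotic-transfer} or directly, since the comparison is two-sided.

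\emph{Global statement.} The Bianchi structure is nilpotent, so $R(h_t)\le0$. The expansion satisfies $\theta\sim1/t>0$ near the past end. Future completeness then follows from Theorem~\ref{thm:homogeneous-future}: case (ii) covers $n\in\{3,5\}$ with $\mathcal V\ge0$, and Proposition~\ref{prop:positive-floor-future-completeness} covers a positive floor. The \cite{GLS2018} argument of Proposition~\ref{thm:quotient-obstruction} then gives global inextendibility.
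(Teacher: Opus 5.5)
Your architecture matches the paper's. You build the tail with prescribed Kasner data, compare with an anisotropic power law to apply Theorem~\ref{thm:two-step-obstruction}, and then invoke Section~\ref{sec:homogeneous-future} and \cite{GLS2018}. Several pieces are fine:
\begin{itemize}
\item the exponent counts $t^{2\chi_r}$ and $t^{2-c|q|}$;
\item the Gronwall propagation of the normalized defect $t^2\mathcal C$, whose coefficient is $2(1-t\theta)=O(t^\mu)$ (this is the proper-time form of the paper's exact identity $\mathcal C'=0$);
\item the cone integral, the growth alternative, and the future argument.
\end{itemize}
One small correction: in orthonormal components $\Ric_{00}=\frac12\sum_r a_0^2/(a_{2r-1}^2a_{2r}^2)$, not $a_0^4$; your later exponent count uses the correct form. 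Two steps, however, fail as written.

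\emph{The fixed point.} In proper time your system is not $O(t^\mu)$ uniformly in $U$. Because $t\theta=\sum_b\ell_b$, the $\ell_a$ equation reads $t\partial_t u_a=-(p_a+u_a)\sum_b u_b+O(t^\mu)$, and you also have $t\partial_t v_a=u_a$. These linear terms carry no factor $t^\mu$. In the norm $\sup t^{-\mu}|U|$, the operator $U\mapsto\int_0^t U(s)\,\dd s/s$ has norm exactly $1/\mu$ for every $t_*$. On the trace mode $\sigma=\sum_b u_b$, your map acts to leading order as $\sigma\mapsto-\int_0^t\sigma\,\dd s/s$.

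Hence $U\mapsto\int_0^t\mathcal N(s,U)\,\dd s/s$ is not a contraction when $\mu<1$, and shrinking $t_*$ does not help. The hypotheses allow $\mu$ to be arbitrarily small: for $m=1$ one has $\chi_1=2p_0$, and $p_0>0$ may be tiny.

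There are two ways to repair this.
\begin{itemize}
\item Invert the linear Fuchsian part exactly. It has eigenvalue $-1$ on $\sigma$, which is inverted by $t^{-1}\int_0^t$, and is nilpotent on the complement. Then iterate only on the quadratic and $t^\mu$-weighted terms.
\item Use the paper's harmonic gauge $N=\eta e^S$. There the velocity-dominated equations are $u_a''=\phi''=0$, so the Kasner background is exactly affine in $\tau$. Every remaining term $\mathcal B_r,\mathcal P$ carries $e^{\mu\tau}$ in both size and Lipschitz constant. Consequently $\int_{-\infty}^\tau(\tau-s)F(s,Y(s))\,\dd s$ contracts with constant $Le^{\mu T}/(4\mu^2)$.
\end{itemize}

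\emph{The filling bound.} Formula \eqref{eq:weighted-planar-area} requires an isotropic weight $w(t)|\dot\xi|^2$. In the pair $(2r-1,2r)$ the weights are $a_{2r-1}^2/a_0\sim t^{2p_{2r-1}-p_0}$ and $a_{2r}^2/a_0\sim t^{2p_{2r}-p_0}$, and their ratio depends on time. A time-dependent rescaling $(x,y)\mapsto(\lambda x,y/\lambda)$ does not preserve bracket area. So using the geometric-mean weight $t^{p_{2r-1}+p_{2r}-p_0}$ is unjustified.

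Your fallback, bounding by the smaller eigenvalue, gives $\frac1{4\pi}\int_0^T t^{p_0-2\max(p_{2r-1},p_{2r})}\,\dd t$. This is finite only if $1+p_0-2\max(p_{2r-1},p_{2r})>0$, which is strictly stronger than $\chi_r>0$. It fails on data the statement covers: for $(p_0,p_1,p_2)=(2/7,-1/7,6/7)$ the quantity equals $-3/7$.

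The missing tool is the ordered estimate of Corollary~\ref{cor:all-two-step-anisotropic-powers} (Proposition~\ref{prop:asymptotic-transfer} when $m=1$). Write the pair's area as an iterated integral, putting inside the index with $D_i=1+p_0-2p_i>0$. Then bound the triangular kernel in Hilbert--Schmidt norm. Such an index always exists because $D_{2r-1}+D_{2r}=2\chi_r>0$, and the resulting constant is $O(T^{\min_r\chi_r})$.

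That corollary also supplies the two-sided metric comparison you need for $n\ge5$. Proposition~\ref{prop:asymptotic-transfer} is stated only on $H_3$ and does not cover that case. This is exactly the route the paper takes to conclude.
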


\begin{proof}
Put $a_a=e^{u_a}$, $S=\sum_a u_a$,
$\eta=(\prod_a A_a)^{-1}$ and $N=\eta e^S$.
In the spatial orthonormal frame the brackets are
$[E_{2r-1},E_{2r}]=b_rE_0$, where
$b_r=e^{u_0-u_{2r-1}-u_{2r}}$.
The two-step Ricci formulas
$\Ric^{(h_t)}|_{e_0^\perp}=J_0^2/2$ and
$\Ric^{(h_t)}(E_0,E_0)=-\operatorname{tr}(J_0^2)/4$ give
\begin{equation}
 \begin{aligned}
 \Ric^{(h_t)}(E_0,E_0)&=\tfrac12\sum_r b_r^2,
 &R(h_t)&=-\tfrac12\sum_r b_r^2,\\
 \Ric^{(h_t)}(E_{2r-1},E_{2r-1})
 &=\Ric^{(h_t)}(E_{2r},E_{2r})=-\tfrac12b_r^2.
 \end{aligned}
 \label{eq:h5-spatial-ricci}
\end{equation}
All other entries vanish.  The sign automorphisms
$e_0\mapsto\varepsilon_0e_0$,
$e_{2r-1}\mapsto\varepsilon_re_{2r-1}$,
$e_{2r}\mapsto\varepsilon_0\varepsilon_re_{2r}$ have distinct,
nontrivial characters on the spatial frame and fix time.  They preserve
the diagonal ansatz and force both the off-diagonal spatial Einstein
equations and the momentum equations.

Define
\[
 W_r=S+u_0-u_{2r-1}-u_{2r},\qquad
 \mathcal B_r=\tfrac12\eta^2e^{2W_r},\qquad
 \mathcal P=\eta^2e^{2S}\mathcal V(\phi),\qquad k_n=\frac2{n-1}.
\]
Since $N'/N=S'$, the remaining evolution equations are
\begin{align}
 u_0''&=-\sum_r\mathcal B_r+k_n\mathcal P,
 &u_{2r-1}''=u_{2r}''&=\mathcal B_r+k_n\mathcal P,\notag\\
 \phi''&=-\eta^2e^{2S}\mathcal V'(\phi),
 &S''&=\sum_r\mathcal B_r+nk_n\mathcal P.
 \label{eq:h5-harmonic-evolution}
\end{align}
The Hamiltonian constraint is
\begin{equation}
 \mathcal C=(S')^2-\sum_a(u_a')^2-(\phi')^2
                   -\sum_r\mathcal B_r-2\mathcal P=0.
 \label{eq:h5-hamiltonian}
\end{equation}
In particular $W_r''=-2\mathcal B_r+2\mathcal P$; the other curvature
terms cancel for every $m$.

Write $u_a=p_a\tau+\log A_a+w_a$,
$\phi=q\tau+\phi_0+v$, and $Y=(w_0,\ldots,w_{2m},v)$.
The right-hand side $F(\tau,Y)$ of the equations for $Y$ obeys
\[
 |F(\tau,0)|\le Me^{\mu\tau},\qquad
 |F(\tau,Y)-F(\tau,Z)|\le Le^{\mu\tau}|Y-Z|
 \quad(\tau\le0,\ |Y|,|Z|\le1).
\]
Indeed, the curvature terms have exponents $2\chi_r$, whereas
\eqref{eq:potential-growth} bounds the potential and its derivatives
by a multiple of $e^{(2-c|q|)\tau}$ after multiplication by $e^{2S}$.
On $(-\infty,T]$ use
$\|Y\|_\mu=\sup_{\tau\le T}e^{-\mu\tau}|Y(\tau)|$ and
\[
 (\mathcal TY)(\tau)=
       \int_{-\infty}^{\tau}(\tau-s)F(s,Y(s))\,\dd s.
\]
Then $\|\mathcal T0\|_\mu\le M/\mu^2$ and
\[
 \|\mathcal TY-\mathcal TZ\|_\mu
       \le\frac{Le^{\mu T}}{4\mu^2}\|Y-Z\|_\mu.
\]
Choose a ball of radius $R_0>2M/\mu^2$ and then $T<0$ with
$R_0e^{\mu T}\le1$ and $Le^{\mu T}/(4\mu^2)<1/2$.
The map preserves the ball and is a contraction.  Its smooth fixed point
satisfies $Y,Y',Y''=O(e^{\mu\tau})$.

No separate constraint correction is needed.  The identities
\begin{align*}
 \mathcal B_r'&=2\mathcal B_rW_r',\qquad
 \mathcal P'=2S'\mathcal P+
          \eta^2e^{2S}\mathcal V'(\phi)\phi',\\
 \sum_a u_a'u_a''&=
   \sum_r\mathcal B_r(-u_0'+u_{2r-1}'+u_{2r}')
                         +k_n\mathcal P S'
\end{align*}
give $\mathcal C'=0$ upon differentiation of
\eqref{eq:h5-hamiltonian}; the potential terms cancel because
$(n-1)k_n=2$.  Its limit at $-\infty$ is zero by
\eqref{eq:h5-kasner-data}.  The normal equation follows as well:
\[
 \Ric(\partial_\tau,\partial_\tau)
 =-S''+(S')^2-\sum_a(u_a')^2
 = (\phi')^2-k_n\mathcal P.
\]
Finally $N=e^\tau\exp(\sum_a w_a)$, so
$t=\int_{-\infty}^\tau N(s)\,\dd s
=e^\tau(1+O(e^{\mu\tau}))$ and
$\tau=\log t+O(t^\mu)$.  These estimates and their first two
derivatives give \eqref{eq:h5-asymptotics}.

If a horizontal exponent is negative, all $p_a<1$ by the Kasner
relations.  The remaining hypotheses of
Corollary~\ref{cor:all-two-step-anisotropic-powers} are precisely
$\chi_r>0$, and its metric comparison applies to
$h_t=(1+O(t^\mu))\sum_a A_a^2t^{2p_a}(\omega^a)^2$ in the sense
of quadratic forms.  This proves past inextendibility on all the stated
quotients.  One has $\theta=t^{-1}(1+O(t^\mu))>0$ near zero
and $R<0$ throughout the diagonal evolution.
Theorem~\ref{thm:homogeneous-future}\textup{(ii),(iii)} gives future
causal completeness in the two stated potential regimes.  The slices are Cauchy by completeness
of an invariant auxiliary metric and uniform comparison on compact
time intervals.  Hence there is no future extension boundary
\cite{GLS2018}; the past conclusion proves global inextendibility.
\end{proof}

\begin{proposition}[Exact massless developments in every odd spatial dimension]
\label{prop:h5-exact-massless}
Assume \eqref{eq:h5-kasner-data} with $\mathcal V=0$, taking $c=0$,
and define
\[
 \kappa_r=\frac{A_0}{A_{2r-1}A_{2r}},\qquad
 F_r(\tau)=1+\frac{\kappa_r^2}{4\chi_r^2}e^{2\chi_r\tau}.
\]
For all $\tau\in\R$, the formulas
\begin{align}
 a_0&=A_0e^{p_0\tau}\prod_{r=1}^mF_r^{-1/2},\notag\\
 a_{2r-1}&=A_{2r-1}e^{p_{2r-1}\tau}F_r^{1/2},
 &a_{2r}&=A_{2r}e^{p_{2r}\tau}F_r^{1/2},\notag\\
 N&=e^\tau\prod_{r=1}^mF_r^{1/2},
 &\phi&=q\tau+\phi_0
 \label{eq:h5-exact-family}
\end{align}
solve $\Ric(g)=\dd\phi\otimes\dd\phi$ and $\Box_g\phi=0$.
Their proper-time range is $(0,\infty)$, and every discrete left
quotient is future timelike and null geodesically complete.
If some horizontal $p_i<0$, every such quotient is globally
$C^0$-inextendible.  The case $q=0$ is vacuum.
\end{proposition}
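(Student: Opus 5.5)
The plan is to verify the field equations directly in the harmonic-time variables of Proposition~\ref{prop:h5-einstein}, and then read off the global causal and extension properties.

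\textbf{Field equations.} Put $u_a=\log a_a$, $S=\sum_a u_a$ and $\eta=(\prod_a A_a)^{-1}$, and take $\mathcal V=0$. The product $a_0a_{2r-1}a_{2r}$ carries the factor $F_r^{1/2}$, while each $F_s$ with $s\ne r$ cancels between $a_0$ and the horizontal pair. Hence $e^S=\eta^{-1}e^{\tau}\prod_rF_r^{1/2}$, using $\sum_ap_a=1$, and therefore $N=\eta e^S$, matching the lapse used there. The system \eqref{eq:h5-harmonic-evolution} then decouples bracket by bracket, and I would check each piece separately:
\begin{itemize}
\item $\phi''=0$ holds trivially.
\item With $W_r=S+u_0-u_{2r-1}-u_{2r}=\log(\eta^{-1}\kappa_r)+(1+p_0-p_{2r-1}-p_{2r})\tau-\tfrac12\log F_r$, the relation $u_{2r-1}''=\tfrac12(\log F_r)''$ must equal $\mathcal B_r=\tfrac12\eta^2e^{2W_r}=\tfrac12\kappa_r^2e^{2\chi_r\tau}/F_r$.
\item Writing $F_r=1+\epsilon_re^{2\chi_r\tau}$ with $\epsilon_r=\kappa_r^2/(4\chi_r^2)$, one has $(\log F_r)''=4\chi_r^2\epsilon_re^{2\chi_r\tau}/F_r^2$.
\end{itemize}
There is a mismatch here between $F_r$ and $F_r^2$ in the denominator. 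I would resolve it by recomputing $W_r$ carefully, because $u_0$ contributes $-\tfrac12\log F_r$ and each horizontal scale contributes $+\tfrac12\log F_r$. This gives $W_r\ni(\tfrac12-\tfrac12-1)\log F_r=-\log F_r$, so $\mathcal B_r=\tfrac12\kappa_r^2e^{2\chi_r\tau}F_r^{-2}$, and this matches exactly. The equation for $u_0$ follows because $u_0''=-\sum_r\tfrac12(\log F_r)''$. The Hamiltonian constraint reduces to the Kasner relations \eqref{eq:h5-kasner-data} plus one cross-term identity per bracket. For that identity I would use $\sum_a(u_a')^2$ with $u_{2r-1}'=p_{2r-1}+\tfrac12(\log F_r)'$, together with $(\log F_r)'=2\chi_r(1-F_r^{-1})$. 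Alternatively, one can note that $\mathcal C'=0$, as shown in Proposition~\ref{prop:h5-einstein}, and that $\mathcal C\to0$ as $\tau\to-\infty$.

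\textbf{Proper time and past asymptotics.} As $\tau\to-\infty$ we have $N\sim e^\tau$, so the past proper-time endpoint is finite, and I would normalize it to $0$. As $\tau\to+\infty$, $N$ grows exponentially, so $t\to\infty$. Near $\tau=-\infty$, each $F_r=1+O(e^{2\chi_r\tau})$. The solution therefore has exactly the asymptotics \eqref{eq:h5-asymptotics} with $\mu=\min_r2\chi_r$, including derivatives, since everything is explicit and analytic in $e^{2\chi_r\tau}$. If some horizontal $p_i<0$, the final argument of Proposition~\ref{prop:h5-einstein}, via Corollary~\ref{cor:all-two-step-anisotropic-powers}, gives past $C^0$-inextendibility on every discrete left quotient.

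\textbf{Future completeness.} On the future side I would apply Theorem~\ref{thm:homogeneous-future}. Here $R(h_t)=-\tfrac12\sum_rb_r^2<0$ by \eqref{eq:h5-spatial-ricci}, and $\theta>0$ near the past end. The obstacle is that case~(i) covers $n\le9$ only, whereas $n=2m+1$ is arbitrary. I therefore expect this to be the main step, and I plan to bypass the dimension restriction using the explicit formulas.
\begin{itemize}
\item Compute $\theta=S'/N$. As $\tau\to\infty$, $S'\to1+\sum_r\chi_r>0$ and $\theta$ stays positive throughout, since $S''=\sum_r\mathcal B_r\ge0$ and $S'\to1$ at $-\infty$.
\item Bound the geodesic energy directly from the explicit late-time Kasner regime. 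For large $\tau$, each scale factor is $\sim e^{\hat p_a\tau}$ with $\hat p_0=p_0-\sum_r\chi_r$ and $\hat p_{2r-1}=p_{2r-1}+\chi_r$. Proper time is $t\sim e^{(1+\sum_r\chi_r)\tau}$, so $a_a\sim t^{\hat p_a/(1+\sum_r\chi_r)}$.
\item Conserved momenta along left-invariant Killing directions are problematic on a nilpotent group. Instead, I would use the affine-length criterion \eqref{eq:future-affine-integral} with the bound $E\le E_0\exp(-\int\lambda_{\min}(K)\,\dd t)$.
\item Here $\lambda_{\min}(K)$ is the smallest $\dot a_a/a_a\sim\hat p_a/((1+\sum_r\chi_r)t)$. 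The most negative exponent is $\hat p_0/(1+\sum_r\chi_r)>-1$, which gives $E\lesssim t^{\beta}$ with $\beta<1$.
\end{itemize}
Then $\int E^{-1}\,\dd t=\infty$ and the spacetime is future complete. For the quotients I would use the covering argument from Proposition~\ref{prop:nonnegative-future-completeness}, and global inextendibility then follows from \cite{GLS2018}. The delicate point I expect is verifying $\hat p_0>-(1+\sum_r\chi_r)$, that is, $1+p_0>0$, which holds because $|p_0|<1$ by the Kasner relations.
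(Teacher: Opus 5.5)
Your proposal is correct and follows the paper's own route. You verify the harmonic-time system with $W_r=\chi_r\tau+C_r-\log F_r$ and use the conserved constraint, you take the past obstruction from Corollary~\ref{cor:all-two-step-anisotropic-powers}, and you obtain future completeness from the explicit late-time Kasner exponents (this is how the paper argues in Proposition~\ref{prop:heisenberg-scattering}) rather than from Theorem~\ref{thm:homogeneous-future}. One step is left implicit: that $P_0$ is the least future exponent follows from $P_{2r-1}-P_0=(1+\sum_s\chi_s-p_{2r})/D>0$, whereas the paper instead uses the future Kasner relations to get $\min_aP_a\ge-(n-2)/n$.
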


\begin{proof}
The logarithms in \eqref{eq:h5-exact-family} obey $N=\eta e^S$ and
\[
 W_r=\chi_r\tau+C_r-\log F_r,\qquad
 \eta e^{C_r}=\kappa_r,\qquad
 (\log F_r)''=
       \frac{\kappa_r^2e^{2\chi_r\tau}}{F_r^2}=2\mathcal B_r.
\]
Thus \eqref{eq:h5-harmonic-evolution} holds with $\mathcal P=0$.
The preceding constraint calculation proves all Einstein equations.
At the past end, put $\mu=2\min_r\chi_r$.
The potential term $\mathcal P$ vanishes identically and
$N=e^\tau\prod_rF_r^{1/2}$ exactly, so the potential-dependent cap
$2-c|q|$ in \eqref{eq:h5-asymptotics} is absent.  The factors satisfy
$F_r=1+O(e^{\mu\tau})$ with differentiated estimates, whence
$t=e^\tau(1+O(e^{\mu\tau}))$.  This gives the prescribed past
asymptotics with error $O(t^\mu)$ after two applications of
$t\partial_t$.  The coefficients are positive and smooth at every
finite $\tau$, and $N\ge e^\tau$.
Consequently $t=\int_{-\infty}^\tau N(s)\,\dd s$ ranges over
$(0,\infty)$.  Future completeness follows from the asymptotics proved
in the next proposition.  The past obstruction just established,
together with the absence of a future boundary, then gives the global
conclusion.
\end{proof}

\begin{proposition}[Kasner transition and scalar fraction]
\label{prop:heisenberg-scattering}
For \eqref{eq:h5-exact-family}, set $D=1+\sum_r\chi_r$.  The future
proper-time Kasner exponents and scalar slope are
\begin{equation}
 P_0=\frac{p_0-\sum_r\chi_r}{D},\qquad
 P_{2r-1}=\frac{p_{2r-1}+\chi_r}{D},\qquad
 P_{2r}=\frac{p_{2r}+\chi_r}{D},\qquad Q=\frac qD.
 \label{eq:heisenberg-scattering}
\end{equation}
They satisfy $\sum_aP_a=1$, $\sum_aP_a^2+Q^2=1$, and
$1+P_0-P_{2r-1}-P_{2r}=-\chi_r/D$.
For positive constants $A_a^+$ and some $\phi_+$,
\begin{equation}
 a_a(t)=A_a^+t^{P_a}(1+O(t^{-\epsilon})),\qquad
 \phi(t)=Q\log t+\phi_++O(t^{-\epsilon})
 \label{eq:heisenberg-future-asymptotics}
\end{equation}
with two differentiated estimates for every
$0<\epsilon<\min\{1,2\min_r\chi_r/D\}$.
Writing $\Omega_\phi=\rho_\phi/\rho_{\rm crit}$ with
$\rho_{\rm crit}=(n-1)\theta^2/(2n)$, one has
\begin{equation}
 \Omega_{\phi,-}=\frac{nq^2}{n-1},\qquad
 \Omega_{\phi,+}=\frac{\Omega_{\phi,-}}{D^2}.
 \label{eq:heisenberg-scalar-scattering}
\end{equation}
\end{proposition}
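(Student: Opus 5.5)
The proof reads everything off the explicit formulas \eqref{eq:h5-exact-family} as $\tau\to+\infty$. There each $F_r$ is dominated by its exponential term, $F_r=\frac{\kappa_r^2}{4\chi_r^2}e^{2\chi_r\tau}(1+O(e^{-2\chi_r\tau}))$, so the logarithms of the coefficients are affine in $\tau$ up to exponentially small, smoothly differentiable errors:
\[
 \log a_0=(p_0-\textstyle\sum_r\chi_r)\tau+c_0+O(e^{-2\chi_{\min}\tau}),\qquad
 \log a_{2r-1}=(p_{2r-1}+\chi_r)\tau+c_{2r-1}+O(e^{-2\chi_r\tau}),
\]
and similarly for $a_{2r}$. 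The lapse behaves as $\log N=D\tau+c_N+O(e^{-2\chi_{\min}\tau})$ with $D=1+\sum_r\chi_r$, and $\phi=q\tau+\phi_0$ exactly.

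The first step is to integrate $\dd t=N\,\dd\tau$. This gives
\[
 t=\frac{e^{c_N}}{D}e^{D\tau}\bigl(1+O(e^{-\epsilon' D\tau})\bigr)
\]
up to an additive constant. The constant comes from the finite part of $\int N$ and contributes a relative error $O(e^{-D\tau})$, i.e.\ $O(t^{-1})$. The correction $e^{-2\chi_{\min}\tau}$ becomes $t^{-2\chi_{\min}/D}$. Hence $\tau=D^{-1}\log t+\text{const}+O(t^{-\epsilon})$ for every $\epsilon<\min\{1,2\min_r\chi_r/D\}$. The strict inequality absorbs any logarithmic coincidence between the two rates.

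Substituting this into the expressions above yields \eqref{eq:heisenberg-future-asymptotics} with $P_a$ equal to the $\tau$-slopes divided by $D$ and $Q=q/D$, which is \eqref{eq:heisenberg-scattering}. For the differentiated estimates, note that $t\partial_t=(t/N)\partial_\tau$ and that $t/N=D^{-1}(1+O(t^{-\epsilon}))$ with the same structure under further differentiation. Applying it twice to the exponentially-corrected affine functions then preserves the error order.

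The algebraic identities follow directly. The sum is $\sum_aP_a=(1-\sum_r\chi_r+2\sum_r\chi_r)/D=1$. For the bracket exponent, $1+P_0-P_{2r-1}-P_{2r}=\bigl(D+p_0-\sum_s\chi_s-p_{2r-1}-p_{2r}-2\chi_r\bigr)/D$, and with $\chi_r=1+p_0-p_{2r-1}-p_{2r}$ this simplifies to $-\chi_r/D$. For the quadratic relation, expand
\[
 \textstyle D^2\bigl(\sum P_a^2+Q^2\bigr)=\sum p_a^2+q^2-2p_0\sum\chi_r+(\sum\chi_r)^2+2\sum_r\chi_r(p_{2r-1}+p_{2r})+2\sum\chi_r^2 .
\]
Using $p_{2r-1}+p_{2r}=1+p_0-\chi_r$, the cross terms reduce to $2\sum\chi_r$, and the total becomes $1+2\sum\chi_r+(\sum\chi_r)^2=D^2$.

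For the scalar fraction, use the proper-time Kasner asymptotics at each end. These give $\theta\sim 1/t$ because the exponents sum to one, and $\dot\phi\sim(\text{slope})/t$. Hence $\rho_\phi=\dot\phi^2/2\sim q^2/(2t^2)$ at the past end, by \eqref{eq:h5-asymptotics} with $c=0$, and $\rho_{\rm crit}\sim(n-1)/(2nt^2)$. This gives $\Omega_{\phi,-}=nq^2/(n-1)$, and the future end has $Q=q/D$ in place of $q$.

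Future completeness, which Proposition~\ref{prop:h5-exact-massless} defers to this result, then follows from the uniform bound $\theta\sim 1/t$. Alternatively, invoke Proposition~\ref{prop:nonnegative-future-completeness} via \eqref{eq:theta-affine-criterion}: $\theta^{\beta}$ with $\theta\gtrsim 1/t$ is non-integrable only when $\beta\le1$, which fails for large $n$. Instead, the future Kasner exponents satisfy $\max|P_a|<1$, so the geodesic energy $E$ grows at most like $t^{\max(-P_a)}$ and $\int E^{-1}\,\dd t$ diverges.

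The main obstacle is to make the time conversion $t(\tau)$ and its differentiated errors fully rigorous with the sharp exponent range, in particular handling the additive constant and the competition between the rates $1$ and $2\chi_{\min}/D$.
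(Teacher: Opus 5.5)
Your proposal is correct and follows the paper's proof. Both read off the $\tau\to+\infty$ asymptotics from the explicit $F_r$, integrate and invert $\dd t=N\,\dd\tau$ (the strict range of $\epsilon$ absorbs the integration constant and the possible resonance $2\min_r\chi_r=D$), and evaluate $\Omega_\phi=n\dot\phi^2/((n-1)\theta^2)$ at both ends. The differences are minor: the paper verifies $\sum_aP_a^2+Q^2=1$ through the commuting reflections $x\mapsto x+B(x,v_r)v_r$ of $B(x,y)=(\sum_a x_a)(\sum_a y_a)-\sum_a x_ay_a$ rather than by your direct expansion. For the side remark on completeness, only $\min_aP_a>-1$ is needed; the paper obtains $P_*\ge-(n-2)/n$ by Cauchy--Schwarz and adds the finite-slab and lifting arguments. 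Your opening appeal to $\theta\sim1/t$, which you yourself note is insufficient for large $n$, and the stronger unargued claim $\max_a|P_a|<1$ can therefore be dropped.
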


\begin{proof}
At $+\infty$, $\log F_r=2\chi_r\tau+
\log(\kappa_r^2/(4\chi_r^2))+O(e^{-2\chi_r\tau})$ with all
derivatives.  Hence $N=C_Ne^{D\tau}(1+O(e^{-2\chi_*\tau}))$,
where $\chi_* =\min_r\chi_r>0$ and $C_N>0$.
Integration and inversion give \eqref{eq:heisenberg-future-asymptotics};
the strict restriction on $\epsilon$ also covers the integration
constant and the possible logarithmic resonance $2\chi_*=D$.
The slopes in \eqref{eq:heisenberg-scattering} follow immediately.

The Kasner identities can be checked without expansion.  On the
logarithmic scale factors use the bilinear form
\[
 B(x,y)=\Bigl(\sum_a x_a\Bigr)\Bigl(\sum_a y_a\Bigr)
                                  -\sum_a x_ay_a.
\]
For $v_r=-e_0+e_{2r-1}+e_{2r}$,
$B(v_r,v_s)=-2\delta_{rs}$ and $B(p,v_r)=\chi_r$.
The maps $x\mapsto x+B(x,v_r)v_r$ are commuting reflections preserving
$B$.  Thus $r=p+\sum_r\chi_rv_r$ has
$\sum_a r_a=D$ and $B(r,r)=B(p,p)=q^2$.
Dividing by $D^2$ proves the two Kasner relations; substitution gives
the stated curvature slopes.  This realizes the curvature-wall
reflection description of cosmological dynamics
\cite{DamourHenneauxNicolai2003} by an exact evolution on $H_n$.
Finally $\theta=t^{-1}+O(t^{-1-\epsilon})$ and
$\dot\phi=Q/t+O(t^{-1-\epsilon})$ at the future end, with the
corresponding past formulas, proving
\eqref{eq:heisenberg-scalar-scattering}.

To verify the completeness assertion in
Proposition~\ref{prop:h5-exact-massless}, let $P_* =\min_aP_a$.
The Kasner relations and Cauchy-Schwarz imply
$P_*\ge-(n-2)/n>-1$.
For $K=\tfrac12\dot h$, \eqref{eq:heisenberg-future-asymptotics} gives
\[
 \lambda_{\min}(h^{-1}K)=\frac{P_*}{t}+O(t^{-1-\epsilon}).
\]
If $E=\dd t/\dd\lambda>0$ along an affinely parametrized future
causal geodesic and $w=\dd x/\dd t$, then $h(w,w)\le1$ and
$(\log E)^{\displaystyle\cdot}=-K(w,w)$.  With
$\rho=\max\{0,-P_*\}<1$ this yields
$E(t)\le Ct^\rho$ at large $t$, so
$\int^\infty\dd t/E(t)=\infty$.
If $t$ instead has a finite upper limit, uniform metric comparison and
the energy equation keep the position and velocity in a compact subset
of the tangent bundle of a finite slab, using completeness of a fixed
invariant metric; the geodesic therefore extends.  This proves both
timelike and null completeness on the cover.  Every quotient geodesic
lifts to the cover, proving the assertion for arbitrary discrete
quotients.
\end{proof}

The family has vacuum examples in every odd spatial dimension.  Take
\[
 p_1=-\frac{n-2}{n},\qquad p_a=\frac2n\ (a\ne1),\qquad q=0.
\]
Then $\chi_1=2(n-1)/n$ and $\chi_r=(n-2)/n$ for $r>1$.
All brackets are nonzero, the spatial Lie algebra is indecomposable,
and every discrete quotient is globally $C^0$-inextendible.
For a nonzero scalar in dimension $5+1$, one may take
\begin{equation}
 (p_0,p_1,p_2,p_3,p_4)=
 \left(\frac15,-\frac1{10},\frac3{10},\frac3{10},\frac3{10}\right),
 \qquad q=\frac{\sqrt{17}}5,
 \qquad (\chi_1,\chi_2)=(1,3/5).
 \label{eq:h5-concrete-scalar-data}
\end{equation}
Here $D=13/5$, so the massless evolution changes the limiting scalar
fraction from $17/20$ to $85/676$.
For these same past data, Proposition~\ref{prop:h5-einstein} also applies
to every nonnegative polynomial potential, including
$\mathcal V=\Lambda+M^2\phi^2/2$ with $\Lambda,M^2\ge0$, since
\eqref{eq:potential-growth} then holds with arbitrarily small positive
$c$.  These potential developments are globally $C^0$-inextendible;
the massless scattering formula is not asserted for them.
Compact quotients are included: in coordinates
$\omega^0=\dd z-\sum_r x_r\dd y_r$ and
$(\omega^{2r-1},\omega^{2r})=(\dd x_r,\dd y_r)$, the integral
Heisenberg group is a lattice.

More generally, a negative exponent implies
\[
 1-q^2\ge p_j^2+\frac{(1-p_j)^2}{n-1}>\frac1{n-1},
 \qquad \Omega_{\phi,-}<1-\frac1{(n-1)^2}.
\]
The higher-dimensional examples thus allow a larger scalar fraction
while retaining the expanding direction needed by the past obstruction.
They do not settle the all-positive-exponent regime in three spatial
dimensions.

\begin{proposition}[Exponential potentials and Heisenberg Kasner asymptotics]
\label{prop:heisenberg-exponential-admissibility}
Let $n=2m+1\ge3$, and consider the field equations
\eqref{eq:h5-field-equations} on the Heisenberg group with coframe
\eqref{eq:h5-coframe}.  Let
\[
 \mathcal V(\phi)=\sum_{j=1}^{J}V_j e^{\lambda_j\phi},
 \qquad J\ge1,\quad V_j>0,\quad \lambda_j\in\mathbb R.
\]
Fix $A_a>0$, $\phi_0\in\mathbb R$, and real numbers
$(p_0,\ldots,p_{2m},q)$ satisfying
\[
 \sum_a p_a=1,\qquad \sum_a p_a^2+q^2=1.
\]
There is a smooth diagonal solution with
\begin{gather}
 a_a(t)=A_at^{p_a}(1+o(1)),\qquad
 \phi(t)=q\log t+\phi_0+o(1),\notag\\
 t\dot a_a(t)/a_a(t)\longrightarrow p_a,\qquad
 t\dot\phi(t)\longrightarrow q
 \label{eq:heisenberg-exponential-kasner-limits}
\end{gather}
as $t\downarrow0$ if and only if
\begin{equation}
 \chi_r:=1+p_0-p_{2r-1}-p_{2r}>0\quad(1\le r\le m),
 \qquad 2+\lambda_jq>0\quad(1\le j\le J).
 \label{eq:heisenberg-exponential-admissibility}
\end{equation}
When these inequalities hold, the solution can be chosen so that the
relative scale-factor errors and the additive scalar error are
$O(t^\mu)$, with the same estimate after one and two applications of
$t\partial_t$, where
\[
 \mu=\min\{2\chi_1,\ldots,2\chi_m,
                    2+\lambda_1q,\ldots,2+\lambda_Jq\}>0.
\]
If a horizontal exponent is negative, the past end is
$C^0$-inextendible on every discrete left quotient.  Under this
negative-horizontal-exponent condition, for $n=3,5$, or in every odd dimension when
$\min_j\lambda_j\le0\le\max_j\lambda_j$, its maximal homogeneous
development is also future causally geodesically complete and globally
$C^0$-inextendible.
\end{proposition}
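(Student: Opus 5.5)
The plan is to reuse the harmonic-gauge machinery of Proposition~\ref{prop:h5-einstein}. I would write $a_a=e^{u_a}$, $S=\sum_au_a$, $N=\eta e^S$, and use the evolution system \eqref{eq:h5-harmonic-evolution} with constraint \eqref{eq:h5-hamiltonian}, where now
\[
 \mathcal P=\eta^2\sum_jV_je^{2S+\lambda_j\phi}.
\]
The sign automorphisms again dispose of the off-diagonal and momentum equations, so only the diagonal system matters.

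\emph{Sufficiency.} Under \eqref{eq:heisenberg-exponential-admissibility} I would insert the ansatz $u_a=p_a\tau+\log A_a+w_a$, $\phi=q\tau+\phi_0+v$. The curvature terms $\mathcal B_r$ carry the factor $e^{2\chi_r\tau}$, as before. Each potential term $\eta^2V_je^{2S+\lambda_j\phi}$, and likewise its $\phi$-derivative $\lambda_j\eta^2V_je^{2S+\lambda_j\phi}$, carries $e^{(2+\lambda_jq)\tau}$ times a smooth function of $Y=(w,v)$. So $F(\tau,Y)$ satisfies $|F(\tau,0)|\le Me^{\mu\tau}$ and the Lipschitz bound with the stated $\mu$. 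This is the only place where the two-sided growth hypothesis \eqref{eq:potential-growth} is replaced by the one-sided condition: the sign of the exponent along the trajectory is all that enters.

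The contraction on $\|\cdot\|_\mu$ then gives $Y,Y',Y''=O(e^{\mu\tau})$. Constraint propagation $\mathcal C'=0$, with $\mathcal C\to0$ at $-\infty$, gives the full system, and $t=e^\tau(1+O(e^{\mu\tau}))$ transfers the estimates, differentiated twice, to proper time exactly as in the proof of Proposition~\ref{prop:h5-einstein}.

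\emph{Necessity.} Suppose a diagonal solution has the limits \eqref{eq:heisenberg-exponential-kasner-limits}. I would first pass to harmonic time $\dd\tau=\dd t/N$. The limits give $t\dot S\to1$ and hence $N=\eta e^S\sim Ct$, so $\tau=\log t+o(\log t)$, $S'\to1$, $u_a'\to p_a$ and $\phi'\to q$. Since $V_j>0$, every term $\mathcal B_r$ and every summand of $\mathcal P$ is positive. Dividing the constraint by $(S')^2$ and using the Kasner relations shows
\[
 \sum_r\mathcal B_r+2\mathcal P\to0 .
\]
On the other hand, $(\log\mathcal B_r)'=2W_r'\to2\chi_r$ and $(\log(e^{2S+\lambda_j\phi}))'\to2+\lambda_jq$. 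A strictly negative limit would force the corresponding positive term to $+\infty$ as $\tau\to-\infty$, contradicting the constraint. This settles the strict violations.

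The borderline cases $\chi_r=0$ or $2+\lambda_jq=0$ are where I expect the main obstacle. My first attempt would use that $S''=\sum_r\mathcal B_r+nk_n\mathcal P\ge0$ and that $S'$ has a limit, so every $\mathcal B_r$ and every potential summand is integrable on $(-\infty,\tau_0]$. For a borderline exponent $E$ (either $2W_r$ or $2S+\lambda_j\phi$), I would write $E'(\tau)=-\int_{-\infty}^\tau E''$ using $E'\to0$. The equations express $E''$ through $\mathcal B_r$ and $\mathcal P$; for example $W_r''=-2\mathcal B_r+2\mathcal P$, which carries an unfavorable sign. I would then try to show that integrability of $e^{E}$ together with this representation forces $E'$ to have a definite sign with $|E'|\gtrsim e^{E}$. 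Integrating $e^{-E}E'\gtrsim1$ would then make $e^{-E}$ grow at least linearly in $|\tau|$. Combined with the assumed Kasner limits, I expect this to contradict the required rate $t\dot a_a/a_a-p_a\to0$. If this direct route fails, I would instead compare with the exact one-wall solutions $F_r$ of Proposition~\ref{prop:h5-exact-massless}, where equality visibly produces a logarithmic correction.

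\emph{Geometric conclusions.} If a horizontal exponent is negative, the Kasner relations give all $p_a<1$. With $\chi_r>0$ and the two-sided comparison $h_t=(1+O(t^\mu))\sum_aA_a^2t^{2p_a}(\omega^a)^2$, Corollary~\ref{cor:all-two-step-anisotropic-powers} yields past inextendibility on every discrete quotient. For the future I would note that $\mathcal V>0$ and $R=-\tfrac12\sum_rb_r^2<0$, while $\theta>0$ near the past end. For $n=3,5$, Theorem~\ref{thm:homogeneous-future}\textup{(ii)} gives future completeness. If $\min_j\lambda_j\le0\le\max_j\lambda_j$, then $\mathcal V$ is a positive continuous function tending to $+\infty$ or to a positive constant in both directions of $\phi$, so $\mathcal V\ge V_*>0$ and Proposition~\ref{prop:positive-floor-future-completeness} applies in every odd dimension. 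Cauchy slices and \cite{GLS2018} exclude a future boundary, which gives global $C^0$-inextendibility.
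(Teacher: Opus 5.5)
Your sufficiency argument (harmonic variables, the factor $e^{(2+\lambda_jq)\tau}$ replacing the two-sided growth bound, contraction, constraint propagation), the use of the sign automorphisms, and the geometric conclusions follow the paper's proof essentially verbatim. The gap is in necessity at equality, $\chi_r=0$ or $2+\lambda_jq=0$. You leave this case open, and the route you sketch would not close it. A bound $e^{-E}\gtrsim|\tau|$ means $E=-\log|\tau|+O(1)$, which is fully compatible with $E'\to0$, and hence with $t\dot a_a/a_a\to p_a$ and $t\dot\phi\to q$. Logarithmic corrections of this type are invisible to the rate hypotheses, so no contradiction with them can be extracted this way. Moreover, $W_r''=-2\mathcal B_r+2\mathcal P$ has no definite sign, so the monotonicity step is itself unsupported.

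What you are not using is the amplitude part of \eqref{eq:heisenberg-exponential-kasner-limits}: $a_a=A_at^{p_a}(1+o(1))$ and $\phi=q\log t+\phi_0+o(1)$ with fixed $A_a>0$ and $\phi_0$. You already use it implicitly to get $N\sim Ct$. The same information gives, with $\kappa_r=A_0/(A_{2r-1}A_{2r})$,
\[
 t^2\Bigl(\frac{a_0}{a_{2r-1}a_{2r}}\Bigr)^2=\kappa_r^2t^{2\chi_r}(1+o(1)),\qquad
 t^2e^{\lambda_j\phi}=e^{\lambda_j\phi_0}t^{2+\lambda_jq}(1+o(1)).
\]
Now multiply the proper-time constraint $\theta^2-\sum_aH_a^2-\dot\phi^2=\frac12\sum_r(a_0/(a_{2r-1}a_{2r}))^2+2\sum_jV_je^{\lambda_j\phi}$ by $t^2$. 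The left side tends to $1-\sum_ap_a^2-q^2=0$ by the rate limits and the Kasner relations. Each positive term on the right tends to $+\infty$ if its exponent is negative. If its exponent vanishes, it tends to the positive constant $\kappa_r^2/2$ or $2V_je^{\lambda_j\phi_0}$. Both are impossible, so the strict and borderline violations are excluded at once. This is exactly the paper's argument, and it makes the integrability analysis and the comparison with the exact one-wall solutions unnecessary.

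In your harmonic variables the same point reads: at equality, $\mathcal B_r$ (respectively $\eta^2V_je^{2S+\lambda_j\phi}$) converges to a positive constant, contradicting $\sum_r\mathcal B_r+2\mathcal P\to0$. The paper's subsequent remark, that at equality the constraint excludes these asymptotics but not a different leading balance, reflects precisely this reliance on the amplitude hypothesis.
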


\begin{proof}
For sufficiency, use the harmonic variables and equations of
Proposition~\ref{prop:h5-einstein}.  Thus
$u_a=p_a\tau+\log A_a+w_a$, $\phi=q\tau+\phi_0+v$,
$S=\sum_a u_a$, $\eta=(\prod_aA_a)^{-1}$, and $N=\eta e^S$.
Write $Y=(w_0,\ldots,w_{2m},v)$ and
$\kappa_r=A_0/(A_{2r-1}A_{2r})$.  The curvature terms are
\[
 \mathcal B_r=\frac{\kappa_r^2}{2}e^{2\chi_r\tau}
 \exp\!\left(2\sum_a w_a+2w_0-2w_{2r-1}-2w_{2r}\right),
\]
and each potential term is
\[
 \mathcal P_j=V_j e^{\lambda_j\phi_0}
       e^{(2+\lambda_jq)\tau}
       \exp\!\left(2\sum_a w_a+\lambda_jv\right).
\]
The scalar equation contains $-\sum_j\lambda_j\mathcal P_j$.
Consequently the right-hand side $F(\tau,Y)$ of the second-order
system for $Y$ satisfies, for $\tau\le0$ and $|Y|,|Z|\le1$,
\[
 |F(\tau,0)|\le M e^{\mu\tau},\qquad
 |F(\tau,Y)-F(\tau,Z)|\le L e^{\mu\tau}|Y-Z|.
\]
On $(-\infty,T]$, equip continuous functions with the norm
$\|Y\|_\mu=\sup_{\tau\le T}e^{-\mu\tau}|Y(\tau)|$ and set
\[
 (\mathcal TY)(\tau)=
       \int_{-\infty}^{\tau}(\tau-s)F(s,Y(s))\,\dd s.
\]
The integral identities
\[
 \int_{-\infty}^{\tau}(\tau-s)e^{\mu s}\,\dd s
 =\frac{e^{\mu\tau}}{\mu^2},\qquad
 \int_{-\infty}^{\tau}(\tau-s)e^{2\mu s}\,\dd s
 =\frac{e^{2\mu\tau}}{4\mu^2}
\]
show that $\|\mathcal T0\|_\mu\le M/\mu^2$ and that its Lipschitz
constant on a fixed ball is at most $Le^{\mu T}/(4\mu^2)$.
Choose the ball radius larger than $2M/\mu^2$ and then choose $T<0$
so that the ball lies in $|Y|\le1$ and this Lipschitz constant is
less than $1/2$.  The contraction theorem gives a fixed point with
$Y,Y',Y''=O(e^{\mu\tau})$.  Smoothness follows from the differential
equations on every finite interval.

The constraint \eqref{eq:h5-hamiltonian} is conserved by these
equations.  All $\mathcal B_r$ and $\mathcal P_j$ tend to zero, while
its quadratic part tends to $1-\sum_a p_a^2-q^2=0$.
It therefore vanishes identically.  The diagonal evolution, the normal
equation, and the mixed equations verified in
Proposition~\ref{prop:h5-einstein} give the full field equations.
Since
\[
 N=e^\tau\exp\!\left(\sum_a w_a\right),\qquad
 t=\int_{-\infty}^{\tau}N(s)\,\dd s
   =e^\tau(1+O(e^{\mu\tau})),
\]
one has $\tau=\log t+O(t^\mu)$.  Moreover
$t\partial_t=(t/N)\partial_\tau$ with
$t/N=1+O(e^{\mu\tau})$ and the corresponding differentiated bounds.
This proves the claimed estimates through two applications of
$t\partial_t$.

For necessity, consider any diagonal solution with
\eqref{eq:heisenberg-exponential-kasner-limits}.  Put
$H_a=\dot a_a/a_a$ and $\theta=\sum_aH_a$.
The proper-time Hamiltonian constraint is
\[
 \theta^2-\sum_aH_a^2-\dot\phi^{\,2}
 =\frac12\sum_{r=1}^m
       \left(\frac{a_0}{a_{2r-1}a_{2r}}\right)^2
        +2\sum_{j=1}^{J}V_j e^{\lambda_j\phi}.
\]
Multiply by $t^2$.  Its left-hand side tends to zero by the
differentiated limits and the generalized Kasner relations.
Every term on the right is positive, and its asymptotic form is
respectively
\[
 \frac{\kappa_r^2}{2}t^{2\chi_r}(1+o(1)),\qquad
 2V_j e^{\lambda_j\phi_0}t^{2+\lambda_jq}(1+o(1)).
\]
All coefficients are strictly positive.  Each term must tend to zero,
which gives every strict inequality in
\eqref{eq:heisenberg-exponential-admissibility}.

A negative horizontal exponent and the Kasner relations imply
$\max_a p_a<1$.  The strict curvature inequalities and the relative
metric estimates therefore give the past conclusion by
Corollary~\ref{cor:all-two-step-anisotropic-powers}.
Finally $\mathcal V\ge0$, the spatial scalar curvature is negative,
and $\theta=t^{-1}(1+O(t^\mu))>0$ near the past end.
For $n=3,5$, Proposition~\ref{prop:nonnegative-future-completeness}
proves future causal completeness.  If the slopes straddle zero, the
potential has a positive lower bound: a zero slope supplies a constant
term, and slopes of both signs force growth at both ends of the scalar
line.  Proposition~\ref{prop:positive-floor-future-completeness} then
applies in every odd dimension.  The Cauchy property of the
product slices and the one-sided completeness obstruction
\cite{GLS2018} then give global $C^0$-inextendibility.
\end{proof}

The equivalence concerns precisely the differentiated generalized
Kasner asymptotics \eqref{eq:heisenberg-exponential-kasner-limits}.
At equality, the constraint excludes these asymptotics but does not
exclude a different leading balance.  In dimension three,
$\chi_1=2p_0$, so positivity of the central Kasner exponent is also
necessary for this regime.

\begin{corollary}[Globally inextendible developments with a positive potential floor]
\label{cor:heisenberg-positive-floor}
Let $n=2m+1\ge3$, and fix the leading amplitudes and Kasner data of
Proposition~\ref{prop:h5-einstein}, with
\[
 \sum_{a=0}^{2m}p_a=1,\qquad
 \sum_{a=0}^{2m}p_a^2+q^2=1,\qquad
 \chi_r:=1+p_0-p_{2r-1}-p_{2r}>0,
\]
and some horizontal $p_i<0$.  Suppose that $\mathcal V$ is smooth,
satisfies~\eqref{eq:positive-potential-floor}, and satisfies the
growth bound~\eqref{eq:potential-growth} with $c|q|<2$.
Then these past data are realized by a maximal homogeneous
Einstein-scalar development which is future causally geodesically
complete and globally $C^0$-inextendible on $H_n$ and every discrete
left quotient.

The same conclusion holds for
\begin{equation}
 \mathcal V(\phi)=\Lambda+
        \sum_{j=1}^{J}V_j e^{\lambda_j\phi},
 \qquad \Lambda>0,\quad V_j>0,
 \label{eq:positive-floor-exponential-sum}
\end{equation}
under the exact inequalities $2+\lambda_jq>0$ for all $j$, without
the two-sided growth restriction $c|q|<2$.  More generally, the
positive finite sum in
Proposition~\ref{prop:heisenberg-exponential-admissibility}, under
its strict inequalities~\eqref{eq:heisenberg-exponential-admissibility},
has the same global conclusion in every odd dimension whenever
\begin{equation}
 \min_j\lambda_j\le0\le\max_j\lambda_j.
 \label{eq:exponential-sum-positive-floor}
\end{equation}
\end{corollary}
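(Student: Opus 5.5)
The plan is to assemble the corollary from three earlier results: the past construction of Proposition~\ref{prop:h5-einstein} (or of Proposition~\ref{prop:heisenberg-exponential-admissibility} for exponential sums), the positive-floor completeness result Proposition~\ref{prop:positive-floor-future-completeness}, and the one-sided future boundary obstruction \cite{GLS2018}. No new estimate should be needed.

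\textbf{First assertion.} The data satisfy every hypothesis of Proposition~\ref{prop:h5-einstein}: the Kasner relations hold, $\chi_r>0$, and the growth bound \eqref{eq:potential-growth} holds with $c|q|<2$. That proposition therefore gives a smooth diagonal tail on $(0,t_*)\times H_n$ with the prescribed asymptotics. Since a horizontal exponent is negative, the same proposition shows that the past end is $C^0$-inextendible on every discrete left quotient, and this holds in any smooth development containing the tail. We then take the maximal homogeneous development. Along it, $R(h_t)=-\tfrac12\sum_r b_r^2<0$ by \eqref{eq:h5-spatial-ricci}, and $\theta=t^{-1}(1+O(t^\mu))>0$ at some early slice $t_0$. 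The floor \eqref{eq:positive-potential-floor} holds along the trajectory because $\mathcal V\ge V_*$ everywhere. Proposition~\ref{prop:positive-floor-future-completeness} then gives infinite future proper time and future timelike and null completeness in every dimension, on $H_n$ and on all quotients. The product slices are Cauchy, so \cite{GLS2018} excludes a future boundary, while the past boundary is already excluded. By \cite[Lemma~2.17]{Sbierski2018}, every proper extension has a past or a future boundary, so global $C^0$-inextendibility follows.

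\textbf{Exponential sums.} For $\mathcal V=\Lambda+\sum_jV_je^{\lambda_j\phi}$, I rerun the contraction argument of Proposition~\ref{prop:heisenberg-exponential-admissibility} with the constant treated as an extra exponential term with $\lambda=0$. Its exponent is $2+0\cdot q=2>0$, so it only caps $\mu$ at $2$, and the inequalities $2+\lambda_jq>0$ together with $\chi_r>0$ give the fixed point with $O(t^\mu)$ errors. This bypasses \eqref{eq:potential-growth}, and that is the one point needing care: the two-sided growth restriction is replaced by the one-sided exponents, because each term is estimated along the actual trajectory $\phi\approx q\log t$. The lower bound $\mathcal V\ge\Lambda>0$ then feeds into the same future argument as in the first assertion.

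\textbf{Straddling slopes.} When \eqref{eq:exponential-sum-positive-floor} holds, the final paragraph of the proof of Proposition~\ref{prop:heisenberg-exponential-admissibility} already produces a positive floor. A zero slope contributes a constant, and slopes of both signs make $\mathcal V\to\infty$ at both ends, so the continuous positive function $\mathcal V$ is bounded below by a positive constant. The global conclusion then follows exactly as above.

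I expect the only delicate step to be checking that the constant term does not disturb the constraint-propagation and necessity bookkeeping of Proposition~\ref{prop:heisenberg-exponential-admissibility}. It should not, because the constant is simply the case $\lambda=0$ with $V>0$.
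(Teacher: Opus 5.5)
Your proposal is correct and follows the paper's proof. The first case combines Proposition~\ref{prop:h5-einstein}, Proposition~\ref{prop:positive-floor-future-completeness} (using $R<0$ and $\theta>0$ near the past end) and the Galloway--Ling--Sbierski future-boundary obstruction; the case \eqref{eq:positive-floor-exponential-sum} treats $\Lambda$ as a zero-slope term in Proposition~\ref{prop:heisenberg-exponential-admissibility}; and the straddling condition \eqref{eq:exponential-sum-positive-floor} supplies the positive floor exactly as you describe. One minor point: \eqref{eq:positive-potential-floor} is a condition along the trajectory, so reading it as a global lower bound on $\mathcal V$ is stronger than needed but harmless.
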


\begin{proof}
Proposition~\ref{prop:h5-einstein} constructs the prescribed past
asymptotics in the first case and proves past $C^0$-inextendibility
on every discrete quotient.  The differentiated estimates give
$\theta=t^{-1}(1+O(t^\mu))>0$ near the past end.  The scalar
curvature formula~\eqref{eq:h5-spatial-ricci} remains negative along
the diagonal homogeneous evolution.  Proposition~\ref{prop:positive-floor-future-completeness}
therefore proves future causal completeness in every dimension.
The product slices are Cauchy: on each compact proper-time interval,
the spatial metrics are uniformly comparable with a fixed complete
left-invariant metric, on both the group and its quotients.
Future timelike completeness excludes a future extension boundary
by~\cite{GLS2018}.  Together with the past obstruction, this proves
global $C^0$-inextendibility.

For~\eqref{eq:positive-floor-exponential-sum}, treat $\Lambda$ as
an additional positive exponential term of slope zero.
Proposition~\ref{prop:heisenberg-exponential-admissibility} constructs
the differentiated Kasner asymptotics under precisely the stated
curvature and potential inequalities.  The zero slope contributes
the automatic inequality $2>0$, and $\mathcal V\ge\Lambda$ supplies
the positive lower bound.  The preceding global argument applies.

For a positive finite exponential sum without a separately displayed
constant, condition~\eqref{eq:exponential-sum-positive-floor} is
equivalent to the existence of a positive lower bound.  A zero slope
already supplies a positive constant term.  If there are slopes of
both signs, the potential tends to $+\infty$ at both ends of the
scalar line and attains a strictly positive minimum.  Conversely,
if all slopes have the same strict sign, the potential tends to zero
at one end.  This proves the final assertion by the same construction
and completeness argument.
\end{proof}

For example, $\mathcal V(\phi)=V_0\cosh(\lambda\phi)$, $V_0>0$,
admits the prescribed differentiated Kasner regime exactly when
$\chi_r>0$ for every bracket and $|\lambda q|<2$.
Whenever a horizontal exponent is negative, the resulting maximal
homogeneous developments are future causally complete and globally
$C^0$-inextendible in every odd spatial dimension.
Similarly, $\mathcal V=\Lambda+P$ with $\Lambda>0$ and a
nonnegative polynomial $P$ has this conclusion for all the stated
Kasner data: its first two derivatives satisfy the growth bound for
every $c>0$, so one can choose $c|q|<2$.
These completeness and inextendibility conclusions do not assert
the massless Kasner scattering law or exponential future
isotropization for these general potentials.

\subsection{Einstein solutions with distinct central scales}

\begin{proposition}[Distinct central scales in vacuum and scalar cosmologies]
\label{prop:einstein-optimal-separation}
Let $\mathsf N=H_3\times H_3$ and choose an invariant coframe
$\omega^1,\ldots,\omega^6$ satisfying
\[
 \dd\omega^1=-\omega^2\wedge\omega^3,\qquad
 \dd\omega^4=-\omega^5\wedge\omega^6,
\]
with the other coframe elements closed.  There exists a smooth
diagonal Einstein-massless-scalar solution
\[
 g=-\dd t^2+\sum_{i=1}^6 a_i(t)^2(\omega^i)^2,
 \qquad \operatorname{Ric}_g=\dd\phi\otimes\dd\phi,
\]
whose past asymptotics are
\begin{equation}
 \begin{split}
 a_i(t)&=t^{p_i}\bigl(1+O(t^{3/5})\bigr),\\
 \phi(t)&=q\log t+O(t^{3/5}),\\
 (p_1,\ldots,p_6)&=\frac1{20}(-8,-1,7,8,7,7),
 \qquad q=\frac{\sqrt{31}}{10}.
 \end{split}
 \label{eq:einstein-optimal-separation-data}
\end{equation}
The estimates hold after two applications of $t\partial_t$.
There is also a vacuum solution with the same error order and
Kasner data
\begin{equation}
 p^{\rm vac}=\frac1{20}
       (-8,3-\sqrt{78},3+\sqrt{78},8,7,7),\qquad q=0.
 \label{eq:vacuum-optimal-separation-data}
\end{equation}
For either maximal expanding homogeneous development,
\begin{align}
 cT^{3/10}\le\kappa_{\rm opt}(T)&\le C T^{3/10}
       &&\text{for sufficiently small }T>0,\notag\\
 \kappa_{\rm an}(T)&=\infty
       &&\text{for every }T\text{ in the proper-time domain}.
 \label{eq:einstein-optimal-separation}
\end{align}
Both developments are future timelike and null complete and globally
$C^0$-inextendible on $\mathsf N$ and every discrete left quotient.
The strict separation of the filling conditions, completeness and
inextendibility persist for a relative open neighborhood of either
displayed Kasner datum within the corresponding diagonal asymptotic
family.  The positive error and decay exponents may vary in these
neighborhoods.
\end{proposition}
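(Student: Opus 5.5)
The plan is to follow the harmonic-gauge construction of Proposition~\ref{prop:h5-einstein}, now with two independent curvature walls. Put $a_i=e^{u_i}$, $S=\sum_iu_i$ and $N=\eta e^S$. The two brackets are $b_1=e^{u_1-u_2-u_3}$ and $b_2=e^{u_4-u_5-u_6}$, and the wall exponents are
\[
 \chi_1=1+p_1-p_2-p_3,\qquad \chi_2=1+p_4-p_5-p_6 .
\]
The spatial Ricci tensor is the orthogonal sum of the two Bianchi~II blocks \eqref{eq:h5-spatial-ricci} with $m=1$. Sign automorphisms of each factor again force the off-diagonal and momentum equations. This gives the system \eqref{eq:h5-harmonic-evolution} with $\mathcal P=0$ and two terms $\mathcal B_1,\mathcal B_2$.

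For the scalar datum one checks $\sum p_i=1$ and $\sum p_i^2+q^2=(276+124)/400=1$. The walls are $\chi_1=3/10$ and $\chi_2=7/10$. For the vacuum datum the same sums hold, and again $\chi_1=3/10$, $\chi_2=7/10$. The contraction map $\mathcal T$ on $(-\infty,T]$ with weight $e^{\mu\tau}$ then applies verbatim with $\mu=2\min\chi_r=3/5$. Conservation of $\mathcal C$ and its vanishing limit give all the field equations. The relation $t=e^\tau(1+O(e^{\mu\tau}))$ gives \eqref{eq:einstein-optimal-separation-data} with two differentiated estimates. Openness follows because every hypothesis used is a strict inequality on $(p,q)$: positive walls, a negative horizontal exponent, and $\max p<1$. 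Only $\mu$ may change in a neighbourhood.

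Future completeness comes from Theorem~\ref{thm:homogeneous-future}\textup{(i)}. Here $n=6\le9$, $\mathcal V=0$, and $R(h_t)=-\tfrac12(b_1^2+b_2^2)<0$. Expansion holds near the past end since $\theta=t^{-1}(1+O(t^\mu))$, and the maximal development is expanding there.

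For the past obstruction I would apply Theorem~\ref{thm:two-step-obstruction}\textup{(i)}. The center and the derived algebra both equal $\mathrm{span}(e_1,e_4)$, so the abelianization metric is $H_t$. Its maximal eigenvalue diverges because a horizontal exponent is negative: $p_2=-1/20$, respectively $p_2=(3-\sqrt{78})/20$. Since $\max p_i<1$ and the metric is diagonal, the cone integral is finite by Proposition~\ref{prop:trace-cone}. Filling requires $\kappa_{\rm opt}(T)\le\tfrac12$, which follows from the upper bound in \eqref{eq:einstein-optimal-separation}. Metric comparison of the actual $h_t$ with its Kasner model is handled as in Proposition~\ref{prop:h5-einstein}, via Proposition~\ref{prop:asymptotic-transfer} or Corollary~\ref{cor:all-two-step-anisotropic-powers}. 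The absence of a future boundary \cite{GLS2018} then gives global inextendibility on every discrete quotient.

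For $\kappa_{\rm opt}$ I would use the tensorization identity \eqref{eq:optimal-product}: $\kappa_{\rm opt}=\max_j\kappa^{(j)}_{\rm opt}$. Each factor has a one-dimensional center, hence is conformal, so $\kappa^{(j)}_{\rm opt}=\kappa^{(j)}_{\rm an}$. This is computable by a weighted area formula of the type \eqref{eq:weighted-planar-area}. After rescaling the two horizontal directions, the effective weight is $a_{2}a_{3}/a_1\sim t^{1-\chi_1}$ in the first factor, and analogously in the second. This gives $\kappa^{(j)}_{\rm opt}\asymp\int_0^T t^{\chi_j-1}\,\dd t\asymp T^{\chi_j}$ up to the $O(t^\mu)$ factors, and the maximum is $\asymp T^{3/10}$.

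The main obstacle is $\kappa_{\rm an}=\infty$. The key is that $A_t=\mathrm{diag}(a_1^2,a_4^2)$ has ratio $a_1/a_4\sim t^{-4/5}$ unbounded, with $a_4\to0$. I would try mixed loops $\xi=(\xi^{(1)},\xi^{(2)})$ with area $Q=\varepsilon e_1+e_4$. The $e_4$-area is generated cheaply near $t\to0$, where the single fixed denominator $\sqrt{A_t(Q,Q)}\ge a_1\varepsilon$ is large. By contrast, the direction-preserving allocation cannot use the small $a_4$ separately in each direction. Choosing $\varepsilon$ and the support interval $[a,b]$ along a sequence should make $J_I(\xi)\to0$ on every tail. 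If this specific family fails, I would fall back on computing $\sup J^{-1}$ exactly for two-component areas and showing it diverges like a power of $a^{-1}$ as $a\downarrow0$. The lower bound $cT^{3/10}$ follows by testing $\kappa_{\rm opt}$ on loops in the first factor alone.
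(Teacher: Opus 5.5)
Your construction, the completeness argument via Theorem~\ref{thm:homogeneous-future}\textup{(i)}, and the past obstruction via Theorem~\ref{thm:two-step-obstruction}\textup{(i)} with Corollary~\ref{cor:all-two-step-anisotropic-powers} for the comparison are sound and essentially the paper's. One small correction: with two central directions each wall $\mathcal B_r$ acts with sign pattern $(-,+,+)$ on its own block, so the system is not literally \eqref{eq:h5-harmonic-evolution}, although the constraint computation is identical.

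For $\kappa_{\rm opt}$ you take a different and rather clean route. Since $h_t$ is an orthogonal product with common lapse, Corollary~\ref{cor:optimal-product} gives $\kappa_{\rm opt}=\max_j\kappa^{(j)}_{\rm opt}$. Each factor has a one-dimensional centre, so $\kappa^{(j)}_{\rm opt}=\kappa^{(j)}_{\rm an}$. This yields both sides of $\kappa_{\rm opt}\asymp T^{3/10}$ at once, whereas the paper bounds $\kappa_{\rm opt}$ above by $\kappa_{\rm cen}$ and tests a separate loop for the lower bound.

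Your per-factor evaluation is not valid as written, however. Formula \eqref{eq:weighted-planar-area} requires an isotropic horizontal block, but in the first factor $a_2/a_3\asymp t^{-2/5}$, so no rescaling makes it isotropic on all of $(0,T)$. The ``effective weight $a_2a_3/a_1$'' heuristic would even give $+\infty$ in the critical case of Proposition~\ref{prop:critical-bianchi-filling}, where the constant is finite. Instead, use the ordered Hilbert--Schmidt bound \eqref{eq:power-ordered-bound} plus the two-sided comparison for the upper bound. For the lower bound, use a balanced ellipse on $[T/4,T/2]$ with amplitudes $T^{\pm(p_3-p_2)/2}$.

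The genuine gap is $\kappa_{\rm an}=\infty$, which is the actual content of the strict separation and which you leave as a hope. Unboundedness of $a_1/a_4$ is not by itself the mechanism. Your sketch also does not say where the $\varepsilon z_1$-area is produced, and that choice is decisive. Suppose it is produced on the same early interval $[s,2s]$ as the $z_2$-area. Then the first factor's own $O(s^{3/10})$ estimate forces horizontal energy $\gtrsim\varepsilon s^{-7/10}$ there. Against $\sqrt{A_t(Q,Q)}\approx\varepsilon a_1\asymp\varepsilon s^{-2/5}$, this contributes $\gtrsim s^{-3/10}$ to $J_I$. If instead $\varepsilon a_1\lesssim a_4$, the $z_2$-piece costs $\gtrsim s^{-7/10}$.

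What works is a two-scale loop, with $Q_I=\varepsilon z_1+z_2$:
\begin{itemize}
\item A unit-area loop in the second horizontal pair on $[s,2s]$. Its horizontal energy is $O(s^{p_5+p_6-1})=O(s^{-3/10})$, against $\sqrt{A_t(Q,Q)}\ge c\,\varepsilon s^{p_1}=c\,\varepsilon s^{-2/5}$. Its cost is therefore $O(\varepsilon^{-1}s^{\delta})$, with $\delta:=p_5+p_6-1-p_1=1/10$.
\item $\sqrt\varepsilon$ times a fixed loop in the first pair, placed on a fixed compact $J\Subset(0,T)$ after $[s,2s]$. There $\sqrt{A_t(Q,Q)}\ge a_4\ge c_J$, so its cost is $O(\varepsilon)$.
\end{itemize}
Taking $\varepsilon=s^{1/20}$ gives $J_I\le Cs^{1/20}\to0$. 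The vacuum datum has the same $p_1,p_5,p_6$, so the same loop works.

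Consequently your openness argument is also incomplete. The list of strict inequalities must include $\delta>0$, with balanced ellipses of amplitudes $s^{\pm(p_6-p_5)/2}$ when $p_5\ne p_6$. Without it, persistence of $\kappa_{\rm an}=\infty$ is not covered.
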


\begin{proof}
Write $a_i=e^{\alpha_i}$ and use harmonic time $\tau$ with
$\dd t=e^S\dd\tau$, $S=\sum_i\alpha_i$.  Put
\[
 v_1=(-1,1,1,0,0,0),\qquad
 v_2=(0,0,0,-1,1,1),\qquad
 w_r=\mathbf1-v_r,
\]
and
\[
 \mathcal B_r=\frac12 e^{2w_r\cdot\alpha},\qquad r=1,2.
\]
For a diagonal invariant metric, the only nonzero orthonormal
spatial brackets have lengths
$e^{\alpha_1-\alpha_2-\alpha_3}$ and
$e^{\alpha_4-\alpha_5-\alpha_6}$.  In each block, the spatial Ricci
eigenvalues are one half the squared bracket length in the central
direction and minus one half in the two horizontal directions.
Independent sign changes of the four horizontal generators induce
the six distinct nontrivial characters
$\eta_1\eta_2,\eta_1,\eta_2,\eta_3\eta_4,\eta_3,\eta_4$
on the invariant frame.  They preserve the spacetime metric and
force all off-diagonal spatial and normal-spatial Ricci components
to vanish.  Thus the momentum constraint vanishes, and the
Einstein-scalar equations are
\begin{equation}
 \alpha''=\mathcal B_1v_1+\mathcal B_2v_2,
 \qquad \phi''=0,
 \qquad
 (S')^2-\sum_i(\alpha_i')^2
       = (\phi')^2+\mathcal B_1+\mathcal B_2.
 \label{eq:einstein-product-heisenberg-ode}
\end{equation}
Here and in the construction below primes denote $\tau$ derivatives.
The two curvature terms are coupled:
$w_r\cdot v_r=-2$ and $w_r\cdot v_s=1$ for $r\ne s$.
No decoupled formula for the Heisenberg family is used.

The data in \eqref{eq:einstein-optimal-separation-data} satisfy
\[
 \sum_i p_i=1,\qquad \sum_i p_i^2+q^2=1,\qquad
 \chi_1:=w_1\cdot p=\frac3{10},\qquad
 \chi_2:=w_2\cdot p=\frac7{10}.
\]
Set $\alpha=p\tau+u$ and $\phi=q\tau$.  On a sufficiently early
interval $(-\infty,\tau_0]$, solve the Volterra equation
\begin{equation}
 u(\tau)=\frac12\sum_{r=1}^2v_r
   \int_{-\infty}^{\tau}(\tau-s)
     e^{2\chi_rs+2w_r\cdot u(s)}\,\dd s.
 \label{eq:einstein-product-heisenberg-volterra}
\end{equation}
For completeness, take $\mu=3/5$ and the norm
$\|u\|_\mu=\sup_{\tau\le\tau_0}e^{-\mu\tau}|u(\tau)|$.
On a fixed ball in this space, first choose its radius larger than
the norm of the right side at $u=0$, and then decrease $\tau_0$.
The inequalities $2\chi_r\ge\mu$ and
\[
 \int_{-\infty}^{\tau}(\tau-s)e^{\mu s}\,\dd s
       =\frac{e^{\mu\tau}}{\mu^2}
\]
show that the right side preserves this ball.  The mean-value theorem
for the exponential gives a Lipschitz constant at most
$C e^{\mu\tau_0}$ in the same norm, since the difference of two
integrands is bounded by
$C e^{2\mu s}\|u-\widetilde u\|_\mu$.
Thus the map is a contraction for sufficiently negative $\tau_0$.
Differentiating its integral equation gives
$u,u',u''=O(e^{\mu\tau})$, and the differential equation gives
smoothness and the corresponding higher derivative estimates.

The constraint is preserved.  Indeed, the derivative of
\[
 \mathcal C=(S')^2-\sum_i(\alpha_i')^2-q^2
                  -\mathcal B_1-\mathcal B_2
\]
vanishes, because $\sum_i(v_r)_i=1$ and
$\mathcal B_r'=2(w_r\cdot\alpha')\mathcal B_r$.
Its limit as $\tau\to-\infty$ is zero by the Kasner relations, so
$\mathcal C=0$.  Hence the constructed functions solve all the
Einstein-scalar equations.  Since
$S=\tau+O(e^{\mu\tau})$, integration of the lapse from $-\infty$
gives $t=e^\tau(1+O(e^{\mu\tau}))$, and proves
\eqref{eq:einstein-optimal-separation-data} in proper time.

For the filling estimates, the central directions are $z_1,z_2$
dual to $\omega^1,\omega^4$.  On a sufficiently early tail the Schur
forms are uniformly comparable with
\begin{equation}
 H_t^0=\operatorname{diag}
    (t^{-1/10},t^{7/10},t^{7/10},t^{7/10}),
 \qquad
 A_t^0=\operatorname{diag}(t^{-4/5},t^{4/5}),
 \qquad L_t=0.
 \label{eq:einstein-optimal-schur-powers}
\end{equation}
The two nonzero brackets have exponents $3/10$ and $7/10$.
The central-component bound
\eqref{eq:anisotropic-bracket-matrix-bound} and its comparison
estimate therefore give
\[
 \kappa_{\rm opt}(T)\le\kappa_{\rm cen}(T)
       \le C\bigl(T^{3/5}+T^{7/5}\bigr)^{1/2}
       \le C' T^{3/10}.
\]
This order is attained.  Place a unit-area smooth ellipse in the
first horizontal pair on $[T/4,T/2]$, with coordinate amplitudes
$T^{(p_3-p_2)/2}$ and $T^{(p_2-p_3)/2}$.
For the central covector dual to $z_1$, its numerator in
\eqref{eq:kappa-opt} is one and its denominator is at most
\[
 C T^{p_2+p_3-p_1-1}=C T^{-3/10}.
\]
Here the dual central norm is $a_1^{-1}$, and the two amplitudes
balance the horizontal energy terms.  Taking this loop and covector
in the variational supremum proves the lower bound
$\kappa_{\rm opt}(T)\ge cT^{3/10}$.

To prove the second assertion in
\eqref{eq:einstein-optimal-separation}, fix $T>0$ and a compact
interval $J$ in a sufficiently early part of $(0,T)$.
Choose smooth planar based loops of signed area one.  For small
$s>0$ with $[s,2s]$ preceding $J$, place such a loop in the second
horizontal pair on $[s,2s]$, using the rescaled parameter $t/s$.
Place a loop of signed area $\varepsilon$ in the first pair on $J$
by multiplying a fixed loop by $\sqrt\varepsilon$.  Set both loops
equal to zero elsewhere.  Their direct sum is a based loop $\xi$
on a compact interval in $(0,T)$, with bracket area
\[
 Q_I(\xi)=\varepsilon z_1+z_2.
\]
On $J$, the central norm of this area has a positive lower bound
independent of $\varepsilon$, because its $z_2$ component equals one.
The contribution to $J_I(\xi)$ on $J$ is therefore at most
$C\varepsilon$.  On $[s,2s]$, the horizontal energy is at most
$C s^{-3/10}$, whereas
\[
 \sqrt{A_t(Q_I(\xi),Q_I(\xi))}
       \ge c\varepsilon s^{-2/5}.
\]
The early contribution is consequently at most
$C\varepsilon^{-1}s^{1/10}$.  Taking
$\varepsilon=s^{1/20}$ proves
\[
 J_I(\xi)\le C s^{1/20}\longrightarrow0.
\]
Thus $\kappa_{\rm an}(T)=\infty$ for every such $T$, and by
monotonicity for every $T$ in the proper-time domain.

All spatial Kasner exponents are less than one, so the cone integral
is finite.  The horizontal exponent $p_2=-1/20$ gives unbounded
diameter on every fixed open set of the cover.  Theorem
\ref{thm:two-step-obstruction} excludes every past continuous extension
on every discrete quotient.  The expansion is positive near the past
end.  The maximal homogeneous continuation has nonpositive spatial
scalar curvature and six spatial dimensions, so
Theorem~\ref{thm:homogeneous-future} gives future timelike and null
completeness.  This proves global $C^0$-inextendibility.

For the vacuum datum \eqref{eq:vacuum-optimal-separation-data},
\[
 \sum_i p_i^{\rm vac}=1,\qquad
 \sum_i(p_i^{\rm vac})^2
   =\frac{244+2\cdot78}{400}=1.
\]
The central exponents, the second horizontal pair and the sum of
the first horizontal pair are unchanged.  Thus both curvature
exponents remain $3/10$ and $7/10$, and the same Volterra construction
with constant scalar gives a vacuum solution with error $O(t^{3/5})$.
The central-component estimate still gives
$\kappa_{\rm opt}(T)=O(T^{3/10})$, and the balanced first-pair
ellipse gives the same lower bound because
$p_2^{\rm vac}+p_3^{\rm vac}=3/10$.
The early loop in the second pair
is unchanged, while the loop in the first pair remains on a fixed
compact time interval and still costs $O(\varepsilon)$.
Consequently the proof of $\kappa_{\rm an}=\infty$ is unchanged.
All vacuum exponents are less than one, and
$(3-\sqrt{78})/20<0$ is horizontal.  The same cone, past obstruction
and future completeness arguments apply.

Finally, all inequalities used above are strict.  In particular, if
$c_1,c_2$ denote the two central exponents and $r_1,r_2$ the second
pair's horizontal exponents, then
\[
 1+c_2-r_1-r_2>0,
 \qquad \delta:=r_1+r_2-1-c_1>0.
\]
For nearby unequal $r_1,r_2$, use on $[s,2s]$ a unit-area ellipse
whose two coordinate amplitudes are
$s^{(r_2-r_1)/2}$ and $s^{(r_1-r_2)/2}$.
Its energy is $O(s^{r_1+r_2-1})$, and the same argument yields
$J_I\le C(\varepsilon+\varepsilon^{-1}s^\delta)$.
The choice $\varepsilon=s^{\delta/2}$ again gives divergence of
$\kappa_{\rm an}$.  The Volterra construction, cone estimate and
negative horizontal exponent also persist in a sufficiently small
relative neighborhood of either displayed Kasner datum on the
appropriate vacuum or scalar Kasner constraint set.  This is a
statement about the diagonal
asymptotic family, with no assertion of genericity in the full space
of invariant or inhomogeneous initial data.
\end{proof}

\begin{corollary}[Einstein separation in every dimension at least six]
\label{cor:einstein-optimal-all-dimensions}
For every spatial dimension $n\ge6$, there are expanding vacuum
developments and Einstein-massless-scalar developments with
nonconstant scalar field on
\[
 \mathsf N_n=H_3\times H_3\times\mathbb R^{n-6}
\]
which are globally $C^0$-inextendible on every discrete left quotient,
future timelike and null complete, and satisfy
$\kappa_{\rm opt}(T)\to0$ while
$\kappa_{\rm an}(T)=\infty$ on every past tail.
\end{corollary}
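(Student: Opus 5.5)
The plan is to extend the six-dimensional construction of Proposition~\ref{prop:einstein-optimal-separation} by flat abelian factors, exactly as the Kasner examples are extended by constant directions. On $\mathsf N_n=H_3\times H_3\times\R^{n-6}$ I take a diagonal invariant metric $\sum_{i=1}^n a_i(t)^2(\omega^i)^2$, with $\omega^7,\dots,\omega^n$ closed. The harmonic-time system \eqref{eq:einstein-product-heisenberg-ode} is unchanged except that $\alpha$, $v_r$ and $w_r=\mathbf 1-v_r$ now have $n$ components. The abelian directions enter no bracket, so $v_r$ has zero entries there and $\alpha_i''=0$ for $i\ge7$. The sign-automorphism argument still forces the off-diagonal equations to vanish: I would add independent sign changes of each abelian generator to obtain further distinct characters.

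\textbf{Step 1: choosing Kasner data.} I need $n$-component data with $\sum p_i=1$, $\sum p_i^2+q^2=1$, and both $\chi_r=w_r\cdot p>0$. In addition I require a negative horizontal exponent, all $p_i<1$, and the same Schur structure as before. That structure means two central exponents $c_1<0<c_2$ (so that $A_t$ separates the central scales) and a second-pair sum $r_1+r_2>1+c_1$.

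Starting from the six-dimensional datum, I would add small exponents $p_7=\dots=p_n=\epsilon$. I then restore $\sum p_i=1$ by scaling and shifting, and restore $\sum p_i^2+q^2=1$ by adjusting $q$ in the scalar case, or the split $p_2,p_3$ of the first pair in the vacuum case as in \eqref{eq:vacuum-optimal-separation-data}. Since every inequality in Proposition~\ref{prop:einstein-optimal-separation} is strict, small perturbations preserve them. The one requirement is that the squared sum can be compensated. In the scalar case $q^2=31/100$ leaves room; in the vacuum case spreading $p_2,p_3$ apart raises $\sum p_i^2$ arbitrarily while keeping $p_2+p_3$ fixed. For the scalar case I also keep $q\ne0$.

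A cleaner concrete choice is to take $p_7=\dots=p_n=0$ with the six-dimensional data unchanged. This satisfies both Kasner relations exactly, and I would use it unless some step genuinely needs positivity of the abelian exponents.

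\textbf{Step 2: solving the system and transferring the estimates.} I solve the Volterra equation \eqref{eq:einstein-product-heisenberg-volterra} verbatim; the abelian components of $u$ stay zero. Constraint propagation carries over unchanged, and so do the proper-time asymptotics.

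For the filling estimates, the abelian directions lie in $V$ but carry no bracket. Any loop component in those directions therefore only increases $f_\xi$ and leaves $Q_I$ unchanged. Hence the two-sided bound $\kappa_{\rm opt}\asymp T^{3/10}$ carries over, via the central-component bound and the balanced ellipse. The construction giving $\kappa_{\rm an}=\infty$ also transfers, since its loops avoid the abelian factor.

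\textbf{Step 3: inextendibility and completeness.} Zero exponents are $<1$, so the cone integral \eqref{eq:schur-cone-integral} remains finite. The negative horizontal exponent gives growth of $\lambda_{\max}(H_t)$, and Theorem~\ref{thm:two-step-obstruction} then yields past inextendibility on every discrete quotient. For the future, the spatial scalar curvature is $-\tfrac12(b_1^2+b_2^2)\le0$ and the expansion is positive. Theorem~\ref{thm:homogeneous-future} needs $n\le9$ for the massless case, so this route covers only $6\le n\le9$.

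\textbf{Main obstacle: future completeness for $n\ge10$.} I would obtain it directly from the explicit decoupled structure. The abelian factors have $\alpha_i''=0$, and $\phi''=0$. The maximal development is the six-dimensional one times flat directions with scale factors $e^{p_i\tau}$, where $p_i=0$ gives constant scale factors.

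With $p_i=0$, the abelian scale factors are constant. The metric is then exactly the Riemannian product of the six-dimensional development (extended in $\tau$) with a flat static $\R^{n-6}$, after absorbing the lapse.

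More precisely, the lapse $e^S$ is unchanged because $S$ gains nothing. So $g=g_6+\sum_{i\ge7}(\omega^i)^2$ is a spacetime product with flat Euclidean space. Causal geodesics of a product with a complete Riemannian factor are complete once the $g_6$-component is complete in the appropriate sense. Two cases arise. Timelike or null components of $g_6$-geodesics are handled by Proposition~\ref{prop:einstein-optimal-separation}. A component that is a stationary point with a constant spatial $\R^{n-6}$ velocity is an unbounded straight line and is complete. There is a subtlety when the $g_6$-projection is timelike but reparametrized. I would handle it through the standard warped-free product decomposition of the geodesic equations: the $\R^{n-6}$ part is affine, and the $g_6$ part is a $g_6$-geodesic with constant causal character, hence complete.

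This product choice also keeps $\R^{n-6}$ from contributing to $\lambda_{\max}$, so Step 3 is unaffected. It therefore resolves all $n\ge6$ uniformly, and it is the choice I commit to.
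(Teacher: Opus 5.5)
Your final construction is the paper's own: the metric product of the developments of Proposition~\ref{prop:einstein-optimal-separation} with static flat $\R^{n-6}$. Completeness then follows by projecting a product causal geodesic to a causal geodesic of the six-dimensional factor with the same affine parameter. The perturbed Kasner data of Step~1 and the $n\le9$ route are therefore unnecessary. Also, the ``stationary'' case you list is spacelike, so it never arises for causal geodesics.

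One justification is wrong, however. The abelian directions do not lie in $V$. They commute with everything, so they are central. For $\mathsf N_n$ one has $Z=Z(\mathfrak n)=\operatorname{span}\{z_1,z_2\}\oplus\R^{n-6}$, and $V=\mathfrak n/Z$ is still the four-dimensional horizontal space of the two Heisenberg factors. Consequently there are no loop components in the new directions, and your argument that such components ``only increase $f_\xi$'' does not address the paper's definitions of $\kappa_{\rm an}$ and $\kappa_{\rm opt}$ for $\mathsf N_n$.

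The invariance must be argued on the central side, using that $A_t$ is block diagonal:
\begin{itemize}
\item Every bracket area lies in $\operatorname{span}\{z_1,z_2\}$, where $A_t$ is unchanged. Hence $J_I(\xi)$, and so $\kappa_{\rm an}$, is unchanged.
\item In \eqref{eq:kappa-opt}, extra covector components leave $|\lambda(Q_I(\xi))|$ unchanged and can only increase $\sqrt{A_t^{-1}(\lambda,\lambda)}$. The supremum is therefore attained with those components equal to zero, so $\kappa_{\rm opt}$ is unchanged as well.
\end{itemize}
The growth test uses the unchanged $H_t$, whose maximal eigenvalue diverges, so it is unaffected. With this correction the rest of your argument goes through.
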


\begin{proof}
Take the direct metric product of either preceding development with
Euclidean $\mathbb R^{n-6}$ and leave the scalar field independent
of the new coordinates.  The equation
$\operatorname{Ric}_g=\dd\phi\otimes\dd\phi$ is preserved by this
product, so the result is an Einstein-massless-scalar solution in
every stated dimension.  The additional spatial directions are
central and flat, and every bracket area has zero component in them.
Thus $\kappa_{\rm an}$ is unchanged.  In the dual expression for
$\kappa_{\rm opt}$, additional flat covector components leave the
numerator unchanged and can only increase the denominator; choosing
them to vanish shows that $\kappa_{\rm opt}$ is unchanged as well.
The cone integral remains finite and the expanding horizontal
direction remains present.  Theorem~\ref{thm:two-step-obstruction}
therefore gives past $C^0$-inextendibility on every discrete quotient.

A causal geodesic of the metric product has constant Euclidean
velocity, and its projection to the original spacetime is a causal
geodesic, with the same affine parameter.  The latter projection is
complete by Proposition~\ref{prop:einstein-optimal-separation}.
The product is consequently future causally complete.  Every
geodesic on a discrete quotient lifts to the product, so the same
completeness holds on every quotient.  Future completeness together
with the past obstruction proves the global conclusion.
\end{proof}

\subsection{Vacuum singularities and analytic horizons in every odd dimension}
\label{subsec:heisenberg-horizons}

The exact family also gives a complete alternative within its vacuum
sector.  The statement concerns the solutions of
Proposition~\ref{prop:h5-exact-massless}; the classification of all
expanding invariant vacuum data in Section~\ref{subsec:all-vacuum-data}
is specific to three spatial dimensions.

\begin{proposition}[The Heisenberg vacuum alternative]
\label{prop:heisenberg-vacuum-dichotomy}
Let $m\ge1$, and consider the exact family of
Proposition~\ref{prop:h5-exact-massless} with $q=0$ and
$\chi_r>0$ for every $r$.  On $H_{2m+1}$ and every discrete left quotient,
exactly one of the following cases occurs:
\begin{enumerate}[label=\textup{(\roman*)}]
\item A horizontal exponent is negative, and the spacetime is globally
$C^0$-inextendible.
\item $p_0=1$ and $p_i=0$ for every $i>0$.  The past end has two analytic
Ricci-flat one-sided extensions.  In either extension the added boundary
is a nondegenerate Killing horizon and the past Cauchy horizon of each
original spatial slice.  On a compact quotient its null generators are
closed, and the region beyond it contains closed timelike curves.
\end{enumerate}
\end{proposition}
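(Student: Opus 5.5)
The proof splits into an algebraic step and two geometric steps. The algebraic step shows that the vacuum Kasner relations together with $\chi_r>0$ leave exactly the two listed cases. Case (i) is then a direct citation. Case (ii) requires constructing the analytic extensions explicitly from the closed-form metric \eqref{eq:h5-exact-family}.

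\emph{Algebraic dichotomy.} The two cases are disjoint, since in (ii) no horizontal exponent is negative. Suppose now that every horizontal exponent satisfies $p_i\ge0$, and put $s_r=p_{2r-1}+p_{2r}\ge0$. The Kasner relations with $q=0$ give $\sum_rs_r=1-p_0$ and $\sum_{i>0}p_i^2=1-p_0^2$. The hypothesis $\chi_r>0$ is the inequality $s_r<1+p_0$. Since the $p_i$ are nonnegative,
\[
 \sum_{i>0}p_i^2\le\sum_rs_r^2\le(1+p_0)\sum_rs_r=(1+p_0)(1-p_0)=1-p_0^2 .
\]
The middle inequality is strict as soon as some $s_r>0$. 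Equality is forced, so every $s_r$ vanishes. Hence $p_i=0$ for all $i>0$, and the sum relation gives $p_0=1$. Note that this argument does not require $p_0\ge0$ in advance.

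\emph{Case (i).} This is exactly the last assertion of Proposition~\ref{prop:h5-exact-massless}. That proposition combines the past obstruction of Corollary~\ref{cor:all-two-step-anisotropic-powers}, future completeness from Proposition~\ref{prop:heisenberg-scattering}, and the result of \cite{GLS2018}.

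\emph{Case (ii).} Here every $\chi_r=2$, and the exact solution reads
\[
 a_0=A_0e^{\tau}\prod_rF_r^{-1/2},\qquad a_i=A_iF_r^{1/2},\qquad N=e^\tau\prod_rF_r^{1/2},\qquad F_r=1+\tfrac{\kappa_r^2}{16}e^{4\tau}.
\]
I would pass to the analytic coordinate $\rho=e^{2\tau}$, or more conveniently to $T$ with $a_0^2=A_0^2T$. In this coordinate $N^2\dd\tau^2=\Phi(T)\,\dd T^2/T$ and $a_0^2(\omega^0)^2=A_0^2T\,\Phi(T)^{-1}(\omega^0)^2$, up to bounded analytic factors $\Phi,\Phi^{-1}$ that are positive at $T=0$. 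The horizontal coefficients are analytic and positive at $T=0$.

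Next I would replace the central coordinate $z$ by $w=z\pm\psi(T)$, where $\psi'$ is chosen to cancel the $\dd T^2/T$ singularity. This is the Eddington--Finkelstein construction for a Taub--NUT type metric. After the substitution the metric takes the form
\[
 \pm2c\,\dd T\,\omega_w+A_0^2T\Phi^{-1}\omega_w^2+(\text{analytic, nonsingular})\,\dd T^2+\text{horizontal},
\]
with $\omega_w=\dd w-\sum_rx_r\dd y_r$. This expression is analytic and nondegenerate across $T=0$, and its determinant is $-c^2\prod_ia_i^2$. The two sign choices give the two one-sided extensions.

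Ricci-flatness extends to $T\le0$ by analyticity. The central field $e_0=\partial_w$ is both left- and right-invariant, so it is Killing on every discrete left quotient. It is null exactly on $\{T=0\}$, which is therefore a Killing horizon; a nonzero surface gravity is computed from $\partial_T(A_0^2T\Phi^{-1})|_{T=0}\ne0$, so the horizon is nondegenerate.

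Every past-inextendible causal curve of the original spacetime either reaches $T=0$ or winds along the generators. I would show that $\{T=0\}$ equals $\partial D(\Sigma_t)$, i.e. the past Cauchy horizon of each slice $\Sigma_t$, using the boundary-graph and Cauchy arguments of Section~\ref{sec:quotient}. On a compact quotient the orbits of $e_0$ are closed central circles, so the null generators are closed. For $T<0$ we have $g(e_0,e_0)=A_0^2T\Phi^{-1}<0$, so these circles are closed timelike curves.

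\emph{Main obstacle.} I expect the hardest step to be choosing $\psi$ and $\Phi$ explicitly in arbitrary $m$, and checking analyticity and nondegeneracy uniformly across all the factors $F_r$. I would first verify this for $m=1$ against the classical Taub--NUT form and then note that the products of the $F_r$ only multiply the metric by analytic units at $T=0$. A secondary point is the precise Cauchy-horizon identification, which I would handle by a limit-curve argument in the analytic extension.
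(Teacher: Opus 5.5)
Your proposal follows the paper's route: the same inequality chain $\sum_{i>0}p_i^2\le\sum_rs_r^2\le(1+p_0)\sum_rs_r$ for the dichotomy, the same citation for (i), and for (ii) the same Eddington--Finkelstein shift of the central coordinate. The paper performs that shift in $s=e^{2\tau}$, which is your $\rho$; your displayed formulas with $\Phi$ are in fact in this variable. Do not switch to $T$ defined by $a_0^2=A_0^2T$. That variable is $T=s/H(s)$ with $H(s)=\prod_r(1+b_rs^2)$ and $b_r=\kappa_r^2/16$, and it is not monotone on $(0,\infty)$, since $s/H(s)\to0$ as $s\to\infty$.

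In $s$ your anticipated main obstacle disappears. Put $z=w+f_\epsilon(s)$ with $f_\epsilon'(s)=\epsilon H(s)/(2A_0s)$ and $\epsilon=\pm1$. The $\dd s^2$ term then cancels exactly, and with symmetrized products
\[
 \widetilde g_\epsilon=\epsilon A_0\,\dd s\,\eta+\frac{A_0^2s}{H(s)}\eta^2+\sum_r(1+b_rs^2)\bigl(A_{2r-1}^2\dd x_r^2+A_{2r}^2\dd y_r^2\bigr),\qquad \eta=\dd w-\sum_rx_r\dd y_r .
\]
Its determinant is $-\tfrac14A_0^2H(s)^2\prod_{i=1}^{2m}A_i^2$, which is negative for every real $s$ because $H\ge1$. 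The shift is right multiplication by a central element, so it commutes with the left $\Gamma$-action. Hence the extension lives on $\R_s\times\Gamma\backslash H_{2m+1}$ for every discrete $\Gamma$.

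The step that does not go through as sketched is the Cauchy-horizon identification. Describing how past-inextendible curves of the original region behave, or invoking the boundary-graph charts of Section~\ref{sec:quotient}, cannot exclude points with $s\le0$ from $D(\Sigma_{s_0})$. The inclusion $\{s>0\}\subseteq D(\Sigma_{s_0})$ is the easy half; it follows because $s$ is a time function on the globally hyperbolic original region. For the other half you need one idea: every point with $s\le0$ lies on an orbit of the complete Killing field $K=\partial_w$. Since $\widetilde g_\epsilon(K,K)=A_0^2s/H(s)$, this orbit is an inextendible causal curve, null at $s=0$ and timelike for $s<0$, along which $s$ is constant. It therefore never meets $\Sigma_{s_0}$. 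This gives $D(\Sigma_{s_0})=\{s>0\}$ and $H^-(\Sigma_{s_0})=\{s=0\}$ without any limit-curve argument.

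Two smaller justifications are also missing. First, nullity of $K$ on $\{s=0\}$ does not by itself make that set a Killing horizon; for example, $\partial_t+\Omega\partial_\phi$ in Minkowski space is null on the timelike cylinder $r=1/\Omega$. You need $\{s=0\}$ to be null. This follows from $\widetilde g_\epsilon^{-1}(\dd s,\dd s)=-4s/H(s)$, or from $K^\flat|_{s=0}=\tfrac{\epsilon A_0}{2}\dd s$, which also gives $\varkappa=-\epsilon A_0\ne0$. Second, closed central orbits on a compact quotient require $\Gamma\cap Z\ne\{e\}$. The horizontal projections of $\Gamma$ must span $\R^{2m}$; otherwise a linear functional vanishing on them descends to an unbounded continuous function on the compact quotient. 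Nondegeneracy of the bracket then produces two elements of $\Gamma$ with nonzero central commutator.
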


\begin{proof}
Suppose first that every horizontal exponent is nonnegative, and put
$s_r=p_{2r-1}+p_{2r}$.  The strict bracket inequalities give
$0\le s_r<1+p_0$.  If $s_r>0$, then
\[
 p_{2r-1}^2+p_{2r}^2\le s_r^2<(1+p_0)s_r.
\]
If any $s_r$ were positive, summation and the vacuum Kasner identities
would imply
\[
 1-p_0^2=\sum_{i=1}^{2m}p_i^2
 <(1+p_0)\sum_{r=1}^m s_r
 =(1+p_0)(1-p_0)=1-p_0^2,
\]
a contradiction.  Thus every horizontal exponent vanishes and $p_0=1$.
In every other case a horizontal exponent is negative, and the global
inextendibility conclusion of Proposition~\ref{prop:h5-exact-massless}
applies.

For the exceptional datum use the Heisenberg coordinates
\[
 \omega^0=\dd z-\sum_{r=1}^m x_r\dd y_r,
 \qquad \omega^{2r-1}=\dd x_r,\qquad
 \omega^{2r}=\dd y_r.
\]
Then $\chi_r=2$.  Set
\begin{equation}
 s=e^{2\tau},\quad b_r=\frac{\kappa_r^2}{16},\quad
 F_r(s)=1+b_rs^2,\quad H(s)=\prod_{r=1}^mF_r(s),\quad
 h_r=A_{2r-1}^2\dd x_r^2+A_{2r}^2\dd y_r^2.
 \label{eq:heisenberg-horizon-functions}
\end{equation}
On $s>0$ the exact vacuum metric is
\begin{equation}
 g=-\frac{H(s)}{4s}\dd s^2
   +\frac{A_0^2s}{H(s)}
      \left(\dd z-\sum_r x_r\dd y_r\right)^2
   +\sum_rF_r(s)h_r.
 \label{eq:heisenberg-exceptional-metric}
\end{equation}
For $\epsilon\in\{+1,-1\}$ write $z=w+f_\epsilon(s)$, where
\[
 f_\epsilon(s)=\frac{\epsilon}{2A_0}
 \left(\log s+\int_0^s\frac{H(u)-1}{u}\,\dd u\right).
\]
The integrand is a polynomial, and
$f_\epsilon'(s)=\epsilon H(s)/(2A_0s)$.
With $\eta=\dd w-\sum_r x_r\dd y_r$, the $\dd s^2$ terms cancel and give
\begin{equation}
 \widetilde g_\epsilon
   =\epsilon A_0\,\dd s\,\eta
    +\frac{A_0^2s}{H(s)}\eta^2
    +\sum_rF_r(s)h_r,
 \qquad -\infty<s<\infty.
 \label{eq:heisenberg-horizon-extension}
\end{equation}
Here $\dd s\,\eta=(\dd s\otimes\eta+\eta\otimes\dd s)/2$.
All coefficients are analytic for real $s$, and in the coframe
$(\dd s,\eta,\dd x_1,\dd y_1,\ldots)$ the determinant is
\[
 -\frac{A_0^2}{4}H(s)^2\prod_{i=1}^{2m}A_i^2<0.
\]
The $(s,w)$ block has one negative and one positive eigenvalue, while
the horizontal block is positive definite.  Thus
\eqref{eq:heisenberg-horizon-extension} is Lorentzian everywhere.
Its analytic Ricci tensor vanishes on $s>0$ and hence everywhere.

Every central translation commutes with the left Heisenberg action.
Since the change from $z$ to $w$ is a central translation depending only
on $s$, a discrete left action becomes the same action on $(x,y,w)$,
with $s$ fixed.  It remains free and properly discontinuous on
$\R_s\times H_{2m+1}$.  Equation~\eqref{eq:heisenberg-horizon-extension}
therefore defines both extensions on
$\R_s\times\Gamma\backslash H_{2m+1}$ for every discrete $\Gamma$.

The field $K=\partial_w$ is Killing and satisfies
\begin{equation}
 \widetilde g_\epsilon(K,K)=\frac{A_0^2s}{H(s)},\qquad
 \widetilde g_\epsilon^{-1}(\dd s,\dd s)=-\frac{4s}{H(s)},\qquad
 K^\flat\big|_{s=0}=\frac{\epsilon A_0}{2}\dd s.
 \label{eq:heisenberg-horizon-identities}
\end{equation}
Consequently $s=0$ is a Killing horizon.  The identity
$\dd(g(K,K))=-2\varkappa K^\flat$ on the horizon gives
$\varkappa=-\epsilon A_0$, which is nonzero.
Each horizon point is a timelike boundary point: the smooth vector field
\[
 T=\partial_s-
 \frac{\epsilon H(s)}{A_0\sqrt{1+s^2}}K
\]
has $T(s)=1$ and
$\widetilde g_\epsilon(T,T)
=H(s)(s-\sqrt{1+s^2})/(1+s^2)<0$.
Its integral curve through that point enters the original region in
the future direction.

Fix an original slice $\Sigma_{s_0}$, $s_0>0$.  The region $s>0$ is
globally hyperbolic with the original spatial slices as Cauchy surfaces,
and $s$ is a time function there.  Every future-inextendible causal
curve starting at $0<s<s_0$ stays in $s>0$ until it meets
$\Sigma_{s_0}$; the Cauchy property ensures that it does meet this slice.
The analogous past-directed assertion holds for $s>s_0$.
Every point with $s\le0$, on the other hand, lies on a complete central
$K$-orbit which is causal and avoids $\Sigma_{s_0}$.  Since $K$ is
complete, this orbit is parametrized on all of $\mathbb R$--periodically
when it is closed--and is therefore an inextendible causal curve.  It is
null for $s=0$ and timelike for $s<0$.  Thus, in the full extension,
\[
 D(\Sigma_{s_0})=\{s>0\},\qquad
 H^-(\Sigma_{s_0})=\{s=0\}.
\]

If the quotient is compact, the horizontal projections of $\Gamma$
span $\R^{2m}$; otherwise a nonzero linear functional vanishing on their
span would descend to an unbounded continuous function on the compact
quotient.  The nondegeneracy of the horizontal symplectic form then
provides two elements of $\Gamma$ with nonzero central commutator.
Hence $\Gamma$ meets the center in a nonzero discrete subgroup, and
every central orbit is closed.  By
\eqref{eq:heisenberg-horizon-identities} these orbits are null on the
horizon and timelike on $s<0$, as asserted.
\end{proof}

\begin{proposition}[Chronology violation and its sharp horizontal length]
\label{prop:heisenberg-cover-chronology}
In either analytic extension
\eqref{eq:heisenberg-horizon-extension}, the chronology-violating set
is exactly $\{s<0\}$.  Every point of this set lies on a smooth
closed timelike curve in its constant-$s$ slice.  These assertions
hold on $\mathbb R\times H_{2m+1}$ and on every discrete left quotient.

Fix $s<0$ and put
\begin{equation}
 c(s)=\frac{A_0^2|s|}{H(s)},\qquad
 a_r(s)=F_r(s)A_{2r-1}^2,\qquad
 d_r(s)=F_r(s)A_{2r}^2,\qquad
 B(s)=\max_{1\le r\le m}\frac1{\sqrt{a_r(s)d_r(s)}}.
 \label{eq:heisenberg-chronology-weights}
\end{equation}
For a curve $\gamma$ in this slice define its horizontal length by
\[
 L_{\mathrm h}(\gamma)=\int
 \left[\sum_{r=1}^m
       \bigl(a_r(s)\dot x_r^2+d_r(s)\dot y_r^2\bigr)\right]^{1/2}
 \,\dd u.
\]
Among smooth closed timelike curves on the cover, and among such
curves on a quotient whose lifts to the cover are closed, the exact
infimum of this length is
\begin{equation}
 L_*(s)=\frac{4\pi}{B(s)\sqrt{c(s)}}
 =\frac{4\pi\sqrt{H(s)}}{A_0\sqrt{|s|}}
       \min_r\bigl(F_r(s)A_{2r-1}A_{2r}\bigr).
 \label{eq:heisenberg-chronology-minimum}
\end{equation}
Every such timelike curve satisfies $L_{\mathrm h}>L_*(s)$, and
every length greater than $L_*(s)$ is attained.  At the limiting
length there is a smooth closed null curve.  In particular,
\begin{equation}
 L_*(s)=\frac{4\pi}{A_0}
       \min_r(A_{2r-1}A_{2r})\,|s|^{-1/2}
       \bigl(1+O(s^2)\bigr)
 \qquad(s\uparrow0).
 \label{eq:heisenberg-chronology-asymptotic}
\end{equation}
If the quotient contains a nonzero central lattice element, its purely
central timelike circles have zero horizontal length; hence the
unrestricted quotient infimum is zero.  Formula
\eqref{eq:heisenberg-chronology-minimum} concerns precisely the fixed-slice
curves specified above, with closed lifts in the quotient case.
\end{proposition}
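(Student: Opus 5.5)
The plan is to work slice by slice inside \eqref{eq:heisenberg-horizon-extension}. On a constant-$s$ slice the cross term $\epsilon A_0\,\dd s\,\eta$ drops out, so the induced tensor is
\[
 \widetilde g_\epsilon\big|_{s}=-c(s)\,\eta^2+\sum_r\bigl(a_r(s)\dd x_r^2+d_r(s)\dd y_r^2\bigr),\qquad s<0 .
\]
This is Lorentzian, with $\partial_w$ timelike. The first step is to show that no point with $s\ge0$ is chronology-violating; the bound and its attainment on $s<0$ then follow from a weighted isoperimetric argument.

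\textbf{The region $s\ge0$.} By \eqref{eq:heisenberg-horizon-identities}, $\dd s$ is timelike on $s>0$ and null on $s=0$, where it is proportional to $K^\flat$. Hence $\dd s$ has a fixed causal character on $\{s\ge0\}$, and $s$ is monotone along every future causal curve there. I will fix the time orientation through the vector field $T$ of Proposition~\ref{prop:heisenberg-vacuum-dichotomy}. Then the null hypersurface $\{s=0\}$ can be crossed by future causal curves in only one direction.

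Now take a closed timelike curve through a point with $s\ge0$. Either it stays in $\{s\ge0\}$, or it crosses $s=0$ in both directions. The second case contradicts one-way crossing. In the first case $s$ is monotone along a closed curve, hence constant. A timelike curve cannot lie in a slice with $s>0$, because those slices are spacelike. On $s=0$ the slice metric is degenerate and positive semidefinite, so a curve in it is at best null. Together with $D(\Sigma_{s_0})=\{s>0\}$, this shows that the chronology-violating set is contained in $\{s<0\}$. The reverse inclusion follows from the explicit slice curves constructed below.

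\textbf{The lower bound.} Take a smooth closed timelike curve $\gamma$ in the slice, with $u\mapsto(x_r(u),y_r(u),w(u))$, closed on the cover. Timelikeness says
\[
 v(u):=\Bigl[\sum_r\bigl(a_r\dot x_r^2+d_r\dot y_r^2\bigr)\Bigr]^{1/2}<\sqrt{c}\,|\eta(\dot\gamma)| .
\]
So $\eta(\dot\gamma)$ never vanishes and has constant sign. Since $w$ is periodic, $\oint\eta=-\sum_r\oint x_r\,\dd y_r=\sum_r\mathcal A_r$, where $\mathcal A_r$ is a signed planar area. Integrating gives
\[
 L_{\mathrm h}<\sqrt c\,\Bigl|\sum_r\mathcal A_r\Bigr| .
\]

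Let $\ell_r$ be the length of the $r$-th projection in the flat metric $a_r\dd x_r^2+d_r\dd y_r^2$. The planar isoperimetric inequality, after rescaling coordinates, gives $|\mathcal A_r|\le\ell_r^2/(4\pi\sqrt{a_rd_r})$. Therefore $|\sum_r\mathcal A_r|\le B\sum_r\ell_r^2/(4\pi)$. By Minkowski's integral inequality, $(\sum_r\ell_r^2)^{1/2}\le\int v=L_{\mathrm h}$. Combining these gives $L_{\mathrm h}<\sqrt c\,B\,L_{\mathrm h}^2/(4\pi)$. Since $L_{\mathrm h}>0$ (the sign of $\eta$ forces nonzero area), this is $L_{\mathrm h}>L_*(s)$.

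\textbf{Attainment and the remaining claims.} Choose the pair $r$ that realizes $B$. Traverse an ellipse of area $\mathcal A$, a circle in the rescaled coordinates, at constant speed, and let $\dot w=x_r\dot y_r+\kappa$ with the constant $\kappa$ chosen so that $w$ closes. Then $\eta(\dot\gamma)$ is constant, and equality holds in both the isoperimetric and Minkowski steps. With $v$ equal to $\sqrt c\,|\eta|$ the curve is null and has length exactly $L_*$. Scaling the ellipse up while keeping the same closure rule gives $v<\sqrt c\,|\eta|$, so it yields timelike curves of every length greater than $L_*$; I expect a one-line check of the scaling exponents to suffice here. These curves pass through every point after a left translation, so the whole of $\{s<0\}$ is chronology-violating.

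All constructions commute with the left action, since $\eta$ and the $h_r$ are invariant. Hence they descend to quotients, restricted to curves with closed lifts. Expanding $H=1+O(s^2)$ and $F_r=1+O(s^2)$ gives \eqref{eq:heisenberg-chronology-asymptotic}. Purely central circles through a nonzero central lattice element have $\dot x=\dot y=0$ and $\eta\ne0$, so they are timelike with zero horizontal length.

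The main obstacle is the one-way crossing of the horizon in the first step, which needs care with the time orientation induced through $T$ for both signs $\epsilon$. The isoperimetric chain itself is routine.
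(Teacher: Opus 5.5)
Your proposal is correct. Its skeleton is the paper's: the slice metric, the constant sign of $\eta(\dot\gamma)$, comparing $L_{\mathrm h}$ with the enclosed central area, an isoperimetric bound, and the explicit ellipse with constant $\eta(\dot\gamma)$ for attainment. Two sub-steps are argued differently.

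\textbf{Excluding $\{s\ge0\}$.} The paper takes a point where $s$ is maximal on the closed curve. There $\dd s(\dot\gamma)=0$, so $\dot\gamma$ is tangent to a slice whose induced metric is positive definite ($s>0$) or positive semidefinite ($s=0$), which contradicts timelikeness. This needs no time orientation and no crossing analysis. Your appeal to $D(\Sigma_{s_0})$ is unnecessary and can be dropped. Your monotonicity route also works, and the obstacle you flag is not real. One has $T(s)=1$ for both signs of $\epsilon$, and on $\{s\ge0\}$ the vector $(\dd s)^\sharp$ is causal and nonzero. Hence every future timelike $V$ there satisfies $\dd s(V)>0$ strictly, which gives the contradiction at once without your case split.

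\textbf{The lower bound.} The paper assembles all pairs into one weighted curve $X\in\mathbb R^{2m}$ with a block skew operator $J$, $\|J\|=B$, and applies Wirtinger and Cauchy--Schwarz once. You instead apply the planar inequality pair by pair and recombine via $1/\sqrt{a_rd_r}\le B$ and Minkowski's inequality $\bigl(\sum_r\ell_r^2\bigr)^{1/2}\le L_{\mathrm h}$. Both give the same constant. You must invoke the signed-area form $|\oint x\,\dd y|\le\ell^2/(4\pi)$, which is valid for arbitrary closed rectifiable curves, because the projections of a closed timelike curve need not be simple. The paper's argument is self-contained; yours is more modular.
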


\begin{proof}
The induced metric on a constant-$s$ slice with $s<0$ is
\begin{equation}
 -c\eta^2+\sum_{r=1}^m(a_r\dd x_r^2+d_r\dd y_r^2),
 \qquad \eta=\dd w-\sum_{r=1}^m x_r\dd y_r,
 \label{eq:heisenberg-chronology-slice}
\end{equation}
where the positive coefficients in
\eqref{eq:heisenberg-chronology-weights} are constant on the slice.
Choose an index $r$ at which $B$ is attained, write
$x=x_r$, $y=y_r$, $a=a_r$, $d=d_r$, and set all other horizontal
coordinates equal to zero.  For $0\le u\le2\pi$, set
\begin{equation}
 x(u)=\frac{R\cos u}{\sqrt a},\qquad
 y(u)=\frac{R\sin u}{\sqrt d},\qquad
 w(u)=\frac{R^2\sin(2u)}{4\sqrt{ad}}.
 \label{eq:heisenberg-chronology-loop}
\end{equation}
All derivatives agree at the endpoints.  Direct calculation gives
\[
 \dot w-x\dot y=-\frac{R^2}{2\sqrt{ad}},\qquad
 a\dot x^2+d\dot y^2=R^2,
\]
and hence
\begin{equation}
 \widetilde g_\epsilon(\dot\gamma,\dot\gamma)
 =R^2-\frac{cR^4}{4ad}<0
 \quad\Longleftrightarrow\quad
 R>\frac{2\sqrt{ad}}{\sqrt c}.
 \label{eq:heisenberg-chronology-threshold}
\end{equation}
The timelike field
\[
 T=\partial_s-
 \frac{\epsilon H(s)}{A_0\sqrt{1+s^2}}\partial_w
\]
from the preceding proof fixes the time orientation.  On a
constant-$s$ tangent vector,
\[
 \widetilde g_\epsilon(\dot\gamma,T)
 =\epsilon A_0\left(\frac12-
              \frac{s}{\sqrt{1+s^2}}\right)\eta(\dot\gamma).
\]
Thus the displayed curve is future directed for $\epsilon=1$;
its reverse is future directed for $\epsilon=-1$.
Left translations preserve the metric and $T$ and act transitively
on the slice.  Translating the curve therefore places it through
any prescribed point, and projecting it to a quotient preserves
its causal character and time orientation.

Conversely, a smooth closed timelike curve anywhere in the extension
has a point where $s$ attains its maximum.  Its tangent there has
$\dd s=0$.  If the maximum were positive, the induced slice metric
would be positive definite; if it were zero, that metric would be
positive semidefinite.  Either conclusion contradicts timelikeness.
Every closed timelike curve consequently lies in $\{s<0\}$.
Together with the construction, this proves the exact assertion
about chronology violation, on the cover and on every quotient.

To prove the lower bound, let $\gamma$ be a smooth closed timelike
curve at fixed $s<0$ on the cover.  Write $L=L_{\mathrm h}(\gamma)$
and let $v$ denote its horizontal speed.  Equation
\eqref{eq:heisenberg-chronology-slice} gives
$|\eta(\dot\gamma)|>v/\sqrt c$.  The function
$\eta(\dot\gamma)$ never vanishes and therefore has a fixed sign.
Since all coordinates of the curve close,
\begin{equation}
 \left|\sum_{r=1}^m\oint x_r\,\dd y_r\right|
 =\left|\oint\eta\right|
 =\int|\eta(\dot\gamma)|\,\dd u
 >\frac L{\sqrt c}.
 \label{eq:heisenberg-chronology-area-lower}
\end{equation}
In particular the horizontal projection is nonconstant and $L>0$.

For completeness, introduce weighted horizontal coordinates
\[
 X=(\sqrt{a_1}x_1,\sqrt{d_1}y_1,\ldots,
             \sqrt{a_m}x_m,\sqrt{d_m}y_m)
\]
and let $J$ be block diagonal with blocks
\[
 \frac1{\sqrt{a_rd_r}}
 \begin{pmatrix}0&-1\\1&0\end{pmatrix}.
\]
Then $\|J\|=B$.  Parametrize the nonconstant closed horizontal
projection by arclength on $[0,L]$.  This gives a periodic Lipschitz
curve $X$ with $|X'|=1$ almost everywhere; constant portions can
be omitted without changing its line integrals or length.
Writing $\overline X=L^{-1}\int_0^L X\,\dd u$, integration by
parts and the periodic Wirtinger inequality give
\begin{align}
 \left|\sum_r\oint x_r\,\dd y_r\right|
 &=\frac12\left|\int_0^L
           \langle J(X-\overline X),X'\rangle\,\dd u\right|
       \notag\\
 &\le\frac B2
       \left(\int_0^L|X-\overline X|^2\,\dd u\right)^{1/2}
       \left(\int_0^L|X'|^2\,\dd u\right)^{1/2}
       \notag\\
 &\le\frac{BL}{4\pi}\int_0^L|X'|^2\,\dd u
 =\frac{BL^2}{4\pi}.
 \label{eq:heisenberg-chronology-isoperimetry}
\end{align}
Combining this with
\eqref{eq:heisenberg-chronology-area-lower} and dividing by $L$
proves $L>4\pi/(B\sqrt c)$.
The same proof applies to a quotient curve with a closed lift,
because its horizontal length and causal character are preserved
by the covering map.

The curves in \eqref{eq:heisenberg-chronology-loop} have horizontal
length $2\pi R$.  Since the chosen pair satisfies
$\sqrt{ad}=B^{-1}$,
\eqref{eq:heisenberg-chronology-threshold} realizes every
$L>L_*(s)$.  At $R=2/(B\sqrt c)$ the same curve has a
nonvanishing null tangent and length $L_*(s)$, with the same time
orientation.  Thus the timelike infimum is exact and is not
attained.  Finally, $F_r(s)=1+O(s^2)$ and $H(s)=1+O(s^2)$
give \eqref{eq:heisenberg-chronology-asymptotic}.
\end{proof}

The horizon is therefore the exact boundary of chronology violation
in these extensions.  For closed curves on the universal cover,
the infimum of horizontal lengths diverges as the horizon is approached.
The closed-lift qualification in
\eqref{eq:heisenberg-chronology-minimum} is essential on quotients:
if $\Gamma$ contains a nonzero central element, its central orbits
are closed timelike curves on $s<0$ with horizontal length zero.
This occurs on every compact quotient.  Chronology violation
itself is already present on the cover, whereas the quotient adds
these purely central closed curves.

\section{Expanding vacuum data on compact Bianchi II quotients}
\label{sec:taub}

The preceding Heisenberg alternative concerns the exact diagonal family.
In three spatial dimensions the constraints and evolution identify every
expanding invariant vacuum datum.  We now obtain the complete statement
on a fixed compact quotient; additional coordinates for its exceptional
horizons are given in Appendix~\ref{app:taub-coordinates}.

\subsection{All expanding homogeneous vacuum data}
\label{subsec:all-vacuum-data}

The preceding dichotomy extends from the displayed family to every
expanding homogeneous vacuum Bianchi~II datum.  The diagonalization and
integration below make this passage explicit.  Compact locally homogeneous
vacuum horizons and their additional local symmetries were studied in
\cite{ChruscielRendall1995}; the inextendibility conclusion here uses the
continuous-metric obstruction of Proposition~\ref{prop:einstein-scalar}.

Let $\mathfrak g$ be the Heisenberg Lie algebra, with center
$\mathfrak z$, and fix a lattice $\Gamma<H_3$.  An invariant vacuum
datum is a pair $(h,K)$ of a positive inner product and a symmetric
bilinear form on $\mathfrak g$, regarded as tensors on
$\Gamma\backslash H_3$, satisfying
\begin{equation}
 R(h)+(\operatorname{tr}_hK)^2-|K|_h^2=0,
 \qquad \operatorname{div}_hK-\dd(\operatorname{tr}_hK)=0.
 \label{eq:all-vacuum-constraints}
\end{equation}
We use $K=\tfrac12\partial_t h$ in proper time.  Write
$\mathscr D^+_\Gamma$ for these data with
$\operatorname{tr}_hK>0$, equipped with the finite-dimensional topology
of the invariant tensor coefficients.  No identifications of data by
spatial diffeomorphisms are made in this definition.

\begin{proposition}[Parametrization and integration of expanding vacuum data]
\label{prop:all-expanding-vacuum-data}
Put $H_h=\mathfrak z^{\perp_h}$ and let $n(h)>0$ be determined by
$R(h)=-n(h)^2/2$.  The space $\mathscr D^+_\Gamma$ is parametrized
smoothly and bijectively by
\begin{equation}
 h>0,\qquad s>0,\qquad
 \mathcal A\in\operatorname{Sym}_0(H_h,h),
 \label{eq:all-vacuum-data-parameters}
\end{equation}
where $\operatorname{Sym}_0(H_h,h)$ denotes the two-dimensional space
of trace-free $h$-self-adjoint endomorphisms of $H_h$.  In this
parametrization $L=h^{-1}K$ preserves $\mathfrak z$ and $H_h$, and
\begin{equation}
 L|_{H_h}=\frac{s}{2}\operatorname{Id}+\mathcal A,
 \qquad
 L|_{\mathfrak z}=\alpha\operatorname{Id},\qquad
 \alpha=\frac{n(h)^2-s^2+2|\mathcal A|_h^2}{4s}.
 \label{eq:all-vacuum-data-formula}
\end{equation}
Every maximal homogeneous development of such data is isometric to
a member of \eqref{eq:scalar-family} with
\begin{equation}
 \sigma=0,\qquad \beta,\gamma>0,\qquad
 \beta\gamma=k^2/4.
 \label{eq:all-vacuum-positive-branch}
\end{equation}
Here an invariant change of coframe may replace the lattice by an
isomorphic image lattice.  The locally rotationally symmetric data
are exactly $\mathcal A=0$, and correspond to $\beta=\gamma=k/2$.
\end{proposition}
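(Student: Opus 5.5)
The plan is to solve the constraints algebraically in an $h$-adapted frame, and then to identify the developments through the diagonalization and the exact family.

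\textbf{Step 1 (momentum constraint).} Fix $h$ and choose an $h$-orthonormal basis $E_0\in\mathfrak z$, $E_1,E_2\in H_h$, with $[E_1,E_2]=nE_0$. Then $R(h)=-n^2/2$, so this $n$ is the quantity $n(h)$ of the statement. For a left-invariant symmetric $K$ on a unimodular algebra, the divergence is $(\operatorname{div}K)(X)=-\sum_i K(\mathrm{ad}_{E_i}^*E_i\ldots)$; concretely, the class-A momentum constraint reads $\operatorname{tr}(L\,\mathrm{ad}_X)$-type terms. Computing with the single bracket gives $(\operatorname{div}_hK)(E_1)=nK(E_0,E_2)$ and $(\operatorname{div}_hK)(E_2)=-nK(E_0,E_1)$, up to signs, and $(\operatorname{div}_hK)(E_0)=0$. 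Since $\operatorname{tr}K$ is constant, its differential vanishes. The momentum constraint is therefore exactly $K(E_0,E_i)=0$ for $i=1,2$. In other words, $L=h^{-1}K$ preserves $\mathfrak z$ and $H_h$.

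\textbf{Step 2 (Hamiltonian constraint and parametrization).} Write $L|_{\mathfrak z}=\alpha$ and $L|_{H_h}=\tfrac s2\operatorname{Id}+\mathcal A$ with $\mathcal A$ trace-free. Then
\[
 \operatorname{tr}L=\alpha+s,\qquad |K|^2=\alpha^2+\tfrac{s^2}{2}+|\mathcal A|^2 .
\]
The Hamiltonian constraint becomes
\[
 -\tfrac{n^2}{2}+2\alpha s+\tfrac{s^2}{2}-|\mathcal A|^2=0,
\]
which is linear in $\alpha$ when $s\ne0$ and yields \eqref{eq:all-vacuum-data-formula}. The map from $(h,s,\mathcal A)$ to data is clearly smooth and injective.

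The remaining point is to show that expansion forces $s>0$. Suppose instead that $s\le0$. If $s=0$, the constraint gives $n^2/2+|\mathcal A|^2=0$, which is impossible. If $s<0$, the formula gives $\alpha<0$ as well, because the numerator $n^2-s^2+2|\mathcal A|^2$ combined with $\alpha+s>0$ is what has to be checked. From $4s(\alpha+s)=n^2+3s^2+2|\mathcal A|^2>0$ we get that $\alpha+s$ has the sign of $s$. Hence $\operatorname{tr}K>0$ forces $s>0$. The inverse map is smooth because $H_h$ and the restrictions of $L$ depend smoothly on $(h,K)$.

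\textbf{Step 3 (developments).} Diagonalize $\mathcal A$ in $H_h$ by an $h$-orthonormal rotation $E_1,E_2$; this rotation is an automorphism of the Heisenberg algebra up to rescaling of the center. Then $h$ and $K$ are simultaneously diagonal in a frame with $[E_1,E_2]=nE_0$. The sign automorphisms used in Proposition~\ref{prop:h5-einstein} preserve diagonality along the evolution, so the development is diagonal in a fixed invariant coframe $\eta^i$ with $\dd\eta^1=-c\,\eta^2\wedge\eta^3$. A linear change of coframe matching $\sigma^i$, namely a Lie algebra automorphism composed with a scaling, carries $\Gamma$ to an isomorphic lattice.

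Diagonal vacuum Bianchi II solutions in harmonic time solve \eqref{eq:h5-harmonic-evolution} with $m=1$ and $\mathcal P=0$. This system integrates explicitly: $W_1''=-2\mathcal B_1$ is a Liouville equation whose solutions are the $\cosh^{-1}$ profile, while $u_1+u_0$ and $u_2+u_0$ are affine in $\tau$. Hence every solution is of the form \eqref{eq:scalar-family} with $\sigma=0$, up to time translation and parameter normalization, and the constraint gives $\beta\gamma=k^2/4$.

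\textbf{Main obstacle.} The main obstacle is matching the branch correctly: one must show that the expanding condition together with the data selects $\beta,\gamma>0$ rather than the time-reversed sign branch. I would check this by evaluating $s$ and $\alpha$ on the family. The horizontal trace of $L$ is
\[
 N^{-1}\bigl(\beta+\gamma+k\tanh(k\tau)\bigr)>0
\]
for all $\tau$ exactly when $\beta+\gamma\ge k$, which follows from $\beta\gamma=k^2/4$ with $\beta,\gamma>0$. The opposite signs give contracting data, so positivity of $s$ matches positivity of $\beta,\gamma$.

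\textbf{LRS correspondence.} Finally, $\mathcal A=0$ means the two horizontal eigenvalues of $L$ coincide. In the family these eigenvalues differ by $(\beta-\gamma)/N$, so $\mathcal A=0$ is equivalent to $\beta=\gamma$, and then $\beta=\gamma=k/2$ by $\beta\gamma=k^2/4$.
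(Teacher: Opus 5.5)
Your treatment of the constraints, the parametrization and the expansion sign, the diagonalization, the Liouville integration and the branch selection is correct and agrees with the paper's proof. Sign automorphisms plus uniqueness is the mechanism behind the paper's invariant-subsystem remark. Selecting the branch through $s=N^{-1}(\beta+\gamma+k\tanh(k\tau))$ rather than $N\operatorname{tr}_hK=\beta+\gamma+\tfrac k2\tanh(k\tau)$ is equally valid. Drop the sentence claiming that $s<0$ forces $\alpha<0$: it fails when $s^2>n^2+2|\mathcal A|_h^2$, and the identity $4s(\alpha+s)=n^2+3s^2+2|\mathcal A|_h^2$ is all you actually use.

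\textbf{Gap: the LRS characterization.} The statement asserts that the locally rotationally symmetric data are exactly $\mathcal A=0$. Your last paragraph proves only $\mathcal A=0\iff\beta=\gamma=k/2$ and tacitly takes LRS to mean $\mathcal A=0$. You must argue both directions.
\begin{itemize}
\item If $\mathcal A=0$, rotations of the $h$-orthonormal horizontal plane, extended by the identity on $\mathfrak z$, have determinant one and so preserve $[E_1,E_2]=nE_0$. They fix $h$ and $K$ because $L|_{H_h}$ is scalar.
\item Conversely, any connected isotropy group preserves the spatial Ricci tensor. Its positive eigenspace is exactly $\mathfrak z$ (eigenvalue $n^2/2$ there, $-n^2/2$ on $H_h$). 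Hence the group fixes $\mathfrak z$, acts by rotations on $H_h$, and must commute with $\mathcal A$. If $\mathcal A\ne0$, its eigenvalues are distinct, only $\pm\operatorname{Id}$ commutes with it, and the connected isotropy is trivial.
\end{itemize}

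\textbf{Smaller omission: maximality.} Identifying the local solution with a member of the family does not yet show that the family on all of $\tau\in\mathbb R$ is the \emph{maximal} homogeneous development. You need two observations:
\begin{itemize}
\item As $\tau\to+\infty$, $N\to\infty$, so the future proper time is infinite.
\item As $\tau\to-\infty$, $N\sim N_0 2^{-1/2}e^{(\beta+\gamma-k/2)\tau}$ with $\beta+\gamma-k/2\ge k/2>0$. So the past proper time is finite and $abc=(ABC/N_0)N\to0$, which rules out continuation as a positive-definite homogeneous solution.
\end{itemize}
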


\begin{proof}
Choose an $h$-orthonormal invariant frame $E_1,E_2,E_3$ with $E_1$
central and
$[E_2,E_3]=nE_1$, where $n=n(h)>0$.  Such a frame exists because
$\mathfrak z$ has dimension one and $H_h$ has dimension two.
Koszul's formula gives
\[
 (\operatorname{div}_hK)(E_1)=0,\qquad
 (\operatorname{div}_hK)(E_2)=nK(E_1,E_3),\qquad
 (\operatorname{div}_hK)(E_3)=-nK(E_1,E_2).
\]
For example, the first term in
$\sum_i(\nabla_{E_i}K)(E_i,E_j)$ arising from
$\nabla_{E_i}E_i$ vanishes, while the two nonzero terms for $j=2$
are $nK(E_1,E_3)/2$ and $nK(E_3,E_1)/2$.
The mean curvature is spatially constant.  Thus the momentum constraint
is equivalent to $K(E_1,E_2)=K(E_1,E_3)=0$, so $L$ preserves the
central line and its orthogonal complement.

Write $L|_{\mathfrak z}=\alpha\operatorname{Id}$ and
$L|_{H_h}=s\operatorname{Id}/2+\mathcal A$, with
$\operatorname{tr}\mathcal A=0$.  The Hamiltonian constraint is
precisely
\begin{equation}
 2\alpha s+\frac{s^2}{2}-|\mathcal A|_h^2=\frac{n^2}{2}.
 \label{eq:all-vacuum-reduced-constraint}
\end{equation}
It excludes $s=0$.  Solving it gives
\eqref{eq:all-vacuum-data-formula} and
\begin{equation}
 \operatorname{tr}_hK=\alpha+s
 =\frac{n^2+3s^2+2|\mathcal A|_h^2}{4s}.
 \label{eq:all-vacuum-expansion-sign}
\end{equation}
Consequently expansion is equivalent to $s>0$.  Conversely every
choice in \eqref{eq:all-vacuum-data-parameters} satisfies both
constraints and has positive mean curvature.  The spaces $H_h$ form
a smooth rank-two vector bundle over the space of positive inner
products, and all operations in
\eqref{eq:all-vacuum-data-formula} are smooth.  The inverse map takes
the horizontal trace and trace-free part of $L$, proving the smooth
parametrization.

An oriented orthogonal change of $E_2,E_3$ diagonalizes
$\mathcal A$ and retains $[E_2,E_3]=nE_1$.
Rescale the dual invariant coframe by a constant so that its sole
nonzero structure equation is
$\dd\sigma^1=-\sigma^2\wedge\sigma^3$.
For example, if $\omega^i$ is dual to $E_i$, take
$\sigma^1=\omega^1/n$, $\sigma^2=\omega^2$ and
$\sigma^3=\omega^3$.
Both initial tensors are diagonal in this coframe.  The diagonal
vacuum equations form an invariant subsystem of the homogeneous
Einstein evolution equations, so uniqueness preserves diagonality.
This also gives a direct version of the simultaneous diagonalization
used in \cite[Lemma~2.4]{Ringstrom2025}.

To integrate that subsystem, set
\[
 h=\sum_{i=1}^3e^{2u_i}(\sigma^i)^2,\qquad
 S=u_1+u_2+u_3,\qquad N=\eta e^S,
\]
where $\eta>0$ is constant and $\dd t=N\dd\tau$.
The equations and the remaining constraint are
\begin{align}
 u_1''&=-\frac{\eta^2}{2}e^{4u_1},&
 u_2''=u_3''&=\frac{\eta^2}{2}e^{4u_1},\notag\\
 (S')^2-\sum_i(u_i')^2-\frac{\eta^2}{2}e^{4u_1}&=0.
 \label{eq:all-vacuum-harmonic-system}
\end{align}
In particular, for constants $k>0$, $\beta$ and $\gamma$,
\begin{equation}
 (u_1')^2+\frac{\eta^2}{4}e^{4u_1}=\frac{k^2}{4},
 \qquad (u_1+u_2)'=\beta,\qquad
 (u_1+u_3)'=\gamma.
 \label{eq:all-vacuum-first-integrals}
\end{equation}
The constant $k$ is strictly positive because $e^{4u_1}>0$.
Equivalently, $y=e^{-2u_1}$ satisfies
$y''=k^2y$ and $(y')^2=k^2y^2-\eta^2$.
Its positive solution has the form
\[
 y=\frac{\eta}{k}\cosh(k(\tau-\tau_c)).
\]
Translate harmonic time by $\tau_c$.  Integration of the other two
first integrals gives positive constants $A,B,C$ such that
\[
 a=A\cosh(k\tau)^{-1/2},\qquad
 b=Be^{\beta\tau}\cosh(k\tau)^{1/2},\qquad
 c=Ce^{\gamma\tau}\cosh(k\tau)^{1/2},
 \qquad A^2=k/\eta.
\]
The lapse is exactly the lapse in \eqref{eq:scalar-family} with
$N_0=\eta ABC$, and
$k=\eta A^2=N_0A/(BC)$.

Writing $v=u_1'$, substitution of
$(u_1',u_2',u_3')=(v,\beta-v,\gamma-v)$ in the constraint gives
\[
 0=2\beta\gamma-2v^2-\frac{\eta^2}{2}e^{4u_1}
   =2\beta\gamma-\frac{k^2}{2}.
\]
Thus $\beta\gamma=k^2/4$.  The two constants have the same sign,
$|\beta+\gamma|\ge k$, and
\[
 N\operatorname{tr}_hK=S'
 =\beta+\gamma+\frac{k}{2}\tanh(k\tau).
\]
Its sign is the common sign of $\beta$ and $\gamma$.  Initial
expansion therefore selects exactly
\eqref{eq:all-vacuum-positive-branch}.
The formulas are smooth for all $\tau\in\mathbb R$.
Their past proper-time length is finite, their future proper-time
length is infinite, and $abc\to0$ at the past endpoint, since
$\beta+\gamma-k/2>0$.
Hence this is the maximal interval of the positive-definite
homogeneous proper-time evolution.

Finally, at any regular slice the difference of the two horizontal
eigenvalues of $L$ is $(\beta-\gamma)/N$.
Thus $\mathcal A=0$ exactly when $\beta=\gamma$, which by the
positive parameter constraint is $\beta=\gamma=k/2$.
In this case rotations of the orthonormal horizontal plane, fixing
the center, preserve the Lie bracket and both initial tensors.
They generate the local rotational symmetry of the development.
Conversely, the positive spatial Ricci eigenspace is precisely
$\mathfrak z$, so every connected isotropy group of $h$ fixes
the central line and acts by rotations on $H_h$.
If $\mathcal A\ne0$, the horizontal eigenvalues differ and their
common stabilizer has no nontrivial connected rotation subgroup.
Thus local rotational symmetry of the initial data is absent.

Every constant invariant coframe used above descends to the original
quotient.  Equivalently, the Lie algebra isomorphism identifying its
normalized brackets with the standard Heisenberg brackets integrates
to a Lie group isomorphism $F:H_3\to H_3$.  It induces a diffeomorphism
\[
 \Gamma\backslash H_3\longrightarrow
 F(\Gamma)\backslash H_3,
\]
and $F(\Gamma)$ is a lattice.  This establishes the claimed
identification without requiring $F$ to preserve $\Gamma$.
\end{proof}

\begin{theorem}[Invariant Bianchi~II horizon-singularity alternative]
\label{thm:vacuum-bianchi-alternative}
\label{cor:open-dense-vacuum-inextendibility}
Fix a lattice $\Gamma<H_3$ and a datum
$(h,K)\in\mathscr D^+_\Gamma$.  Put $L=h^{-1}K$,
$H_h=\mathfrak z^{\perp_h}$, and write
\[
 L|_{H_h}=\frac{s}{2}\operatorname{Id}+\mathcal A,
 \qquad
 \mathcal A\in\operatorname{Sym}_0(H_h,h),
\]
as in Proposition~\ref{prop:all-expanding-vacuum-data}.  Let
\begin{equation}
 n=n(h),\qquad
 \alpha=\frac{n^2-s^2+2|\mathcal A|_h^2}{4s},\qquad
 r=\frac{\sqrt2|\mathcal A|_h}{\sqrt{4\alpha^2+n^2}}.
 \label{eq:invariant-vacuum-anisotropy}
\end{equation}
The maximal homogeneous vacuum development is future timelike and null
geodesically complete.  The following conditions are equivalent:
\begin{enumerate}[label=\textup{(\roman*)}]
\item its past end on $\Gamma\backslash H_3$ admits a continuous
nondegenerate Lorentzian extension;
\item its past end admits an analytic Ricci-flat extension;
\item $\mathcal A=0$;
\item $L|_{H_h}$ is a scalar endomorphism;
\item the datum is locally rotationally symmetric;
\item in past proper time,
\[
 \lim_{t\downarrow0}t^4
 R_{\mu\nu\rho\sigma}R^{\mu\nu\rho\sigma}=0.
\]
\end{enumerate}
When these conditions hold, the one-sided quotient extensions have a
compact nondegenerate Cauchy horizon with closed null generators, and
the universal cover has the analytic bifurcate extension of
Proposition~\ref{prop:heisenberg-bifurcate}.  Otherwise the development
is globally $C^0$-inextendible on both the quotient and its universal
cover, without any field equation or symmetry assumption on a candidate
extension, and
\begin{equation}
 R_{\mu\nu\rho\sigma}R^{\mu\nu\rho\sigma}
 =\mathfrak C(r)t^{-4}+o(t^{-4}),\qquad
 \mathfrak C(r)=
 \frac{32(\sqrt{1+r^2}-1)}{(2\sqrt{1+r^2}-1)^3}>0.
 \label{eq:curvature-residue}
\end{equation}
The extendible locus is a smooth codimension-two submanifold of the
nine-dimensional space $\mathscr D^+_\Gamma$; its complement is open
and dense.  No quotient by spatial diffeomorphisms is taken in this
dimension count.
\end{theorem}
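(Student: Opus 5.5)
Everything reduces to Proposition~\ref{prop:all-expanding-vacuum-data}: the datum determines a member of the Taub family \eqref{eq:scalar-family} with $\sigma=0$, $\beta,\gamma>0$, $\beta\gamma=k^2/4$. This identification holds on an isomorphic image lattice, which is harmless because all conclusions are invariant under the induced diffeomorphism of quotients. The same proposition gives (iii)$\Leftrightarrow$(v)$\Leftrightarrow$$\beta=\gamma$. The equivalence (iii)$\Leftrightarrow$(iv) is immediate from the decomposition $L|_{H_h}=\tfrac s2\operatorname{Id}+\mathcal A$. Future completeness follows from Corollary~\ref{cor:taub-vacuum-specialization}, or directly from Theorem~\ref{thm:homogeneous-future}(i), since $R(h)=-n^2/2<0$.

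For the extension equivalences, (ii)$\Rightarrow$(i) is trivial. For (iii)$\Rightarrow$(ii), I would use Corollary~\ref{cor:taub-vacuum-specialization} with $\beta=\gamma=k/2$, i.e.\ past exponents $(1,0,0)$. The extensions of Proposition~\ref{prop:heisenberg-vacuum-dichotomy} with $m=1$ then give analytic Ricci-flat extensions on every discrete quotient, together with the compact nondegenerate Cauchy horizon with closed null generators on the compact quotient. The bifurcate cover extension is cited from Proposition~\ref{prop:heisenberg-bifurcate}. For (i)$\Rightarrow$(iii), I argue by contraposition: if $\mathcal A\neq0$, then $\beta\neq\gamma$, and Corollary~\ref{cor:taub-vacuum-specialization} gives global $C^0$-inextendibility on the quotient and on the cover, with no condition on the candidate extension.

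For (vi) and \eqref{eq:curvature-residue}, I would compute the Kretschmann scalar of the Taub family. Past Kasner asymptotics with $O(t^{4p_1})$ errors after two $t\partial_t$ derivatives give $t^4R_{\mu\nu\rho\sigma}R^{\mu\nu\rho\sigma}\to 16\,p_1p_2p_3\cdot(\text{Kasner constant})$, the vacuum Kasner value $-16p_1p_2p_3$ in the usual normalization; the curvature-wall terms decay like $t^{4p_1}$. The limit is zero exactly when some $p_i=0$, which in the vacuum horizontal sector happens only at $(1,0,0)$. The residue then has to be expressed through the data, which I would do in three steps. First, with $\delta=(\beta-\gamma)/k$ and $u=\sqrt{1+\delta^2}$, the exponents \eqref{eq:scalar-kasner-exponents} give $-16p_1p_2p_3=16\,\delta^2/(2u-1)^3$, using $(u-1)^2-\delta^2=2-2u$. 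Second, relate $\delta$ to $r$: at a regular slice the horizontal eigenvalue gap $(\beta-\gamma)/N$ equals $2|\mathcal A|_h/\sqrt2$ up to normalization, and $4\alpha^2+n^2$ should equal $k^2/N^2$. The latter follows from the first integral $(u_1')^2+\tfrac{\eta^2}{4}e^{4u_1}=k^2/4$, since $\alpha=u_1'/N$ and $n$ is proportional to $\eta e^{2u_1}/N$. Third, conclude $r=|\delta|$, so that $\sqrt{1+r^2}=u$ and $\delta^2=2(u-1)\cdot\tfrac{u+1}{2}$. The form $32(u-1)/(2u-1)^3$ in the statement suggests an additional factor that I will reconcile by checking the identity carefully; this normalization bookkeeping is the main obstacle. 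It is the only genuinely computational step, and I would verify it first at an explicit Kasner point before trusting the general identity.

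Finally, the dimension count: $\mathscr D^+_\Gamma$ is parametrized by $h$ (six dimensions), $s>0$ (one) and $\mathcal A$ (two), giving nine. The locus $\mathcal A=0$ is the zero section of a smooth rank-two bundle, hence a smooth codimension-two submanifold; its complement is open and dense.
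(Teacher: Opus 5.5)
Your route is the same as the paper's. You reduce via Proposition~\ref{prop:all-expanding-vacuum-data} to the positive Taub branch, transported back from the image lattice. You read off (iii)$\Leftrightarrow$(iv)$\Leftrightarrow$(v), and take (iii)$\Rightarrow$(ii) and the horizon and bifurcate statements from Propositions~\ref{prop:heisenberg-vacuum-dichotomy} and~\ref{prop:heisenberg-bifurcate}. You get (i)$\Rightarrow$(iii) by contraposition from the negative horizontal exponent when $\beta\ne\gamma$, and you count dimensions through the zero section of the rank-two bundle. Your derivation of $r=|\beta-\gamma|/k$ from \eqref{eq:all-vacuum-first-integrals}, with $u_1'=N\alpha$ and $n=\eta e^{2u_1}/N$, is the argument the paper gives in Corollary~\ref{cor:quantitative-vacuum-horizon}. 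The horizontal eigenvalue gap there is exactly $\sqrt2|\mathcal A|_h$, with no normalization ambiguity.

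The one step that fails as written is the residue formula \eqref{eq:curvature-residue}. The ``normalization obstacle'' you describe is an algebra slip, not a missing factor. Your own identity $(u-1)^2-\delta^2=2-2u$ gives $p_1p_2p_3=2(1-u)/(2u-1)^3$, hence $-16p_1p_2p_3=32(u-1)/(2u-1)^3$ directly. With $u=\sqrt{1+r^2}$ this is exactly $\mathfrak C(r)$. Your expression $16\delta^2/(2u-1)^3=16(u-1)(u+1)/(2u-1)^3$ is off by the factor $(u+1)/2$. For example, at $r=3/4$ the exponents are $(2/3,-1/3,2/3)$ and the true limit is $64/27$, whereas your formula gives $8/3$.

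The Kasner limit itself should also be stated cleanly. One has $t^4|\operatorname{Riem}|^2\to4\sum_ip_i^2(p_i-1)^2+4\sum_{i<j}p_i^2p_j^2$. The relations $\sum p_i=\sum p_i^2=1$ give $p_i(p_i-1)=p_jp_k$ and $\sum_{i<j}p_i^2p_j^2=-2p_1p_2p_3$, so the limit is $-16p_1p_2p_3$. With this correction your proof is complete. The equivalence with (vi) was unaffected, since both expressions vanish only at $r=0$.
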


\begin{proof}
In the parametrization of
Proposition~\ref{prop:all-expanding-vacuum-data}, the positive inner
product has six parameters, $s>0$ has one, and
$\mathcal A\in\operatorname{Sym}_0(H_h,h)$ has two.
The locus $\mathcal A=0$ is the zero section in this rank-two
vector bundle over the seven-dimensional space of $(h,s)$.
It is a smooth submanifold of codimension two with empty interior.
Its complement is therefore open and dense.
This topology agrees with every $C^r$ topology restricted to
invariant tensor data on the compact quotient, including $C^\infty$.

If $\mathcal A\ne0$, then $\beta\ne\gamma$ in the positive
vacuum family, so one of $\beta,\gamma$ is less than $k/2$.
Proposition~\ref{prop:einstein-scalar} proves global
$C^0$-inextendibility, including on the image lattice described
above.  Pulling back by the quotient isometry gives the assertion
for the fixed original lattice.  These globally hyperbolic
developments have no proper smooth extension and are therefore maximal
globally hyperbolic developments, in the sense of
\cite{ChoquetBruhatGeroch1969}.
If $\mathcal A=0$, the preceding proposition identifies the
symmetric positive Taub branch, and
Propositions~\ref{prop:heisenberg-bifurcate} and
\ref{prop:heisenberg-vacuum-dichotomy} give the stated extensions and horizons.
Theorem~\ref{thm:homogeneous-future} gives future causal completeness.
The preceding proposition proves
$\mathcal A=0\Longleftrightarrow L|_{H_h}$ is scalar
$\Longleftrightarrow$ local rotational symmetry.  Put
$u=\sqrt{1+r^2}$ and
\[
 p_1=\frac1{2u-1},\qquad
 p_-=\frac{u-r-1}{2u-1},\qquad
 p_+=\frac{u+r-1}{2u-1}.
\]
The differentiated Kasner asymptotics of the exact family give
\[
 \lim_{t\downarrow0}t^4|\operatorname{Riem}(g)|^2
 =4\sum_i p_i^2(p_i-1)^2+4\sum_{i<j}p_i^2p_j^2
 =-16p_1p_-p_+=\mathfrak C(r).
\]
This vanishes exactly when $r=0$, equivalently $\mathcal A=0$.
These observations prove all the asserted equivalences without using
curvature blow-up as the $C^0$ obstruction.
\end{proof}

The openness and density just proved concern invariant tensor data
on a fixed underlying nilmanifold.  They do not assert openness or
density in the full space of inhomogeneous Einstein initial data.

\begin{corollary}[Anisotropy and the exceptional vacuum horizon]
\label{cor:quantitative-vacuum-horizon}
For a datum $(h,s,\mathcal A)$ in
Proposition~\ref{prop:all-expanding-vacuum-data}, let $r$ be given by
\eqref{eq:invariant-vacuum-anisotropy} and put $u=\sqrt{1+r^2}$.
The number $r$ is independent of the regular slice on which it is
computed.  The unordered past Kasner exponents are
\begin{equation}
 p_1=\frac1{2u-1},\qquad
 p_-=\frac{u-r-1}{2u-1},\qquad
 p_+=\frac{u+r-1}{2u-1},
 \label{eq:regular-data-past-exponents}
\end{equation}
where $p_1$ is central.  If $r>0$, then $p_-<0$, and the invariant
horizontal scale factor in that eigendirection satisfies
\[
 \lim_{t\downarrow0}
 \frac{\log a_-(t)}{\log(1/t)}=-p_->0.
\]
The Kretschmann scalar has the limit
\begin{equation}
 \lim_{t\downarrow0}
 t^4 R_{\mu\nu\rho\sigma}R^{\mu\nu\rho\sigma}
 =\frac{32(u-1)}{(2u-1)^3}.
 \label{eq:vacuum-anisotropy-curvature}
\end{equation}
For fixed regular $h$ and $s$, as $\mathcal A\to0$,
\begin{align}
 -p_-&=\frac{2\sqrt2s}{n^2+s^2}|\mathcal A|_h
          +O(|\mathcal A|_h^2),\label{eq:vacuum-horizon-linear-rate}\\
 \frac{32(u-1)}{(2u-1)^3}
 &=\frac{128s^2}{(n^2+s^2)^2}|\mathcal A|_h^2
          +O(|\mathcal A|_h^4).
 \label{eq:vacuum-horizon-quadratic-curvature}
\end{align}
The error bounds are uniform when $(h,s)$ ranges over a compact subset
of the regular data parameters.
Thus arbitrarily small nonzero invariant horizontal anisotropy replaces
the exceptional analytic past horizon by a $C^0$-inextendible end.
\end{corollary}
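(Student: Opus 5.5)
The plan is to express $r$ through the Taub parameters $(\beta,\gamma,k)$ of Proposition~\ref{prop:all-expanding-vacuum-data} and then quote the exponent formulas of Proposition~\ref{prop:einstein-scalar} with $\sigma=0$.

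\emph{Step 1: slice independence of $r$.} On any regular slice, with harmonic time and the diagonal frame, the horizontal eigenvalues of $L$ are $(u_2',u_3')/N$. Their difference is $(\beta-\gamma)/N$, so $|\mathcal A|_h=|\beta-\gamma|/(\sqrt2N)$. The central eigenvalue is $\alpha=u_1'/N$. The bracket length is $n=\eta e^{2u_1}/(\ldots)$; more precisely, the first integral in \eqref{eq:all-vacuum-first-integrals} gives
\[
 (u_1')^2+\tfrac{\eta^2}{4}e^{4u_1}=\tfrac{k^2}{4}.
\]
I expect this to read $N^2(4\alpha^2+n^2)=k^2$ once $n$ is identified with $\eta e^{2u_1}/N$ in the normalized coframe; checking this normalization of the structure constant is the one computation to do carefully. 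It then follows that
\[
 r=\frac{|\beta-\gamma|}{k}=|\delta|,
\]
independent of the slice.

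\emph{Step 2: exponents.} With $\nu=0$, \eqref{eq:scalar-dimensionless} gives $u=\sqrt{1+r^2}$, and \eqref{eq:scalar-kasner-exponents} gives \eqref{eq:regular-data-past-exponents}, up to ordering of the horizontal indices. Since $u<1+r$ for $r>0$, we get $p_-<0$. The differentiated asymptotics \eqref{eq:scalar-kasner-asymptotics} give $\log a_-=p_-\log t+O(1)$, which yields the stated limit. The Kretschmann limit is the computation already displayed in the proof of Theorem~\ref{thm:vacuum-bianchi-alternative}: evaluate $-16p_1p_-p_+$, use $(u-1)^2-r^2=2-2u$, and simplify to $32(u-1)/(2u-1)^3$.

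\emph{Step 3: expansion in $\mathcal A$.} Fix $(h,s)$ and put $\epsilon=|\mathcal A|_h$. Then
\[
 \alpha=\alpha_0+\frac{\epsilon^2}{2s},\qquad \alpha_0=\frac{n^2-s^2}{4s},\qquad 4\alpha_0^2+n^2=\frac{(n^2+s^2)^2}{4s^2}.
\]
Hence $r=\frac{2\sqrt2 s}{n^2+s^2}\epsilon+O(\epsilon^3)$ and $u=1+r^2/2+O(r^4)$. Substituting into the exponent formula gives $-p_-=r+O(r^2)$. The curvature limit equals $16r^2+O(r^4)$, because $32(u-1)\approx16r^2$ and $(2u-1)^3\approx1$. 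Both expansions match the claims \eqref{eq:vacuum-horizon-linear-rate} and \eqref{eq:vacuum-horizon-quadratic-curvature}.

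Uniformity on compact sets follows because $r$ is a smooth function of $(h,s,\epsilon^2)$ divided by quantities bounded away from zero. The final sentence of the statement then follows from Theorem~\ref{thm:vacuum-bianchi-alternative}.
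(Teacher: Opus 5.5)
Your proposal is correct and follows the paper's proof essentially step for step. The first integral gives $N^2(4\alpha^2+n^2)=k^2$; your normalization $Nn=\eta e^{2u_1}$ is exactly the paper's. Hence $r=|\beta-\gamma|/k$, the exponents come from \eqref{eq:scalar-kasner-exponents} with $\nu=0$, and the small-$\mathcal A$ expansions are the same.

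The one step the paper spells out that you delegate to Theorem~\ref{thm:vacuum-bianchi-alternative} is why $t^4R_{\mu\nu\rho\sigma}R^{\mu\nu\rho\sigma}$ tends to the pure Kasner value. The reason is that the bracket coefficient satisfies $n(t)=O(t^{2p_1-1})$ with $p_1>0$, so the intrinsic and mixed curvature terms are $o(t^{-2})$. The paper's proof of that theorem only asserts this limit and gives the justification in the present corollary, so you should include the estimate rather than cite it.
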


\begin{proof}
In the harmonic-time integration of
Proposition~\ref{prop:all-expanding-vacuum-data}, let $N$ denote the
lapse at the selected regular slice.  The central logarithmic
derivative is $u_1'=N\alpha$, and $Nn=\eta e^{2u_1}$.
The first integral
\eqref{eq:all-vacuum-first-integrals} therefore gives
\[
 \frac{k}{N}=\sqrt{4\alpha^2+n^2}.
\]
The horizontal eigenvalue difference has absolute value
$\sqrt2|\mathcal A|_h$, whereas the same difference in harmonic
coordinates is $|\beta-\gamma|/N$.  Hence
\begin{equation}
 r=\frac{|\beta-\gamma|}{k}.
 \label{eq:invariant-vacuum-first-integral}
\end{equation}
This proves slice independence.  Since $\beta\gamma=k^2/4$ and both
constants are positive,
$(\beta+\gamma)/k=\sqrt{1+r^2}=u$.
Equation~\eqref{eq:scalar-kasner-exponents} now proves
\eqref{eq:regular-data-past-exponents}.  For $r>0$,
$\sqrt{1+r^2}<1+r$, so $p_-<0$.  The scale-factor assertion follows
from \eqref{eq:scalar-kasner-asymptotics}; the continuous
inextendibility follows from
Theorem~\ref{thm:vacuum-bianchi-alternative}.

For completeness, the exact formulas give differentiated asymptotics
\[
 a_i(t)=c_i t^{p_i}(1+O(t^{4p_1})),\qquad
 \frac{\dot a_i}{a_i}=\frac{p_i}{t}+O(t^{4p_1-1}),\qquad
 \frac{\dd}{\dd t}\left(\frac{\dot a_i}{a_i}\right)
 =-\frac{p_i}{t^2}+O(t^{4p_1-2}),
\]
with $p_1>0$.  The spatial bracket coefficient satisfies
$n(t)=O(t^{2p_1-1})$.  In the orthonormal-frame curvature equations,
the purely spatial intrinsic curvature terms are $O(n(t)^2)$,
and the mixed terms are
$O(n(t)\max_i|\dot a_i/a_i|)$.  Multiplication by $t^2$ makes
both tend to zero.  The remaining leading terms are the Kasner
components.  Consequently
\[
 \lim_{t\downarrow0}t^4
 R_{\mu\nu\rho\sigma}R^{\mu\nu\rho\sigma}
 =4\sum_i p_i^2(p_i-1)^2
   +4\sum_{i<j}p_i^2p_j^2
 =-16p_1p_-p_+.
\]
The last equality follows from
$\sum_i p_i=\sum_i p_i^2=1$.
Since
$p_1p_-p_+=-2(u-1)/(2u-1)^3$,
this proves \eqref{eq:vacuum-anisotropy-curvature}.

Finally, with $h,s$ fixed,
\[
 \alpha=\frac{n^2-s^2}{4s}+O(|\mathcal A|_h^2),\qquad
 \sqrt{4\alpha^2+n^2}
 =\frac{n^2+s^2}{2s}+O(|\mathcal A|_h^2).
\]
Thus
$r=2\sqrt2s|\mathcal A|_h/(n^2+s^2)
+O(|\mathcal A|_h^3)$.
The elementary expansions
$-p_-=r+O(r^2)$ and
$32(\sqrt{1+r^2}-1)/(2\sqrt{1+r^2}-1)^3
=16r^2+O(r^4)$ give
\eqref{eq:vacuum-horizon-linear-rate} and
\eqref{eq:vacuum-horizon-quadratic-curvature}.
\end{proof}

These estimates describe perturbations in the finite-dimensional space
of invariant vacuum data at a regular slice.  The continuous
inextendibility is supplied by the negative horizontal exponent;
curvature blow-up alone would give a weaker obstruction to extension.

\begin{proposition}[Uniform stretching near the exceptional horizon]
\label{prop:uniform-vacuum-horizon}
Let $\mathcal K$ be a compact subset of the regular parameter space
$\{(h,s):h>0,\ s>0\}$ in
Proposition~\ref{prop:all-expanding-vacuum-data}.
There are $\varepsilon,C,\eta>0$ with the following property.
Consider data $(h,s,\mathcal A)$ with $(h,s)\in\mathcal K$ and
$|\mathcal A|_h\le\varepsilon$, and choose past proper time $t$ to
vanish at the singularity or horizon.  Let $t_{\mathrm{ref}}>0$ be
the time of the selected regular slice.  For $\mathcal A\ne0$, choose
$h$-unit horizontal eigenvectors $X_-,X_+$ of $\mathcal A$ for its
negative and positive eigenvalues, respectively, and extend them as
invariant vector fields.  Set
\[
 \ell_\pm(t)=\sqrt{h_t(X_\pm,X_\pm)},\qquad
 B_{\mathrm H}(h,s)=\frac{n(h)}{\sqrt{n(h)^2+s^2}}.
\]
At $\mathcal A=0$, any $h$-orthonormal horizontal pair may be used.
The quantity $B_{\mathrm H}$ is the limiting horizontal length at
the horizon of the development of $(h,s,0)$.  With $r$ and $p_\pm$
as in~\eqref{eq:invariant-vacuum-anisotropy} and
\eqref{eq:regular-data-past-exponents}, one has
\begin{equation}
 \left|\log\frac{\ell_\pm(t)}{B_{\mathrm H}}
       -p_\pm\log\frac{t}{t_{\mathrm{ref}}}\right|
 \le C\left(r+\left(\frac{t}{t_{\mathrm{ref}}}\right)^\eta\right),
 \qquad 0<t\le t_{\mathrm{ref}}.
 \label{eq:uniform-horizon-stretching}
\end{equation}
The constants are independent of the direction of $\mathcal A$.
Moreover, for every $0<\lambda_0<\lambda_1<\infty$, there is a
constant $C_{\lambda_0,\lambda_1}$ such that, when $r>0$,
\begin{align}
 \sup_{\lambda_0\le\lambda\le\lambda_1}
 \left(
 \left|\frac{\ell_-(t_{\mathrm{ref}}e^{-\lambda/r})}
                  {B_{\mathrm H}}-e^\lambda\right|
 +\left|\frac{\ell_+(t_{\mathrm{ref}}e^{-\lambda/r})}
                  {B_{\mathrm H}}-e^{-\lambda}\right|
 \right)
 \le C_{\lambda_0,\lambda_1}
       \left(r+e^{-\eta\lambda_0/r}\right).
 \label{eq:uniform-horizon-crossover}
\end{align}
Thus the joint limit of vanishing invariant anisotropy and approach
to the past end has a nontrivial stretching profile on the scale
$\log(t_{\mathrm{ref}}/t)\asymp r^{-1}$.
\end{proposition}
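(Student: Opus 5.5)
The plan is to work entirely with the explicit integration in Proposition~\ref{prop:all-expanding-vacuum-data}. Every datum there develops into the Taub family \eqref{eq:scalar-family} with $\sigma=0$, $\beta,\gamma>0$ and $\beta\gamma=k^2/4$. After an oriented rotation of the horizontal $h$-orthonormal pair, the invariant fields $X_\pm$ are the duals of $\sigma^2,\sigma^3$ up to constant factors, so $\ell_\pm(t)$ are the two horizontal scale factors $b,c$ normalized by $\ell_\pm(t_{\mathrm{ref}})=1$. Label them so that $X_-$ carries the smaller of $\beta,\gamma$; at $\mathcal A=0$ any pair works since $b/c$ is then constant. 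Writing $\tau_{\mathrm{ref}}$ for the harmonic time of the reference slice, the exact formula gives
\[
 \log\ell_-(\tau)=\beta(\tau-\tau_{\mathrm{ref}})
 +\tfrac12\log\frac{\cosh(k\tau)}{\cosh(k\tau_{\mathrm{ref}})},
\]
and the same with $\gamma$ for $\ell_+$.

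\textbf{Step 1: rescale to dimensionless quantities.} Introduce $x=k\tau$, $x_{\mathrm{ref}}=k\tau_{\mathrm{ref}}$, $\hat\beta=\beta/k$, $\hat\gamma=\gamma/k$. By Corollary~\ref{cor:quantitative-vacuum-horizon}, $\hat\beta,\hat\gamma=(u\mp r)/2$ with $u=\sqrt{1+r^2}$. The reference slice is fixed by $u_1'=-\tfrac k2\tanh(k\tau)=N\alpha$ together with $k/N=\sqrt{4\alpha^2+n^2}$, which gives
\[
 \tanh x_{\mathrm{ref}}=-\frac{2\alpha}{\sqrt{4\alpha^2+n^2}}.
\]
At $\mathcal A=0$ this equals $(s^2-n^2)/(s^2+n^2)$, so $e^{2x_{\mathrm{ref}}}=s^2/n^2$. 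Since $\alpha$ is a smooth function of $(h,s,|\mathcal A|_h^2)$, $x_{\mathrm{ref}}$ stays bounded and depends smoothly on the data, uniformly for $(h,s)\in\mathcal K$ and $|\mathcal A|_h\le\varepsilon$. By \eqref{eq:invariant-vacuum-anisotropy} we also have $r=O(|\mathcal A|_h)$ uniformly there.

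\textbf{Step 2: expand near the past end.} As $x\to-\infty$,
\[
 \tfrac12\log\cosh x=-\tfrac x2-\tfrac12\log2+O(e^{2x}).
\]
Hence $\log\ell_-=(\hat\beta-\tfrac12)(x-x_{\mathrm{ref}})+c_-+O(e^{2x})$, where
\[
 c_-=-\tfrac12\log\bigl(e^{x_{\mathrm{ref}}}\cosh x_{\mathrm{ref}}\cdot 2e^{-x_{\mathrm{ref}}}\bigr)
 =-\tfrac12\log\bigl(1+e^{2x_{\mathrm{ref}}}\bigr),
\]
after regrouping the linear term. At $r=0$ this gives $c_-=\log\bigl(n/\sqrt{n^2+s^2}\bigr)=\log B_{\mathrm H}$, which also identifies $B_{\mathrm H}$ as the limiting horizontal length. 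For $r>0$ the constant differs from $\log B_{\mathrm H}$ by $O(r)$ by smoothness.

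Next relate $x$ to proper time. Since $t=\int_{-\infty}^\tau N$ with $N=N_0e^{(\beta+\gamma)\tau}\cosh(k\tau)^{1/2}$, one gets
\[
 \log t=\hat\alpha x+\mathrm{const}+O(e^{2x}),\qquad \hat\alpha=u-\tfrac12\in\bigl[\tfrac12,\,1\bigr].
\]
Subtracting the same expression at the reference slice leaves a bounded discrepancy between $\hat\alpha(x-x_{\mathrm{ref}})$ and $\log(t/t_{\mathrm{ref}})$, because $x_{\mathrm{ref}}$ is bounded. Multiplying by $p_\pm=(\hat\beta-\tfrac12)/\hat\alpha$ (respectively with $\hat\gamma$) turns this discrepancy into $O(|p_\pm|)=O(r)$, which is exactly why $r$ appears on the right of \eqref{eq:uniform-horizon-stretching}. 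The remainder $e^{2x}$ is bounded by $C(t/t_{\mathrm{ref}})^\eta$ with $\eta=2/\hat\alpha-\delta$, which is at least $2$, uniformly. For $t$ near $t_{\mathrm{ref}}$ all terms are bounded, and there $(t/t_{\mathrm{ref}})^\eta$ is bounded below, so this range is absorbed into the constant $C$.

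\textbf{Step 3: the crossover.} Substitute $t=t_{\mathrm{ref}}e^{-\lambda/r}$ into \eqref{eq:uniform-horizon-stretching}. From $-p_-=r+O(r^2)$ and $p_+=r+O(r^2)$ (Corollary~\ref{cor:quantitative-vacuum-horizon}), we get $p_\mp\log(t/t_{\mathrm{ref}})=\pm\lambda+O(\lambda r)$. Exponentiating over the compact range $\lambda\in[\lambda_0,\lambda_1]$, and using $(t/t_{\mathrm{ref}})^\eta\le e^{-\eta\lambda_0/r}$, gives \eqref{eq:uniform-horizon-crossover} with a constant depending only on $\lambda_1$ and $C$.

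The main obstacle is uniformity: every error must be controlled by constants depending only on $\mathcal K$ and $\varepsilon$, not on the direction of $\mathcal A$ or the scale $k$. Working in the variables $x$, $\hat\beta$, $\hat\gamma$ is what removes $k$; the direction of $\mathcal A$ enters only through $|\mathcal A|_h$. The delicate point is verifying that the integration constant in $\log t$ is a smooth function of $(x_{\mathrm{ref}},r)$. I would handle this by writing that constant as an explicit convergent integral of $e^{\hat\alpha x'}\bigl(\sqrt{(1+e^{2x'})/2}-2^{-1/2}\bigr)$ and differentiating under the integral sign.
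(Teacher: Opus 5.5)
Your proposal is correct and follows essentially the paper's own route. The ingredients are the same: the exact Taub integration in $x=k\tau$, the reference slice fixed by $\tanh x_{\mathrm{ref}}=-2\alpha/\sqrt{4\alpha^2+n^2}$, the lapse integral showing that $\hat\alpha(x-x_{\mathrm{ref}})$ and $\log(t/t_{\mathrm{ref}})$ differ by a uniformly bounded term which $|p_\pm|\le Cr$ turns into $O(r)$, and the exact remainder $\tfrac12\log(1+e^{2x})\le C(t/t_{\mathrm{ref}})^\eta$. The paper simply packages the lapse integral as an exact identity via an auxiliary function $F_a$ with $1\le F_a\le\sqrt{1+e^{2x}}$.

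There are two small repairs. First, to convert the $O(|\mathcal A|_h^2)$ shift of $x_{\mathrm{ref}}$ into $O(r)$ you need the lower bound $r\ge c|\mathcal A|_h$, not only $r=O(|\mathcal A|_h)$; this lower bound is immediate from \eqref{eq:invariant-vacuum-anisotropy} on the compact set. Second, $\eta$ must be a fixed constant, such as $2$, rather than the datum-dependent $2/\hat\alpha-\delta$. Finally, the ``delicate'' smoothness of the integration constant is never needed; only its boundedness is used.
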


The exact lapse integral and the uniform parameter estimates proving
this proposition are given in Appendix~\ref{app:uniform-vacuum-proof}.

The normalization in this proposition compares directional lengths
with the horizon length for the projected symmetric datum at the
same regular $h$ and $s$.  It does not compare the diameters of entire
compact slices.  The two limiting factors are $e^\lambda$ and
$e^{-\lambda}$, so their ratio tends to $e^{2\lambda}$.
The estimate resolves the joint proper-time and anisotropy limit
within invariant vacuum data; it makes no assertion about
inhomogeneous perturbations.

\begin{corollary}[Sharp curvature residue and horizon crossover]
\label{cor:curvature-delay-law}
For the function $\mathfrak C$ in \eqref{eq:curvature-residue},
\begin{equation}
 0\le\mathfrak C(r)\le\frac{64}{27}.
 \label{eq:sharp-curvature-residue}
\end{equation}
For finite $r$, equality on the left holds exactly at $r=0$, while
equality on the right holds exactly at $r=3/4$.  The latter value gives,
up to permutation, the past Kasner exponents
$(-1/3,2/3,2/3)$.  Moreover, let
$(h_j,s_j,\mathcal A_j)$ be data with $(h_j,s_j)$ in a fixed compact
parameter set, $0<|\mathcal A_j|_{h_j}\to0$, and write
$r_j=r(h_j,s_j,\mathcal A_j)$,
$\mathfrak C_j=\mathfrak C(r_j)$.  If
$0<t_j\le t_{{\rm ref},j}$ and
\begin{equation}
 \frac{\sqrt{\mathfrak C_j}}4
 \log\frac{t_{{\rm ref},j}}{t_j}\longrightarrow\lambda
 \in(0,\infty),
 \label{eq:curvature-delay-scaling}
\end{equation}
then
\begin{equation}
 \frac{\ell_{-,j}(t_j)}{B_{\rm H}(h_j,s_j)}\longrightarrow e^\lambda,
 \qquad
 \frac{\ell_{+,j}(t_j)}{B_{\rm H}(h_j,s_j)}\longrightarrow e^{-\lambda}.
 \label{eq:curvature-delay-profile}
\end{equation}
Thus an order-one anisotropic deformation occurs at the
nonperturbative proper-time scale
\begin{equation}
 \frac{t}{t_{\rm ref}}
 =\exp\!\left[-\frac{4\lambda}{\sqrt{\mathfrak C}}
          +o(\mathfrak C^{-1/2})\right].
 \label{eq:curvature-delay-time}
\end{equation}
\end{corollary}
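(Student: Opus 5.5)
The plan splits into two independent parts: an elementary one-variable analysis of $\mathfrak C$, and a reparametrization that reduces the crossover statement to the uniform estimate \eqref{eq:uniform-horizon-crossover} of Proposition~\ref{prop:uniform-vacuum-horizon}.

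\emph{Sharp bounds.} Put $u=\sqrt{1+r^2}\in[1,\infty)$, which is strictly increasing in $r\ge0$, and study $G(u)=32(u-1)/(2u-1)^3$. A direct differentiation gives
\[
 G'(u)=\frac{32\bigl[(2u-1)-6(u-1)\bigr]}{(2u-1)^4}=\frac{32(5-4u)}{(2u-1)^4}.
\]
Thus $G$ increases strictly on $[1,5/4]$ and decreases strictly on $[5/4,\infty)$. Together with $G(1)=0$ and $G(u)\to0$ as $u\to\infty$, this gives $0\le\mathfrak C(r)\le G(5/4)=64/27$. For finite $r$, equality on the left holds only at $u=1$, that is $r=0$, and equality on the right only at $u=5/4$, that is $r=3/4$. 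Substituting $u=5/4$ and $r=3/4$ into \eqref{eq:regular-data-past-exponents} gives $(p_1,p_-,p_+)=(2/3,-1/3,2/3)$, which is the stated permutation of $(-1/3,2/3,2/3)$.

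\emph{Crossover.} First, Corollary~\ref{cor:quantitative-vacuum-horizon} gives $r=2\sqrt2 s|\mathcal A|_h/(n^2+s^2)+O(|\mathcal A|_h^3)$, uniformly on the compact parameter set. Hence $r_j>0$ and $r_j\to0$. The expansion $\mathfrak C(r)=16r^2+O(r^4)$, already used in that corollary, then gives $\sqrt{\mathfrak C_j}/4=r_j(1+O(r_j^2))$.

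Define $\lambda_j:=r_j\log(t_{{\rm ref},j}/t_j)$, so that $t_j=t_{{\rm ref},j}e^{-\lambda_j/r_j}$. Hypothesis \eqref{eq:curvature-delay-scaling} and the preceding expansion give $\lambda_j\to\lambda$. Fix $0<\lambda_0<\lambda<\lambda_1$; then $\lambda_j\in[\lambda_0,\lambda_1]$ for all large $j$.

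Next apply \eqref{eq:uniform-horizon-crossover} at $\lambda=\lambda_j$. This is legitimate because that estimate is a supremum over the whole compact window. For large $j$ the data also satisfy $|\mathcal A_j|\le\varepsilon$, as Proposition~\ref{prop:uniform-vacuum-horizon} requires, and its constants depend only on the compact set. The estimate yields
\[
 \left|\frac{\ell_{-,j}(t_j)}{B_{\rm H}(h_j,s_j)}-e^{\lambda_j}\right|
 +\left|\frac{\ell_{+,j}(t_j)}{B_{\rm H}(h_j,s_j)}-e^{-\lambda_j}\right|
 \le C_{\lambda_0,\lambda_1}\bigl(r_j+e^{-\eta\lambda_0/r_j}\bigr)\longrightarrow0 .
\]
Since $e^{\pm\lambda_j}\to e^{\pm\lambda}$, this proves \eqref{eq:curvature-delay-profile}.

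Finally, rewrite the hypothesis as $\log(t_j/t_{{\rm ref},j})=-4(\lambda+o(1))/\sqrt{\mathfrak C_j}$. This is exactly \eqref{eq:curvature-delay-time}, since $o(1)\cdot\mathfrak C_j^{-1/2}=o(\mathfrak C_j^{-1/2})$.

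The main point requiring care is the second step: converting the curvature-based scaling $\sqrt{\mathfrak C}/4$ into the anisotropy scaling $r$, and checking that the uniform window estimate tolerates a $j$-dependent $\lambda_j$. Both follow from the relative error $O(r^2)$ and the supremum form of \eqref{eq:uniform-horizon-crossover}; the remaining steps are routine calculus.
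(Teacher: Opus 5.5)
Your proposal is correct and takes the paper's route: differentiate $\mathfrak C$ in $u=\sqrt{1+r^2}$ to obtain the sharp bounds and the extremal Kasner triple, then use $\sqrt{\mathfrak C}/4=r+O(r^3)$ to reduce the scaling hypothesis to the double-scaling estimate of Proposition~\ref{prop:uniform-vacuum-horizon}. Your explicit reparametrization $\lambda_j=r_j\log(t_{{\rm ref},j}/t_j)\to\lambda$, together with the observation that \eqref{eq:uniform-horizon-crossover} is a supremum over $[\lambda_0,\lambda_1]$, simply spells out the step the paper states in one line.
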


\begin{proof}
Set $u=\sqrt{1+r^2}$.  Then
\[
 \mathfrak C=\frac{32(u-1)}{(2u-1)^3},\qquad
 \frac{\dd\mathfrak C}{\dd u}
 =\frac{32(5-4u)}{(2u-1)^4}.
\]
This proves \eqref{eq:sharp-curvature-residue}; its maximum occurs at
$u=5/4$, hence at $r=3/4$, and substitution in
\eqref{eq:regular-data-past-exponents} gives the stated Kasner triple.
As $r\downarrow0$,
\[
 \mathfrak C(r)=16r^2+O(r^4),\qquad
 \frac{\sqrt{\mathfrak C(r)}}4=r+O(r^3).
\]
Condition \eqref{eq:curvature-delay-scaling} is therefore precisely the
double-scaling limit in
Proposition~\ref{prop:uniform-vacuum-horizon}, which proves
\eqref{eq:curvature-delay-profile} and
\eqref{eq:curvature-delay-time}.
\end{proof}

\section{Discussion}

The expanding-end estimate proves the G\"odeke-Rendall conjecture and
places it in a broader dimension-explicit principle.  It applies without
an imposed oscillation bound, without diagonalization, and also to
homogeneous spaces with isotropy and complete Riemannian scalar targets.
The integrated criterion separates the geometric estimate from the
matter dynamics: a positive potential floor yields completeness in every
dimension, while a sufficiently slow decay along the actual trajectory
still forces infinite affine length.  For future-global general matter,
the same argument identifies an explicit normal energy-condition window
through the critical ninth spatial dimension.  It is a completeness
statement, not a cosmic no-hair theorem; the stronger asymptotic
conclusions in \cite{Rendall2004Scalar,Ringstrom2025} use additional
hypotheses.

At the finite end, the main point is that quotient collapse need not hide
growth on the universal cover.  Boundary localization, transporter germs
and lifted spacelike sections transfer such growth to an arbitrary
candidate $C^0$ extension without lifting that extension.  For two-step
groups the central correction is intrinsic and variationally optimal,
and its exact tensorization permits arbitrary mixed discrete subgroups of
finite products.  The Einstein examples on $H_3\times H_3$ show that
distributed correction can strictly improve area-direction allocation
inside the vacuum and canonical scalar equations.  The numerical bound
$1/2$ remains a sufficient timelike-interpolation threshold, not a
necessary characterization of filling or extendibility.

The Bianchi~II application joins both time directions in one fixed-lattice
statement.  The classical codimension-two non-genericity of homogeneous
horizons \cite{Siklos1978,ChruscielRendall1995} becomes an intrinsic
equivalence between local rotational symmetry, vanishing anisotropy,
vanishing renormalized curvature residue, analytic horizon extension and
past $C^0$-extendibility.  Every datum in the open dense complement gives
a future-complete but past globally $C^0$-inextendible spacetime.
Proposition~\ref{prop:uniform-vacuum-horizon} and
Corollary~\ref{cor:curvature-delay-law} quantify the transition: a shear
of size $r$ changes directional lengths by order one only when
$\log(t_{\mathrm{ref}}/t)\asymp r^{-1}$, equivalently at a scale
$t/t_{\mathrm{ref}}\asymp\exp(-4\lambda/\sqrt{\mathfrak C})$.  This
nonuniform limit explains how a regular horizon can be destroyed by
arbitrarily small invariant anisotropy although its curvature residue is
only quadratic in that anisotropy.  The lengths here are local
directional observables, not compact quotient diameters.

Either one-sided analytic extension has chronology-violating region
exactly $\{s<0\}$.  The sharp contractible-loop horizontal threshold
grows as $|s|^{-1/2}$ near the horizon, while compact quotients also have
central timelike loops of zero horizontal length.  These quantitative
claims concern the time-dependent NUT extension and do not duplicate the
qualitative Lorentz-Heisenberg causality theory of
\cite{GuediriLorentz2003,GuediriGlobal2003,GuediriCriterion2008}.  No
uniqueness of continuous extensions is asserted
\cite{SbierskiUniqueness}.

The scope has definite boundaries.  Genericity is only in the
nine-dimensional invariant constraint-data space on a fixed compact
quotient, not in the full inhomogeneous Cauchy-data space or quotient
moduli.  The all-positive-exponent scalar regime remains outside the
two-step obstruction.  Quiescent formation and stability
\cite{FournodavlosRodnianskiSpeck2023,OudeGroenigerPetersenRingstrom,
FrancoGrisalesRingstrom2026,FrancoGrisalesLocalized2026}
do not settle that continuous-metric question.  Exact local isometries
are essential to the present localization argument, and higher-step or
perfect homogeneous algebras require different filling mechanisms.  The
FLRW results \cite{SbierskiFLRW,LingFLRW} concern singularities without
particle horizons, while Le's volume-distance criterion
\cite{LeVDR} imposes additional conditions on candidate extensions; both
directions remain complementary to the local-homogeneity mechanism used
here.

\clearpage

\appendix
\section{Exact filling constants and power-law applications}
\label{sec:applications}

\subsection{The exact scalar-block filling constant}

Choose fixed inner products on $V$ and $Z$ and consider
\begin{equation}
 h_t=b(t)^2|\dd v|_V^2+
 a(t)^2|\Theta+L_t\dd v|_Z^2.
 \label{eq:scalar-block-metric}
\end{equation}
No boundedness of $L_t$ is used in the filling estimate.  Put
\begin{equation}
 \dd\mu=\frac Na\,\dd t,
 \qquad q(t)=\frac{a(t)}{b(t)},
 \qquad
 B_0=\sup_{|v|_V=|w|_V=1}|\mathsf b(v,w)|_Z.
 \label{eq:scalar-data}
\end{equation}
For $T\in(t_-,t_+)$ define
\begin{equation}
 R(T)=\int_{t_-}^T\frac{N(t)a(t)}{b(t)^2}\,\dd t.
 \label{eq:area-time}
\end{equation}

\begin{proposition}[Exact weighted bracket-area constant]
\label{prop:exact-area}
For \eqref{eq:scalar-block-metric}, including arbitrary $L_t$,
\begin{equation}
 \kappa_{\rm an}(T)=\frac{B_0}{4\pi}R(T),
 \label{eq:exact-area-constant}
\end{equation}
with value $+\infty$ if $R(T)=\infty$.
In particular $B_0R(T)\le2\pi$ implies future one-connectedness.
The equality is sharp as an evaluation of the variational constant,
not as a necessary condition for future one-connectedness.
\end{proposition}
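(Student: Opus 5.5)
The plan is to compute $\kappa_{\rm an}(T)$ directly from its definition \eqref{eq:kappa-an} and reduce it to the weighted planar area formula \eqref{eq:weighted-planar-area}. For the metric \eqref{eq:scalar-block-metric} we have $H_t=b^2|\cdot|_V^2$ and $A_t=a^2|\cdot|_Z^2$, with the coupling $L_t$ absent from both, so
\[
 J_I(\xi)=\frac{1}{|Q_I(\xi)|_Z}\int_I\frac{b(t)^2}{N(t)a(t)}|\dot\xi|_V^2\,\dd t .
\]
This already shows that $L_t$ plays no role, as claimed.

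Hence
\[
 \kappa_{\rm an}(T)=\sup_I\sup_\xi\frac{|Q_I(\xi)|_Z}{\int_I w|\dot\xi|_V^2\,\dd t},
 \qquad w=\frac{b^2}{Na}.
\]

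For the upper bound, reparametrize by $u=\int w^{-1}\,\dd t$. This preserves bracket area and turns the denominator into the unweighted Dirichlet energy on an interval of length $\ell=\int_I w^{-1}=\int_I Na/b^2$.

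Fix a unit vector $e\in Z$. Then $\langle e,Q(\xi)\rangle=\frac12\int\omega_e(\xi,\xi')$, where $\omega_e=\langle e,\mathsf b(\cdot,\cdot)\rangle$ is a skew form on $V$ of operator norm at most $B_0$. After subtracting the mean of the closed loop, the argument used for \eqref{eq:heisenberg-chronology-isoperimetry} applies: Cauchy--Schwarz followed by the periodic Wirtinger inequality gives
\[
 |\langle e,Q\rangle|\le\frac{B_0\ell}{4\pi}\int|\xi'|^2 .
\]
Taking the supremum over unit $e$, so that $e$ is aligned with $Q$, yields $\kappa_{\rm an}(T)\le B_0\ell/(4\pi)\le B_0R(T)/(4\pi)$, since $I\Subset(t_-,T)$.

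For the lower bound, take unit vectors $v,w\in V$ with $|\mathsf b(v,w)|_Z$ arbitrarily close to $B_0$; by compactness this maximum is attained. In the $u$-variable, place a circle traversed once in the plane spanned by $v$ and $w$. This is the main obstacle: $v$ and $w$ need not be orthonormal, so the energy of this circle is not simply $|\xi'|^2$ of a round circle. I would handle it by choosing the extremal pair so that it is orthogonal. For a fixed unit $v$, the maximizing $w$ for the linear map $w\mapsto\mathsf b(v,w)$ restricted to the unit sphere can be taken orthogonal to $v$, because replacing $w$ by its component orthogonal to $v$ leaves $\mathsf b(v,w)$ unchanged while shrinking $|w|$, and renormalizing only increases the value.

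With orthonormal $v,w$, the circle $\xi=\rho(\cos\phi\,v+\sin\phi\,w)$ has area $\pi\rho^2\mathsf b(v,w)$ and energy $4\pi^2\rho^2/\ell$. Its ratio is therefore $|\mathsf b(v,w)|\ell/(4\pi)$, which attains the bound. Letting $I$ exhaust $(t_-,T)$ and using monotone convergence gives $\ell\to R(T)$, including the case $R(T)=\infty$. This proves \eqref{eq:exact-area-constant}.

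The remaining two claims follow quickly. If $B_0R(T)\le2\pi$, then $\kappa_{\rm an}(T)\le\tfrac12$, and Proposition~\ref{thm:schur-filling} gives future one-connectedness. The sharpness remark is interpretive: equality holds for the variational constant itself, and no converse is claimed.
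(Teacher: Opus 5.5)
Your proof is correct and follows the paper's own argument step for step: the change of variable $u=\int Na/b^2\,\dd t$, the Cauchy--Schwarz plus periodic Wirtinger bound for each unit central direction, an orthonormal maximizing pair obtained by antisymmetry and orthogonal projection, a round circle for sharpness, and exhaustion of $(t_-,T)$. The one detail to fix is that the extremal circle must be based so that it lies in $W^{1,2}_0$: use $\rho\bigl((\cos\phi-1)v+\sin\phi\,w\bigr)$, which changes neither its bracket area nor its energy.
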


\begin{proof}
Fix $I=[A,B]\Subset(t_-,T)$ and change variable by
$u(t)=\int_A^t Na/b^2\,\dd r$, with interval length $\ell=u(B)$.
The bracket area is invariant under this change, whereas
\[
 \mathcal E_I(\xi)=\int_A^B\frac{b^2}{Na}|\dot\xi|^2\,\dd t
 =\int_0^\ell|\xi'(u)|^2\,\dd u.
\]
Since $A_t=a(t)^2I$, $J_I=\mathcal E_I/|Q_I|$ exactly.
For a unit central vector $z$, define the skew-adjoint map $J_z$ by
$\langle J_zv,w\rangle=\langle z,\mathsf b(v,w)\rangle$.
Then $\sup_{|z|=1}\|J_z\|=B_0$.  Subtracting the mean from the closed
loop $\xi$ does not change its area.  The periodic Wirtinger inequality,
obtained term by term from the Fourier series on $[0,\ell]$, gives
\[
 \int_0^\ell|\xi-\bar\xi|^2\,\dd u
 \le\frac{\ell^2}{4\pi^2}\int_0^\ell|\xi'|^2\,\dd u.
\]
Cauchy-Schwarz therefore yields
\[
 |\langle z,Q_I\rangle|
 =\frac12\left|\int_0^\ell
 \langle J_z(\xi-\bar\xi),\xi'\rangle\,\dd u\right|
 \le\frac{B_0\ell}{4\pi}\mathcal E_I(\xi).
\]
Taking the supremum over $z$ proves the upper bound.

Choose orthonormal $e_1,e_2\in V$ with
$|\mathsf b(e_1,e_2)|=B_0$; a maximizing pair can be chosen orthogonal
by antisymmetry and orthogonal projection.  For $r>0$ the based circle
\[
 \xi(u)=r\bigl((\cos(2\pi u/\ell)-1)e_1+
 \sin(2\pi u/\ell)e_2\bigr)
\]
has $Q_I=\pi r^2\mathsf b(e_1,e_2)$ and
$\mathcal E_I=4\pi^2r^2/\ell$.  Thus the bound is attained on each
compact interval.  Exhaustion of $(t_-,T)$ gives
\eqref{eq:exact-area-constant}, also when the integral is infinite.
\end{proof}

\subsection{Separate central components and anisotropic powers}
\label{subsec:central-component-filling}

The direction-preserving correction used to bound
$\kappa_{\rm an}$ remains parallel to the total bracket area.
When the central metric has different time scales, distributing its
components separately gives another computable upper bound for
$\kappa_{\rm opt}$.
Fix a basis $z_1,\ldots,z_r$ of $Z$ and smooth positive functions
$a_\alpha(t)$ such that
\begin{equation}
 A_t\left(\sum_{\alpha=1}^r u_\alpha z_\alpha,
           \sum_{\alpha=1}^r u_\alpha z_\alpha\right)
 \le \sum_{\alpha=1}^r a_\alpha(t)^2u_\alpha^2.
 \label{eq:central-diagonal-majorant}
\end{equation}
This inequality does not require simultaneous diagonalization: for an
arbitrary central metric, one may take
$a_\alpha(t)^2=rA_t(z_\alpha,z_\alpha)$, by Cauchy-Schwarz.
Write $Q_I(\xi)=\sum_\alpha Q_{I,\alpha}(\xi)z_\alpha$ and set
\begin{align}
 E_{I,\alpha}(\xi)
  &=\int_I\frac{H_t(\dot\xi,\dot\xi)}{N(t)a_\alpha(t)}\,\dd t,
 \notag\\
 c_\alpha(T)
  &=\sup_{I\Subset(t_-,T)}\ 
    \sup_{0\ne\xi\in W^{1,2}_0(I;V)}
       \frac{|Q_{I,\alpha}(\xi)|}{E_{I,\alpha}(\xi)},
 &
 \kappa_{\rm cen}(T)&=
       \left(\sum_{\alpha=1}^r c_\alpha(T)^2\right)^{1/2}.
 \label{eq:central-component-constant}
\end{align}
The constants depend on the chosen central basis and majorant.  They
provide another sufficient filling test; no general ordering between
$\kappa_{\rm cen}$ and $\kappa_{\rm an}$ is asserted.

\begin{proposition}[Filling by separate central corrections]
\label{prop:central-component-filling}
If \eqref{eq:central-diagonal-majorant} holds and
$\kappa_{\rm cen}(T)\le1/2$, then $(t_-,T)\times\mathsf N$ is future
one-connected, with no additional restriction on $L_t$.
Moreover, $\kappa_{\rm opt}(T)\le\kappa_{\rm cen}(T)$, so this test
implies the filling hypothesis of Theorem~\ref{thm:two-step-obstruction}.
\end{proposition}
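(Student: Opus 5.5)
The plan is to reduce the statement to Lemma~\ref{lem:central-allocation} and Proposition~\ref{thm:schur-filling}, and to prove the inequality $\kappa_{\rm opt}(T)\le\kappa_{\rm cen}(T)$ directly from the dual formula \eqref{eq:kappa-opt}. Once this inequality holds, $\kappa_{\rm cen}(T)\le1/2$ gives $\kappa_{\rm opt}(T)\le1/2$. Future one-connectedness, with no restriction on $L_t$, then follows verbatim from Proposition~\ref{thm:schur-filling}. The filling hypothesis of Theorem~\ref{thm:two-step-obstruction} is exactly $\kappa_{\rm opt}(T)\le1/2$, so that consequence is immediate as well.

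For the inequality, fix $I\Subset(t_-,T)$, a nonzero based loop $\xi$, and a covector $\lambda\ne0$, and write $\lambda_\alpha=\lambda(z_\alpha)$. The key pointwise step is the dual form of the majorant \eqref{eq:central-diagonal-majorant}. If a quadratic form satisfies $A_t\le D_t:=\sum_\alpha a_\alpha^2u_\alpha^2$, then $A_t^{-1}\ge D_t^{-1}$ on covectors. Hence
\[
 \sqrt{A_t^{-1}(\lambda,\lambda)}\ge\Bigl(\sum_\alpha\lambda_\alpha^2/a_\alpha(t)^2\Bigr)^{1/2}.
\]
Next I bound the numerator componentwise. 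By definition of $c_\alpha(T)$, $|\lambda(Q_I(\xi))|\le\sum_\alpha|\lambda_\alpha||Q_{I,\alpha}|$, and each term satisfies
\[
 |\lambda_\alpha||Q_{I,\alpha}|\le c_\alpha(T)\int_I f_\xi\,\frac{|\lambda_\alpha|}{a_\alpha}\,\dd t .
\]
Summing over $\alpha$ and applying Cauchy--Schwarz pointwise in $t$ to the vectors $(c_\alpha)$ and $(|\lambda_\alpha|/a_\alpha)$ gives
\[
 |\lambda(Q_I(\xi))|\le\kappa_{\rm cen}(T)\int_I f_\xi\Bigl(\sum_\alpha\lambda_\alpha^2/a_\alpha^2\Bigr)^{1/2}\dd t\le\kappa_{\rm cen}(T)\int_I f_\xi\sqrt{A_t^{-1}(\lambda,\lambda)}\,\dd t .
\]
Taking suprema over $\lambda$, $\xi$ and $I$ yields $\kappa_{\rm opt}(T)\le\kappa_{\rm cen}(T)$.

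The main point to check is the monotonicity of duals. For positive forms, $A\le D$ implies $A^{-1}\ge D^{-1}$, and I would verify this as the elementary inequality $\sup_u\lambda(u)^2/A(u,u)\ge\sup_u\lambda(u)^2/D(u,u)$. I would also handle the degenerate cases. If some $c_\alpha(T)=\infty$, then $\kappa_{\rm cen}=\infty$ and the inequality is trivial. If $f_\xi\equiv0$, then $\xi$ is constant, hence zero, which is excluded.

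Alternatively, one could use an explicit construction in place of the duality argument. Take the correction $c(t)=\sum_\alpha f_\xi(t)Q_{I,\alpha}z_\alpha/E_{I,\alpha}a_\alpha$, whose norm is at most $\kappa_{\rm cen}f_\xi$ by the majorant and Cauchy--Schwarz. The allocation lemma then gives the same bound, matching how $k_I\le J_I^{-1}$ was derived earlier in the paper.
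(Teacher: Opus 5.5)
Your proof is correct, but it is organized differently from the paper's.

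\textbf{How the paper argues.} The paper does not reduce to Proposition~\ref{thm:schur-filling}. It writes down the componentwise correction $c_\theta=\theta(1-\theta)\sum_\alpha\frac{Q_{I,\alpha}(\xi)}{E_{I,\alpha}(\xi)}\frac{H_t(\dot\xi,\dot\xi)}{N a_\alpha}z_\alpha$, which is exactly $\theta(1-\theta)$ times your ``alternative'' control. From the majorant alone it checks that $|c_\theta/N|_{A_t}\le\theta(1-\theta)e_\xi(t)\kappa_{\rm cen}(T)$. It then feeds this bound straight into Lemma~\ref{lem:convexity} to build the fixed-endpoint timelike homotopy. The inequality $\kappa_{\rm opt}\le\kappa_{\rm cen}$ is read off afterwards, by applying the necessity half of Lemma~\ref{lem:central-allocation} to that same explicit control.

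\textbf{How you argue.} You prove $\kappa_{\rm opt}\le\kappa_{\rm cen}$ first, by a direct dual computation. It uses the implication $A_t\le D_t\Rightarrow A_t^{-1}\ge D_t^{-1}$, the componentwise bound $|Q_{I,\alpha}|\le c_\alpha(T)E_{I,\alpha}$, and pointwise Cauchy--Schwarz. This is the same pairing inequality as the necessity half, unfolded. You then obtain one-connectedness from Proposition~\ref{thm:schur-filling}.

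\textbf{Comparison.} Your route is shorter and more modular. However, the homotopy it produces rests on the existence half of Lemma~\ref{lem:central-allocation}, namely the support-function and coercive-minimization construction of $d$. The paper's route gives a closed-form correction, and its filling argument needs only Lemma~\ref{lem:convexity}.

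A small remark on your alternative: Cauchy--Schwarz is not needed there. The majorant already gives $|c(t)|_{A_t}\le f_\xi(t)\bigl(\sum_\alpha Q_{I,\alpha}^2/E_{I,\alpha}^2\bigr)^{1/2}\le\kappa_{\rm cen}(T)f_\xi(t)$.
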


\begin{proof}
Use the notation of the proof of Proposition~\ref{thm:schur-filling}, with
$\xi=v_1-v_0$ and $e_\xi=H_t(\dot\xi,\dot\xi)/N^2$.  If $\xi=0$,
the affine interpolation of the body controls already fixes both
endpoints and is timelike.  Otherwise every $E_{I,\alpha}(\xi)$ is
strictly positive.  Replace the correction in
\eqref{eq:distributed-correction} by
\begin{equation}
 c_\theta(t)=\theta(1-\theta)
 \sum_{\alpha=1}^r
  \frac{Q_{I,\alpha}(\xi)}{E_{I,\alpha}(\xi)}
  \frac{H_t(\dot\xi,\dot\xi)}{N(t)a_\alpha(t)}z_\alpha.
 \label{eq:componentwise-central-correction}
\end{equation}
Its integral is $\theta(1-\theta)Q_I(\xi)$, so
\eqref{eq:bracket-defect} gives the terminal endpoint exactly as before.
By \eqref{eq:central-diagonal-majorant},
\begin{align*}
 \left|\frac{c_\theta(t)}{N(t)}\right|_{A_t}
 &\le\theta(1-\theta)e_\xi(t)
 \left(\sum_{\alpha=1}^r
       \frac{|Q_{I,\alpha}(\xi)|^2}{E_{I,\alpha}(\xi)^2}
       \right)^{1/2}\\
 &\le\theta(1-\theta)e_\xi(t)\kappa_{\rm cen}(T).
\end{align*}
Removing the factor $\theta(1-\theta)$ gives a central control
with integral $Q_I(\xi)$ and norm at most
$f_\xi(t)(\sum_\alpha |Q_{I,\alpha}|^2/E_{I,\alpha}^2)^{1/2}$.
Lemma~\ref{lem:central-allocation} and the supremum over $I,\xi$
therefore give $\kappa_{\rm opt}\le\kappa_{\rm cen}$.
Lemma~\ref{lem:convexity} proves timelikeness.  The correction is
piecewise continuous in $t$, is continuous in $\theta$, and respects the
one-sided tangents on the common subdivision; integration therefore
gives a fixed-endpoint timelike homotopy of the required regularity.
The proof of Theorem~\ref{thm:two-step-obstruction} uses its filling
hypothesis only to obtain future one-connectedness on the cover.  The
same proof consequently applies with the present hypothesis.
\end{proof}

The separate central estimates give a criterion for anisotropic powers
on every two-step algebra.  Choose orthonormal auxiliary bases
$e_1,\ldots,e_m$ of $V$ and $z_1,\ldots,z_r$ of $Z$, and write
\[
 \mathsf b(e_i,e_j)=\sum_{\alpha=1}^r b_{ij}^{\alpha}z_\alpha.
\]
Let $B_i,C_\alpha>0$, and define Schur data in proper time by
\begin{equation}
 H_t^0=\operatorname{diag}(B_i^2t^{2p_i})_{i=1}^m,
 \qquad
 A_t^0=\operatorname{diag}(C_\alpha^2t^{2q_\alpha})_{\alpha=1}^r,
 \qquad L_t^0=L_t.
 \label{eq:fully-anisotropic-schur-powers}
\end{equation}
Denote the associated invariant metric by $h_t^0$.

\begin{corollary}[Anisotropic powers on arbitrary two-step groups]
\label{cor:all-two-step-anisotropic-powers}
Suppose that $L_t$ is smooth and, for some $r_i\ge0$,
\begin{equation}
 \|L_te_i\|\le K_i t^{-r_i},\qquad
 q_\alpha<1\ \text{for all }\alpha,\qquad
 p_i+r_i<1\ \text{for all }i.
 \label{eq:anisotropic-power-cone}
\end{equation}
Assume the bracket inequalities
\begin{equation}
 \chi_{ij}^{\alpha}:=1+q_\alpha-p_i-p_j>0
 \qquad\text{whenever }b_{ij}^{\alpha}\ne0,
 \label{eq:bracketwise-power-conditions}
\end{equation}
and at least one of the following growth alternatives:
\begin{equation}
\begin{aligned}
 &\text{\textup{(i)}}\quad \min_i p_i<0;\\
 &\text{\textup{(ii)}}\quad
       \sum_{i=1}^m p_i+\sum_{\alpha=1}^r q_\alpha<0;\\
 &\text{\textup{(iii)}}\quad
       [\mathfrak n,\mathfrak n]
         =\operatorname{span}\{z_1,\ldots,z_{r_c}\}
       \ \text{for some }0\le r_c<r,
       \quad q_\alpha<0\ \text{for some }\alpha>r_c.
\end{aligned}
\label{eq:anisotropic-power-growth}
\end{equation}
The requirement that the central basis be adapted to the commutator
applies only to alternative \textup{(iii)}.
Then $-\dd t^2+h_t^0$ is past $C^0$-inextendible on the simply
connected group and on every quotient by a discrete subgroup.
The same conclusions hold for every smooth invariant metric
$g=-\dd t^2+h_t$ satisfying, on a past tail,
\begin{equation}
 c_0h_t^0\le h_t\le C_0h_t^0,
 \qquad 0<c_0\le C_0<\infty.
 \label{eq:all-two-step-power-comparison}
\end{equation}
Independent future timelike geodesic completeness gives global
$C^0$-inextendibility.
\end{corollary}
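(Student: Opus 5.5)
The plan is to verify the three hypotheses of Theorem~\ref{thm:two-step-obstruction}: the filling bound $\kappa_{\rm opt}(T)\le\tfrac12$ on some past tail, the finite cone integral \eqref{eq:schur-cone-integral}, and one of the two growth conditions. We first treat $h_t^0$ with $N=1$, $t_-=0$, and then transfer to comparable metrics.

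\textbf{Filling.} We use Proposition~\ref{prop:central-component-filling} with the diagonal majorant $a_\alpha(t)=C_\alpha t^{q_\alpha}$, which holds with equality for $A_t^0$. For each $\alpha$ we must bound $c_\alpha(T)$. Expanding in the orthonormal basis gives
\[
 Q_{I,\alpha}(\xi)=\tfrac12\sum_{i<j} b_{ij}^\alpha\int_I(\xi_i\dot\xi_j-\xi_j\dot\xi_i)\,\dd t ,
\]
a sum of planar areas in the $(i,j)$ coordinate planes. Bound each term by the weighted planar area formula \eqref{eq:weighted-planar-area}, applied in the pair of rescaled coordinates. Concretely, substitute $\xi_i=B_i^{-1}t^{-p_i}\cdot(\text{weighted})$; more simply, apply Cauchy--Schwarz and a weighted Hardy/Wirtinger estimate. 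The target bound is
\[
 |{\textstyle\int}\xi_i\dot\xi_j|\le C\Bigl(\int_0^T t^{1+q_\alpha-p_i-p_j}\,\frac{\dd t}{t}\Bigr)^{1/2}\!\cdot E_{I,\alpha}(\xi)\ \sim\ C\,T^{\chi^\alpha_{ij}/2}E_{I,\alpha}(\xi).
\]
The natural route is the one used for the $H_3\times H_3$ example. Write $w_i=B_i^2t^{2p_i}/(C_\alpha t^{q_\alpha})$ for the energy weight of coordinate $i$, so that $E_{I,\alpha}=\sum_i\int w_i\dot\xi_i^2$. Then estimate $|\xi_i(t)|\le(\int_0^t w_i^{-1})^{1/2}(\int w_i\dot\xi_i^2)^{1/2}$, which requires $\int_0 w_i^{-1}<\infty$. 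Combining this with Cauchy--Schwarz on $\int\xi_i\dot\xi_j$ produces the weight
\[
 \int t^{-1}\,(t^{1-2p_i+q_\alpha})\,t^{q_\alpha-2p_j}\cdots
\]
and the exponents should reduce to $2\chi^\alpha_{ij}$. This is the main obstacle: the one-sided Hardy bound needs $2p_i-q_\alpha<1$, which is not implied by the hypotheses individually. We would instead symmetrize the pair: write the area as $\tfrac12\int(\xi_i\dot\xi_j-\xi_j\dot\xi_i)$, rescale time in a pair-dependent way as in \eqref{eq:weighted-planar-area}, and use geometric-mean weights $\sqrt{w_iw_j}$. Since the area is invariant under the linear map $(\xi_i,\xi_j)\mapsto(\lambda\xi_i,\lambda^{-1}\xi_j)$, one can balance the two weights pointwise at the cost of a dyadic decomposition in $t$. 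On each dyadic block $[2^{-k-1}T,2^{-k}T]$ the weights are comparable to constants, and the planar Wirtinger bound gives a local constant $\lesssim(2^{-k}T)^{\chi^\alpha_{ij}}$, which sums geometrically because $\chi^\alpha_{ij}>0$. Loops straddling blocks are handled by splitting $\xi$ with cutoffs, which produces cross terms controlled by the same sums. The outcome is $c_\alpha(T)\le C\sum T^{\chi^\alpha_{ij}}$ (possibly with square roots), which tends to $0$. Hence $\kappa_{\rm cen}(T)\le\tfrac12$ for small $T$. Only $L_t$-independence is needed here, and Proposition~\ref{prop:central-component-filling} allows arbitrary $L_t$.

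\textbf{Cone.} By Proposition~\ref{prop:trace-cone}, $m_h^{-1}\le\sqrt{D_h}$ with
\[
 D_h=\sum_i B_i^{-2}t^{-2p_i}+\sum_\alpha C_\alpha^{-2}t^{-2q_\alpha}+\operatorname{tr}(L_tH_t^{-1}L_t^*).
\]
The last term is at most $\sum_i K_i^2B_i^{-2}t^{-2(p_i+r_i)}$. Hence $\sqrt{D_h}\lesssim\sum t^{-p_i}+\sum t^{-q_\alpha}+\sum t^{-(p_i+r_i)}$, which is integrable at $0$ by \eqref{eq:anisotropic-power-cone} (note that $p_i\le p_i+r_i<1$).

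\textbf{Growth.} In case (i), $\lambda_{\max}(H^0_t)\to\infty$, so condition (i) of Theorem~\ref{thm:two-step-obstruction} holds via Proposition~\ref{prop:abelianization-growth}. In case (ii), $\sqrt{\det H\det A}\propto t^{\sum p_i+\sum q_\alpha}\to\infty$, giving condition (ii). In case (iii), take $\alpha=z_\beta^*$ with $\beta>r_c$, which annihilates $[\mathfrak n,\mathfrak n]$. Inverting the Gram matrix of Proposition~\ref{prop:trace-cone} gives $h_t^{-1}(\alpha,\alpha)=A_t^{-1}(z_\beta^*,z_\beta^*)+(\text{the }L\text{ term, nonnegative})$. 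This needs care: the term $(L_tH_t^{-1}L_t^*)_{\beta\beta}$ may not tend to zero. If it does not, fall back on the lower bound
\[
 \lambda_{\max}(\overline h_t)\ge\overline h_t(\bar z_\beta,\bar z_\beta)=A_t(z_\beta,z_\beta)=C_\beta^2t^{2q_\beta}\to\infty,
\]
which holds because $z_\beta$ lies in the center, so $\Theta$ evaluates directly on it, and it is not in the commutator. This gives condition (i) directly.

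\textbf{Comparison.} Under \eqref{eq:all-two-step-power-comparison} we have $m_h\ge\sqrt{c_0}\,m_{h^0}$, and the diameter and volume lower bounds scale by constants. The quantity $\kappa_{\rm opt}$ changes by at most the factor $\sqrt{C_0}/c_0$, since $H$ and $A^{-1}$ are comparable: Schur complements and dual forms respect the quadratic-form order. Shrinking $T$ therefore restores the bound $\le\tfrac12$. Theorem~\ref{thm:two-step-obstruction} then gives the past and, given future completeness, the global conclusions.
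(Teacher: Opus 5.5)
Your overall architecture matches the paper's: Proposition~\ref{prop:central-component-filling} with $a_\alpha=C_\alpha t^{q_\alpha}$, the trace test of Proposition~\ref{prop:trace-cone}, the three growth alternatives, then comparison. The cone and growth steps are essentially right. Your comparison through $\kappa_{\rm opt}\le(\sqrt{C_0}/c_0)\kappa^0_{\rm opt}$ is a valid substitute for the paper's rescaled majorant.

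The gap is the filling estimate itself, which is the substance of the corollary and which you leave as a hedged plan. The obstacle you identify disappears with one observation you did not make. Put $D_k^\alpha=1+q_\alpha-2p_k$, so the one-sided bound for $\xi_k$ needs $D_k^\alpha>0$. Since $D_i^\alpha+D_j^\alpha=2\chi_{ij}^\alpha>0$, every bracketing pair has at least one good index. For a based loop $\int\xi_i\dot\xi_j\,\dd t=-\int\xi_j\dot\xi_i\,\dd t$, so you may always make the good index the inner variable. Write $\dot\xi_k=(f_k^\alpha)^{1/2}u_k$ with $f_k^\alpha=C_\alpha B_k^{-2}t^{q_\alpha-2p_k}$. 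Cauchy--Schwarz on the triangle $\{r<t\}$ then gives $|\int\xi_i\dot\xi_j|\le V_{ij}^\alpha(T)\|u_i\|_2\|u_j\|_2$, where, when $D_i^\alpha>0$,
$V_{ij}^\alpha(T)^2=\int_0^Tf_j^\alpha\int_0^tf_i^\alpha=C_\alpha^2B_i^{-2}B_j^{-2}T^{2\chi_{ij}^\alpha}/(2\chi_{ij}^\alpha D_i^\alpha)$.
The sign of $D_j^\alpha$ plays no role. Collecting the pairs into a symmetric matrix $S_\alpha(T)$ gives $c_\alpha(T)\le\tfrac12\lambda_{\max}(S_\alpha(T))\to0$, with no dyadic decomposition.

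Your dyadic substitute does not close as written. Cutting $\xi$ by cutoffs $\phi_k$ puts $\int w_l|\phi_k'|^2\xi_l^2\,\dd t$ into the energies of the pieces. These are weighted $L^2$ norms of $\xi_l$ itself, and they need exactly the Hardy-type control you said was unavailable. Blockwise-constant balancing $(\lambda\xi_i,\lambda^{-1}\xi_j)$ does not touch these terms, and the adjacent-block cross terms are not the issue.

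The route can be repaired, but none of the following is in the proposal:
\begin{enumerate}
\item With $\sum_k\phi_k^2=1$, the pair area $P_{ij}(\xi)=\tfrac12\int(\xi_i\dot\xi_j-\xi_j\dot\xi_i)\,\dd t$ splits exactly as $\sum_kP_{ij}(\phi_k\xi)$.
\item The local Wirtinger constant at scale $s_k$ is $O(s_k^{\chi})$, with $\chi=\chi_{ij}^\alpha$.
\item The resulting error $\int_0^Tt^{\chi-2}w_l\xi_l^2\,\dd t$ is $O(T^{\chi})\int w_l\dot\xi_l^2\,\dd t$ for every sign of $D_l^\alpha$. For this, integrate $\dot\xi_l$ from $0$ when $D_l^\alpha>0$ and from $T$ otherwise.
\end{enumerate}

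Two smaller points. In (iii), the equality $\overline h{}^0_t(\bar z_\beta,\bar z_\beta)=C_\beta^2t^{2q_\beta}$ needs the infimum over commutator translates to be attained at $0$. That uses the diagonality of $A_t^0$ in the adapted basis, not merely $z_\beta\notin[\mathfrak n,\mathfrak n]$. In the comparison step, alternative (iii) should be transferred explicitly via $\overline h_t\ge c_0\overline h{}^0_t$.
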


\begin{proof}
First consider $h_t^0$, and take $a_\alpha=C_\alpha t^{q_\alpha}$.
Fix $I=[a,b]\Subset(0,T)$ and extend a based loop $\xi$ by zero to
$(0,T)$.  For each central index $\alpha$, put
\[
 f_i^\alpha(t)=\frac{C_\alpha}{B_i^2}t^{q_\alpha-2p_i},
 \qquad D_i^\alpha=1+q_\alpha-2p_i,
 \qquad u_i(t)=\frac{\dot\xi_i(t)}{\sqrt{f_i^\alpha(t)}}.
\]
Then $E_{I,\alpha}=\sum_i\|u_i\|_{L^2(0,T)}^2$ and, since the
loop has equal endpoints,
\[
 Q_{I,\alpha}=\sum_{i<j}b_{ij}^{\alpha}
                  \int_0^T\xi_i\dot\xi_j\,\dd t,
 \qquad
 \int_0^T\xi_i\dot\xi_j\,\dd t
       =-\int_0^T\xi_j\dot\xi_i\,\dd t.
\]
For a pair with $b_{ij}^{\alpha}\ne0$, one has
$D_i^\alpha+D_j^\alpha=2\chi_{ij}^{\alpha}>0$.  Choose as the inner
variable the index with the larger $D$, denoted $D_{ij}^{\alpha}$;
this number is positive.  Cauchy-Schwarz on the time triangle gives
\begin{align*}
 \left|\int_0^T\xi_i\dot\xi_j\,\dd t\right|
 &\le V_{ij}^{\alpha}(T)\|u_i\|_2\|u_j\|_2,\\
 V_{ij}^{\alpha}(T)
 &=\frac{C_\alpha}{B_iB_j}
   \frac{T^{\chi_{ij}^{\alpha}}}
        {\sqrt{2\chi_{ij}^{\alpha}D_{ij}^{\alpha}}},
 \qquad
 D_{ij}^{\alpha}=\max\{D_i^\alpha,D_j^\alpha\}.
\end{align*}
Indeed, if the inner index is $i$, the square of the triangle norm is
\[
 \int_0^T f_j^\alpha(t)\int_0^t f_i^\alpha(s)\,\dd s\,\dd t
 =\frac{C_\alpha^2}{B_i^2B_j^2}
   \frac{T^{2\chi_{ij}^{\alpha}}}
        {2\chi_{ij}^{\alpha}D_i^\alpha}.
\]
The reversed area identity treats the other ordering.  This argument
does not require both $D_i^\alpha$ and $D_j^\alpha$ to be positive.

Let $S_\alpha(T)$ be the symmetric $m$ by $m$ matrix with zero
diagonal and off-diagonal entries
\[
 (S_\alpha(T))_{ij}=
 \begin{cases}
 |b_{ij}^{\alpha}|V_{ij}^{\alpha}(T),&b_{ij}^{\alpha}\ne0,\\
 0,&b_{ij}^{\alpha}=0.
 \end{cases}
\]
Writing $s_i=\|u_i\|_2$ yields
\[
 |Q_{I,\alpha}|
 \le\frac12s^{\mathsf T}S_\alpha(T)s
 \le\frac12\lambda_{\max}(S_\alpha(T))E_{I,\alpha}.
\]
Taking the suprema over $I$ and $\xi$ therefore gives the explicit
bound
\begin{equation}
 \kappa_{\rm cen}^0(T)
 \le\frac12\left(\sum_{\alpha=1}^r
       \lambda_{\max}(S_\alpha(T))^2\right)^{1/2}
 \longrightarrow0\qquad(T\downarrow0).
 \label{eq:anisotropic-bracket-matrix-bound}
\end{equation}
Thus Proposition~\ref{prop:central-component-filling} supplies the
required timelike filling on a sufficiently early tail.

The trace formula in Proposition~\ref{prop:trace-cone} gives
\[
 D_{h^0}(t)=
 \sum_i B_i^{-2}t^{-2p_i}
 +\sum_\alpha C_\alpha^{-2}t^{-2q_\alpha}
 +\sum_i B_i^{-2}t^{-2p_i}\|L_te_i\|^2.
\]
Condition \eqref{eq:anisotropic-power-cone} makes
$\sqrt{D_{h^0}}$ integrable at zero, hence gives finite causal width.
Under the first alternative in \eqref{eq:anisotropic-power-growth},
the largest horizontal eigenvalue tends to infinity.  Under the second,
the invariant volume density
\[
 \left(\prod_i B_i\right)\left(\prod_\alpha C_\alpha\right)
 t^{\sum_i p_i+\sum_\alpha q_\alpha}
\]
diverges.  In alternative \textup{(iii)}, pass to the full abelianization
$\mathfrak a=\mathfrak n/[\mathfrak n,\mathfrak n]$ and its quotient
metric $\overline h_t^0$.  For the specified index $\alpha>r_c$, the
class $\overline z_\alpha$ is nonzero.  Since $A_t^0$ is diagonal in
the chosen central basis and a pure central vector has no horizontal
component, one has, for arbitrary $L_t$,
\[
 \overline h_t^0(\overline z_\alpha,\overline z_\alpha)
 =\inf_{w\in[\mathfrak n,\mathfrak n]}
       h_t^0(z_\alpha+w,z_\alpha+w)
 =C_\alpha^2t^{2q_\alpha}\longrightarrow\infty.
\]
Thus $\lambda_{\max}(\overline h_t^0)\to\infty$.
This supplies the first growth alternative of
Theorem~\ref{thm:two-step-obstruction}.

The metric-size obstruction, with its filling hypothesis
replaced as in Proposition~\ref{prop:central-component-filling}, proves
the conclusion on the cover and all the stated quotients.

For \eqref{eq:all-two-step-power-comparison}, restriction to $Z$ and
the quotient characterization \eqref{eq:quotient-characterization}
give $A_t\le C_0A_t^0$ and $H_t\ge c_0H_t^0$.
Choose $a_\alpha=\sqrt{C_0}C_\alpha t^{q_\alpha}$ in
\eqref{eq:central-diagonal-majorant}.  Then
\[
 E_{I,\alpha}\ge\frac{c_0}{\sqrt{C_0}}E_{I,\alpha}^0,
 \qquad
 \kappa_{\rm cen}(T)
 \le\frac{\sqrt{C_0}}{c_0}\kappa_{\rm cen}^0(T)\longrightarrow0.
\]
Also $m_h\ge\sqrt{c_0}m_{h^0}$,
$\lambda_{\max}(H_t)\ge c_0\lambda_{\max}(H_t^0)$, and the
volume density is at least $c_0^{(m+r)/2}$ times that of $h_t^0$.
The cone and growth conditions are therefore preserved.  The same
obstruction proves the comparison statement, and future timelike
completeness excludes a future extension boundary.
For alternative \textup{(iii)}, the comparison also passes to the full
abelianization: taking the infimum over commutator translates in
$h_t\ge c_0h_t^0$ gives
$\overline h_t\ge c_0\overline h_t^0$.
Its diverging eigenvalue is therefore preserved, and
Theorem~\ref{thm:two-step-obstruction} applies with the same replacement
of the filling test.

\end{proof}

The bracket inequalities have a direct geometric interpretation.  In
the frame orthonormal for the Schur metric $h_t^0$, the horizontal
vectors and central vectors may be written
\[
 E_i(t)=B_i^{-1}t^{-p_i}\bigl(s(e_i)-L_te_i\bigr),\qquad
 Z_\alpha(t)=C_\alpha^{-1}t^{-q_\alpha}z_\alpha.
\]
Here the subtraction in $E_i$ is central, so the spatial brackets at
fixed time are independent of $L_t$ and satisfy
\[
 [E_i(t),E_j(t)]
 =\sum_\alpha b_{ij}^{\alpha}
       \frac{C_\alpha}{B_iB_j}
       t^{q_\alpha-p_i-p_j}Z_\alpha(t).
\]
Multiplying each bracket coefficient by the proper-time scale $t$
gives $t^{\chi_{ij}^{\alpha}}$ times a constant.  Thus the condition
$\chi_{ij}^{\alpha}>0$ means that these time-normalized spatial
commutators vanish at the singular end.  When Einstein solutions have
an expansion rate of order $t^{-1}$, this is the spatial scaling
associated with a velocity-dominated regime.  The corollary uses this
scaling only to control timelike filling; it does not establish the
Einstein equations, an expansion asymptotic, or perturbative stability
for every metric covered by its hypotheses.

\begin{example}[Different central time scales]
\label{ex:two-central-time-scales}
Take $\mathsf N=H_3\times H_3$ with the only nonzero basic brackets
$\mathsf b(e_1,e_2)=z_1$ and $\mathsf b(e_3,e_4)=z_2$.  Let
\[
 H_t=\operatorname{diag}(t^{-1/2},t^{-1/2},t,t),
 \qquad A_t=\operatorname{diag}(t^{-1},t^{3/2}),
 \qquad L_t=0,
 \qquad 0<t<1.
\]
Thus $(p_1,p_2,p_3,p_4)=(-1/4,-1/4,1/2,1/2)$ and
$(q_1,q_2)=(-1/2,3/4)$.  The two bracket exponents are $1$ and
$3/4$, respectively.  All the cone inequalities hold, and a horizontal
direction expands.  Corollary~\ref{cor:all-two-step-anisotropic-powers}
therefore proves past $C^0$-inextendibility on the cover and every
discrete quotient.

In contrast, the common central majorant in
\eqref{eq:computable-energy} gives $a_+(t)=t^{-1/2}$.
Restricting its variational problem to the $(e_3,e_4)$ plane gives
energy $\int t^{3/2}|\dot\xi|^2\,\dd t$.  The based-circle
calculation after the change of variable
$u=\int t^{-3/2}\,\dd t$ shows
\[
 \kappa_{\rm Sch}(T)\ge
 \frac1{4\pi}\sup_{0<a<b<T}\int_a^b t^{-3/2}\,\dd t
 =+\infty.
\]
The separate central estimate tends to zero nevertheless.  The
example demonstrates why distinct central time scales should be kept
separate; it is a geometric example and no Einstein-matter equation
is imposed.
\end{example}

\subsection{Power-law metrics}

The next corollary allows the horizontal-central coupling to diverge.

\begin{corollary}[Power-law Schur metrics]
\label{cor:power-law}
Let \eqref{eq:scalar-block-metric} be in proper time on
$(0,t_+)\times\mathsf N$.  Suppose that, as $t\downarrow0$,
\begin{equation}
 a(t)\asymp t^{p_Z},\qquad b(t)\asymp t^{p_V},
 \qquad \|L_t\|\le C t^{-q_L},\qquad q_L\ge0.
 \label{eq:power-asymptotics}
\end{equation}
Let $d_V=\dim V$, $d_Z=\dim Z$, and assume
\begin{equation}
 \max\{p_Z,p_V+q_L\}<1,\qquad
 D:=1+p_Z-2p_V>0,
 \label{eq:power-causal-filling}
\end{equation}
together with
\begin{equation}
 p_V<0
 \quad\text{or}\quad
 d_Zp_Z+d_Vp_V<0.
 \label{eq:power-growth}
\end{equation}
Then the past end on the simply connected cover is $C^0$-inextendible.  The
same holds on every compact nilmanifold quotient under either alternative,
with no lattice alignment requirement.  Future timelike
completeness promotes each conclusion to global $C^0$-inextendibility.
\end{corollary}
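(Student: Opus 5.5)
The plan is to reduce Corollary~\ref{cor:power-law} to Theorem~\ref{thm:two-step-obstruction} by checking its three ingredients for the scalar-block metric~\eqref{eq:scalar-block-metric}: the filling bound $\kappa_{\rm opt}(T)\le\tfrac12$ on an early tail, finiteness of the cone integral~\eqref{eq:schur-cone-integral}, and one of the growth alternatives (i) or (ii). In proper time $N=1$. By the comparison~\eqref{eq:optimal-anisotropic-comparison} it suffices to control $\kappa_{\rm an}$. Proposition~\ref{prop:exact-area} evaluates it exactly, with no hypothesis on $L_t$, as $\kappa_{\rm an}(T)=B_0R(T)/(4\pi)$.

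\textbf{Step 1 (filling).} Under~\eqref{eq:power-asymptotics} the integrand satisfies $a/b^2\asymp t^{p_Z-2p_V}=t^{D-1}$. Since $D>0$, this gives $R(T)\asymp T^{D}/D\to0$. Choosing $T$ small enough that $B_0R(T)\le2\pi$ yields $\kappa_{\rm an}(T)\le\tfrac12$, hence $\kappa_{\rm opt}(T)\le\tfrac12$. The two-sided comparison constants in $\asymp$ only enter through the value of $R(T)$, so they cause no trouble here.

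\textbf{Step 2 (cone).} I will use Proposition~\ref{prop:trace-cone}. With $H_t=b^2I$, $A_t=a^2I$ and fixed auxiliary inner products,
\[
 D_h(t)=d_Vb^{-2}+d_Za^{-2}+b^{-2}\operatorname{tr}(L_tL_t^*)
 \le C\bigl(t^{-2p_V}+t^{-2p_Z}+t^{-2p_V-2q_L}\bigr).
\]
Consequently $\sqrt{D_h}\le C(t^{-p_V}+t^{-p_Z}+t^{-p_V-q_L})$, which is integrable at $0$ precisely because $\max\{p_Z,p_V+q_L\}<1$. Note that $q_L\ge0$ makes $p_V<1$ automatic. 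This is where the inverse-horizontal weighting of the coupling term matters: it allows a divergent $L_t$ as long as $p_V+q_L<1$, rather than requiring a bound against the smaller eigenvalue. I expect this step to be the only delicate point, and the trace formula removes the difficulty.

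\textbf{Step 3 (growth and conclusion).} If $p_V<0$, then $\lambda_{\max}(H_t)=b^2\to\infty$. The abelianization metric dominates $H_t$ up to a constant, by Proposition~\ref{prop:abelianization-growth}, so alternative (i) holds. If $d_Zp_Z+d_Vp_V<0$, then $\sqrt{\det H_t\det A_t}=b^{d_V}a^{d_Z}\asymp t^{d_Vp_V+d_Zp_Z}\to\infty$, so alternative (ii) holds. Theorem~\ref{thm:two-step-obstruction} then gives past $C^0$-inextendibility on $\mathsf N$ and on every discrete quotient, in particular on every compact nilmanifold. No rationality or lattice alignment condition enters, since case (i) uses lifted diameters via Corollary~\ref{cor:cover-transfer} and case (ii) uses the uniform volume density of Lemma~\ref{lem:quotient-tests}(a). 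Finally, future timelike completeness excludes a future boundary by \cite{GLS2018}, and the slices are Cauchy, which gives global $C^0$-inextendibility.
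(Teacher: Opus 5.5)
Your proposal is correct and matches the paper's proof step for step. The paper obtains the finite cone width from the trace bound of Proposition~\ref{prop:trace-cone}, written there as $m_h^{-1}\le C(a^{-1}+(1+\|L_t\|)/b)$; it obtains filling from the exact constant $\kappa_{\rm an}(T)=B_0R(T)/(4\pi)=O(T^D)$ of Proposition~\ref{prop:exact-area}; and it feeds horizontal-eigenvalue or volume-density growth into Theorem~\ref{thm:two-step-obstruction}, exactly as you do.
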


\begin{proof}
Proposition~\ref{prop:trace-cone} gives
\[
 m_h(t)^{-1}\le C\bigl(a(t)^{-1}+(1+\|L_t\|)/b(t)\bigr).
\]
The first condition in \eqref{eq:power-causal-filling} therefore gives
finite cone width.  Moreover,
\[
 q(t)^2\dd\mu(t)=\frac{a(t)}{b(t)^2}\,\dd t
 \asymp t^{D-1}\dd t.
\]
Equation \eqref{eq:exact-area-constant} gives
$\kappa_{\rm an}(T)=O(T^D)<1/2$ on a sufficiently early tail.  If $p_V<0$,
$\sqrt{\lambda_{\max}(H_t)}=b(t)\to\infty$.  If the second alternative in
\eqref{eq:power-growth} holds, the invariant density
\[
 \sqrt{\det H_t\det A_t}=b(t)^{d_V}a(t)^{d_Z}
 \asymp t^{d_Vp_V+d_Zp_Z}
\]
diverges.  Theorem~\ref{thm:two-step-obstruction} applies on the cover and
every quotient.  If $a=t^{p_Z}$, $b=t^{p_V}$ and
$L_t=t^{-q_L}L_0$ with $L_0\ne0$, Proposition~\ref{prop:trace-cone}
also shows that $p_Z<1$ and $p_V+q_L<1$ are necessary and sufficient
for the intrinsic cone integral.  Thus the cone range is exact for this
specified power-law family.
\end{proof}

\subsection{A critical bracket exponent}
The strict power inequality used above is sufficient for the filling
constant to tend to zero.  At equality the exact constant can remain
finite, and its size is then determined by the anisotropy and the
coefficient of the bracket.

\begin{proposition}[The critical Bianchi II filling constant]
\label{prop:critical-bianchi-filling}
On the normalized Heisenberg algebra $\mathsf b(e_x,e_y)=z$, let
\[
 N=1,\qquad
 H_t=\begin{pmatrix}B_x^2t^{2p_x}&0\\0&B_y^2t^{2p_y}\end{pmatrix},
 \qquad A_t(z,z)=C^2t^{2q},\qquad B_x,B_y,C>0,
\]
with arbitrary smooth Schur coupling $L_t$.  Suppose
\[
 1+q-p_x-p_y=0.
\]
For every $T>0$ on which these data are defined,
\[
 \kappa_{\rm an}(T)=
 \begin{cases}
 \displaystyle\frac{C}{B_xB_y|p_x-p_y|},&p_x\ne p_y,\\[6pt]
 +\infty,&p_x=p_y.
 \end{cases}
\]
In particular, if $p_x\ne p_y$ and
$C/(B_xB_y)\le|p_x-p_y|/2$, the tail $(0,T)\times H_3$ is future
one-connected.  These are exact values of the variational constant;
the sufficient filling inequality is not asserted to be necessary.
\end{proposition}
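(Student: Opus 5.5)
We reduce $\kappa_{\rm an}(T)$ to a translation-invariant one-dimensional variational problem in logarithmic time. It is then evaluated by a convolution estimate for the upper bound and an explicit family of near-maximizers for the lower bound. The coupling $L_t$ plays no role: by Lemma~\ref{lem:schur} and the definition \eqref{eq:kappa-an}, $\kappa_{\rm an}$ involves only $H_t$, $A_t$, $N$ and $Q_I$.

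\textbf{Reduction.} Since $Z=\R z$ and $A_t(Q,Q)^{1/2}=Ct^q|Q|$, write $Q_I(\xi)=\mathcal Q(\xi)z$ with $\mathcal Q=\frac12\int(\xi_x\dot\xi_y-\xi_y\dot\xi_x)\,\dd t=\int\xi_x\dot\xi_y\,\dd t$ for based loops. Rescale $\xi_x\mapsto B_x\xi_x$ and $\xi_y\mapsto B_y\xi_y$; this multiplies the area by $B_xB_y$. Using $q=p_x+p_y-1$, this gives
\[
 \kappa_{\rm an}(T)=\frac{C}{B_xB_y}\sup\frac{|\int\xi_x\dot\xi_y\,\dd t|}
 {\int\bigl(t^{1+\delta}\dot\xi_x^2+t^{1-\delta}\dot\xi_y^2\bigr)\dd t},
 \qquad \delta=p_x-p_y .
\]
Next pass to $s=\log t$. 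The area is parametrization invariant, and the denominator becomes
\[
 \int\bigl(e^{\delta s}\xi_x'^2+e^{-\delta s}\xi_y'^2\bigr)\dd s .
\]
Translate $s\mapsto s+c$ and simultaneously rescale $(\xi_x,\xi_y)\mapsto(e^{-\delta c/2}\xi_x,e^{\delta c/2}\xi_y)$. This preserves both the area and the energy, so the supremum over intervals in $(-\infty,\log T)$ equals the supremum over all compact intervals of $\R$. This already explains why the answer is independent of $T$.

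\textbf{Case $\delta=0$.} The weights are constant, and \eqref{eq:weighted-planar-area} with $w=1$ gives the value $\ell/(4\pi)$ on an interval of length $\ell$. Letting $\ell\to\infty$ gives $+\infty$.

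\textbf{Upper bound for $\delta\ne0$.} By the symmetry $x\leftrightarrow y$ we may assume $\delta>0$. Set $u=e^{\delta s/2}\xi_x'$ and $v=e^{-\delta s/2}\xi_y'$. Integrate by parts to write the area as $-\int\xi_y\xi_x'$, and insert $\xi_y(s)=\int_{s'<s}\xi_y'(s')\,\dd s'$. Then
\[
 \mathcal Q=-\iint_{s'<s}e^{-\delta(s-s')/2}\,v(s')\,u(s)\,\dd s'\,\dd s .
\]
This is a convolution with the kernel $e^{-\delta r/2}\mathbf 1_{r>0}$, whose $L^1$ norm is $2/\delta$. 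Young's inequality gives
\[
 |\mathcal Q|\le\frac2\delta\|u\|_2\|v\|_2\le\frac1\delta\bigl(\|u\|_2^2+\|v\|_2^2\bigr),
\]
which is the claimed bound $C/(B_xB_y|p_x-p_y|)$.

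\textbf{Lower bound for $\delta\ne0$.} The Fourier symbol of the kernel is $(\delta/2+i\omega)^{-1}$, with maximal modulus $2/\delta$ at $\omega=0$. So I would take $u=-v$ equal to a long plateau cutoff $\varphi(s/L)$ and let $L\to\infty$; both inequalities above then become asymptotically equalities. The one extra requirement is that $\xi_x$ and $\xi_y$ be based loops, i.e.\ $\int e^{-\delta s/2}u=\int e^{\delta s/2}v=0$.
\begin{itemize}
\item For $\xi_x$, weight $e^{\delta s}$: add a compensating short segment far to the \emph{past}, where this weight is exponentially small.
\item For $\xi_y$, weight $e^{-\delta s}$: add the compensating segment far to the \emph{future}, where its weight is small.
\end{itemize}
Checking that these closing segments cost $o(1)$ in energy and change the area by $o(1)$ is the step I expect to be the main obstacle. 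The risk is that a closing segment, for instance the $\xi_y$ one, pairs with the nonzero value of the other component through the area and produces a term of order one. If it does, I will first try placing the two closing segments so that each occurs where the other component already vanishes. Failing that, I will solve the Euler--Lagrange system, whose solutions are exponentials in $s$, on a long interval and glue.

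\textbf{Conclusion.} Since $A_t$ is one-dimensional, hence conformal, the discussion after \eqref{eq:optimal-anisotropic-comparison} gives $\kappa_{\rm opt}=\kappa_{\rm an}$. Therefore, when $p_x\ne p_y$ and $C/(B_xB_y)\le|p_x-p_y|/2$, we get $\kappa_{\rm opt}(T)\le\frac12$, and Proposition~\ref{thm:schur-filling} yields future one-connectedness of $(0,T)\times H_3$.
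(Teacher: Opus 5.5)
Your proposal is correct. The step you flag as the main obstacle closes immediately with the placement you already chose.

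\textbf{The closing segments.} Your identity $\mathcal Q=-\langle u,k*v\rangle$, with the causal kernel $k(r)=e^{-\delta r/2}\mathbf 1_{r>0}$, holds for the full closed loop. Write $u=\varphi_L+u_P$ and $v=-\varphi_L+v_F$, where $\varphi_L=\varphi(\cdot/L)$, $u_P$ is supported in a far-past interval $P$, and $v_F$ in a far-future interval $F$.
\begin{itemize}
\item $k*v$ vanishes before $\operatorname{supp}\varphi_L$, and $k*v_F$ vanishes before $F$.
\item Hence every pairing involving $u_P$ or $v_F$ is zero, and $\mathcal Q=\langle\varphi_L,k*\varphi_L\rangle$ exactly.
\item Equivalently, $\xi_y\equiv0$ on $P$ and $\xi_x\equiv0$ on $F$. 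So your fallback placement coincides with your primary one, and no order-one cross term arises.
\end{itemize}
The closing energies can be taken equal to $M_x^2/\int_Pe^{-\delta s}\,\dd s$ and $M_y^2/\int_Fe^{\delta s}\,\dd s$. For fixed $L$ both tend to zero as $P\to-\infty$ and $F\to+\infty$, even though $M_y$ is exponentially large in $L$. Translation invariance then moves the resulting compact interval into $(-\infty,\log T)$.

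\textbf{Comparison with the paper.} The paper conjugates the loop itself rather than its derivative: $\xi_x\propto t^{(p_y-p_x)/2}X(\log t)$ and $\xi_y\propto t^{-(p_y-p_x)/2}Y(\log t)$.
\begin{itemize}
\item The based-loop condition becomes compact support of $X,Y$.
\item After the cross terms integrate out, the energy is $\int(X'^2+Y'^2+a^2(X^2+Y^2))\,\dd s$ with $a=(p_y-p_x)/2$.
\item The upper bound is then Cauchy--Schwarz together with $a^2\|X\|_2^2\le E_X$.
\item The near-maximizers $X=Y=$ a long bump need no closing at all.
\end{itemize}
In your variables these maximizers are, up to a common constant, $u=X'-\tfrac\delta2X$ and $v=Y'+\tfrac\delta2Y$. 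This is your plateau $u\approx-v$, with the moment conditions enforced by the small distributed corrections $X',Y'$ instead of remote segments.

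Your route gives a structural reason for the constant. At $1+q-p_x-p_y=0$, the triangular kernel of the paper's Hilbert--Schmidt estimates (Proposition~\ref{prop:asymptotic-transfer}, Corollary~\ref{cor:all-two-step-anisotropic-powers}) becomes a convolution kernel in $\log t$. Its Hilbert--Schmidt norm is therefore infinite. Its operator norm, the $L^1$ norm $2/|\delta|$ (the symbol at zero frequency), is finite and sharp. The paper's route is shorter and avoids the closing bookkeeping.

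Finally, for the filling conclusion the inequality $\kappa_{\rm opt}\le\kappa_{\rm an}$ already suffices; the equality you invoke is true but not needed.
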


\begin{proof}
For a based loop $\xi=(\xi_x,\xi_y)$ on a compact interval
$I\Subset(0,T)$, the signed area and energy are
\[
 Q=\int_I\xi_x\dot\xi_y\,\dd t,
 \qquad
 E=\int_I\left(\frac{B_x^2}{C}t^{2p_x-q}\dot\xi_x^2
              +\frac{B_y^2}{C}t^{2p_y-q}\dot\xi_y^2\right)\,\dd t.
\]
Write $\delta=p_y-p_x$, $a=\delta/2$, and $s=\log t$.  Since
$q=p_x+p_y-1$, make the invertible change
\[
 \xi_x(t)=\frac{\sqrt C}{B_x}t^{\delta/2}X(s),
 \qquad
 \xi_y(t)=\frac{\sqrt C}{B_y}t^{-\delta/2}Y(s).
\]
The functions $X,Y$ vanish at the endpoints of $\log I$; extension
by zero therefore places them in $W^{1,2}(\mathbb R)$ with compact
support in $(-\infty,\log T)$.  A direct substitution, using
$\dd t=t\,\dd s$, gives
\begin{align*}
 E&=\int_{\mathbb R}
       \bigl((X'+aX)^2+(Y'-aY)^2\bigr)\,\dd s\\
  &=\int_{\mathbb R}
       \bigl(X'^2+Y'^2+a^2(X^2+Y^2)\bigr)\,\dd s,\\
 Q&=\frac{C}{B_xB_y}\int_{\mathbb R}X(Y'-aY)\,\dd s.
\end{align*}
The cross terms in the energy integrate to zero because both
functions have zero trace.  Conversely, every such compactly supported
pair yields an admissible based loop on some compact subinterval of
$(0,T)$.

Suppose $a\ne0$, and set
\[
 E_X=\int(X'^2+a^2X^2),\qquad
 E_Y=\int(Y'^2+a^2Y^2).
\]
Since $\|X\|_2\le E_X^{1/2}/|a|$ and
$\|Y'-aY\|_2^2=E_Y$, Cauchy-Schwarz gives
\[
 |Q|\le\frac{C}{B_xB_y|a|}\sqrt{E_XE_Y}
       \le\frac{C}{2B_xB_y|a|}E.
\]
This proves the upper bound.

For sharpness, choose a nonzero $\eta\in C_c^\infty((0,1))$ and,
for $L>0$, put
\[
 X_L(s)=Y_L(s)=
 \eta\left(\frac{s-\log T+2L}{L}\right).
\]
Their support lies in $(\log T-2L,\log T-L)$, so each pair is
admissible on a compact interval strictly inside the tail.  Let
$U=\int_0^1\eta^2>0$ and $V=\int_0^1(\eta')^2$.  Then
\[
 |Q_L|=\frac{C}{B_xB_y}|a|LU,\qquad
 E_L=2(L^{-1}V+a^2LU),
\]
because $\int X_LX_L'=0$.  Consequently
\[
 \frac{|Q_L|}{E_L}\longrightarrow
 \frac{C}{2B_xB_y|a|}
 =\frac{C}{B_xB_y|p_x-p_y|}.
\]
This proves the exact supremum; no loop reaching $t=0$ has been used.

If $a=0$, the transformed expressions become
$E=\int(X'^2+Y'^2)$ and
$Q=(C/(B_xB_y))\int XY'$.  A based circle on a logarithmic interval
of length $L$ has $|Q|/E=CL/(4\pi B_xB_y)$.  Such intervals of
arbitrarily large finite length fit inside $(-\infty,\log T)$.
Thus the supremum is infinite.  The filling conclusion follows from
Proposition~\ref{thm:schur-filling}; the Schur coupling has not entered
the calculation.
\end{proof}

\begin{example}[Inextendibility at the critical exponent]
\label{ex:critical-inextendibility}
In the Heisenberg coframe
$\sigma^1=\dd z-x\,\dd y$, $\sigma^2=\dd x$, $\sigma^3=\dd y$,
consider
\[
 g=-\dd t^2+C^2t^{-3/2}(\sigma^1)^2
             +B_x^2t^{-1/2}(\sigma^2)^2
             +B_y^2t(\sigma^3)^2,
 \qquad 0<t<T.
\]
Here $(p_x,p_y,q)=(-1/4,1/2,-3/4)$, so
$1+q-p_x-p_y=0$ and $|p_x-p_y|=3/4$.  If
$C/(B_xB_y)\le3/8$, Proposition~\ref{prop:critical-bianchi-filling}
gives $\kappa_{\rm an}(T)\le1/2$.  The least-scale inverse is bounded
by
\[
 C^{-1}t^{3/4}+B_x^{-1}t^{1/4}+B_y^{-1}t^{-1/2},
\]
which is integrable at zero.  The horizontal eigenvalue
$B_x^2t^{-1/2}$ diverges, as does the invariant volume density
$CB_xB_y t^{-1/2}$.  Therefore
Theorem~\ref{thm:two-step-obstruction} proves past
$C^0$-inextendibility on the simply connected cover and on every
discrete quotient.  This example lies outside the strict bracket
inequalities: its time-normalized spatial bracket is the nonzero
constant $C/(B_xB_y)$.  It is a geometric example; no Einstein-matter
equation or energy condition is asserted.
\end{example}

\begin{remark}
For completeness, if $\chi=1+q-p_x-p_y<0$ in the same exact power
family, then $\kappa_{\rm an}(T)=+\infty$.  Indeed, for any fixed
nonzero-area based loop $\xi$ supported in $[a,b]\Subset(0,\infty)$,
define on $[\varepsilon a,\varepsilon b]$
\[
 \xi_{\varepsilon,i}(t)
     =\varepsilon^{(1+q-2p_i)/2}\xi_i(t/\varepsilon),
 \qquad i=x,y.
\]
Direct substitution leaves each term of the energy unchanged and
multiplies the area by $\varepsilon^\chi$.  For sufficiently small
$\varepsilon$ these intervals lie in $(0,T)$, and the area-to-energy
ratio tends to infinity.  Hence the critical proposition describes
the finite-constant boundary between the decaying constants for
$\chi>0$ and divergent constants for $\chi<0$, with the isotropic
critical case excluded explicitly.
\end{remark}

\subsection{Diagonal Bianchi II metrics}

The diagonal case admits a direct integral test without a power-law
assumption.  Use the coframe~\eqref{eq:heisenberg-coframe} and write
\begin{equation}
 g=-N(t)^2\dd t^2+a(t)^2(\dd z-x\dd y)^2
                  +b(t)^2\dd x^2+c(t)^2\dd y^2,
 \label{eq:diagonal-heisenberg-metric}
\end{equation}
where all four functions are smooth and positive on $(t_-,t_+)$.  Put
\begin{align}
 f_x(t)&=\frac{N(t)a(t)}{b(t)^2},&
 f_y(t)&=\frac{N(t)a(t)}{c(t)^2},\notag\\
 R_{\rm diag}(T)&=\int_{t_-}^T
                 \frac{N(t)a(t)}{\min\{b(t)^2,c(t)^2\}}\,\dd t,
 \label{eq:diagonal-area-data}\\
 V_{xy}(T)^2&=\int_{t_-}^T f_y(t)
                   \int_{t_-}^t f_x(r)\,\dd r\,\dd t,&
 V_{yx}(T)^2&=\int_{t_-}^T f_x(t)
                   \int_{t_-}^t f_y(r)\,\dd r\,\dd t.\notag
\end{align}
These are nonnegative improper integrals, with $+\infty$ allowed.

\begin{proposition}[Diagonal Heisenberg filling and inextendibility]
\label{prop:diagonal-heisenberg}
For~\eqref{eq:diagonal-heisenberg-metric},
\begin{equation}
 \kappa_{\rm an}(T)\le\kappa_{\rm diag}(T):=
 \min\left\{\frac{R_{\rm diag}(T)}{4\pi},
             \frac{V_{xy}(T)}2,\frac{V_{yx}(T)}2\right\}.
 \label{eq:diagonal-area-bound}
\end{equation}
If $\kappa_{\rm diag}(T)\le1/2$, then the lifted past slab is future
one-connected.  Suppose in addition that
\begin{equation}
 \int_{t_-}^T\frac{N(t)}{\min\{a(t),b(t),c(t)\}}\,\dd t<\infty
 \label{eq:diagonal-cone-test}
\end{equation}
and either $\limsup_{t\to t_-}\max\{b(t),c(t)\}=\infty$ or
$\limsup_{t\to t_-}a(t)b(t)c(t)=\infty$.  Then the past end is
$C^0$-inextendible on $H_3$ and on every discrete left quotient.
Future timelike geodesic completeness gives global
$C^0$-inextendibility.
\end{proposition}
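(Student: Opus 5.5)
The plan is to bound $\kappa_{\rm an}$ by three separate estimates, one for each entry of the minimum in \eqref{eq:diagonal-area-bound}. Then Proposition~\ref{thm:schur-filling} gives filling and Theorem~\ref{thm:two-step-obstruction} gives the obstruction. In the Schur form of \eqref{eq:diagonal-heisenberg-metric} one has $L_t=0$, $H_t=\operatorname{diag}(b^2,c^2)$, and $A_t=a^2$ on the one-dimensional center spanned by $z$, with bracket coefficient one. Hence $J_I(\xi)=\int_I N^{-1}a^{-1}(b^2\dot\xi_x^2+c^2\dot\xi_y^2)\,\dd t/|Q_I(\xi)|$, and $Q_I(\xi)=\int_I\xi_x\dot\xi_y\,\dd t$ for a based loop, up to the sign and normalization fixed by $\dd\sigma^1=-\sigma^2\wedge\sigma^3$.

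\emph{First bound.} Minorize the weights: $b^2/(Na)$ and $c^2/(Na)$ are both at least $w:=\min\{b^2,c^2\}/(Na)$. The denominator of $J_I^{-1}$ is then at least $\int_I w|\dot\xi|^2$, and the weighted planar area formula \eqref{eq:weighted-planar-area} gives $|Q|/\int_Iw|\dot\xi|^2\le(4\pi)^{-1}\int_I w^{-1}$. Letting $I$ exhaust $(t_-,T)$ yields $R_{\rm diag}(T)/(4\pi)$.

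\emph{Second bound.} Repeat the triangle argument from Corollary~\ref{cor:all-two-step-anisotropic-powers}. Set $u_x=\dot\xi_x/\sqrt{f_x}$ and $u_y=\dot\xi_y/\sqrt{f_y}$, so that the energy equals $\|u_x\|_2^2+\|u_y\|_2^2$. Writing $\xi_x(t)=\int_{t_-}^t\sqrt{f_x}\,u_x$ and applying Cauchy--Schwarz on the triangle $\{r<t\}$ gives $|Q|\le V_{xy}(T)\|u_x\|\|u_y\|\le \tfrac12V_{xy}(T)E$. The symmetric version, obtained via the integration by parts $\int\xi_x\dot\xi_y=-\int\xi_y\dot\xi_x$, gives $V_{yx}$. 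Taking the minimum of the three bounds proves \eqref{eq:diagonal-area-bound}, and Proposition~\ref{thm:schur-filling} (through \eqref{eq:optimal-anisotropic-comparison}) converts $\kappa_{\rm diag}\le1/2$ into future one-connectedness.

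\emph{Inextendibility.} Here $m_h$ is comparable to $\min\{a,b,c\}$ relative to $k=\dd x^2+\dd y^2+(\sigma^1)^2$, by Proposition~\ref{prop:trace-cone} or directly since the metric is diagonal in this coframe. So \eqref{eq:diagonal-cone-test} is the cone integral \eqref{eq:schur-cone-integral}. Divergence of $\max\{b,c\}$ is divergence of $\lambda_{\max}(H_t)$, hence of the abelianization metric, and $abc=\sqrt{\det H_t\det A_t}$ is the volume alternative. Theorem~\ref{thm:two-step-obstruction} then applies on $H_3$ and on every discrete quotient, and the global statement follows from \cite{GLS2018} as before.

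The only delicate point I expect is the time interchange in the triangle estimate. The loop is supported on $I\Subset(t_-,T)$, and extending it by zero to $(t_-,T)$ is harmless because the improper integrals defining $V$ only increase. One must also check that the sign conventions for $Q$ do not matter, since only absolute values enter.
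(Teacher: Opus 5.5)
Your proposal is correct and follows the paper's proof essentially step for step. You use the same minorized-weight Wirtinger bound for $R_{\rm diag}(T)/(4\pi)$ and the same triangular Hilbert--Schmidt estimates for $V_{xy}(T)/2$ and $V_{yx}(T)/2$. You then apply Proposition~\ref{thm:schur-filling} and Theorem~\ref{thm:two-step-obstruction} with $m_h=\min\{a,b,c\}$, $\lambda_{\max}(H_t)=\max\{b^2,c^2\}$ and density $abc$, exactly as the paper does. The paper additionally writes out the central correction explicitly in coordinates, but your appeal to \eqref{eq:optimal-anisotropic-comparison} makes that step unnecessary.
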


\begin{proof}
For a based horizontal loop $\xi=(\xi_x,\xi_y)$ on
$I=[r_0,r_1]\Subset(t_-,T)$, its central area and energy are
\[
 Q=\int_I\xi_x\dot\xi_y\,\dd t
   =-\int_I\xi_y\dot\xi_x\,\dd t,\qquad
 E=\int_I\left(\frac{\dot\xi_x^2}{f_x}
                         +\frac{\dot\xi_y^2}{f_y}\right)\dd t.
\]
For a nonzero loop, $E>0$, and the variational quotient defining
$\kappa_{\rm an}$ is $|Q|/E$.
Since
$E\ge\int_I\min\{b^2,c^2\}|\dot\xi|^2/(Na)\,\dd t$,
the change of time and periodic Wirtinger estimate in
Proposition~\ref{prop:exact-area}, with bracket norm $B_0=1$, give
$|Q|\le R_{\rm diag}(T)E/(4\pi)$ whenever that integral is finite.
For the ordered estimate, write
$\dot\xi_x=\sqrt{f_x}\,u$ and
$\dot\xi_y=\sqrt{f_y}\,v$.  Then
\[
 Q=\int_{r_0}^{r_1}\sqrt{f_y(t)}\,v(t)
             \int_{r_0}^t\sqrt{f_x(r)}\,u(r)\,\dd r\,\dd t.
\]
The squared Hilbert--Schmidt norm of this triangular kernel is at most
$V_{xy}(T)^2$.  Hence
$|Q|\le V_{xy}(T)\|u\|_2\|v\|_2\le V_{xy}(T)E/2$.
The other expression for $Q$ gives the estimate with $V_{yx}$.
Taking the supremum proves~\eqref{eq:diagonal-area-bound}.

For completeness, the central correction can be written directly in
these coordinates.  Given two timelike curves with common endpoints,
put $\xi=(x_1-x_0,y_1-y_0)$ and
$\eta_j=\dot z_j-x_j\dot y_j$.  Interpolate $x,y$ affinely and set
\[
 c_\theta(t)=\theta(1-\theta)\frac QE
                  \frac{b^2\dot\xi_x^2+c^2\dot\xi_y^2}{Na},
 \qquad
 \eta_\theta=(1-\theta)\eta_0+\theta\eta_1+c_\theta,
\]
with $c_\theta=0$ if $E=0$.  Define $z_\theta$ by integrating
$\dot z_\theta=\eta_\theta+x_\theta\dot y_\theta$ from the common
initial point.  The integral of $c_\theta$ is
$\theta(1-\theta)Q$, which cancels the endpoint defect
$-\theta(1-\theta)Q$ of the horizontal interpolation.  Moreover,
\[
 \frac{a|c_\theta|}{N}
 \le\theta(1-\theta)\kappa_{\rm diag}(T)
                 \frac{b^2\dot\xi_x^2+c^2\dot\xi_y^2}{N^2}.
\]
Lemma~\ref{lem:convexity} therefore preserves strict timelikeness.
The interpolating curves depend continuously on $\theta$ and have fixed
endpoints, giving the asserted filling.

Choose the auxiliary metric
$k=(\dd z-x\dd y)^2+\dd x^2+\dd y^2$.  Then
$m_h=\min\{a,b,c\}$,
$\lambda_{\max}(H_t)=\max\{b^2,c^2\}$, and the invariant density is
$abc$.  Theorem~\ref{thm:two-step-obstruction} now gives the past and
global conclusions, including every discrete quotient.
\end{proof}

\subsection{General Bianchi II metrics}

Use the coframe~\eqref{eq:heisenberg-coframe} on $H_3$.
Every left-invariant Bianchi II metric has the form
\begin{equation}
 h_t=H_t(\dd x,\dd y)+
 a(t)^2\bigl(\sigma^1+\ell_t\bigr)^2,
 \label{eq:general-bII}
\end{equation}
where $H_t$ is a positive quadratic form in the horizontal variables and
$\ell_t$ is a horizontal one-form.  This is the one-dimensional-center case
of \eqref{eq:schur-form}.  Proposition~\ref{thm:schur-filling} therefore gives a
future timelike filling whenever $\kappa_{\rm an}(T)\le1/2$, with no separate
bound on $\ell_t$.  The coupling enters the inextendibility criterion through
the intrinsic cone scale $m_h$.  Theorem~\ref{thm:two-step-obstruction} applies when the cone
integral is finite and either a horizontal eigenvalue or the density
$a(t)\sqrt{\det H_t}$ diverges.  Proposition~\ref{prop:diagonal-heisenberg}
gives the diagonal criterion directly, and
Proposition~\ref{prop:asymptotic-transfer} proves its power-law comparison
form.  The general Schur metric also permits rotating horizontal
eigendirections, arbitrary horizontal-central coupling, and every discrete
left quotient, including compact Heisenberg nilmanifolds.

For instance, take
\[
 H_t=t^{2p_V}I_2,\qquad a(t)=t^{p_Z},\qquad
 \ell_t=t^{-q_L}(\lambda_2\,\dd x+\lambda_3\,\dd y),
\]
where $(\lambda_2,\lambda_3)\ne(0,0)$.  Conditions
\eqref{eq:power-causal-filling} and \eqref{eq:power-growth}, with
$(d_V,d_Z)=(2,1)$, give explicit $C^0$-inextendible Bianchi II ends even when
the coupling is unbounded.  For example,
$(p_Z,p_V,q_L)=(3/4,-1/4,1)$ satisfies the improved cone and filling
conditions but not the earlier estimate
$\max\{p_Z,p_V\}+q_L<1$.

The example is geometric rather than an Einstein-matter model with a
nonnegative normal energy density.  To see this, use the proper-time
orthonormal coframe
\[
 \theta^1=a(\sigma^1+\ell),\qquad
 \theta^2=b\,\dd x,\qquad \theta^3=b\,\dd y.
\]
Writing $H_a=\dot a/a$ and $H_b=\dot b/b$, the second fundamental form
$K=\tfrac12\dot h$ has orthonormal components
\[
 K_{11}=H_a,\qquad K_{22}=K_{33}=H_b,\qquad
 K_{1A}=\frac{a}{2b}\dot\ell_A\quad(A=2,3).
\]
The spatial scalar curvature is $R^{(3)}=-a^2/(2b^4)$, because the
sole orthonormal bracket has size $a/b^2$.  The Hamiltonian constraint
therefore gives, with $n=\partial_t$,
\begin{align}
 G(n,n)
 &=\frac12\bigl(R^{(3)}+(\operatorname{tr}K)^2-|K|^2\bigr)\notag\\
 &=H_b^2+2H_aH_b-\frac{a^2|\dot\ell|^2}{4b^2}
                  -\frac{a^2}{4b^4}\notag\\
 &=-\left(\frac5{16}+\frac{\lambda_2^2+\lambda_3^2}{4}\right)t^{-2}
   -\frac14t^{5/2}<0.
 \label{eq:coupling-example-negative-energy}
\end{align}
Thus its effective stress tensor $T=G$, in units $8\pi G=1$ with zero
cosmological constant, violates the weak energy condition.  This does
not affect the geometric inextendibility conclusion.  Section~\ref{sec:einstein-scalar} provides Einstein-scalar applications with nonnegative scalar energy density.

\subsection{Metrics comparable to anisotropic power laws}

On the Heisenberg group fix the coframe
$\sigma^1=\dd z-x\,\dd y$, $\sigma^2=\dd x$, $\sigma^3=\dd y$.
The following criterion requires no control of derivatives of the metric
relative to its leading power law.

\begin{proposition}[Power-law comparison criterion]
\label{prop:asymptotic-transfer}
Let $A_i>0$ and $p_i\in\R$ satisfy
\begin{equation}
 \chi:=1+p_1-p_2-p_3>0,\qquad
 \max_i p_i<1,\qquad
 \bigl(\min\{p_2,p_3\}<0\ \text{or}\ p_1+p_2+p_3<0\bigr).
 \label{eq:transfer-exponents}
\end{equation}
Put
\[
 h_t^0=\sum_{i=1}^3 A_i^2t^{2p_i}(\sigma^i)^2.
\]
Suppose $g=-\dd t^2+h_t$ is a smooth invariant orthogonal metric on
$(0,t_+)\times H_3$ and, for some $T_0>0$ and constants
$0<c_0\le C_0<\infty$,
\begin{equation}
 c_0h_t^0\le h_t\le C_0h_t^0\qquad(0<t<T_0).
 \label{eq:asymptotic-comparison}
\end{equation}
Then the past end is $C^0$-inextendible on the cover and on every quotient
by a discrete subgroup.  In particular the conclusion holds if
$(1-\eps(t))h_t^0\le h_t\le(1+\eps(t))h_t^0$ with $\eps(t)\to0$.
\end{proposition}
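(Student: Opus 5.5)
The plan is to reduce the statement to Corollary~\ref{cor:all-two-step-anisotropic-powers}, which already contains the comparison clause \eqref{eq:all-two-step-power-comparison}. On $H_3$ the Schur data of $h_t^0$ in the fixed coframe are as follows: the center is spanned by $z$, dual to $\sigma^1$; the horizontal quotient $V$ has basis $e_x,e_y$, dual to $\sigma^2,\sigma^3$; and the bracket is $\mathsf b(e_x,e_y)=\pm z$, up to the normalization $\dd\sigma^1=-\sigma^2\wedge\sigma^3$. With $\Theta=\sigma^1$ after the section change of Lemma~\ref{lem:schur}, the data are
\[
H_t^0=\operatorname{diag}(A_2^2t^{2p_2},A_3^2t^{2p_3}),\qquad A_t^0=A_1^2t^{2p_1},\qquad L_t=0.
\]
The coordinate form $\sigma^1=\dd z-x\,\dd y$ differs from the symmetric $\Theta=\dd z-\tfrac12(x\dd y-y\dd x)$ only by an exact invariant shift, $\tfrac12\dd(xy)$. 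This shift is absorbed by the group isomorphism $(x,y,z)\mapsto(x,y,z-\tfrac12xy)$, so we may work directly with this coframe; equivalently, the corollary is stated in an arbitrary invariant coframe with the structure constants $b^\alpha_{ij}$.

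Next we check the hypotheses of Corollary~\ref{cor:all-two-step-anisotropic-powers}, with $m=2$, $r=1$, $q_1=p_1$, horizontal exponents $(p_2,p_3)$, $r_i=0$ and $K_i=0$.
\begin{itemize}
\item The cone condition \eqref{eq:anisotropic-power-cone} requires $q_1<1$ and $p_i<1$. Both follow from $\max_ip_i<1$.
\item The only nonzero bracket is $b^1_{23}$, so the bracket inequality \eqref{eq:bracketwise-power-conditions} is $1+p_1-p_2-p_3=\chi>0$.
\item The growth alternatives in \eqref{eq:transfer-exponents} are exactly (i) $\min\{p_2,p_3\}<0$ and (ii) $p_1+p_2+p_3<0$ of \eqref{eq:anisotropic-power-growth}.
\end{itemize}
The corollary therefore gives past $C^0$-inextendibility of $-\dd t^2+h_t^0$, and, via \eqref{eq:all-two-step-power-comparison}, of every smooth invariant metric $-\dd t^2+h_t$ with $c_0h_t^0\le h_t\le C_0h_t^0$ on a past tail. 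This holds on $H_3$ and on every discrete quotient. The orthogonality of $g$ matters here: it gives lapse $N=1$ in proper time, which is the form the corollary assumes.

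For the final sentence, $\eps(t)\to0$ lets us choose $T_0$ with $\eps\le\tfrac12$ on $(0,T_0)$. Then \eqref{eq:asymptotic-comparison} holds with $c_0=\tfrac12$ and $C_0=\tfrac32$.

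The only step needing care is the comparison transfer inside the corollary, namely that $h_t\le C_0h_t^0$ controls $A_t$ while $h_t\ge c_0h_t^0$ controls the quotient $H_t$. Off-diagonal terms of $h_t$ may produce a nonzero $L_t$, but the filling estimate and the trace cone bound are insensitive to $L_t$ once the two-sided comparison holds. The cone bound is insensitive because $m_h\ge\sqrt{c_0}\,m_{h^0}$ follows directly from the lower comparison. The filling estimate is insensitive because $H_t\ge c_0H_t^0$ follows from the quotient characterization by infimizing over central translates. Since the corollary's proof already handles both points, no new estimate is needed.
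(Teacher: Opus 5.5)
Your reduction is correct and matches the paper in substance. The statement is the case $m=2$, $r=1$, $L_t=0$ of Corollary~\ref{cor:all-two-step-anisotropic-powers} together with its comparison clause, and the paper's proof repeats that corollary's argument directly in this case: a triangular Hilbert--Schmidt bound giving $\kappa_{\rm an}^0(T)=O(T^\chi)$, integrability of $1/m_{h^0}$, horizontal-eigenvalue or volume growth, and transfer through the quotient characterization to $\kappa_{\rm an}\le(\sqrt{C_0}/c_0)\kappa_{\rm an}^0$. One wording fix: $\tfrac12\dd(xy)$ is not an invariant form, since $\sigma^1$ is simply the left-invariant form $\Theta$ written in the non-exponential coordinates $(x,y,z)$, and that is all your identification of the Schur data needs.
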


\begin{proof}
For $h_t^0$ the Schur data are
$A_t=A_1^2t^{2p_1}$,
$H_t=\operatorname{diag}(A_2^2t^{2p_2},A_3^2t^{2p_3})$, and $L_t=0$.
Set
\[
 f_2(t)=\frac{A_1}{A_2^2}t^{p_1-2p_2},\qquad
 f_3(t)=\frac{A_1}{A_3^2}t^{p_1-2p_3},\qquad
 D_j=1+p_1-2p_j\quad(j=2,3).
\]
For a based loop $(x,y)$, its central area and energy are
\[
 Q=\int x\dot y=-\int y\dot x,\qquad
 E=\int\left(\frac{\dot x^2}{f_2}+
                    \frac{\dot y^2}{f_3}\right).
\]
Writing $\dot x=\sqrt{f_2}\,u$ and $\dot y=\sqrt{f_3}\,v$, the
Hilbert-Schmidt bound for the triangular integral gives
\[
 |Q|\le V_{23}(T)\|u\|_2\|v\|_2
       \le\tfrac12V_{23}(T)E,\qquad
 V_{23}(T)^2=\int_0^T f_3(t)\int_0^t f_2(r)\,\dd r\,\dd t.
\]
The alternative expression for $Q$ gives the estimate with the two
indices interchanged.  Since $D_2+D_3=2\chi>0$, at least one $D_j$ is
positive.  If $D_2>0$, direct integration gives
\[
 V_{23}(T)^2=
 \frac{A_1^2}{A_2^2A_3^2}
 \frac{T^{2\chi}}{2\chi D_2}.
\]
This calculation remains valid when $D_3\le0$: only the inner
$f_2$ integral is evaluated separately at zero.  The other ordering
applies when $D_3>0$.  Consequently, with $D=\max\{D_2,D_3\}>0$,
\begin{equation}
 \kappa_{\rm an}^0(T)\le
 \frac{A_1}{2A_2A_3\sqrt{2\chi D}}T^\chi.
 \label{eq:power-ordered-bound}
\end{equation}
The least scale of $h_t^0$ satisfies
\[
 (m_{h^0}(t))^{-1}
 =\max_i A_i^{-1}t^{-p_i}
 \le\sum_i A_i^{-1}t^{-p_i},
\]
which is integrable at zero.  If a horizontal exponent is negative,
then $\lambda_{\max}(H_t^0)\to\infty$.  If $p_1+p_2+p_3<0$, the
invariant volume density $A_1A_2A_3t^{p_1+p_2+p_3}$ diverges.

Restriction to the center and the quotient characterization
\eqref{eq:quotient-characterization} show that
$c_0A_t^0\le A_t\le C_0A_t^0$ and
$c_0H_t^0\le H_t\le C_0H_t^0$.  Therefore the defining variational
quotients satisfy $J_I\ge c_0C_0^{-1/2}J_I^0$, and hence
\[
 \kappa_{\rm an}(T)\le\frac{\sqrt{C_0}}{c_0}
 \kappa_{\rm an}^0(T)\longrightarrow0.
\]
Also $m_h\ge\sqrt{c_0}\,m_{h^0}$ and
$\lambda_{\max}(H_t)\ge c_0\lambda_{\max}(H_t^0)$.
If the horizontal exponent is negative, the diameter alternative of
Theorem~\ref{thm:two-step-obstruction} applies after decreasing $T$.
In the volume case, comparison of determinants gives a volume density
at least $c_0^{3/2}A_1A_2A_3t^{p_1+p_2+p_3}$, and the volume
alternative of the same theorem applies.
\end{proof}

When $p_1+p_2+p_3=1$, the condition $\chi>0$ is exactly $p_1>0$,
and \eqref{eq:power-ordered-bound} has order $T^{2p_1}$.
The geometric criterion itself requires no field equation or Kasner
constraint.  Its exponent $\chi$ measures the decay of the ordered
bracket-area estimate.

\section{Arithmetic of compact Kasner slices}
\label{app:kasner-arithmetic}

The diameter of a compact Kasner slice depends on its lattice, although
the continuous-inextendibility conclusion does not.  Irrational lattices
also occur in cosmological causal pasts in \cite[Example~7(d)]{Lott2020}.

\begin{corollary}[Arithmetic dependence of the spatial diameter]
\label{cor:arithmetic-kasner}
Replace $\vartheta$ by $\theta\in\R$ in
Proposition~\ref{prop:irrational-kasner}, and write $h_t^\theta$ for
the resulting spatial metric.  Each spacetime is vacuum and globally
$C^0$-inextendible, and $\Vol(\mathbb T^3,h_t^\theta)=t$.
\begin{enumerate}[label=\textup{(\alph*)}]
\item If $c_0>0$, $1\le w<2$ and
\begin{equation}
 |p-\theta q|\ge c_0|q|^{-w}
 \quad(p\in\mathbb Z,\ q\in\mathbb Z\setminus\{0\}),
 \label{eq:kasner-diophantine}
\end{equation}
then, for $0<t\le1$ and constants $c,C>0$,
\begin{equation}
 c t^{1/6}\le\diam(\mathbb T^3,h_t^\theta)
 \le C t^{(2-w)/(3(w+1))}.
 \label{eq:kasner-diophantine-diameter}
\end{equation}
Thus $w=1$ gives diameter comparable to $t^{1/6}$.
For each $1<w<2$, \eqref{eq:kasner-diophantine} holds for almost
every $\theta$, with $c_0$ depending on $\theta,w$.
\item If $\theta$ is irrational and $q_j\to\infty$ are positive
integers with $p_j\in\mathbb Z$ and
\begin{equation}
 \epsilon_j=|p_j-\theta q_j|,\qquad q_j^2\epsilon_j\longrightarrow0,
 \label{eq:kasner-liouville}
\end{equation}
then, at $t_j=\epsilon_j/q_j\to0$,
\begin{equation}
 \diam(\mathbb T^3,h_{t_j}^\theta)
 \ge c(q_j^2\epsilon_j)^{-1/3}\longrightarrow\infty.
 \label{eq:kasner-liouville-diameter}
\end{equation}
An example is $\theta=\sum_{n\ge1}10^{-n!}$.
\end{enumerate}
\end{corollary}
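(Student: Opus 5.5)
The plan is to treat the metric $h_t^\theta=t^{-2/3}\eta_1^2+t^{4/3}(\eta_2^2+\dd z^2)$ exactly as in Proposition~\ref{prop:irrational-kasner}, and to reduce every diameter statement to successive minima of a planar lattice.

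\emph{Step 1: the $\theta$-independent assertions.} Vacuum, future completeness, the lifted bound \eqref{eq:irrational-kasner-lifted} (via the affine function $q$ with $\dd q=\eta_1$), the application of Corollary~\ref{cor:cover-transfer}, and the volume identity $\eta_1\wedge\eta_2=\dd x\wedge\dd y$ all go through verbatim, since none of them used the value of $\vartheta$. The lower bound $\diam\ge ct^{1/6}$ comes from the same area argument: the flat $(x,y)$-torus has area $t^{1/3}\le\pi D_t^2$.

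\emph{Step 2: reduction to a planar lattice.} Write $\Lambda_t\subset\R^2$ for $\mathbb Z^2$ with the flat metric $t^{-2/3}\eta_1^2+t^{4/3}\eta_2^2$. This lattice has covolume $t^{1/3}$ and successive minima $\lambda_1\le\lambda_2$ with $\lambda_1\lambda_2\asymp t^{1/3}$. In dimension two, the covering radius $\mu$ satisfies $\lambda_2/2\le\mu\le C\lambda_2$. The diameter of the torus factor is comparable to $\mu$, and the circle factor adds at most $t^{2/3}$. Hence
\[
 \diam(\mathbb T^3,h_t^\theta)\asymp \frac{t^{1/3}}{\lambda_1(t)}+O(t^{2/3}).
\]
For an integer vector $(p,-q)$ one has $|(p,-q)|^2=c_\theta^{-2}\bigl(t^{-2/3}(p-\theta q)^2+t^{4/3}(\theta p+q)^2\bigr)$.

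\emph{Step 3: part (a).} I will bound $\lambda_1$ from below. If $q=0$, the first term gives length at least $t^{-1/3}|p|/c_\theta$, which is large. If $q\ne0$, split according to whether $|p|\le2(|\theta|+1)|q|$. If not, then $|p-\theta q|\gtrsim|p|$ and the length is again large. If so, then $|\theta p+q|\gtrsim|q|$ up to a constant (after a short check covering small $|p|/|q|$), and \eqref{eq:kasner-diophantine} gives
\[
 |(p,-q)|\gtrsim t^{-1/3}|q|^{-w}+t^{2/3}|q|.
\]
Minimizing over $|q|$ puts the balance at $|q|\asymp t^{-1/(w+1)}$ and yields $\lambda_1\gtrsim t^{2/3-1/(w+1)}$. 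Therefore $\diam\lesssim t^{1/(w+1)-1/3}=t^{(2-w)/(3(w+1))}$, and the $t^{2/3}$ term is smaller. For $1<w<2$, the Diophantine condition holds for almost every $\theta$ by the Borel--Cantelli lemma, because $\sum_q q\cdot q^{-w-1}<\infty$ (Khintchine).

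\emph{Step 4: part (b).} Take $v_j=(p_j,-q_j)$ at $t_j=\epsilon_j/q_j$. Both terms in $|v_j|^2$ are then of order $\epsilon_j^{4/3}q_j^{2/3}$, so $\lambda_1\lesssim\epsilon_j^{2/3}q_j^{1/3}$. Hence
\[
 \diam\ge\mu\ge\frac{\lambda_2}{2}\gtrsim\frac{t_j^{1/3}}{\lambda_1}=(q_j^2\epsilon_j)^{-1/3},
\]
and the torus-factor diameter bounds the product diameter from below. For $\theta=\sum_{n\ge1}10^{-n!}$, take $q_j=10^{j!}$ and $p_j=q_j\sum_{n\le j}10^{-n!}$. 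Then $\epsilon_j\le2\cdot10^{j!-(j+1)!}$, so $q_j^2\epsilon_j\to0$.

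The main obstacle is Step 3's uniform lower bound for $|\theta p+q|$ relative to $|q|$ across all integer pairs. I would handle it by a case split on the ratio $|p|/|q|$, using in the bad ratio range that $|p-\theta q|$ itself is then large.
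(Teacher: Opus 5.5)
Your proposal is correct and is essentially the paper's proof, written on the vector lattice instead of the covector lattice. In two dimensions the integral covector lattice is the vector lattice rotated by a right angle and rescaled by the covolume $t^{1/3}$. Hence your $t^{1/3}/\lambda_1(t)$ is exactly the paper's $1/\ell(t)$, your test vector $(p_j,-q_j)$ is the paper's covector $(q_j,p_j)$, and your Minkowski and covering-radius bounds play the role of its estimate $1/(2\ell)\le D_2\le 2/(\sqrt3\,\ell)$.

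The intermediate claim in Step~3, that $|p|\le2(|\theta|+1)|q|$ alone gives $|\theta p+q|\gtrsim|q|$, is false (take $p\approx-q/\theta$). Your stated remedy does work, and it follows in one line from $(p-\theta q)^2+(\theta p+q)^2=(1+\theta^2)(p^2+q^2)$, which is the paper's rewriting $Q_t=t^{2/3}r^2+(t^{-4/3}-t^{2/3})v^2$. You should also record that $\sum_n10^{-n!}$ is irrational: a rational value with denominator $b$ would force $\epsilon_j\ge1/b$ whenever $\epsilon_j>0$.
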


\begin{proof}
The vacuum, causal-length, lifted-diameter and volume calculations in
Proposition~\ref{prop:irrational-kasner} use only an orthonormal
rotation of $\dd x,\dd y$, so apply for every $\theta$; future
completeness also follows from
Theorem~\ref{thm:homogeneous-future}\textup{(i)}.
The planar factor has area $t^{1/3}$, giving the lower diameter
bound by the same disk-covering argument.

We record the planar estimate used throughout this appendix.  For a
Euclidean lattice $\Lambda$ of covolume $F$, let $\ell$ be the
shortest nonzero dual length and $D_2=\diam(\R^2/\Lambda)$.
A shortest dual vector is primitive; in orthonormal coordinates it
is $(0,\ell)$, and $\Lambda$ has a basis
$(F\ell,0),(x,1/\ell)$.  Successive subtraction of these basis
vectors puts every point in a rectangle with half-widths
$F\ell/2$ and $1/(2\ell)$.  The integral covector also defines a
surjective circle map, whence
\[
 \frac1{2\ell}\le D_2
 \le\frac12\sqrt{\ell^{-2}+F^2\ell^2}.
\]
The disk of radius $\ell/2$ embeds in the dual torus of area $1/F$,
so $F\ell^2\le4/\pi$.  The preceding inequalities imply both
\begin{align}
 \frac1{2\ell}&\le D_2\le\frac2{\sqrt3\,\ell},
 \label{eq:planar-dual-diameter}\\
 0&\le D_2-\frac1{2\ell}\le\frac{F\ell}{2}
       \le\sqrt{\frac F\pi}.
 \label{eq:planar-sharp-diameter-error}
\end{align}
For the planar Kasner factor, with $c_\theta=\sqrt{1+\theta^2}$,
the integral covectors have squared norms
\begin{equation}
 Q_t(m,n)=c_\theta^{-2}
 \left[t^{2/3}(m+\theta n)^2+t^{-4/3}(n-\theta m)^2\right].
 \label{eq:kasner-dual-quadratic}
\end{equation}
Put $\ell(t)^2=\min_{(m,n)\ne(0,0)}Q_t(m,n)$ and
$D_\theta(t)=\diam(\mathbb T^3,h_t^\theta)$.  Since $F=t^{1/3}$
and the remaining circle has diameter $t^{2/3}/2$, the product metric
satisfies
\begin{equation}
 0\le D_\theta(t)-\frac1{2\ell(t)}
 \le\frac{t^{1/6}}{\sqrt\pi}+\frac{t^{2/3}}2.
 \label{eq:kasner-diameter-dual-error}
\end{equation}

Under \eqref{eq:kasner-diophantine}, put
$r=\sqrt{m^2+n^2}$ and $v=(n-\theta m)/c_\theta$.
Then $|v|\ge c_1r^{-w}$ for every nonzero integer pair (treat
$m=0$ separately).  For $t\le2^{-1/2}$,
\[
 Q_t=t^{2/3}r^2+(t^{-4/3}-t^{2/3})v^2
 \ge t^{2/3}r^2+\tfrac12c_1^2t^{-4/3}r^{-2w}
 \ge c_2t^{2(w-2)/(3(w+1))},
\]
where the last inequality splits at $r=t^{-1/(w+1)}$.
Equation~\eqref{eq:planar-dual-diameter}, the circle factor and a
compact-time comparison prove \eqref{eq:kasner-diophantine-diameter}.
On a bounded $\theta$-interval, the set where
$|p-\theta q|<q^{-w}$ for some $p$ has measure $O(q^{-w})$.
Borel-Cantelli, followed by a decrease of $c_0$ for finitely many
denominators and exclusion of rational $\theta$, proves the
almost-everywhere assertion.

For \textup{(b)}, substitution of $(m,n)=(q_j,p_j)$ gives
$Q_{t_j}(q_j,p_j)\le C_\theta q_j^{4/3}\epsilon_j^{2/3}$.
The lower circle bound proves \eqref{eq:kasner-liouville-diameter}.
For the displayed number, use $q_j=10^{j!}$ and its $j$th truncated
sum $p_j/q_j$: the positive tail gives
$0<\epsilon_j\le2q_j^{-j}$, hence $q_j^2\epsilon_j\to0$.
It also proves irrationality, since a rational limit of denominator
$b$ would give $\epsilon_j\ge1/b$.
\end{proof}

\begin{proposition}[Exact arithmetic criterion for diameter collapse]
\label{prop:kasner-arithmetic-criterion}
For irrational $\theta$, define
$A(\theta)=\liminf_{q\to\infty}q^2\|q\theta\|_{\R/\mathbb Z}$,
where $q$ is a positive integer and
$\|v\|_{\R/\mathbb Z}=\min_{p\in\mathbb Z}|v-p|$.
Then
\begin{equation}
 \limsup_{t\downarrow0}D_\theta(t)
 =\frac{A(\theta)^{-1/3}}
 {2^{2/3}\sqrt3\,(1+\theta^2)^{1/6}},
 \label{eq:kasner-exact-diameter-limsup}
\end{equation}
with $0^{-1/3}=\infty$ and $\infty^{-1/3}=0$.
Thus the diameter tends to zero exactly when $A(\theta)=\infty$,
has a finite positive upper limit exactly when $0<A(\theta)<\infty$,
and has unbounded excursions exactly when $A(\theta)=0$.
\end{proposition}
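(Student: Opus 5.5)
The plan is to reduce everything to the shortest dual length $\ell(t)$ and then to an explicit one-variable minimization. By \eqref{eq:kasner-diameter-dual-error},
\[
 D_\theta(t)=\frac1{2\ell(t)}+O(t^{1/6}).
\]
Hence
\[
 \limsup_{t\downarrow0}D_\theta(t)=\frac12\Bigl(\liminf_{t\downarrow0}\ell(t)^2\Bigr)^{-1/2},
\]
with the conventions $0^{-1/2}=\infty$ and $\infty^{-1/2}=0$. It therefore suffices to prove
\begin{equation*}
 \liminf_{t\downarrow0}\ell(t)^2=\tfrac32\,2^{1/3}c_\theta^{2/3}A(\theta)^{2/3},
 \qquad c_\theta=\sqrt{1+\theta^2}.
\end{equation*}
Substituting this value gives the constant $\bigl(2\cdot\sqrt{3/2}\cdot2^{1/6}c_\theta^{1/3}\bigr)^{-1}A^{-1/3}=A^{-1/3}/(2^{2/3}\sqrt3\,c_\theta^{1/3})$, which is \eqref{eq:kasner-exact-diameter-limsup}.

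\emph{Heuristic model.} In \eqref{eq:kasner-dual-quadratic} take $(m,n)=(q,p)$ with $q\ge1$, $p$ the nearest integer to $q\theta$, and $\epsilon=|p-\theta q|=\|q\theta\|$. Then $m+\theta n=c_\theta^2q+O(\epsilon)$, and with $s=t^{2/3}$,
\[
 Q_t(q,p)=\bigl(1+O(\epsilon/q)\bigr)\,c_\theta^2q^2s+c_\theta^{-2}\epsilon^2s^{-2}.
\]
Write $f_{q,\epsilon}(s)=c_\theta^2q^2s+c_\theta^{-2}\epsilon^2s^{-2}$ for the model function. It has a unique minimizer
\[
 s_*^3=\frac{2\epsilon^2}{c_\theta^4q^2},
\]
and its minimum value is
\[
 \min_s f_{q,\epsilon}=\tfrac32 c_\theta^2q^2s_*=\tfrac32\,2^{1/3}c_\theta^{2/3}(q^2\epsilon)^{2/3}.
\]

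\emph{Upper bound for $\liminf\ell^2$.} Pick $q_j\to\infty$ with $q_j^2\|q_j\theta\|\to A$; for $A=\infty$ there is nothing to prove. Set $t_j=s_*(q_j,\epsilon_j)^{3/2}$. Since $\epsilon_j\lesssim q_j^{-2}$ along this sequence, $s_*\to0$, so $t_j\to0$. Evaluating $\ell(t_j)^2\le Q_{t_j}(q_j,p_j)$ then gives the upper bound, because the relative correction $O(\epsilon_j/q_j)$ tends to zero.

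\emph{Lower bound for $\liminf\ell^2$.} Suppose $\ell(t_k)^2\le M$ along a sequence $t_k\to0$; otherwise the liminf is infinite and the bound is trivial. Let $(m_k,n_k)$ be a minimizing pair.
\begin{enumerate}[label=\textup{(\alph*)}]
\item Boundedness of $Q_{t_k}$ forces $|n_k-\theta m_k|\le Ct_k^{2/3}\to0$ and $|m_k+\theta n_k|\le Ct_k^{-1/3}$.
\item Irrationality of $\theta$ then gives $|m_k|\to\infty$. Use the sign symmetry $(m,n)\mapsto(-m,-n)$ to take $q_k=|m_k|$.
\item The integer $n_k$ must be the nearest integer to $q_k\theta$, so $\epsilon_k\ge\|q_k\theta\|$, and the relative error $O(\epsilon_k/q_k)$ tends to zero.
\item Bounding $Q_{t_k}(m_k,n_k)$ below by $(1-o(1))\min_s f_{q_k,\epsilon_k}$, and using $\liminf q_k^2\|q_k\theta\|\ge A$ because $q_k\to\infty$, yields the matching lower bound.
\end{enumerate}
The same argument covers the extreme cases $A=0$ and $A=\infty$.

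The main obstacle is step (d): one has to make the replacement of the exact form $Q_t$ by the model $f_{q,\epsilon}$ uniform over all minimizing pairs, not just along a chosen good sequence. The plan is to rely on the a priori bounds from (a) and (b), which force every relevant pair into the regime $q\to\infty$ with $\epsilon/q\to0$, so that the comparison between $Q_t$ and $f_{q,\epsilon}$ holds uniformly there.
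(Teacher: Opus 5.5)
Your proposal is correct and follows essentially the same route as the paper: you reduce via \eqref{eq:kasner-diameter-dual-error} to $\liminf_{t\downarrow0}\ell(t)^2=\tfrac32\,2^{1/3}c_\theta^{2/3}A(\theta)^{2/3}$, get the upper bound from good approximants $(q_j,p_j)$ evaluated at the optimizing time, and get the lower bound from minimizing pairs that irrationality forces to have $q_k\to\infty$ and $\epsilon_k\to0$. The uniformity worry in your step (d) is avoided in the paper by the exact inequality $Q_t(m,n)\ge 3\cdot2^{-2/3}c_\theta^{-2}\bigl(|m+\theta n|^2|n-\theta m|\bigr)^{2/3}$, which holds for every pair and every $t$; one then needs only $|m_k+\theta n_k|/q_k\to c_\theta^2$ along the sequence, and your relative-error bound already supplies this.
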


\begin{proof}
We prove, in the extended nonnegative real numbers,
\begin{equation}
 \liminf_{t\downarrow0}\ell(t)^2
 =K_\theta A(\theta)^{2/3},\qquad
 K_\theta=3\,2^{-2/3}c_\theta^{2/3}.
 \label{eq:kasner-exact-dual-liminf}
\end{equation}
The elementary minimum of $at^{2/3}+bt^{-4/3}$ for $a,b>0$
is $3\,2^{-2/3}a^{2/3}b^{1/3}$, attained at $t^2=2b/a$.
Irrationality makes both terms of \eqref{eq:kasner-dual-quadratic}
nonzero for a nonzero integer pair, so
\begin{equation}
 Q_t(m,n)\ge3\,2^{-2/3}c_\theta^{-2}
 \bigl(|m+\theta n|^2|n-\theta m|\bigr)^{2/3}.
 \label{eq:kasner-dual-optimized-lower}
\end{equation}
If $L=\liminf_{t\downarrow0}\ell(t)^2$ is finite, choose minimizing
pairs $(m_j,n_j)$ at $t_j\downarrow0$ with $Q_{t_j}(m_j,n_j)\to L$.
Boundedness implies $\delta_j=|n_j-\theta m_j|\to0$ and
$q_j=|m_j|\to\infty$: otherwise a fixed nonzero pair would have
$n_j=\theta m_j$.  Therefore
\[
 \frac{|m_j+\theta n_j|}{q_j}\to c_\theta^2,\qquad
 q_j^2\delta_j\ge q_j^2\|q_j\theta\|_{\R/\mathbb Z}.
\]
Equation~\eqref{eq:kasner-dual-optimized-lower} yields
$L\ge K_\theta A(\theta)^{2/3}$; in particular $A(\theta)=\infty$
forces $L=\infty$.  The lower bound is automatic if $L=\infty$.
For the reverse inequality when $A(\theta)<\infty$, take nearest
integers $p_j$ with $q_j^2\epsilon_j\to A(\theta)$, where
$\epsilon_j=|p_j-\theta q_j|$.  At
$s_j=\sqrt2\epsilon_j/|q_j+\theta p_j|\to0$ the scalar minimum
is attained, and
\[
 \ell(s_j)^2\le Q_{s_j}(q_j,p_j)
 =3\,2^{-2/3}c_\theta^{-2}
 \left(\frac{|q_j+\theta p_j|^2}{q_j^2}
             q_j^2\epsilon_j\right)^{2/3}
 \longrightarrow K_\theta A(\theta)^{2/3}.
\]
This proves \eqref{eq:kasner-exact-dual-liminf}.
Equation~\eqref{eq:kasner-diameter-dual-error} and reciprocation give
\eqref{eq:kasner-exact-diameter-limsup}, including its extended-real cases.
\end{proof}

\begin{proposition}[Collapse subsequences and rational slopes]
\label{prop:kasner-arithmetic-completion}
For every irrational $\theta$, there are $t_j\downarrow0$ and
$c,C>0$ such that
\begin{equation}
 c t_j^{1/6}\le D_\theta(t_j)\le C t_j^{1/6}.
 \label{eq:kasner-universal-collapse-subsequence}
\end{equation}
Thus $\liminf_{t\downarrow0}D_\theta(t)=0$.
For $\theta=a/b$, where $a\in\mathbb Z$, $b\in\mathbb N$ and
$\gcd(a,b)=1$, one instead has
\begin{equation}
 D_{a/b}(t)=\frac{t^{-1/3}}{2\sqrt{a^2+b^2}}+O(t^{5/3}).
 \label{eq:kasner-rational-diameter-asymptotic}
\end{equation}
\end{proposition}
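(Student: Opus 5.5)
The plan is to work entirely through the planar dual-lattice reduction already set up in the proof of Corollary~\ref{cor:arithmetic-kasner}. Equation~\eqref{eq:kasner-diameter-dual-error} gives
\[
 D_\theta(t)=\frac1{2\ell(t)}+O(t^{1/6}),
\]
and \eqref{eq:planar-dual-diameter} gives $D_\theta(t)\asymp\ell(t)^{-1}$ up to the circle term $t^{2/3}/2$. Both assertions therefore reduce to statements about
\[
 \ell(t)^2=\min_{(m,n)\ne(0,0)}Q_t(m,n),
\]
with $Q_t$ as in \eqref{eq:kasner-dual-quadratic}.

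\textbf{Irrational case.} For the collapse subsequence I would use the continued-fraction convergents $p_j/q_j$ of $\theta$. They satisfy $\epsilon_j=|p_j-\theta q_j|<1/q_{j+1}\le1/q_j$, and by Legendre's theorem there is a one-sided lower bound $\epsilon_j\ge 1/(2q_{j+1})$.

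I would choose $t_j$ so that the two terms of $Q_t$ are balanced at a scale where \emph{every} integer pair is forced to be large, rather than balancing them for the single pair $(q_j,p_j)$. The natural choice is
\[
 t_j^{-1}\asymp q_j/\epsilon_j\asymp q_jq_{j+1}.
\]
With this choice:
\begin{enumerate}[label=\textup{(\alph*)}]
\item The pair $(q_j,p_j)$ has $Q\asymp t^{2/3}q_j^2+t^{-4/3}\epsilon_j^2$, which is not automatically of order $t^{1/3}$.
\item So I would instead take $t_j$ with $t_j^{2/3}q_{j+1}^2\asymp t_j^{-4/3}\epsilon_j^{2}$, i.e.\ $t_j\asymp\epsilon_j/q_{j+1}$.
\item By the best-approximation property, any pair with $|m|<q_{j+1}$ has $|n-\theta m|\ge\epsilon_j$. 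This bounds the second term below by $t^{-4/3}\epsilon_j^2\asymp t^{2/3}q_{j+1}^2$.
\item Pairs with $|m|\ge q_{j+1}$ have first term at least $t^{2/3}q_{j+1}^2$. The case $m=0$ gives $Q\ge t^{-4/3}n^2\gg$ the threshold.
\end{enumerate}

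Hence $\ell(t_j)^2\gtrsim t_j^{2/3}q_{j+1}^2$. Using $\epsilon_j\asymp 1/q_{j+1}$ we get $t_j\asymp q_{j+1}^{-2}$, so $\ell(t_j)^2\gtrsim t_j^{-1/3}$. This gives $D_\theta(t_j)\lesssim t_j^{1/6}$, while the lower bound $ct^{1/6}$ is already proved.

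The main obstacle is the bookkeeping between $q_j$ and $q_{j+1}$ for badly approximable versus Liouville-like $\theta$. I expect to check carefully that the best-approximation inequality $|n-\theta m|\ge\epsilon_j$ for $0<|m|<q_{j+1}$ suffices, and that $\epsilon_j q_{j+1}\in[1/2,1]$ makes all constants uniform in $j$.

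\textbf{Rational case.} Here the minimizing dual covector becomes exact: $(m,n)=(b,a)$ makes $n-\theta m=0$ and gives
\[
 Q_t=t^{2/3}(b+\theta a)^2/c_\theta^2=t^{2/3}(a^2+b^2)/b^2\cdot b^2/(a^2+b^2)^{\dots}.
\]
I would simplify this directly to $t^{2/3}(a^2+b^2)$ using $c_\theta^2=(a^2+b^2)/b^2$. Every other primitive pair has $|n-\theta m|\ge1/b$, so its $Q_t\gtrsim t^{-4/3}$ is far larger. Thus
\[
 \ell(t)=t^{1/3}\sqrt{a^2+b^2}.
\]
The planar lattice is then a rectangle-like lattice with basis $(F\ell,0),(x,1/\ell)$ with $F\ell=t^{1/3}t^{1/3}\sqrt{a^2+b^2}$. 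I would refine \eqref{eq:planar-sharp-diameter-error} by computing the exact diameter of $\R^2/\Lambda$ to second order: the excess over $1/(2\ell)$ is $O(F^2\ell^3)=O(t^{5/3})$, because $\sqrt{(2\ell)^{-2}+(F\ell/2)^2}-(2\ell)^{-1}=O(F^2\ell^3)$. I would then add the circle factor in the product metric in quadrature rather than linearly, which gives a correction $O(t^{4/3}\ell)=O(t^{5/3})$.

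I expect this quadrature-and-second-order step to be the only delicate point. The reason is that the crude bound \eqref{eq:kasner-diameter-dual-error} only gives $O(t^{1/6})$, and $D_{a/b}(t)$ must instead be computed from the exact Pythagorean diameter formula for the product of a nearly rectangular planar torus with a circle.
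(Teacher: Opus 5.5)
Your proposal is correct. The rational case is essentially the paper's argument, while the irrational case takes a genuinely different route.

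\textbf{Rational slopes.} For $\theta=a/b$ the paper also uses $\ell(t)^2=(a^2+b^2)t^{2/3}$ for $t<(a^2+b^2)^{-1}$. It combines the rectangle bound $1/(2\ell)\le D_2\le\frac12\sqrt{\ell^{-2}+F^2\ell^2}$ with the orthogonal circle added in quadrature. It then bounds $D_{a/b}(t)^2$ on both sides and divides the difference of squares by $D_{a/b}(t)+t^{-1/3}/(2\sqrt{a^2+b^2})$. That packages your two-step excess estimate in one line.

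\textbf{Irrational slopes: the paper's route.} The paper uses no continued fractions. It writes $Q_t(v)=|S_tv|^2$ with $\det S_t=t^{-1/3}$. A unique minimizing pair $\pm v$ is locally constant, but it cannot persist as $t\downarrow0$: one has $\ell(t)^2\le4t^{-1/3}/\pi$, while any fixed $v$ has $Q_t(v)\ge c_\theta^{-2}(n-\theta m)^2t^{-4/3}$. At the resulting switching times, two independent integer minimizers give $t_j^{-1/3}\le|\det(S_{t_j}v_j,S_{t_j}w_j)|\le\ell(t_j)^2$.

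\textbf{Irrational slopes: your route.} You fix the explicit times $t_j=\epsilon_j/q_{j+1}$ and bound every covector from below using two facts:
\begin{itemize}
\item Lagrange's best-approximation property: $|n-\theta m|\ge\epsilon_j$ whenever $0<|m|<q_{j+1}$.
\item The convergent estimate $\epsilon_jq_{j+1}\in(\tfrac12,1)$. This comes from $1/(q_j+q_{j+1})<\epsilon_j<1/q_{j+1}$, not from Legendre's theorem as you wrote.
\end{itemize}

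\textbf{Comparison.} Your version identifies the collapse times concretely as $t_j\asymp q_{j+1}^{-2}$. This ties them to the convergents that govern Proposition~\ref{prop:kasner-arithmetic-criterion}. The paper's soft argument needs no arithmetic input and yields the absolute bound $\ell(t_j)^2\ge t_j^{-1/3}$.

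\textbf{Repair needed in step (d).} For $|m|\ge q_{j+1}$, the first term $c_\theta^{-2}t^{2/3}(m+\theta n)^2$ can be small on its own, since $m+\theta n$ may nearly vanish. Use instead the identity $(m+\theta n)^2+(n-\theta m)^2=c_\theta^2(m^2+n^2)$, which gives $Q_t(m,n)\ge t^{2/3}(m^2+n^2)$ for $t\le1$. Since $t_j<1$, your conclusion $\ell(t_j)^2\ge(2c_\theta^2)^{-1}t_j^{-1/3}$ stands.
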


\begin{proof}
For irrational $\theta$, write $Q_t(v)=|S_tv|^2$, where
\[
 S_t=c_\theta^{-1}
 \begin{pmatrix}t^{1/3}&\theta t^{1/3}\\
 -\theta t^{-2/3}&t^{-2/3}\end{pmatrix},\qquad
 \det S_t=t^{-1/3}.
\]
Every minimizing integer vector is primitive.  On a compact time
interval, uniform positive definiteness of $Q_t$ and the bound
$\ell(t)^2\le Q_t(1,0)$ leave only finitely many possible minimizers.
A unique minimizing pair $\{v,-v\}$ is therefore locally constant.
It cannot remain fixed on $(0,\tau)$: the dual disk bound gives
$\ell(t)^2\le4t^{-1/3}/\pi$, while each fixed $v=(m,n)\ne0$
has $Q_t(v)\ge c_\theta^{-2}(n-\theta m)^2t^{-4/3}$.
Hence every such interval contains two distinct minimizing pairs.
At a sequence of these times $t_j\downarrow0$, choose minimizing
primitive vectors $v_j,w_j$ not equal up to sign.  They are independent,
and therefore
\[
 t_j^{-1/3}\le|\det(S_{t_j}v_j,S_{t_j}w_j)|
 \le\ell(t_j)^2.
\]
Equation~\eqref{eq:kasner-diameter-dual-error} gives the upper
bound in \eqref{eq:kasner-universal-collapse-subsequence}; the
planar area gives the lower bound.

For $\theta=a/b$, put $R=a^2+b^2$.
Pairs with $bn-am=0$ are $k(b,a)$ and have
$Q_t=k^2Rt^{2/3}$; all others have $Q_t\ge R^{-1}t^{-4/3}$.
Thus $\ell(t)^2=Rt^{2/3}$ for $0<t<R^{-1}$.
The rectangle bound above and the orthogonal circle yield
\[
 \frac1{4Rt^{2/3}}\le D_{a/b}(t)^2
 \le\frac1{4Rt^{2/3}}+\frac{R+1}{4}t^{4/3}.
\]
Dividing the difference of squares by
$D_{a/b}(t)+t^{-1/3}/(2\sqrt R)\ge t^{-1/3}/\sqrt R$
gives a remainder at most $\sqrt R(R+1)t^{5/3}/4$.
\end{proof}

\begin{proposition}[Realization of every finite positive arithmetic value]
\label{prop:kasner-arithmetic-realization}
For every $A_*\in(0,\infty)$ there is an irrational $\theta\in(0,1)$
with $A(\theta)=A_*$.  In particular, the finite positive upper-limit
case in Proposition~\ref{prop:kasner-arithmetic-criterion} is nonempty.
\end{proposition}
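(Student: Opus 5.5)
The plan is to realize $A(\theta)=\liminf_{q\to\infty}q^2\|q\theta\|_{\R/\mathbb Z}$ through the continued fraction expansion $\theta=[0;a_1,a_2,\ldots]$, with convergents $p_k/q_k$. The first step is the standard identity
\[
 q_k\|q_k\theta\|_{\R/\mathbb Z}=\frac{1}{\theta_{k+1}+q_{k-1}/q_k},\qquad
 \theta_{k+1}=[a_{k+1};a_{k+2},\ldots],
\]
which gives
\[
 q_k^2\|q_k\theta\|=\frac{q_k}{\theta_{k+1}+q_{k-1}/q_k}.
\]
Since $\theta_{k+1}\ge a_{k+1}$, this quantity is comparable to $q_k/a_{k+1}$. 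For a non-convergent $q$ with $q_k\le q<q_{k+1}$, best approximation gives $\|q\theta\|\ge\|q_k\theta\|$. In fact $\|q\theta\|\ge\|q_{k-1}\theta\|\ge 1/(2q_k)$ unless $q$ is a convergent. So for non-convergents one has $q^2\|q\theta\|\ge q_k/2$, and this bound is eventually huge relative to $A_*$ provided $q_k\to\infty$. Hence $A(\theta)$ reduces to the liminf of the convergent expression along $k$.

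The construction is then to choose the partial quotients inductively so that $q_k/(\theta_{k+1}+q_{k-1}/q_k)$ converges to $A_*$ along a subsequence, while at all other indices it tends to infinity. I would use a two-phase recursion. Most partial quotients are set to $1$, and on a sparse set of indices $k_j$ one sets $a_{k_j+1}=\lfloor q_{k_j}/A_*\rfloor$, which is possible because $q_{k_j}\to\infty$. At these indices $\theta_{k_j+1}=a_{k_j+1}+O(1)$, with the $O(1)$ term lying in $(0,1)$, so the ratio becomes $A_*(1+O(A_*/q_{k_j}))\to A_*$. At indices where $a_{k+1}$ is bounded (equal to $1$) the ratio is at least $q_k/3\to\infty$. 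I still need to check the index $k_j$ itself and the index right before it. At $k=k_j$ the large quotient sits in $\theta_{k_j+1}$, which is the intended term. At $k=k_j-1$ we have $\theta_{k_j}=1+1/\theta_{k_j+1}\le2$, which is harmless. Nothing else is affected, because $\theta_{k+1}\le a_{k+1}+1$.

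The step I expect to be the main obstacle is the error control: $\theta_{k_j+1}$ depends on the entire tail. I would handle it by choosing $a_{k_j+1}$ to absorb the fractional correction, for instance $a=\lfloor q_{k_j}/A_*-q_{k_j-1}/q_{k_j}-\tfrac12\rfloor$. The remaining error is then bounded by $1$ in the denominator, which is $o(q_{k_j}/A_*)$, so it is negligible. Irrationality follows because the expansion is infinite, and $\theta\in(0,1)$ because $a_0=0$. Finally, the non-convergent lower bound must be reconciled with the sparse large quotients. For $q_{k_j}<q<q_{k_j+1}$ the bound $\|q\theta\|\ge\|q_{k_j}\theta\|$ yields $q^2\|q\theta\|\ge q_{k_j}^2\|q_{k_j}\theta\|\approx A_*$ at worst. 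This only makes the liminf at least $A_*$ and so does not spoil equality. Combining these estimates gives $A(\theta)=A_*$.
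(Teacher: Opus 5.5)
Your proof is correct and rests on the same two facts as the paper's: the identity $q_k^2|e_k|=q_k/(\alpha_{k+1}+q_{k-1}/q_k)$, with $e_k=q_k\theta-p_k$, and the best-approximation inequality for $q_k\le q<q_{k+1}$. The construction, however, differs.

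The paper sets $a_{n+1}=\max\{1,\lfloor q_n/A_*\rfloor\}$ at \emph{every} index. Then $q_n/a_{n+1}\to A_*$, so the whole sequence $q_n^2|e_n|$ converges to $A_*$. The inequality $q^2\|q\theta\|_{\R/\mathbb Z}\ge q_n^2|e_n|$ for $q_n\le q<q_{n+1}$ then gives $A(\theta)\ge A_*$ immediately, and the convergents give equality.

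You place large quotients only at a sparse set of indices $k_j$ and put $1$'s elsewhere. As a result, $A_*$ is reached only along a subsequence. You therefore need the extra case check that all other convergents, including $k_j-1$, have $q_k^2\|q_k\theta\|\ge q_k/3\to\infty$. This works, but it gains only bounded partial quotients off the sparse set, at the price of a case distinction.

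Your ``error control'' adjustment is also unnecessary. Since $a_{k+1}<\alpha_{k+1}<a_{k+1}+1$, any choice $a_{k_j+1}=q_{k_j}/A_*+O(1)$ with $a_{k_j+1}\ge1$ already works.

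One claim in your reduction is false and should be deleted. It is not true that $\|q\theta\|\ge\|q_{k-1}\theta\|\ge 1/(2q_k)$ for every non-convergent $q\in(q_k,q_{k+1})$. The intermediate denominators $q=q_{k-1}+jq_k$, $1\le j<a_{k+1}$, satisfy
\[
 \|q\theta\|=(a_{k+1}-j)|e_k|+|e_{k+1}|<|e_{k-1}|.
\]
For $j=a_{k+1}-1$ this is of order $1/(a_{k+1}q_k)$, far below $1/(2q_k)$. This is exactly what happens inside the intervals $(q_{k_j},q_{k_j+1})$ of your own construction.

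The bound $q^2\|q\theta\|\ge q/2\ge q_k/2$ does hold for $q$ that are not multiples of a convergent denominator. Its source is Legendre's theorem: $\|q\theta\|<1/(2q)$ forces the nearest fraction to be a convergent. It does not follow from the chain you wrote.

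You do not need it in any case. The best-approximation inequality $\|q\theta\|\ge\|q_k\theta\|$ for $q_k\le q<q_{k+1}$, which you also state, gives $q^2\|q\theta\|\ge q_k^2\|q_k\theta\|$. That alone reduces $A(\theta)$ to the lower limit along convergents, exactly as in the paper.
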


\begin{proof}
Construct $\theta=[0;a_1,a_2,\ldots]$ by setting
$q_{-1}=0$, $q_0=1$, $p_{-1}=1$, $p_0=0$ and recursively
\[
 a_{n+1}=\max\{1,\lfloor q_n/A_*\rfloor\},\qquad
 (p_{n+1},q_{n+1})=a_{n+1}(p_n,q_n)+(p_{n-1},q_{n-1}).
\]
Then $q_n\to\infty$, $a_{n+1}\to\infty$ and
\begin{equation}
 q_n/a_{n+1}\longrightarrow A_*.
 \label{eq:kasner-cf-digit-choice}
\end{equation}
The recurrence gives
$p_nq_{n-1}-p_{n-1}q_n=(-1)^{n-1}$.
For the tail $\alpha_{n+1}=[a_{n+1};a_{n+2},\ldots]$ it gives
\[
 \theta=\frac{p_n\alpha_{n+1}+p_{n-1}}
 {q_n\alpha_{n+1}+q_{n-1}},\qquad
 e_n=q_n\theta-p_n=\frac{(-1)^n}{q_n\alpha_{n+1}+q_{n-1}}.
\]
These nonzero errors tend to zero, proving irrationality, and
$a_{n+1}<\alpha_{n+1}<a_{n+1}+1$ gives
\begin{equation}
 q_n^2|e_n|=\frac{q_n}{\alpha_{n+1}+q_{n-1}/q_n}
 \longrightarrow A_*.
 \label{eq:kasner-cf-error-limit}
\end{equation}
To control other denominators, the determinant identity writes
$(p,q)=u(p_n,q_n)+v(p_{n+1},q_{n+1})$ with integer $u,v$.
If $0<q<q_{n+1}$, either $v=0$, $u\ne0$, or $u,v$ have
opposite signs.  Since $e_n,e_{n+1}$ have opposite signs, both
cases give $|q\theta-p|\ge|e_n|$.
For large $n$, $p_n$ is a nearest integer because $|e_n|<1/2$.
Thus $q_n\le q<q_{n+1}$ implies
$q^2\|q\theta\|_{\R/\mathbb Z}\ge q_n^2|e_n|$.
Taking lower limits, and using the convergents for the reverse
inequality, proves $A(\theta)=A_*$.
\end{proof}

These propositions realize collapse, finite positive diameter upper
limits, and unbounded excursions among irrational slopes; every such
slope has collapse subsequences.  Rational slopes have the divergent
asymptotic~\eqref{eq:kasner-rational-diameter-asymptotic}.
All cases retain the same local vacuum geometry and continuous
inextendibility.

\section{Spectral formulas for the filling constants}
\label{app:area-spectrum}

\subsection{Exact constants with a conformal central metric}
\label{subsec:conformal-central-spectrum}

The horizontal metric need not be a scalar block for the variational
constant to admit a finite-dimensional characterization.  Suppose that
the central metric is conformal to a fixed inner product $A_0$ on $Z$:
\begin{equation}
 A_t=a(t)^2A_0,\qquad a(t)>0.
 \label{eq:conformal-central-metric}
\end{equation}
Fix an auxiliary inner product on $V$, put $d=\dim V$, and use this
inner product to represent $H_t$ by a positive-definite symmetric
operator.  For $z\in Z$ define the skew-adjoint operator $J_z$ on $V$ by
\begin{equation}
 \langle J_zv,w\rangle=A_0\bigl(z,\mathsf b(v,w)\bigr),
 \qquad M(t)=\frac{H_t}{N(t)a(t)}.
 \label{eq:central-direction-operator}
\end{equation}
For $I=[\alpha,\beta]\Subset(t_-,T)$ set
\begin{align}
 E_I(\xi)&=\int_I\langle M(t)\dot\xi,\dot\xi\rangle\,\dd t,
 & Q_{I,z}(\xi)&=A_0\bigl(z,Q_I(\xi)\bigr),\notag\\
 c_z(I)&=\sup_{0\ne\xi\in W^{1,2}_0(I;V)}
                  \frac{|Q_{I,z}(\xi)|}{E_I(\xi)}.
 \label{eq:directional-area-constant}
\end{align}
These definitions allow $J_z$ to be singular, or identically zero.

\begin{proposition}[Central directions and the endpoint equation]
\label{prop:central-endpoint-spectrum}
Assume \eqref{eq:conformal-central-metric}, with arbitrary smooth positive
horizontal metric $H_t$ and arbitrary horizontal-central coupling $L_t$.
For $\omega\in\R$ and $z\in Z$, let $\Phi_{\omega,z}$ solve
\begin{equation}
 \dot\Phi_{\omega,z}(t)
   =\omega J_zM(t)^{-1}\Phi_{\omega,z}(t),
 \qquad \Phi_{\omega,z}(\alpha)=I_d,
 \label{eq:general-area-momentum}
\end{equation}
and define the endpoint matrix
\begin{equation}
 B_{\omega,z}(I)
   =\int_\alpha^\beta M(t)^{-1}\Phi_{\omega,z}(t)\,\dd t.
 \label{eq:general-area-endpoint}
\end{equation}
If $J_z\ne0$, the set
\[
 \{\omega\in\R\setminus\{0\}:\det B_{\omega,z}(I)=0\}
\]
is nonempty, and the minimum
\begin{equation}
 \omega_{*,z}(I)
 =\min\{|\omega|:\omega\in\R\setminus\{0\},\quad
                  \det B_{\omega,z}(I)=0\}
 \label{eq:general-area-first-frequency}
\end{equation}
exists and is positive.  If $J_z=0$, the endpoint matrix is invertible
for every $\omega$, and we put $\omega_{*,z}(I)=+\infty$.
Then
\begin{equation}
 c_z(I)=\frac1{2\omega_{*,z}(I)},\qquad
 \kappa_{\rm an}(T)
  =\sup_{I\Subset(t_-,T)}\max_{|z|_{A_0}=1}
                         \frac1{2\omega_{*,z}(I)},
 \label{eq:general-area-exact-spectrum}
\end{equation}
where $1/(+\infty)=0$.  In particular, the maximum over central
directions is attained on each compact interval.  Both signs of $\omega$
are included in \eqref{eq:general-area-first-frequency}.
\end{proposition}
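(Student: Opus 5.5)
The plan is to read $c_z(I)$ as the first eigenvalue of a quadratic-form pencil, and to identify its degeneracy with the endpoint equation \eqref{eq:general-area-endpoint}.

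\emph{Step 1: the directional area as a quadratic form.} Using the definition of $J_z$,
\[
 Q_{I,z}(\xi)=\tfrac12\int_I\langle J_z\xi,\dot\xi\rangle\,\dd t .
\]
This is a bounded quadratic form on $W^{1,2}_0(I;V)$. Since $M$ is uniformly positive on $I$, the form $E_I$ is equivalent to the square of the $W^{1,2}_0$ norm. By Cauchy--Schwarz and Poincar\'e, $|Q_{I,z}|\le C E_I$, so $c_z(I)<\infty$. Moreover $Q_{I,z}$ is weakly continuous, because $W^{1,2}_0\hookrightarrow L^2$ is compact and $Q_{I,z}(\xi)$ is continuous in $\xi\in L^2$ for $\dot\xi$ bounded in $L^2$.

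For $\omega\in\R$, set $F_\omega=E_I-2\omega Q_{I,z}$. By the definition of $c_z(I)$, $F_\omega$ is positive definite exactly when $2|\omega|c_z(I)<1$; both signs of $\omega$ appear because $|Q_{I,z}|$ is used. I will prove that the first $|\omega|$ at which $F_\omega$ degenerates is $\omega_{*,z}(I)$.

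\emph{Step 2: the kernel of $F_\omega$ and the endpoint matrix.} I will compute the first variation with a test function $\eta\in W^{1,2}_0$. Skew-adjointness of $J_z$ and one integration by parts give
\[
 \tfrac12\delta F_\omega=\int_I\langle M\dot\xi-\omega J_z\xi,\dot\eta\rangle\,\dd t .
\]
Therefore $\xi$ lies in the kernel (weakly) exactly when $M\dot\xi-\omega J_z\xi$ equals a constant $p$. In that case $\xi$ is smooth.

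Put $P=p+\omega J_z\xi$. Then $\dot P=\omega J_zM^{-1}P$ and $P(\alpha)=p$, so $P=\Phi_{\omega,z}p$. Integrating $\dot\xi=M^{-1}P$ gives $\xi(\beta)=B_{\omega,z}(I)p$. A nonzero kernel element therefore exists exactly when $\det B_{\omega,z}(I)=0$; note $p\neq0$, since otherwise uniqueness forces $\xi\equiv0$.

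Pairing the Euler--Lagrange equation with $\xi$ itself gives $E_I(\xi)=2\omega Q_{I,z}(\xi)$. So every zero gives $c_z(I)\ge1/(2|\omega|)$.

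\emph{Step 3: degeneracy occurs at $\omega=\pm1/(2c_z)$.} This is the main step. Suppose $c_z(I)>0$ and take $\omega_0$ with $2|\omega_0|c_z(I)=1$. Take a maximizing sequence for $|Q_{I,z}|/E_I$, normalized so that $E_I=1$. Weak compactness together with the weak continuity of $Q_{I,z}$ and the weak lower semicontinuity of $E_I$ show that the supremum is attained. The maximizer minimizes $F_{\omega_0}\ge0$ at the value $0$, hence lies in its kernel, and Step 2 gives $\det B_{\omega_0,z}(I)=0$.

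Combining this with the reverse inequality in Step 2:
\begin{enumerate}[label=\textup{(\alph*)}]
\item every nonzero zero $\omega$ satisfies $|\omega|\ge1/(2c_z(I))$;
\item the value $|\omega|=1/(2c_z(I))$ is attained.
\end{enumerate}
Hence the minimum in \eqref{eq:general-area-first-frequency} exists and equals $1/(2c_z(I))>0$.

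\emph{Step 4: the two cases for $J_z$, and the formula for $\kappa_{\rm an}$.}
\begin{itemize}
\item If $J_z\neq0$, pick $v,w$ with $\langle J_zv,w\rangle\neq0$. A small based circle in the plane of $v,w$ has nonzero $Q_{I,z}$, so $c_z(I)>0$ and Step 3 applies.
\item If $J_z=0$, then $\Phi_{\omega,z}=I_d$ and $B_{\omega,z}(I)=\int_IM^{-1}$ is positive definite for every $\omega$, while $c_z(I)=0$. This matches the convention $\omega_{*,z}(I)=+\infty$ with $1/(+\infty)=0$.
\end{itemize}

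For $\kappa_{\rm an}$, the conformal assumption gives
\[
 J_I(\xi)=\frac{E_I(\xi)}{|Q_I(\xi)|_{A_0}},\qquad
 |Q_I|_{A_0}=\max_{|z|_{A_0}=1}Q_{I,z}.
\]
Hence $\sup_\xi J_I^{-1}=\sup_{|z|=1}c_z(I)$. The map $z\mapsto c_z(I)$ is a supremum of absolute values of linear functions of $z$, so it is convex and finite, hence continuous. By compactness of the unit sphere the maximum over central directions is attained on each compact interval. Taking the supremum over $I$ gives \eqref{eq:general-area-exact-spectrum}.
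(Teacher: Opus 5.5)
Your argument is correct and is essentially the paper's proof: an energy-normalized maximizer obtained by weak compactness, the Euler--Lagrange equation rewritten as the momentum system \eqref{eq:general-area-momentum} with endpoint reconstruction through $B_{\omega,z}(I)$, the pairing identity $E_I=2\omega Q_{I,z}$ for the converse bound, and continuity of $z\mapsto c_z(I)$ for the maximum over directions (your convexity argument replaces the paper's Lipschitz estimate).

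One statement needs correcting. When $J_z\ne0$, the form $E_I-2\omega Q_{I,z}$ is positive definite exactly for $-1/(2c_-)<\omega<1/(2c_+)$, where $c_\pm=\sup_{\xi\ne0}\pm Q_{I,z}(\xi)/E_I(\xi)$. These one-sided constants can differ, because time reversal flips $Q_{I,z}$ but changes $E_I$ when $M$ depends on $t$. For example, for $m_1\ne m_2$ the rotating block of Corollary~\ref{cor:rotating-area} has least positive and least negative roots of different moduli. So in Step~3 you must take $\omega_0$ with the sign of $Q_{I,z}$ at the maximizer. Only that sign need degenerate, which is all the two-signed minimum in \eqref{eq:general-area-first-frequency} requires.
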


\begin{proof}
Since $M$ is smooth and positive definite on $I$, $E_I^{1/2}$ is a Hilbert
norm equivalent to the usual norm on $W^{1,2}_0(I;V)$.  Poincar\'e's
inequality and boundedness of the bracket give
\begin{equation}
 |Q_I(\xi)|_{A_0}\le C_I E_I(\xi).
 \label{eq:general-area-bound}
\end{equation}
If $J_z=0$, then $Q_{I,z}=0$, $\Phi_{\omega,z}=I_d$, and
$B_{\omega,z}=\int_I M^{-1}\,\dd t$ is positive definite.  All the stated
conclusions for this direction follow.  Henceforth suppose $J_z\ne0$.
Choose $v,w\in V$ with $\langle J_zv,w\rangle\ne0$.  A based ellipse in
their span has nonzero signed area, so $0<c_z(I)<\infty$.

The quadratic form $Q_{I,z}$ is weakly continuous on bounded subsets of
$W^{1,2}_0(I;V)$.  Indeed, weak convergence there gives strong convergence
of the functions in $L^2$, by compactness, and weak convergence of their
derivatives in $L^2$.  These convergences pass to
\[
 Q_{I,z}(\xi)
   =\frac12\int_I\langle J_z\xi,\dot\xi\rangle\,\dd t.
\]
Thus $|Q_{I,z}|$ attains a positive maximum on the weakly compact ball
$E_I\le1$.  Homogeneity forces every maximizer to satisfy $E_I=1$.
Choose one, denote it by $\xi$, and put
$\lambda=Q_{I,z}(\xi)$.  Then $|\lambda|=c_z(I)>0$.  The first
variation of the signed quadratic form at this constrained extremum gives
\[
 -\int_I\langle J_z\dot\xi,\eta\rangle\,\dd t
   =2\lambda\int_I\langle M\dot\xi,\dot\eta\rangle\,\dd t
 \qquad\bigl(\eta\in W^{1,2}_0(I;V)\bigr).
\]
The multiplier is $\lambda$ because evaluation of the first-variation
identity on $\eta=\xi$ gives $2Q_{I,z}(\xi)=2\lambda E_I(\xi)$.
Consequently
\begin{equation}
 2\lambda(M\dot\xi)'=J_z\dot\xi
 \label{eq:general-area-euler-lagrange}
\end{equation}
in distributions.  With $p=M\dot\xi$ and
$\omega=(2\lambda)^{-1}$ this becomes
\[
 p'=\omega J_zM^{-1}p.
\]
Initially $p\in L^2$, and the equation gives $p\in W^{1,2}$.  Its
continuous representative therefore solves the integral equation; the
smooth coefficients then give the usual classical regularity.  In
particular $p(t)=\Phi_{\omega,z}(t)p_0$, where $p_0=p(\alpha)\ne0$.
The last inequality follows from uniqueness for the linear initial-value
problem, since $p_0=0$ would imply $\xi=0$.
Since $\xi(\alpha)=0$, integration of $\dot\xi=M^{-1}p$ gives
\begin{equation}
 \xi(t)=\left(\int_\alpha^t
           M(r)^{-1}\Phi_{\omega,z}(r)\,\dd r\right)p_0.
 \label{eq:general-area-loop-reconstruction}
\end{equation}
The other endpoint condition is exactly
$B_{\omega,z}(I)p_0=0$.  Thus a nonzero return frequency exists, with
\begin{equation}
 |\omega|=\frac1{2c_z(I)}.
 \label{eq:general-area-extremal-frequency}
\end{equation}

Conversely, suppose $\omega\ne0$ and
$0\ne p_0\in\ker B_{\omega,z}(I)$.  Define
$p=\Phi_{\omega,z}p_0$ and $\xi$ by
\eqref{eq:general-area-loop-reconstruction}.  Then $\xi$ is a based
loop with $\dot\xi=M^{-1}p$ and $p'=\omega J_z\dot\xi$.
It is nonzero, since $\dot\xi(\alpha)=M(\alpha)^{-1}p_0\ne0$.
Integration by parts yields
\[
 E_I(\xi)=\int_I\langle p,\dot\xi\rangle\,\dd t
  =-\int_I\langle p',\xi\rangle\,\dd t
  =2\omega Q_{I,z}(\xi).
\]
Therefore $|\omega|\ge(2c_z(I))^{-1}$.  Combined with
\eqref{eq:general-area-extremal-frequency}, this proves the existence
and positivity of the minimum and the first identity in
\eqref{eq:general-area-exact-spectrum}.  Notice also that
$B_{0,z}(I)=\int_I M^{-1}\,\dd t$ is positive definite; thus the zero
frequency is never an endpoint root.

It remains to recover the vector-valued area.  Under
\eqref{eq:conformal-central-metric}, its defining quotient satisfies
\[
 J_I(\xi)=\frac{E_I(\xi)}{|Q_I(\xi)|_{A_0}}
 \qquad\text{when }Q_I(\xi)\ne0.
\]
By \eqref{eq:general-area-bound},
\[
 |c_z(I)-c_w(I)|\le C_I|z-w|_{A_0}.
\]
Thus $c_z(I)$ is continuous in $z$ and attains a maximum on the unit
central sphere.  The dual characterization of the norm and interchange
of two suprema now give
\begin{align*}
 \sup_{0\ne\xi\in W^{1,2}_0(I;V)}
      \frac{|Q_I(\xi)|_{A_0}}{E_I(\xi)}
 &=\sup_{0\ne\xi}\sup_{|z|_{A_0}=1}
      \frac{|Q_{I,z}(\xi)|}{E_I(\xi)}\\
 &=\max_{|z|_{A_0}=1}c_z(I).
\end{align*}
Taking the supremum over compact intervals proves the second identity
in \eqref{eq:general-area-exact-spectrum}.  None of these expressions
contains the Schur coupling $L_t$.
\end{proof}

\begin{remark}
The endpoint matrix is essential when $J_z$ is singular.  Integration
of \eqref{eq:general-area-momentum} gives
\begin{equation}
 \Phi_{\omega,z}(\beta)-I_d
       =\omega J_zB_{\omega,z}(I).
 \label{eq:general-area-return-endpoint}
\end{equation}
If $J_z$ is invertible and $\omega\ne0$, the endpoint condition is
equivalent to $\det(\Phi_{\omega,z}(\beta)-I_d)=0$.  If $J_z$ is
singular, that determinant vanishes for every frequency and does not
characterize based loops.  For example, on the free two-step algebra
$V=Z=\R^3$ with $\mathsf b(v,w)=v\times w$, one has
$J_zv=z\times v$ and $\ker J_z=\R z$ for $z\ne0$.
Formula \eqref{eq:general-area-exact-spectrum} applies without any
nondegeneracy assumption on these operators.
\end{remark}

For a genuinely time-dependent central anisotropy, the same endpoint
equation can still evaluate the auxiliary constant $\kappa_{\rm Sch}$:
use the fixed norm in \eqref{eq:computable-energy} and replace $a$ by
$a_+$.  The compact-interval proof requires only continuity and positive
definiteness of $M$, so continuity of $a_+$ suffices.  In that case it
gives the upper bound
$\kappa_{\rm an}\le\kappa_{\rm Sch}$ rather than, in general, an exact
evaluation of $\kappa_{\rm an}$.  Its direction-dependent denominator in
\eqref{eq:kappa-an} cannot be replaced by a common energy without that
distinction.

\begin{corollary}[Endpoint equation for arbitrary central anisotropy]
\label{cor:optimal-central-spectrum}
Fix auxiliary inner products and, for each nonzero $\lambda\in Z^*$,
define
\[
 \langle J_\lambda v,w\rangle=\lambda(\mathsf b(v,w)),\qquad
 M_\lambda(t)=\frac{\sqrt{A_t^{-1}(\lambda,\lambda)}}{N(t)}H_t.
\]
For a compact interval $I=[a,b]$, let
\[
 \dot\Phi_{\omega,\lambda}
       =\omega J_\lambda M_\lambda^{-1}\Phi_{\omega,\lambda},
 \qquad \Phi_{\omega,\lambda}(a)=I,
 \qquad
 B_{\omega,\lambda}=
       \int_a^b M_\lambda(t)^{-1}\Phi_{\omega,\lambda}(t)\,\dd t.
\]
When $J_\lambda\ne0$, let $\omega_{*,\lambda}(I)$ be the least
absolute value of a nonzero real root of $\det B_{\omega,\lambda}=0$;
when $J_\lambda=0$, set $\omega_{*,\lambda}(I)=+\infty$.
Then
\[
 \kappa_{\rm opt}(T)=
 \sup_{I\Subset(t_-,T)}\max_{|\lambda|_*=1}
             \frac{1}{2\omega_{*,\lambda}(I)}.
\]
The roots exist and have a positive least modulus whenever
$J_\lambda\ne0$, and the displayed maximum is attained on each
compact interval.  This evaluates $\kappa_{\rm opt}$, rather than
$\kappa_{\rm an}$, for a nonconformal central metric.
\end{corollary}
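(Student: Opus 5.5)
The plan is to reduce this corollary to Proposition~\ref{prop:central-endpoint-spectrum} one covector at a time, and then exchange the suprema in the definition \eqref{eq:kappa-opt}.

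\emph{Reduction to a fixed covector.} Fix a compact interval $I=[a,b]$ and a nonzero $\lambda\in Z^*$, and define
\[
 c_\lambda(I)=\sup_{\xi\ne0}\frac{|\lambda(Q_I(\xi))|}{\int_I f_\xi\sqrt{A_t^{-1}(\lambda,\lambda)}\,\dd t}.
\]
The denominator equals $\int_I\langle M_\lambda\dot\xi,\dot\xi\rangle\,\dd t$, where the pairing uses the auxiliary inner product on $V$ and $H_t$ is represented as an operator. The numerator is $\tfrac12\int_I\langle J_\lambda\xi,\dot\xi\rangle\,\dd t$. This is exactly the directional problem \eqref{eq:directional-area-constant}, with $M$ replaced by $M_\lambda$ and $J_z$ by $J_\lambda$.

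The proof of Proposition~\ref{prop:central-endpoint-spectrum} for a fixed direction uses only two facts: $M$ is continuous and positive definite on $I$, and $J_z$ is a fixed skew operator. It never uses conformality of $A_t$. Both facts hold here, because $\sqrt{A_t^{-1}(\lambda,\lambda)}$ is smooth and positive for $\lambda\ne0$. Rerunning that argument therefore gives $c_\lambda(I)=1/(2\omega_{*,\lambda}(I))$, with convention $+\infty$ when $J_\lambda=0$. The same argument supplies the rest of the fixed-covector claims. Weak compactness gives a maximizer, the Euler--Lagrange equation becomes $p'=\omega J_\lambda M_\lambda^{-1}p$ with $p=M_\lambda\dot\xi$, and reconstruction from a kernel vector of $B_{\omega,\lambda}$ gives the converse. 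Together these yield existence of the roots, positivity of their least modulus, and exclusion of $\omega=0$.

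\emph{Exchange of suprema.} Interchanging the supremum over loops with the supremum over covectors gives $\sup_{\xi}k_I(\xi)=\sup_{\lambda\ne0}c_\lambda(I)$. Since $c_\lambda$ is invariant under $\lambda\mapsto s\lambda$ for $s\ne0$ (numerator and denominator both scale by $|s|$), it suffices to take the supremum over the auxiliary unit sphere $|\lambda|_*=1$. Taking the supremum over compact intervals $I\Subset(t_-,T)$ then yields the displayed formula for $\kappa_{\rm opt}(T)$.

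\emph{Main obstacle: attainment of the maximum.} The delicate step is showing that the maximum over $|\lambda|_*=1$ is attained. Here the weight $\sqrt{A_t^{-1}(\lambda,\lambda)}$ depends on the direction, so the Lipschitz estimate from the conformal case does not transfer verbatim. I would prove continuity of $\lambda\mapsto c_\lambda(I)$ on the unit sphere directly. By uniform positive definiteness of $A_t$ on $I$, there is $\epsilon$ with
\[
 \bigl|\sqrt{A_t^{-1}(\lambda,\lambda)}-\sqrt{A_t^{-1}(\mu,\mu)}\bigr|\le C|\lambda-\mu|_*,
\]
and the weights are bounded below by a positive constant. Consequently the denominators for $\lambda$ and $\mu$ have ratio $1+O(|\lambda-\mu|)$, uniformly in $\xi$. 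The numerators differ by at most $C_I|\lambda-\mu|$ times the energy, by the bound \eqref{eq:general-area-bound}. Together these give uniform continuity of the quotient in $\lambda$, hence continuity of $c_\lambda(I)$, and compactness of the sphere gives attainment.

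Where $J_\lambda$ passes from nonzero to zero, continuity is checked on the quotient values $c_\lambda$ themselves and not on the root $\omega_{*,\lambda}$. Those values tend to $0$, which is consistent with the convention $\omega_{*,\lambda}=+\infty$ when $J_\lambda=0$.
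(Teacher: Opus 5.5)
Your proposal is correct and follows the paper's proof essentially step for step. You treat the fixed-covector problem as the directional problem of Proposition~\ref{prop:central-endpoint-spectrum} with $M$ replaced by $M_\lambda$, rerun that variational argument unchanged, interchange the suprema, and obtain attainment of the maximum from continuity of the Rayleigh suprema on the compact unit covector sphere. Your explicit ratio estimate for the $\lambda$-dependent denominators is a more detailed version of the paper's instruction to ``subtract the two quotients and use the uniform denominator bound.''
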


\begin{proof}
For fixed $\lambda$, the denominator defining $k_I$ is the positive
quadratic energy $E_{I,\lambda}(\xi)=\int_I
\langle M_\lambda\dot\xi,\dot\xi\rangle\,\dd t$, whereas the numerator
is the absolute value of $\frac12\int_I
\langle J_\lambda\xi,\dot\xi\rangle\,\dd t$.
The variational argument of
Proposition~\ref{prop:central-endpoint-spectrum}, from existence of an
energy-normalized maximizer through its Euler-Lagrange equation and
endpoint reconstruction, applies to this fixed positive matrix
$M_\lambda$ without any change.  It gives
\[
 \sup_{\xi\ne0}\frac{|\lambda(Q_I(\xi))|}{E_{I,\lambda}(\xi)}
       =\frac1{2\omega_{*,\lambda}(I)}.
\]
On the auxiliary unit covector sphere, $M_\lambda$ is uniformly positive
on $I$ and depends continuously on $\lambda$.  Its energy is uniformly
equivalent to $\|\dot\xi\|_2^2$; both its coefficients and the area
form vary continuously and uniformly.  Hence these Rayleigh suprema
depend continuously on $\lambda$ (subtract the two quotients and use
the uniform denominator bound).  They attain a maximum on the sphere.
Interchanging the suprema over $\lambda$ and $\xi$ proves the formula.
\end{proof}

\subsection{The anisotropic Bianchi II filling constant}

In the three-dimensional Heisenberg group the exact filling constant can
also be determined when the horizontal metric has unequal, time-dependent
eigenvalues and eigendirections.  Choose a basis $e_1,e_2$ of $V$ and a
generator $z$ of $Z$ such that $\mathsf b(e_1,e_2)=z$, and write
$A_t(z,z)=a(t)^2$.  We identify $H_t$ with its positive-definite symmetric
$2\times2$ matrix in this basis and set
\begin{equation}
 M(t)=\frac{H_t}{N(t)a(t)},
 \qquad
 J=\begin{pmatrix}0&-1\\1&0\end{pmatrix}.
 \label{eq:area-hamiltonian-matrix}
\end{equation}
For $I=[\alpha,\beta]\Subset(t_-,T)$ define
\begin{equation}
 c(I)=\sup_{0\ne\xi\in W^{1,2}_0(I;\mathbb R^2)}
 \frac{\left|\frac12\int_I\langle J\xi,\dot\xi\rangle\,\dd t\right|}
 {\int_I\langle M(t)\dot\xi,\dot\xi\rangle\,\dd t}.
 \label{eq:compact-area-constant}
\end{equation}
Since $Z$ is one dimensional, the definition
\eqref{eq:kappa-an} gives
$\kappa_{\rm an}(T)=\sup_{I\Subset(t_-,T)}c(I)$.

\begin{proposition}[Hamiltonian return formula]
\label{prop:area-return}
For each real $\omega$, let $\Phi_\omega$ solve
\begin{equation}
 \dot\Phi_\omega(t)=\omega J M(t)^{-1}\Phi_\omega(t),
 \qquad \Phi_\omega(\alpha)=I_2.
 \label{eq:area-return-ode}
\end{equation}
There is a nonzero real $\omega$ for which
$\det(\Phi_\omega(\beta)-I_2)=0$.  Moreover, the minimum
\begin{equation}
 \omega_*(I)=\min\bigl\{|\omega|:
 \omega\in\mathbb R\setminus\{0\},\quad
 \det(\Phi_\omega(\beta)-I_2)=0\bigr\}
 \label{eq:first-return-frequency}
\end{equation}
exists, is positive, and satisfies
\begin{equation}
 c(I)=\frac1{2\omega_*(I)},\qquad
 \kappa_{\rm an}(T)
 =\sup_{I\Subset(t_-,T)}\frac1{2\omega_*(I)}.
 \label{eq:exact-anisotropic-return}
\end{equation}
These equalities hold for arbitrary horizontal-central coupling $L_t$.
They evaluate the variational constant exactly; they do not assert that
$\kappa_{\rm an}(T)\le1/2$ is necessary for future one-connectedness.
\end{proposition}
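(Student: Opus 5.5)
The proof specializes Proposition~\ref{prop:central-endpoint-spectrum} to $d=2$ and $\dim Z=1$. Take $A_0$ to be the inner product with $A_0(z,z)=1$, so $A_t=a(t)^2A_0$ is automatically conformal. For the unit central direction $z$, the defining identity $\langle J_zv,w\rangle=A_0(z,\mathsf b(v,w))$ gives, in the basis $e_1,e_2$ with $\mathsf b(e_1,e_2)=z$, $\langle J_zv,w\rangle=v_1w_2-v_2w_1$. This $J_z$ is a multiple of the displayed rotation $J$: the sign is fixed by the inner product representing $H_t$, and the multiple is irrelevant after $\omega\mapsto-\omega$, since both signs of $\omega$ are admitted. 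The matrix $M$ of \eqref{eq:area-hamiltonian-matrix} coincides with the operator $M$ of \eqref{eq:central-direction-operator}, and $c(I)=c_z(I)$. The unit central sphere is $\{\pm z\}$, and $c_{-z}=c_z$, so the earlier proposition already gives existence of a nonzero endpoint root of $\det B_{\omega}(I)=0$, positivity of its least modulus, and
\[
 c(I)=\frac{1}{2\omega_*^B(I)},\qquad
 \kappa_{\rm an}(T)=\sup_{I}\frac1{2\omega_*^B(I)},
\]
where $\omega_*^B$ is defined through the endpoint matrix. Independence of $L_t$ is inherited from that proposition.

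The only remaining step is to replace the endpoint condition $\det B_\omega(I)=0$ by the return condition $\det(\Phi_\omega(\beta)-I_2)=0$. By \eqref{eq:general-area-return-endpoint}, $\Phi_\omega(\beta)-I_2=\omega JB_\omega(I)$. Since $J$ is invertible and $\omega\ne0$,
\[
 \det(\Phi_\omega(\beta)-I_2)=\omega^2\det J\,\det B_\omega(I),
\]
so the two determinants have the same nonzero roots. Hence $\omega_*(I)=\omega_*^B(I)$, the minimum in \eqref{eq:first-return-frequency} exists and is positive, and \eqref{eq:exact-anisotropic-return} follows. Here the $2\times2$ identity $\det(\omega J)=\omega^2$ is used. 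At $\omega=0$ the return determinant vanishes trivially, which is why zero must be excluded from the minimum, while $B_0$ is positive definite.

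The main point needing care is the orientation/sign bookkeeping identifying $J_z$ with $J$. If the auxiliary inner product on $V$ is not the one making $e_1,e_2$ orthonormal, then $J_z$ equals $J$ only up to a positive factor times a congruence. I would therefore fix the auxiliary inner product on $V$ to make $e_1,e_2$ orthonormal, so that $J_z=\pm J$ exactly, and note that $M$ is then literally the matrix of $H_t/(Na)$ in this basis. With that choice the reduction is immediate. I would also remark explicitly that the equality evaluates the variational constant only and asserts nothing about the necessity of $\kappa_{\rm an}\le1/2$.
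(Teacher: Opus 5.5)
Your proposal is correct and follows the paper's proof. Both specialize Proposition~\ref{prop:central-endpoint-spectrum} to $\dim V=2$ with the unit central generator, use $c_{-z}=c_z$, and invoke \eqref{eq:general-area-return-endpoint} with $\det(\omega J)=\omega^2$ to identify the nonzero return frequencies with the nonzero endpoint frequencies. With $e_1,e_2$ orthonormal one in fact gets $J_z=J$ exactly, so your sign discussion is unnecessary, though harmless since both signs of $\omega$ are admitted.
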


\begin{proof}
Apply Proposition~\ref{prop:central-endpoint-spectrum} with the unit
central generator $z$, $\dim V=2$, and $J_z=J$.  The two unit central
directions differ by sign and give the same absolute area constant.
Since $J$ is invertible, identity
\eqref{eq:general-area-return-endpoint} shows that, for $\omega\ne0$,
\[
 \det\bigl(\Phi_\omega(\beta)-I_2\bigr)
   =\omega^2\det B_{\omega,z}(I).
\]
Thus the return frequencies in \eqref{eq:first-return-frequency} are
exactly the endpoint frequencies in
\eqref{eq:general-area-first-frequency}.  Their nonempty set has a
positive least modulus, and
\eqref{eq:general-area-exact-spectrum} gives
\eqref{eq:exact-anisotropic-return}.  The arbitrary Schur coupling does
not enter either equation.
\end{proof}

For a scalar horizontal block, $M(t)=m(t)I_2$, equation
\eqref{eq:area-return-ode} integrates to
\[
 \Phi_\omega(\beta)
 =\exp\left(\omega J\int_\alpha^\beta m(t)^{-1}\,\dd t\right).
\]
Its first nonzero return frequency has modulus
$2\pi/\int_I m^{-1}\,\dd t$.  Hence
$c(I)=(4\pi)^{-1}\int_I Na/b^2\,\dd t$, recovering
Proposition~\ref{prop:exact-area} for the normalized Heisenberg bracket.
For a general horizontal matrix, \eqref{eq:area-return-ode} reduces the
exact variational problem to a $2\times2$ linear initial-value problem;
the definition \eqref{eq:first-return-frequency} includes both signs of
$\omega$.

\begin{corollary}[A rotating horizontal block]
\label{cor:rotating-area}
Let $I=[\alpha,\beta]$, $L=\beta-\alpha$, and
$\mathcal R(\theta)=\exp(\theta J)$.  Suppose that
\begin{equation}
 M(t)=\mathcal R\left(\frac{\pi(t-\alpha)}L\right)
 \begin{pmatrix}m_1&0\\0&m_2\end{pmatrix}
 \mathcal R\left(\frac{\pi(t-\alpha)}L\right)^{\!\mathsf T},
 \qquad m_1,m_2>0.
 \label{eq:rotating-horizontal-block}
\end{equation}
Writing $S=m_1+m_2$ and $P=m_1m_2$, the exact constant on this interval is
\begin{equation}
 c(I)=\frac{L}{\pi\bigl(\sqrt{S^2+32P}-S\bigr)}.
 \label{eq:rotating-area-constant}
\end{equation}
For the nonrotating matrix $\operatorname{diag}(m_1,m_2)$ the corresponding
constant is $c_{\rm fixed}(I)=L/(4\pi\sqrt P)$.  In particular,
\begin{equation}
 \frac{c(I)}{c_{\rm fixed}(I)}
 =\frac{\sqrt{S^2+32P}+S}{8\sqrt P}\ge1,
 \label{eq:rotating-area-ratio}
\end{equation}
with strict inequality when $m_1\ne m_2$.
\end{corollary}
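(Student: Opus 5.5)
Since the corollary concerns a single compact interval, I will apply Proposition~\ref{prop:area-return} and reduce the claim to locating the first nonzero return frequency of the $2\times2$ system \eqref{eq:area-return-ode}. The rotation is removed by a rotating frame. Put $\Omega=\pi/L$ and $D=\operatorname{diag}(m_1,m_2)$, so that $M(t)=\mathcal R(\Omega(t-\alpha))D\mathcal R(\Omega(t-\alpha))^{\mathsf T}$. Since $J$ commutes with $\mathcal R$, the substitution $\Phi_\omega(t)=\mathcal R(\Omega(t-\alpha))\Psi(t)$ gives a constant-coefficient system
\[
 \dot\Psi=\bigl(\omega JD^{-1}-\Omega J\bigr)\Psi,\qquad \Psi(\alpha)=I_2 .
\]
The endpoint value is then $\Phi_\omega(\beta)=\mathcal R(\pi)\exp\bigl(L(\omega JD^{-1}-\Omega J)\bigr)=-\exp(LG_\omega)$, where $G_\omega=\omega JD^{-1}-\Omega J$.

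The return condition $\det(\Phi_\omega(\beta)-I_2)=0$ therefore becomes the requirement that $\exp(LG_\omega)$ have eigenvalue $-1$. The matrix $G_\omega$ is traceless, because $JD^{-1}$ and $J$ are both traceless. A direct computation gives
\[
 \det G_\omega=\Bigl(\frac{\omega}{m_1}-\Omega\Bigr)\Bigl(\frac{\omega}{m_2}-\Omega\Bigr)=:\nu^2 .
\]
When $\nu^2>0$, the eigenvalues of $G_\omega$ are $\pm i\nu$, and $\exp(LG_\omega)$ has eigenvalue $-1$ exactly when $L\nu$ is an odd multiple of $\pi$. When $\nu^2\le0$, the eigenvalues of the exponential are real and positive, or the exponential is unipotent, so no return occurs. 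The first return is therefore the smallest $|\omega|\neq0$ satisfying $L^2\nu^2=\pi^2$, which reads $(\omega/m_1-\Omega)(\omega/m_2-\Omega)=\Omega^2$. Expanding and cancelling $\Omega^2$ gives $\omega^2/P=\Omega\omega S/P$. This yields only $\omega=\Omega S$, whereas the claimed constant corresponds to $\omega_*=\frac{\pi}{4L}(\sqrt{S^2+32P}-S)/2\cdot$(normalization). This discrepancy tells me to recheck the frame convention.

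The step I expect to be the main obstacle is exactly this bookkeeping. First, I must settle whether $\mathcal R(\theta)$ conjugation with the factor $M^{-1}$ shifts the generator by $+\Omega J$ or $-\Omega J$. Second, I must confirm that the angle at $t=\beta$ is $\pi$, giving the factor $-I$, and not $2\pi$. Third, I need to handle the higher odd multiples $L\nu=3\pi,5\pi,\dots$, and both signs of $\omega$, since one of these may give the smaller root. The target formula contains $\sqrt{S^2+32P}$, which is the discriminant of a quadratic of the form $P\Omega^2\cdot c=\omega(\omega+\Omega S)$-type, so the correct root comes from choosing the sign of $\omega$ opposite to the rotation and the appropriate odd multiple. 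I will solve the resulting quadratic in $\omega$, select the root of least modulus, and read off $c(I)=1/(2\omega_*)$.

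For the comparison, the nonrotating case has the constant generator $\omega JD^{-1}$ with $\nu^2=\omega^2/P$. The first return at $L\nu=2\pi$ then gives $\omega=2\pi\sqrt P/L$, hence $c_{\rm fixed}=L/(4\pi\sqrt P)$. The ratio \eqref{eq:rotating-area-ratio} follows by rationalizing $1/(\sqrt{S^2+32P}-S)$. The inequality $\ge1$ is equivalent to $S\ge2\sqrt P$, which is AM--GM, with equality only when $m_1=m_2$.
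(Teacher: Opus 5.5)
Your rotating-frame reduction is correct and is exactly the paper's. You have $\dot\Psi=J(\omega D^{-1}-\Omega I_2)\Psi$ and $\Phi_\omega(\beta)=-\exp(LG_\omega)$, and a return occurs exactly when $d_\omega:=\det G_\omega>0$ and $L\sqrt{d_\omega}=(2n+1)\pi$ for some $n\ge0$. There is no convention error to look for. In particular, replacing $-\Omega J$ by $+\Omega J$ only changes every root $\omega$ to $-\omega$, so no modulus changes.

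The gap is your sentence that the first return is the least nonzero $|\omega|$ with $L^2d_\omega=\pi^2$. That claim is false, and it is why you get $\omega=\Omega S$, i.e.\ $c(I)=L/(2\pi S)$, instead of the stated constant. The level $n=0$ is already reached at the excluded trivial root $\omega=0$, because $d_0=\Omega^2$. Moreover $d_\omega$ is a convex quadratic in $\omega$ with $d_0=d_{\Omega S}=\Omega^2$. Hence there are no returns in $(0,\Omega S)$, and on the negative axis the first return occurs only at the next level $d_\omega=9\Omega^2$.

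What is missing is the root comparison you only announce. For each $n\ge0$, the condition $L^2d_\omega=(2n+1)^2\pi^2$ reads
\[
 \omega^2-\Omega S\,\omega-4n(n+1)P\Omega^2=0,\qquad
 \omega_n^\pm=\frac\Omega2\Bigl(S\pm\sqrt{S^2+16n(n+1)P}\Bigr).
\]
For $n=0$ the roots are $0$ and $\Omega S$. For $n\ge1$ the positive roots exceed $\Omega S$ and increase with $n$, and the negative roots have moduli increasing with $n$. So the only competitors are $\Omega S$ and $|\omega_1^-|=\frac\Omega2\bigl(\sqrt{S^2+32P}-S\bigr)$.

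The inequality $|\omega_1^-|\le\Omega S$ is equivalent to $\sqrt{S^2+32P}\le3S$, i.e.\ to $4P\le S^2$, which holds by AM--GM. Hence $\omega_*=\frac{\pi}{2L}\bigl(\sqrt{S^2+32P}-S\bigr)$, and Proposition~\ref{prop:area-return} gives \eqref{eq:rotating-area-constant}. Once this comparison is supplied, your fixed-block computation $c_{\rm fixed}(I)=L/(4\pi\sqrt P)$ and your rationalization proving \eqref{eq:rotating-area-ratio} are correct as written.
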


\begin{proof}
Set $\Omega=\pi/L$ and $D=\operatorname{diag}(m_1,m_2)$.
In the momentum equation from \eqref{eq:area-return-ode}, the substitution
$p(t)=\mathcal R(\Omega(t-\alpha))q(t)$ gives
\[
 q'=B_\omega q,\qquad
 B_\omega=J(\omega D^{-1}-\Omega I_2)
 =\begin{pmatrix}
 0&-(\omega/m_2-\Omega)\\
 \omega/m_1-\Omega&0
 \end{pmatrix}.
\]
Consequently
$\Phi_\omega(\beta)=-\exp(LB_\omega)$.  Put
$d_\omega=(\omega/m_1-\Omega)(\omega/m_2-\Omega)$; then
$B_\omega^2=-d_\omega I_2$ and
$\det\exp(LB_\omega)=1$.  The return condition is equivalent to
$\operatorname{tr}\exp(LB_\omega)=-2$.
If $d_\omega<0$, this trace is
$2\cosh(L\sqrt{-d_\omega})>2$, and if $d_\omega=0$ it is $2$,
including the nilpotent case.  Thus neither case gives a return.
For $d_\omega>0$, the return condition is exactly
\[
 L\sqrt{d_\omega}=(2n+1)\pi,
 \qquad n=0,1,2,\ldots.
\]
For $n=0$ its roots are $\omega=0$ and $\omega=\Omega S$.
For $n\ge1$ they are
\[
 \omega_n^\pm=\frac\Omega2
 \left(S\pm\sqrt{S^2+16n(n+1)P}\right).
\]
The positive roots increase with $n$, and the moduli of the negative
roots also increase with $n$.  Since $4P\le S^2$,
\[
 |\omega_1^-|
 =\frac\Omega2\bigl(\sqrt{S^2+32P}-S\bigr)
 \le\Omega S.
\]
After excluding the zero root, the least modulus is therefore
$|\omega_1^-|$.  Proposition~\ref{prop:area-return} proves
\eqref{eq:rotating-area-constant}.

For the fixed matrix $D$, the substitution $\eta=D^{1/2}\xi$ changes
the energy to $\int_I|\dot\eta|^2\,\dd t$ and the signed area to
$(\det D)^{-1/2}\frac12\int_I\langle J\eta,\dot\eta\rangle\,\dd t$.
The scalar formula gives $c_{\rm fixed}(I)=L/(4\pi\sqrt P)$.
Rationalizing the denominator in \eqref{eq:rotating-area-constant}
gives \eqref{eq:rotating-area-ratio}.  The function
$s\mapsto\sqrt{s^2+32P}+s$ is strictly increasing for $s>0$,
and $S\ge2\sqrt P$, with equality precisely when $m_1=m_2$.
This proves the final assertion.
\end{proof}

Thus the exact filling constant is sensitive to the evolution of the
horizontal eigendirections, even when the two instantaneous eigenvalues
are fixed.  This comparison concerns the filling functional, not a
necessary threshold for causal connectedness or an Einstein evolution
stability statement.

\section{Massive-scalar future asymptotics}
\label{app:massive-future}

\begin{corollary}[A massive scalar field with positive cosmological constant]
\label{cor:massive-de-sitter-future}
Let $\mathcal V(\phi)=\Lambda+m^2\phi^2/2$, where $\Lambda>0$ and
$m>0$.  For every set of data in
Proposition~\ref{prop:potential-solutions} with
$\min\{p_2,p_3\}<0$, the maximal homogeneous development is globally
$C^0$-inextendible and future causally geodesically complete on the
cover and every lattice quotient.  There are a positive left-invariant
metric $h_\infty$ and constants $b,C>0$ such that, in the fixed invariant
frame, for all sufficiently large $t$,
\begin{equation}
 |\phi(t)|+|\dot\phi(t)|+
 \bigl|h_t^{-1}K-H_\Lambda\operatorname{Id}\bigr|_{h_\infty}
 +\bigl|e^{-2H_\Lambda t}h_t-h_\infty\bigr|_{h_\infty}
 \le Ce^{-bt},\qquad H_\Lambda=\sqrt{\Lambda/3}.
 \label{eq:massive-de-sitter-asymptotics}
\end{equation}
\end{corollary}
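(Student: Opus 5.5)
The plan has three parts: the global conclusions come directly from earlier results, and the real work is the exponential de Sitter asymptotics in \eqref{eq:massive-de-sitter-asymptotics}.

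First, the global conclusions. The potential $\mathcal V=\Lambda+m^2\phi^2/2$ is a nonnegative polynomial, so it satisfies \eqref{eq:potential-growth} for every $c>0$. Proposition~\ref{prop:potential-solutions} therefore applies to all the stated data and yields past $C^0$-inextendibility on every discrete quotient. The floor $\mathcal V\ge\Lambda>0$ brings Proposition~\ref{prop:positive-floor-future-completeness} into play. Two facts feed it: $R(h_t)=-\tfrac12 n^2<0$ along the diagonal evolution, and $\theta>0$ near the past end. Together they give future causal completeness and $\theta\ge\theta_*=\sqrt{3\Lambda}=3H_\Lambda$. The Cauchy property and \cite{GLS2018} then give global $C^0$-inextendibility, exactly as in Corollary~\ref{cor:heisenberg-positive-floor}.

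Second, convergence of $\theta$ and decay of the shear, scalar and curvature terms. Write the constraint as $\tfrac23\theta^2=|\Sigma|^2+\dot\phi^2+2\Lambda+m^2\phi^2-R$. From \eqref{eq:higher-future-identities},
\[
\dot\theta=-\tfrac32(|\Sigma|^2+\dot\phi^2)+\tfrac12R\le0 .
\]
So $\theta$ decreases to some limit $\theta_\infty\ge3H_\Lambda$, and $\int^\infty(|\Sigma|^2+\dot\phi^2-R)\,\dd t<\infty$. To obtain exponential rates I will use the Wald-type quantity $\mathcal F=\tfrac23\theta^2-2\Lambda=|\Sigma|^2+\dot\phi^2+m^2\phi^2-R\ge0$. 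Its derivative is $\tfrac43\theta\dot\theta\le-2\theta(|\Sigma|^2+\dot\phi^2-R/3)$, which already controls the shear and curvature parts. The scalar part $m^2\phi^2$ is not directly damped. For it I will add a cross term $\epsilon\phi\dot\phi$ to the scalar energy $\dot\phi^2+m^2\phi^2$. With $\theta\ge3H_\Lambda$ bounded below, the modified energy obeys $\tfrac{\dd}{\dd t}\le -b\cdot(\text{energy})$. For Bianchi~II, $-R=\tfrac12 n^2$ with $n\propto a_1/(a_2a_3)$. I expect $n$ to be damped like $e^{-\theta_\infty t/3}$ once the shear is small, which gives exponential decay of the curvature term. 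Combining these, $\mathcal F\le Ce^{-bt}$, hence $\theta-3H_\Lambda=O(e^{-bt})$ and $h_t^{-1}K-H_\Lambda\operatorname{Id}=O(e^{-bt})$.

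Third, convergence of the rescaled metric. Since $\partial_t(e^{-2H_\Lambda t}h_t)=2e^{-2H_\Lambda t}h_t\,(h_t^{-1}K-H_\Lambda)$, the relative derivative is integrable with exponential tail. A Gronwall step gives uniform comparability of $e^{-2H_\Lambda t}h_t$, and then convergence to a positive $h_\infty$ at rate $e^{-bt}$.

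The main obstacle is the coupling inside the energy argument: the damped scalar oscillator has friction $\theta$ that is only asymptotically constant, and the curvature term decays only after the shear has decayed. I will handle this by a bootstrap on a late interval, where $\theta\in[3H_\Lambda,3H_\Lambda+\delta]$ is already known from the monotone limit. I will also choose $\epsilon$ small, using $m>0$, and check the underdamped and overdamped cases separately.
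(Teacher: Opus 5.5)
Your proposal is correct, and your global part agrees in substance with the paper's. For the asymptotics, however, you take a genuinely different route.

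\textbf{The paper's route.} The paper proves no energy estimates. It checks the potential hypotheses of \cite[Theorem~1]{Rendall2004Scalar}, which gives only qualitative convergence: $|\Sigma|\to0$, $R\to0$, $\dot\phi\to0$, hence $\phi\to0$ and $\theta\to\sqrt{3\Lambda}$. These limits place the solution in the set $\mathcal G$ of \cite[Proposition~11.1]{Ringstrom2025}. The exponential estimate \eqref{eq:massive-de-sitter-asymptotics} is then quoted from \cite[Proposition~11.3]{Ringstrom2025}, using $\mathcal V'(0)=0$ and $\mathcal V''(0)=m^2>0$.

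\textbf{Your route, streamlined.} Your Wald-type argument replaces both citations, and it is simpler than you make it.
\begin{enumerate}
\item Since $\frac{\dd}{\dd t}(\dot\phi^2+m^2\phi^2)=-2\theta\dot\phi^2$, the gravitational part alone satisfies the exact identity $\frac{\dd}{\dd t}(|\Sigma|^2-R)=-2\theta|\Sigma|^2+\tfrac23\theta R\le-2H_\Lambda(|\Sigma|^2-R)$. The curvature term therefore decays together with the shear, and your heuristic about the damping of $n$ is not needed.
\item The energy $\dot\phi^2+m^2\phi^2+\epsilon\phi\dot\phi$ decays exponentially using only $3H_\Lambda\le\theta\le\theta(t_0)$. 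You need neither a bootstrap nor an under/overdamped case split.
\item The bootstrap as you phrase it is slightly circular. Monotonicity only gives $\theta_\infty\ge3H_\Lambda$; the equality $\theta_\infty=3H_\Lambda$ is an output of the estimate, not an input.
\end{enumerate}

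\textbf{What each route buys.} The paper's proof is short, and it inherits results stated for much more general potentials (positive lower bound, nondegenerate minimum). The price is verifying two sets of external hypotheses. Your argument is elementary and gives explicit rates. It does use the quadratic form $\mathcal V'(\phi)=m^2\phi$ in the scalar step. It needs nothing about Bianchi~II beyond $R\le0$, and not diagonality either. The same identity holds in spatial dimension $n$ with $\tfrac2n$ in place of $\tfrac23$. It would therefore also yield \eqref{eq:heisenberg-massive-future} in Corollary~\ref{cor:heisenberg-massive-all-dimensions}, without the LaSalle-type and linearization steps used there.
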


\begin{proof}
Global inextendibility and future causal completeness follow from
Corollary~\ref{cor:polynomial-potential} and
Proposition~\ref{prop:nonnegative-future-completeness}.
The future-asymptotic result of
\cite[Theorem~1]{Rendall2004Scalar} applies to initially expanding
Bianchi I-VIII solutions which exist for all future proper time,
with a $C^2$ scalar potential satisfying the following conditions:
\[
 \inf_{\R}\mathcal V>0,\qquad
 \sup_I|\mathcal V'|<\infty
   \ \hbox{whenever $I\subset\R$ is an interval and}
          \ \sup_I\mathcal V<\infty,
\]
and $\mathcal V'$ has a limit in $\R\cup\{-\infty,+\infty\}$
at each end of the scalar line.  Any additional matter must obey the
dominant and strong energy conditions; here there is no additional
matter.  The normalizations are related by the constant scalar and
potential rescaling that sets $8\pi G=1$.
For the present potential, $\mathcal V\ge\Lambda>0$,
\[
 \mathcal V(\phi)\le V_1
 \quad\Longrightarrow\quad
 |\phi|\le\frac{\sqrt{2(V_1-\Lambda)}}m,
 \qquad |\mathcal V'(\phi)|=m^2|\phi|,
\]
whenever the sublevel set is nonempty, and
$\mathcal V'(\phi)=m^2\phi\to\pm\infty$ as
$\phi\to\pm\infty$.  Bianchi II is among the allowed types,
and future global existence has already been proved.

The cited theorem and its proof give
$|\Sigma|\to0$, $R\to0$, $\dot\phi\to0$, and
$\mathcal V'(\phi(t))\to0$.  Thus $\phi\to0$.
Since the sole Bianchi II orthonormal structure coefficient satisfies
$R=-n^2/2$, we have $n\to0$; the Hamiltonian constraint gives
$\theta\to\sqrt{3\Lambda}$.  These limits place every member in
the set $\mathcal G$ defined in
\cite[Proposition~11.1]{Ringstrom2025}, since
$\mathcal V'(0)=0$ and $\mathcal V''(0)=m^2>0$.
The global assumptions of that proposition also hold:
$\mathcal V$ is smooth and coercive, its lower bound is positive,
and the only zero of $\mathcal V'$ is simple.
Consequently \cite[Proposition~11.3]{Ringstrom2025} gives exactly
\eqref{eq:massive-de-sitter-asymptotics}.
This use of Proposition~11.3 applies to every solution just considered,
because the defining limits have been established for each one.
The estimates descend to every lattice quotient.
\end{proof}

These solutions have a continuously inextendible anisotropic past and
an exponentially isotropizing future.  The nonzero limiting
$H_\Lambda$ describes accelerated late-time expansion.

\subsection{Massive fields in all odd Heisenberg dimensions}
The following argument proves the future estimates directly for the
higher-dimensional diagonal developments constructed in this paper.

\begin{corollary}[Massive fields with a positive cosmological constant]
\label{cor:heisenberg-massive-all-dimensions}
Let $n=2m+1\ge3$ and
$\mathcal V(\phi)=\Lambda+M^2\phi^2/2$, where $\Lambda>0$ and $M>0$.
Every maximal initially expanding diagonal homogeneous solution of
\eqref{eq:h5-field-equations} on $H_n$ exists for all future proper
time and is future timelike and null geodesically complete on every
discrete left quotient.  Put
\[
 H_\Lambda=\sqrt{\frac{2\Lambda}{n(n-1)}}.
\]
There are a positive left-invariant metric $h_\infty$ and constants
$b,C>0$ such that, in the fixed invariant frame, for all sufficiently
large $t$,
\begin{equation}
 |\phi(t)|+|\dot\phi(t)|+
 \bigl|h_t^{-1}K-H_\Lambda\operatorname{Id}\bigr|_{h_\infty}
 +\bigl|e^{-2H_\Lambda t}h_t-h_\infty\bigr|_{h_\infty}
 \le Ce^{-bt}.
 \label{eq:heisenberg-massive-future}
\end{equation}
In particular, every singularity datum of
Proposition~\ref{prop:h5-einstein} with a negative horizontal
exponent gives a globally $C^0$-inextendible maximal homogeneous
development on every discrete left quotient, in every odd spatial
dimension.
\end{corollary}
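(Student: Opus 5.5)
We plan to prove Corollary~\ref{cor:heisenberg-massive-all-dimensions} in three steps: future existence and completeness, convergence to the de Sitter point, and exponential rates by linearization.

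\emph{Step 1: existence and completeness.} Since $\mathcal V\ge\Lambda>0$, Proposition~\ref{prop:positive-floor-future-completeness} applies in every dimension. The diagonal two-step formula \eqref{eq:h5-spatial-ricci} gives $R(h_t)=-\tfrac12\sum_r b_r^2\le0$ throughout, and initial expansion is assumed. Proposition~\ref{prop:nonnegative-future-completeness} then gives infinite future proper time and future causal completeness on every discrete quotient. The floor also gives $\theta\ge\theta_*=\sqrt{2n\Lambda/(n-1)}=nH_\Lambda$, and $\theta$ is nonincreasing by \eqref{eq:higher-future-identities}.

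\emph{Step 2: convergence.} Integrating $\dot\theta=-\frac n{n-1}(|\Sigma|^2+\dot\phi^2)+\frac R{n-1}$ over $[t_0,\infty)$ with $\theta$ bounded below gives $\int(|\Sigma|^2+\dot\phi^2-R)\,\dd t<\infty$. In particular $\int\sum_r b_r^2<\infty$. The Hamiltonian constraint bounds every quantity, so the derivatives of $|\Sigma|^2$, $\dot\phi^2$ and $b_r^2$ are bounded, and Barbalat's lemma gives $\Sigma,\dot\phi,b_r\to0$.

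For $\phi$ itself, the scalar equation $\ddot\phi+\theta\dot\phi+M^2\phi=0$ is a damped oscillator with damping at least $nH_\Lambda$. The Lyapunov function $\tfrac12\dot\phi^2+\tfrac12M^2\phi^2+\epsilon\phi\dot\phi$ should give exponential decay of $(\phi,\dot\phi)$ directly. This also makes $\mathcal V(\phi)\to\Lambda$, so the constraint $\frac{n-1}{n}\theta^2=|\Sigma|^2+\dot\phi^2+2\mathcal V-R$ yields $\theta\to nH_\Lambda$.

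\emph{Step 3: exponential rates, the main obstacle.} We work in the harmonic-type variables of Proposition~\ref{prop:h5-einstein}, now in proper time. Write $H_a=\dot a_a/a_a$. The diagonal equations read
\[
 \dot H_a=-\theta H_a\pm(\text{bracket terms})+\tfrac{2}{n-1}\mathcal V .
\]
Subtracting the mean, the shear $H_a-\theta/n$ obeys $\dot\sigma_a=-\theta\sigma_a+O(\sum_r b_r^2)$. Each bracket satisfies
\[
 \frac{\dd}{\dd t}\log b_r=H_0-H_{2r-1}-H_{2r}\to -H_\Lambda ,
\]
so $b_r$ decays exponentially once $\sigma\to0$. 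Feeding this back gives exponential decay of $\sigma$, and then of $\theta-nH_\Lambda$ through the constraint.

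The difficulty is closing this bootstrap without circularity, because the convergence in Step~2 is only qualitative. We will first choose $T$ so that $|\sigma|$ is small, which makes $\log b_r$ decrease at rate at least $H_\Lambda/2$ after $T$. The $\sigma$-equation, integrated with integrating factor $e^{\int\theta}$, then gives an exponential bound on $\sigma$. The alternative would be to cite \cite[Proposition~11.3]{Ringstrom2025}, but that reference covers $n=3$ only, so we expect to need the direct argument.

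\emph{Conclusion.} With $H_a-H_\Lambda=O(e^{-bt})$ established, integrating gives $e^{-2H_\Lambda t}a_a^2\to$ positive limits with exponential error; these limits define $h_\infty$, and $h_t^{-1}K=\operatorname{diag}(H_a)$ converges to $H_\Lambda\operatorname{Id}$ at the same rate, which is \eqref{eq:heisenberg-massive-future}. The final assertion follows by combining this with Proposition~\ref{prop:h5-einstein}, whose past obstruction applies here because the polynomial potential satisfies the growth bound with small $c$, so that $c|q|<2$. Future timelike completeness and \cite{GLS2018} then exclude a future boundary, which gives global $C^0$-inextendibility.
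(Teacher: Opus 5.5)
Your proposal is correct. Its skeleton is the paper's: global existence, qualitative convergence to the de Sitter point, exponential rates, then integration of $H_a$. The mechanisms at each stage differ.

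\emph{The paper's route.}
\begin{itemize}
\item It reproves future existence directly for the diagonal system.
\item It gets causal completeness from $K$ becoming positive definite at late times, so that $E$ is nonincreasing.
\item It obtains convergence by a LaSalle-type argument. The accumulation set of the bounded orbit of $x=(\Sigma,b,\phi,\psi)$ is invariant and $\theta$ is constant on it. Then \eqref{eq:heisenberg-massive-lyapunov} forces $\Sigma=b=\psi=0$, and the scalar equation forces $\phi=0$.
\item It gets the exponential rate in one stroke by linearizing at $x=0$. The linearization is block diagonal, with spectrum $-\theta_\Lambda$, $-H_\Lambda$ and the roots of $z^2+\theta_\Lambda z+M^2$, where $\theta_\Lambda=nH_\Lambda$. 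A solution of $A^{\mathsf T}P+PA=-\operatorname{Id}$ supplies a quadratic Lyapunov function.
\end{itemize}

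\emph{Your route.}
\begin{itemize}
\item You cite Propositions~\ref{prop:nonnegative-future-completeness} and~\ref{prop:positive-floor-future-completeness} for existence and completeness. This is legitimate, since $R=-\tfrac12\sum_r b_r^2\le0$ and $\mathcal V\ge\Lambda$.
\item You replace LaSalle by Barbalat's lemma, applied to $|\Sigma|^2$, $\dot\phi^2$ and $b_r^2$. These are integrable because $-\dot\theta$ is.
\item You replace linearization by the triangular structure of the system. The scalar equation sees the geometry only through $\theta\in[nH_\Lambda,\theta(t_0)]$, so the perturbed energy $\tfrac12\dot\phi^2+\tfrac12M^2\phi^2+\epsilon\phi\dot\phi$ gives exponential decay of $(\phi,\dot\phi)$ on its own.
\item The pointwise bound $\theta/n\ge H_\Lambda$, together with eventual smallness of $\Sigma$, makes $\log b_r$ decrease at a fixed rate. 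The shear then obeys a damped equation with exponentially small $O(|b|^2)$ forcing.
\item This bootstrap is not circular, because Barbalat already supplies $\Sigma\to0$ qualitatively before it starts.
\end{itemize}

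\emph{What each buys.} Your approach avoids checking that $\theta(x)$ is smooth with vanishing differential at $x=0$, and it gives explicit rates block by block. The paper's approach is self-contained and uniform, handling all variables with a single spectral computation once convergence to the fixed point is known.
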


\begin{proof}
Write $H_a=\dot a_a/a_a$, $\theta=\sum_aH_a$,
$\Sigma_a=H_a-\theta/n$, $\psi=\dot\phi$, and
$b_r=a_0/(a_{2r-1}a_{2r})$.  In the spatial orthonormal frame, set
\[
 r_0(b)=\frac12\sum_r b_r^2,\qquad
 r_{2r-1}(b)=r_{2r}(b)=-\frac12b_r^2,\qquad
 R(b)=-\frac12\sum_r b_r^2.
\]
These are the diagonal spatial Ricci entries and their trace,
respectively.  The proper-time spatial field equations are
$\dot H_a=-\theta H_a-r_a(b)+2\mathcal V/(n-1)$.
Subtracting their average, and differentiating the structure
coefficients, gives the autonomous system
\begin{align}
 \dot\Sigma_a&=-\theta\Sigma_a-r_a(b)+R(b)/n,
                   &\sum_a\Sigma_a&=0,\notag\\
 \dot b_r&=\left(-\theta/n+
          \Sigma_0-\Sigma_{2r-1}-\Sigma_{2r}\right)b_r,
                   &\dot\phi&=\psi,\notag\\
 \dot\psi&=-\theta\psi-M^2\phi.
 \label{eq:heisenberg-massive-autonomous}
\end{align}
On the expanding branch the Hamiltonian constraint determines
$\theta$ as the smooth positive function
\begin{equation}
 \theta=
 \left[\frac n{n-1}\left(
       \sum_a\Sigma_a^2+\psi^2+2\Lambda+M^2\phi^2
                         +\frac12\sum_r b_r^2\right)\right]^{1/2}.
 \label{eq:heisenberg-massive-expansion}
\end{equation}
In particular $\theta\ge\theta_\Lambda:=nH_\Lambda>0$.
The trace evolution gives
\begin{equation}
 -\dot\theta=
 \frac n{n-1}\left(\sum_a\Sigma_a^2+\psi^2\right)
                +\frac1{2(n-1)}\sum_r b_r^2\ge0.
 \label{eq:heisenberg-massive-lyapunov}
\end{equation}

Consequently $\theta_\Lambda\le\theta(t)\le\theta(t_0)$.
Equation~\eqref{eq:heisenberg-massive-expansion} bounds all variables
$\Sigma_a,b_r,\phi,\psi$.  Moreover $H_a=\theta/n+\Sigma_a$ is
bounded.  At a finite endpoint, integration of $\dot a_a=H_aa_a$
keeps every $a_a$ bounded above and bounded away from zero.
The homogeneous evolution equations therefore extend the solution
across any finite future endpoint.  Thus its future proper-time
interval is infinite.  This argument has no dimension restriction.

For clarity, convergence follows directly from the finite-dimensional
system, without a genericity assumption.  Regard
$x=(\Sigma,b,\phi,\psi)$ as a variable in the space
$\sum_a\Sigma_a=0$, and use
\eqref{eq:heisenberg-massive-expansion} to eliminate $\theta$.
The orbit $x(t)$ has compact closure and its vector field is smooth.
The decreasing function $\theta(x(t))$ has a limit.  Its value is
constant on the set of accumulation points of $x(t)$ as
$t\to\infty$.  This set is nonempty and invariant: a convergent
sequence $x(t_j)$, together with continuous dependence of solutions,
shows that every fixed finite segment of the trajectory through a
limit point is again a limit of segments through $x(t_j)$.
Equation~\eqref{eq:heisenberg-massive-lyapunov} then forces
$\Sigma=0$, $b=0$, and $\psi=0$ along every such trajectory.
The last equation of \eqref{eq:heisenberg-massive-autonomous}
forces $\phi=0$ there as well.  Hence the accumulation set consists
only of $x=0$, and
\[
 \Sigma_a\longrightarrow0,\qquad b_r\longrightarrow0,
 \qquad \phi,\psi\longrightarrow0,
 \qquad \theta\longrightarrow\theta_\Lambda.
\]

The convergence is exponential.  The derivative of
$\theta(x)$ at $x=0$ vanishes.  The linearization of
\eqref{eq:heisenberg-massive-autonomous} at that point is block
diagonal: on the trace-free shear variables it is
$-\theta_\Lambda\operatorname{Id}$, on the $b$ variables it is
$-H_\Lambda\operatorname{Id}$, and on $(\phi,\psi)$ it is
\[
 A_\phi=\begin{pmatrix}0&1\\-M^2&-\theta_\Lambda\end{pmatrix}.
\]
Its scalar eigenvalues are the roots of
$z^2+\theta_\Lambda z+M^2=0$.  Every eigenvalue of the full
linearization $A$ has strictly negative real part.
One may see the resulting decay directly by defining
\[
 P=\int_0^\infty e^{A^{\mathsf T}s}e^{As}\,\dd s.
\]
The integral converges, $P$ is positive definite, and
$A^{\mathsf T}P+PA=-\operatorname{Id}$.
Since $\dot x=Ax+O(|x|^2)$,
\[
 \frac{\dd}{\dd t}(x^{\mathsf T}Px)
   =-|x|^2+O(|x|^3)\le-\tfrac12|x|^2
\]
once $x$ is sufficiently small.  Equivalence of the positive
quadratic form with $|x|^2$ proves $|x(t)|\le Ce^{-bt}$ for some
$b>0$.  In particular $H_a-H_\Lambda=O(e^{-bt})$.
Integration gives positive constants $A_a^\infty$ such that
\[
 e^{-H_\Lambda t}a_a(t)
       =A_a^\infty(1+O(e^{-bt})).
\]
Taking $h_\infty=\sum_a(A_a^\infty)^2(\omega^a)^2$ proves
\eqref{eq:heisenberg-massive-future}.

For future causal completeness, along an affinely parametrized
future causal geodesic let $E=\dd t/\dd\lambda>0$ and
$w=\dd x/\dd t$.  The time component of the geodesic equation is
\[
 \frac{\dd}{\dd t}\log E=-K(w,w),\qquad h_t(w,w)\le1.
\]
For all sufficiently large $t$, every $H_a\ge H_\Lambda/2$, so
$K$ is positive definite and $E(t)$ is nonincreasing.  The affine
length is therefore at least a positive constant times the elapsed
proper time and diverges as $t\to\infty$.
If $t$ has a finite future upper limit instead, comparison with a
fixed complete invariant metric and the geodesic energy equation
bound the position and tangent on the corresponding finite slab.
The geodesic equation then continues the curve.  This proves both
timelike and null completeness; lifting proves it on every discrete
left quotient.

Finally, the quadratic potential satisfies
\eqref{eq:potential-growth} with any prescribed $c>0$.
Choose $c$ so that $c|q|<2$ and apply
Proposition~\ref{prop:h5-einstein} to the stated singularity data.
The resulting tail is expanding and has a past
$C^0$-inextendible end whenever a horizontal exponent is negative.
The future completeness just proved, the Cauchy property of the
product slices, and the one-sided completeness obstruction
\cite{GLS2018} give global $C^0$-inextendibility.
\end{proof}

\section{Metric comparison and invariant shift}
\label{sec:stability}

The strict form of the two-step criterion is open under a uniform
two-sided comparison on an entire past tail.  The estimate applies
directly to the optimal allocation constant $\kappa_{\rm opt}$.
Such comparison is a hypothesis on the evolving metrics, not a conclusion
about the Einstein initial-value problem.  Small changes in asymptotic
exponents, for example, need not satisfy it:
$t^{2(p+\epsilon)}/t^{2p}=t^{2\epsilon}$ is not uniformly bounded above
and below near zero when $\epsilon\ne0$.

\begin{proposition}[Quantitative two-sided comparison]
\label{thm:stability}
Let
\[
 g=-N^2\dd t^2+h_t,\qquad
 \widetilde g=-\widetilde N^2\dd t^2+\widetilde h_t
\]
be invariant orthogonal evolutions on the same two-step group, or data
descending to the same quotient.  Suppose that, on $(t_-,T_0)$,
\begin{equation}
 (1-\eps)N\le\widetilde N\le(1+\eps)N,\qquad
 (1-\eps)h_t\le\widetilde h_t\le(1+\eps)h_t,\qquad 0<\eps<1.
 \label{eq:metric-comparison}
\end{equation}
Then, for $T\le T_0$ and
$C_\eps=(1+\eps)^{3/2}/(1-\eps)$,
\begin{equation}
 \widetilde\kappa_{\rm opt}(T)
 \le C_\eps\,\kappa_{\rm opt}(T),\qquad
 \widetilde\kappa_{\rm an}(T)
 \le C_\eps\,\kappa_{\rm an}(T).
 \label{eq:kappa-stability}
\end{equation}
If $\kappa_{\rm opt}(T)\le\tfrac12-\delta$ for
$0<\delta<\tfrac12$ and
\begin{equation}
 \frac{(1+\eps)^{3/2}}{1-\eps}
 \left(\frac12-\delta\right)\le\frac12,
 \label{eq:admissible-epsilon}
\end{equation}
then $\widetilde\kappa_{\rm opt}(T)\le\tfrac12$.
The largest admissible $\eps=\eps_*(\delta)$ is the unique root in
$(0,1)$ of equality in \eqref{eq:admissible-epsilon}, and
\begin{equation}
 \eps_*(\delta)=\frac45\delta+O(\delta^2)
 \qquad(\delta\downarrow0).
 \label{eq:epsilon-expansion}
\end{equation}
The comparison also preserves finiteness of $\int_{t_-}^T N/m_h$ and
each of the metric-divergence conditions in
Theorem~\ref{thm:two-step-obstruction}.
Hence every strict instance of that theorem remains past
$C^0$-inextendible under sufficiently small comparisons of the form
\eqref{eq:metric-comparison}.
The global conclusion also persists when the perturbed spacetime is
independently future timelike geodesically complete.
\end{proposition}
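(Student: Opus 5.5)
The plan is to track how each ingredient of $\kappa_{\rm opt}$ transforms under \eqref{eq:metric-comparison}, and then to check preservation of the cone integral and of the two growth conditions directly.

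\textbf{Step 1: the Schur data.} Restricting $\widetilde h_t$ and $h_t$ to $Z$ gives $(1-\eps)A_t\le\widetilde A_t\le(1+\eps)A_t$. For $H_t$, I would use the quotient characterization
\[
 H_t(v,v)=\inf_{z\in Z}h_t(sv+z,sv+z).
\]
This is the Schur complement, and the infimum preserves two-sided bounds, so $(1-\eps)H_t\le\widetilde H_t\le(1+\eps)H_t$. The coupling $\widetilde L_t$ never enters $\kappa_{\rm opt}$, so it needs no control.

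\textbf{Step 2: the filling constants.} In \eqref{eq:kappa-opt}, fix an interval $I$, a based loop $\xi$ and a covector $\lambda$. The numerator $|\lambda(Q_I(\xi))|$ does not depend on the metric. For the denominator I would use three pointwise bounds:
\[
 \widetilde f_\xi=\frac{\widetilde H_t(\dot\xi,\dot\xi)}{\widetilde N}\ge\frac{1-\eps}{1+\eps}\,f_\xi,
 \qquad
 \widetilde A_t^{-1}\ge(1+\eps)^{-1}A_t^{-1}.
\]
These give a denominator factor of at least $(1-\eps)(1+\eps)^{-3/2}$. Hence $\widetilde k_I(\xi)\le C_\eps k_I(\xi)$, and taking suprema gives the first inequality in \eqref{eq:kappa-stability}. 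For $\kappa_{\rm an}$ the same argument applies to $J_I$: its numerator $\widetilde H_t/\widetilde N$ gains the factor $(1-\eps)/(1+\eps)$, and $\sqrt{\widetilde A_t(Q,Q)}$ loses at most $\sqrt{1+\eps}$.

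\textbf{Step 3: the admissible $\eps$.} The threshold statement follows immediately from Step 2. Set $\phi(\eps)=(1+\eps)^{3/2}/(1-\eps)$. It is strictly increasing, equals $1$ at $\eps=0$, and tends to $\infty$ as $\eps\to1$. Since $\tfrac12\big/\bigl(\tfrac12-\delta\bigr)>1$, the equation $\phi(\eps)=\tfrac12\big/\bigl(\tfrac12-\delta\bigr)$ has a unique root $\eps_*(\delta)\in(0,1)$. The expansions are
\[
 \log\phi=\tfrac52\eps+O(\eps^2),
 \qquad
 \log\frac{1/2}{1/2-\delta}=2\delta+O(\delta^2),
\]
so $\eps_*=\tfrac45\delta+O(\delta^2)$, which is \eqref{eq:epsilon-expansion}.

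\textbf{Step 4: cone integral and growth conditions.} By definition, $\widetilde m_h\ge\sqrt{1-\eps}\,m_h$, so $\widetilde N/\widetilde m_h\le(1+\eps)(1-\eps)^{-1/2}N/m_h$ and finiteness of the cone integral is preserved. For condition (i), the quotient metric on the abelianization is again an infimum over a subspace, so $\overline{\widetilde h}_t\ge(1-\eps)\overline h_t$ and $\lambda_{\max}(\overline{\widetilde h}_t)$ still diverges. For condition (ii), $\det\widetilde H_t\det\widetilde A_t\ge(1-\eps)^{\dim\mathfrak n}\det H_t\det A_t$, so the volume density still diverges.

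With the strict filling margin and these three preserved hypotheses, Theorem~\ref{thm:two-step-obstruction} applies to $\widetilde g$ and gives past inextendibility. Under independent future completeness it gives global inextendibility, by the same \cite{GLS2018} argument used in the proof of that theorem. For a descended quotient metric, all of these quantities are computed on the cover, so the quotient case needs no separate argument.

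The only step needing real care is Step 1's claim that the comparison passes to $H_t$. The Schur complement through $L_t$ is not a restriction, so the quotient characterization, rather than entrywise bounds on the Gram matrix, is what makes this work.
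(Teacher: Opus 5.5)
Your proposal is correct and follows the paper's proof essentially step by step. The paper uses the same restriction and quotient-characterization comparisons for $A_t$, $H_t$ and the abelianization metric, the same factor $(1-\eps)(1+\eps)^{-3/2}$ in the denominators of $\kappa_{\rm opt}$ and $J_I$, the same monotonicity argument for the unique root $\eps_*(\delta)$, and the same lower bounds on $m_{\widetilde h}$, $\lambda_{\max}$ and the determinant before invoking Theorem~\ref{thm:two-step-obstruction}. Your logarithmic expansion giving $\eps_*(\delta)=\tfrac45\delta+O(\delta^2)$ is a slightly more explicit version of the paper's expansion at $(\eps,\delta)=(0,0)$.
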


\begin{proof}
Restriction of \eqref{eq:metric-comparison} to $Z$ gives
$(1-\eps)A_t\le\widetilde A_t\le(1+\eps)A_t$.  The quotient
characterization
\begin{equation}
 H_t(v,v)=\inf_{z\in Z}h_t(sv+z,sv+z)
 \label{eq:quotient-characterization}
\end{equation}
gives the same comparison for $H_t$ and $\widetilde H_t$.
Minimizing over $[\mathfrak n,\mathfrak n]$ gives it for the
abelianization metrics.  Inversion reverses the central comparison, so
\[
 \widetilde A_t^{-1}\ge(1+\eps)^{-1}A_t^{-1}.
\]
For every nonzero loop $\xi\in W^{1,2}_0(I;V)$ and
$0\ne\lambda\in Z^*$, its optimal-allocation denominator therefore obeys
\begin{align*}
 &\int_I\frac{\widetilde H_t(\dot\xi,\dot\xi)}{\widetilde N(t)}
        \sqrt{\widetilde A_t^{-1}(\lambda,\lambda)}\,\dd t\\
 &\qquad\ge
 \frac{1-\eps}{(1+\eps)^{3/2}}
 \int_I\frac{H_t(\dot\xi,\dot\xi)}{N(t)}
        \sqrt{A_t^{-1}(\lambda,\lambda)}\,\dd t.
\end{align*}
The numerator $|\lambda(Q_I(\xi))|$ is metric independent.
Taking the suprema in \eqref{eq:kappa-opt} first over $\lambda$ and
then over $I,\xi$ proves the first inequality in
\eqref{eq:kappa-stability}.
Likewise, for $Q_I(\xi)\ne0$,
\[
 \widetilde J_I(\xi)
 \ge\frac{1-\eps}{(1+\eps)^{3/2}}J_I(\xi),
\]
which proves the second inequality.
The strict-margin conclusion follows from
\eqref{eq:admissible-epsilon}.
The left side of the equivalent inequality
$(1+\eps)^{3/2}(1-2\delta)\le1-\eps$ is strictly increasing in $\eps$
and the right side is strictly decreasing.
Their values at zero and one give a unique equality point in $(0,1)$.
Expansion at $(\eps,\delta)=(0,0)$ gives
\eqref{eq:epsilon-expansion}.

Using the same fixed auxiliary metric for both evolutions,
\[
 m_{\widetilde h}\ge\sqrt{1-\eps}\,m_h,\qquad
 \frac{\widetilde N}{m_{\widetilde h}}
 \le\frac{1+\eps}{\sqrt{1-\eps}}\frac N{m_h}.
\]
Thus the finite cone integral persists.  Writing $d=\dim\mathsf N$,
the quotient and determinant comparisons give
\begin{align*}
 \lambda_{\max}(\overline{\widetilde h}_t)
    &\ge(1-\eps)\lambda_{\max}(\overline h_t),\\
 \det\widetilde H_t\det\widetilde A_t
    &\ge(1-\eps)^d\det H_t\det A_t.
\end{align*}
These preserve the two divergence alternatives in
Theorem~\ref{thm:two-step-obstruction}.
In particular, their horizontal and fixed-covector sufficient
conditions are also preserved, since
\[
 \lambda_{\max}(\widetilde H_t)
    \ge(1-\eps)\lambda_{\max}(H_t),\qquad
 \widetilde h_t^{-1}(\alpha,\alpha)
    \le(1-\eps)^{-1}h_t^{-1}(\alpha,\alpha).
\]
Proposition~\ref{thm:schur-filling} and
Theorem~\ref{thm:two-step-obstruction} now give the stated
inextendibility conclusions.
\end{proof}

\begin{remark}[Why the intrinsic scale is needed]
For $H_t=I$, $A_t=t^2I$, $L_t=0$ and $N=t$, replace $L_t$ by
$\widetilde L_t=(c/t)P$, with $\|P\|=1$ and fixed small $c>0$.
In variables $(v,tz)$ the two quadratic forms have uniformly close
positive Gram matrices, so the full metrics are uniformly bilipschitz.
Nevertheless $m_{\rm Sch}\asymp t$, while
$\widetilde m_{\rm Sch}\asymp t^2/c$.
The corresponding Schur-bound integral changes from finite to infinite.
In contrast, $m_h\asymp m_{\widetilde h}\asymp t$, as required by
Proposition~\ref{thm:stability}.
The Schur bound is a sufficient estimate, not an invariant hypothesis
to be preserved separately.
\end{remark}

\subsection{Invariant shift on a fixed quotient}

Let $\omega$ be the left Maurer-Cartan form on $\mathsf N$ and let
$\beta(t)\in\mathfrak n$ be smooth.
It descends with $\omega$ to any left quotient, so the metric
\begin{equation}
 g_\beta=-N^2\dd t^2+h_t(\omega+\beta\dd t,\omega+\beta\dd t)
 \label{eq:shift-metric}
\end{equation}
is well defined on $(t_-,t_+)\times\Gamma\backslash\mathsf N$.

\begin{proposition}[Removal of invariant shift]
\label{prop:shift}
Let $a(t)$ solve $a^{-1}a'=-\beta(t)$.
The diffeomorphism $F(t,[x])=(t,[xa(t)])$ transforms
\eqref{eq:shift-metric} into
\[
 F^*g_\beta=-N^2\dd t^2+\bar h_t,\qquad
 \bar h_t=(\Ad_{a(t)^{-1}})^*h_t.
\]
The forms $H_t,A_t$ and the constants
$\kappa_{\rm opt},\kappa_{\rm an}$ are unchanged.
The conclusions of Theorem~\ref{thm:two-step-obstruction} therefore
hold with the cone integral computed using $m_{\bar h}$.
If the horizontal component $v_a(t)$ of $a(t)$ is bounded on the
past tail, finiteness of $\int_{t_-}^T N/m_h$ already implies that
cone condition.
It suffices that the horizontal drift $\beta_V$ be integrable;
no integrability of the central drift is needed.
\end{proposition}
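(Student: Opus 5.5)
The plan is to verify the pullback formula directly and then check each invariant quantity in turn. Write $\widetilde x=xa(t)$ on the cover; the map descends to $\Gamma\backslash\mathsf N$ because right multiplication commutes with the left action of $\Gamma$. Differentiating gives $\widetilde x^{-1}\dd\widetilde x=\Ad_{a^{-1}}(x^{-1}\dd x)+a^{-1}a'\,\dd t$. Hence
\[
 F^*\omega=\Ad_{a(t)^{-1}}\omega-\beta(t)\,\dd t .
\]
The shift term then cancels as follows. With $F^*$ applied to $\omega+\beta\dd t$, this should read $\Ad_{a^{-1}}\omega+(\beta-\Ad_{a^{-1}}\beta)\dd t$. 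So I would instead choose $a^{-1}a'=-\Ad_{a^{-1}}\beta$, or equivalently $a'a^{-1}=-\beta$, so that the $\dd t$ terms cancel exactly. I will fix the convention so that $F^*(\omega+\beta\dd t)=\Ad_{a^{-1}}\omega$. This is a routine but sign-sensitive computation, and it yields $F^*g_\beta=-N^2\dd t^2+\bar h_t$.

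Next, I compute the Schur data of $\bar h_t=h_t(\Ad_{a^{-1}}\cdot,\Ad_{a^{-1}}\cdot)$. In a two-step group, $\Ad_{a^{-1}}=\exp(-\operatorname{ad}_{\log a})$ equals the identity on $Z$ and maps $X\mapsto X-[\log a,X]$. It therefore induces the identity on $V=\mathfrak n/Z$ and on $\mathfrak n/[\mathfrak n,\mathfrak n]$, while it shifts the section by $K=-\mathsf b(v_a,\cdot)$. By Lemma~\ref{lem:schur}, $\bar A_t=A_t$, $\bar H_t=H_t$, and $\bar L_t=L_t+K_t$. Since $\kappa_{\rm opt}$ and $\kappa_{\rm an}$ depend only on $H_t$, $A_t$, $N$ and the intrinsic bracket area, they are unchanged. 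The abelianization metric $\overline h_t$ and $\det H_t\det A_t$ are unchanged as well. Theorem~\ref{thm:two-step-obstruction} therefore applies to $F^*g_\beta$, with the cone integral computed for $m_{\bar h}$. Inextendibility is a diffeomorphism invariant, so it transfers back to $g_\beta$.

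For the cone statement I use Proposition~\ref{prop:trace-cone}. The only changed term is $\operatorname{tr}(\bar L_tH_t^{-1}\bar L_t^*)$, and it satisfies
\[
 \operatorname{tr}\bigl(\bar L_tH_t^{-1}\bar L_t^*\bigr)\le 2\operatorname{tr}\bigl(L_tH_t^{-1}L_t^*\bigr)+2\|\mathsf b\|^2|v_a|^2\operatorname{tr}(H_t^{-1}).
\]
If $v_a$ is bounded, then $D_{\bar h}\le C D_h$, so $m_{\bar h}^{-1}\le C' m_h^{-1}$ and the integral stays finite. Finally, the horizontal projection of $a'a^{-1}=-\beta$ is $v_a'=-\beta_V$, because projection to $V$ is a homomorphism to an abelian group. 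Integrability of $\beta_V$ thus gives bounded $v_a$, and the central drift never enters.

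I expect the main obstacle to be the first step: getting the convention for $a$ right so that the $\dd t$ cross term cancels exactly, and confirming that the time-dependent right translation keeps the product slices and time orientation, hence the Cauchy property. The remaining steps are bookkeeping.
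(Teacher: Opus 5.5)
Your route is the paper's: the Maurer--Cartan identity for $F$, the fact that $\Ad_{a^{-1}}=\operatorname{Id}-\operatorname{ad}_{\log a}$ fixes $Z$ pointwise and acts trivially on $V$ and on $\mathfrak n/[\mathfrak n,\mathfrak n]$, so that only the Schur coupling changes, and $v_a'=-\beta_V$ for the cone claim. For the cone step you use the trace formula of Proposition~\ref{prop:trace-cone}. That is a valid substitute for the paper's shorter argument, which notes that bounded $v_a$ makes $\Ad_{a^{\pm1}}$ uniformly bounded and hence gives $\bar h_t\ge c\,m_h(t)^2k$.

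However, your first step contains a real error, in exactly the place you identified as the main obstacle. Your own first two displayed lines already finish the computation. With the stated equation $a^{-1}a'=-\beta$ you get $F^*\omega=\Ad_{a^{-1}}\omega-\beta\,\dd t$. Since $F$ preserves $t$ and $\beta$ depends only on $t$, $F^*(\beta\,\dd t)=\beta\,\dd t$. Hence $F^*(\omega+\beta\,\dd t)=\Ad_{a^{-1}}\omega$ exactly, as the statement claims. No $\Ad_{a^{-1}}\beta$ term arises, because $\beta\,\dd t$ is not acted on by the adjoint map.

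The expression $\Ad_{a^{-1}}\omega+(\beta-\Ad_{a^{-1}}\beta)\,\dd t$ is instead what your proposed replacement $a'a^{-1}=-\beta$ produces. Under that convention $a^{-1}a'=-\Ad_{a^{-1}}\beta$, so a residual central shift $(\beta-\Ad_{a^{-1}}\beta)\,\dd t=\mathsf b(v_a,\beta_V)\,\dd t$ survives, and it is nonzero in general. So keep the convention of the statement rather than switching. Nothing downstream changes: under either convention $v_a'=-\beta_V$, and $\bar h_t$ depends only on $v_a$.

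You should also record, as the paper does, that the equation for $a$ is triangular in exponential coordinates. One first integrates $v_a'=-\beta_V$, then the central component, so $a(t)$ exists on the whole time interval.
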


\begin{proof}
Right multiplication is well defined on a left quotient for every $a$;
no lattice-preserving automorphism is required.
In two-step exponential coordinates the equation for $a$ consists
of an integral for its horizontal component followed by an integral
for its central component.
Thus its solution exists throughout the open time interval.
The Maurer-Cartan identity gives
\[
 F^*\omega=\Ad_{a^{-1}}\omega+a^{-1}a'\dd t,
\]
which proves the metric formula.
The adjoint map fixes the center pointwise and induces the identity
on $\mathfrak n/Z$.
It changes only the Schur coupling.
Hence $H_t,A_t,Q_I$, and both
$\kappa_{\rm opt}$ and $\kappa_{\rm an}$ are unchanged.
It also induces the identity on
$\mathfrak n/[\mathfrak n,\mathfrak n]$, so the abelianization metric
is unchanged.
The determinant and horizontal-growth alternatives are preserved,
and every annihilator covector is fixed by the adjoint map.

In exponential coordinates $v_a'=-\beta_V$ and
$\Ad_{a^{-1}}(v,z)=(v,z-\mathsf b(v_a,v))$.
Boundedness of $v_a$ therefore makes this map and its inverse
uniformly bounded, independently of the central component of $a$.
In particular, $\bar h_t\ge c\,m_h(t)^2k$ for a fixed $c>0$.
This proves the sufficient cone condition and the assertion about
integrable horizontal drift.
Finally $F$ preserves the time coordinate, so it preserves the end
under consideration and its extendibility.
\end{proof}

\section{Analytic horizons and uniform vacuum asymptotics}\label{app:heisenberg-bifurcate}

On the universal cover both regular horizon components can be included
in a single analytic extension.  The obstruction to taking its full
lattice quotient occurs at their intersection.

\begin{proposition}[Bifurcate Heisenberg horizons]
\label{prop:heisenberg-bifurcate}
For every $m\ge1$, the exceptional vacuum metric
\eqref{eq:heisenberg-exceptional-metric} on
$(0,\infty)\times H_{2m+1}$ admits an analytic Ricci-flat extension
to $\mathbb R^{2m+2}$.  Its past boundary consists of the future
portions of two nondegenerate Killing horizons meeting in a spacelike
bifurcation surface diffeomorphic to $\mathbb R^{2m}$.  The surface
gravities are $+1$ and $-1$ for the boost normalization used below.
Every nontrivial central translation fixes the bifurcation surface.
Consequently, for a lattice $\Gamma<H_{2m+1}$, the extended lattice
action is not free or properly discontinuous there.  The regular
one-sided quotient extensions remain those of
Proposition~\ref{prop:heisenberg-vacuum-dichotomy}.
\end{proposition}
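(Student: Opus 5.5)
The plan is to combine the two one-sided extensions $\widetilde g_{+1}$ and $\widetilde g_{-1}$ of \eqref{eq:heisenberg-horizon-extension} through Kruskal-type null coordinates in the $(s,z)$ plane. On $s>0$ the metric \eqref{eq:heisenberg-exceptional-metric} has the $(s,z)$ block
\[
 -\frac{H}{4s}\dd s^2+\frac{A_0^2s}{H}\dd z^2 .
\]
Using the tortoise function $f_{+1}$ from the proof of Proposition~\ref{prop:heisenberg-vacuum-dichotomy}, I set
\[
 U=-\exp\bigl(-A_0(z-f_{+1}(s))\bigr),\qquad
 V=\exp\bigl(A_0(z+f_{+1}(s))\bigr).
\]
Since $f_{+1}(s)=(2A_0)^{-1}(\log s+P(s))$ with $P$ polynomial, one gets
\[
 -UV=s\,e^{P(s)}=:G(s).
\]
Then $G'(0)=1$, so $s=G^{-1}(-UV)$ is analytic near $UV=0$. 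Globally I would use that $G$ is increasing on $\R$, because $G'=e^{P}H>0$, to define $s$ analytically on the region where $-UV$ lies in the range of $G$. The remaining step is the cross term. Because $\eta=\dd z-\sum_r x_r\dd y_r$ and $z=(2A_0)^{-1}\log(V/|U|)$, I would write $\eta=(2A_0)^{-1}(\dd V/V-\dd U/U)-\sum x_r\dd y_r$. This has poles on the axes, so the twist must be handled by a $U,V$-dependent central translation.

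So the second step is to choose the spatial coordinate adapted to the boost. I take $w_\pm$ as in the one-sided extensions, so that $\partial_w$ is the Killing field $K$ which acts by $(U,V)\mapsto(e^{-A_0c}U,e^{A_0c}V)$. I then check that the full metric has the form
\[
 -\frac{\Phi(UV)}{A_0^2}\,\dd U\,\dd V
 +\frac{\Psi(UV)}{A_0}(U\,\dd V-V\,\dd U)\,\xi+\ldots,
\]
where $\xi=-\sum x_r\dd y_r$ and the horizontal terms $\sum F_r(s)h_r$ are analytic in $UV$. Concretely the $\eta^2$ coefficient $A_0^2s/H$ multiplies $(\dd V/V-\dd U/U)^2/(4A_0^2)$. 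The combination $s/(UV)=-s/G(s)$ is analytic, and this removes the poles: $(V\dd U-U\dd V)^2$ appears with analytic coefficient, and the $\dd U\,\dd V$ terms combine with $-H\dd s^2/(4s)$ into a regular coefficient, exactly as in the classical Misner/Taub--NUT construction. The cross term $\eta\xi$ uses the same cancellation. Nondegeneracy at $U=V=0$ then follows from the determinant computation already made in Proposition~\ref{prop:heisenberg-vacuum-dichotomy}. Ricci-flatness extends by analyticity from $s>0$. This cross-term regularity is the main obstacle: each one-sided chart is regular across only one axis, so I would verify regularity in $(U,V)$ directly, rather than gluing the charts, by expanding in $UV$.

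The remaining steps are routine. The horizons are $\{U=0\}$ and $\{V=0\}$, and on each one $K=A_0(V\partial_V-U\partial_U)$ is null. From $\dd(g(K,K))=-2\varkappa K^\flat$, the surface gravities come out as $\pm1$ once $K$ is normalized as a unit boost; on the half-horizons, the one-sided computation giving $\mp A_0$ agrees after this rescaling. The bifurcation surface $\{U=V=0\}$ is parametrized by the horizontal coordinates $(x_r,y_r)\in\R^{2m}$ and is spacelike. The original region is $\{U<0<V\}$, and its past boundary consists of the future portions $\{U=0,V>0\}$ and $\{V=0,U<0\}$. A central translation $z\mapsto z+c$ acts as a boost $(U,V)\mapsto(e^{-A_0c}U,e^{A_0c}V)$, so it fixes $U=V=0$ pointwise in $(U,V)$ and acts trivially on the bifurcation surface. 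A lattice contains nonzero central elements, as shown in the proof of Proposition~\ref{prop:heisenberg-vacuum-dichotomy}, so the extended action has fixed points there and is neither free nor properly discontinuous. The one-sided quotients $\{V>0\}$ and $\{U<0\}$ avoid the bifurcation surface and reproduce $\widetilde g_{\pm1}$.
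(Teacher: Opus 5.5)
Your proposal is correct and is essentially the paper's construction. Both use boost-adapted null coordinates in the $(s,z)$-plane, in which the original region becomes a future wedge and $UV$ is an analytic function of $s$. In both, the twisted terms are analytic because the $\eta^2$ coefficient carries the factor $s$, the nonvanishing determinant and Ricci-flatness continue analytically, and central translations act as boosts fixing $U=V=0$ pointwise.

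The differences are minor:
\begin{itemize}
\item Your coordinates are $V=e^{P(s)/2}(\mathsf T+Z)$ and $-U=e^{P(s)/2}(\mathsf T-Z)$, where $\mathsf T\pm Z=\sqrt s\,e^{\pm A_0z}$ are the paper's and $P$ is your integral. This makes the $(s,z)$-block exactly $e^{-P}H^{-1}\,\dd U\,\dd V$. The price is inverting $G$, which is harmless because $G'=e^{P}H\ge1$, so $G$ is a global analytic diffeomorphism of $\mathbb R$.
\item No $U,V$-dependent central translation is needed, since $s\,\dd z=-(2A_0)^{-1}e^{-P}(U\,\dd V-V\,\dd U)$ is already analytic.
\item The determinant cannot be quoted from the one-sided charts, which do not contain $U=V=0$. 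Recomputed in the $(U,V,x,y)$ chart it equals $-\tfrac14e^{-2P}\prod_{i=1}^{2m}A_i^2<0$, so your nondegeneracy argument goes through.
\end{itemize}
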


\begin{proof}
Retain $b_r,F_r,H,h_r$ from
\eqref{eq:heisenberg-horizon-functions}, and put
\[
 \rho=\sqrt s,\qquad \zeta=A_0z,\qquad
 \mathcal C=A_0\sum_{r=1}^m x_r\dd y_r.
\]
Then \eqref{eq:heisenberg-exceptional-metric} becomes
\begin{equation}
 g=-H(\rho^2)\dd\rho^2
   +\frac{\rho^2}{H(\rho^2)}(\dd\zeta-\mathcal C)^2
   +\sum_rF_r(\rho^2)h_r.
 \label{eq:heisenberg-bifurcate-polar}
\end{equation}
Set
\[
 \mathsf T=\rho\cosh\zeta,\qquad
 Z=\rho\sinh\zeta,\qquad
 s=\mathsf T^2-Z^2,
\]
and define
\[
 \Theta=\mathsf T\dd\mathsf T-Z\dd Z,\qquad
 \Psi=\mathsf T\dd Z-Z\dd\mathsf T,\qquad
 P(s)=\frac{H(s)-1}{s^2}.
\]
The function $P$ extends to a polynomial at $s=0$, since
$H(s)=\prod_r(1+b_rs^2)$.  Products of one-forms below are
symmetrized tensor products.  On the future cone
$\{\mathsf T>|Z|\}$, one has
$\Theta=\rho\dd\rho$ and $\Psi=s\dd\zeta$.  Therefore the metric is
\begin{align}
 \widetilde g={}&g_0-sP(s)\Theta^2
   -\frac{sP(s)}{H(s)}(\Psi-s\mathcal C)^2
   +s^2\sum_r b_rh_r,\label{eq:heisenberg-bifurcate-metric}\\
 g_0={}&-\dd\mathsf T^2+\dd Z^2
        -2\Psi\mathcal C+s\mathcal C^2+\sum_rh_r.
 \notag
\end{align}
Indeed, subtracting $g_0$ from
\eqref{eq:heisenberg-bifurcate-polar} gives
$-(H-1)\Theta^2/s+(H^{-1}-1)(\Psi-s\mathcal C)^2/s$
and the displayed horizontal correction.
Since $H(s)>0$ for every real $s$, all coefficients in
\eqref{eq:heisenberg-bifurcate-metric} are real analytic on
$\mathbb R^{2m+2}$.

On the future cone the determinant of the $(\rho,\zeta)$ block
in the coframe $(\dd\rho,\dd\zeta-\mathcal C,\dd x_r,\dd y_r)$
is $-\rho^2$.  The horizontal determinant is
$\prod_rF_r(s)^2\prod_{i=1}^{2m}A_i^2
=H(s)^2\prod_{i=1}^{2m}A_i^2$.
The Jacobian of $(\rho,\zeta)\mapsto(\mathsf T,Z)$ is $\rho$.
Thus, in the Cartesian coordinates,
\begin{equation}
 \det\widetilde g=-H(s)^2\prod_{i=1}^{2m}A_i^2<0.
 \label{eq:heisenberg-bifurcate-determinant}
\end{equation}
Both sides are analytic, so this identity holds on all of
$\mathbb R^{2m+2}$.  The metric is everywhere nondegenerate;
connectedness and its signature on the future cone imply that it
is Lorentzian everywhere.  The coordinate transformation is a
diffeomorphism from the original spacetime onto that proper open
cone.  Its Ricci tensor is analytic and vanishes on the cone,
and hence vanishes everywhere.

The field
\[
 K=Z\partial_{\mathsf T}+\mathsf T\partial_Z
\]
preserves $s,\Theta,\Psi$ and the horizontal forms, so it is Killing.
On the original cone $K=\partial_\zeta$, giving the identities
\begin{equation}
 K^\flat=\frac{\Psi-s\mathcal C}{H(s)},\qquad
 \widetilde g(K,K)=\frac{s}{H(s)},\qquad
 \widetilde g^{-1}(\dd s,\dd s)=-\frac{4s}{H(s)}.
 \label{eq:heisenberg-bifurcate-identities}
\end{equation}
Analyticity extends them to the whole space.  The hypersurfaces
$\mathcal H_+=\{\mathsf T=Z\}$ and
$\mathcal H_-=\{\mathsf T=-Z\}$ are null.  Away from their
intersection $K$ is a nonvanishing null normal and
$\dd s=\mp2K^\flat$, respectively.  Since $H(0)=1$,
$\dd(\widetilde g(K,K))=\dd s$ there.  The identity
$\dd(\widetilde g(K,K))=-2\varkappa K^\flat$ gives
$\varkappa=+1$ on $\mathcal H_+$ and $\varkappa=-1$ on
$\mathcal H_-$.  At their intersection
\[
 \mathcal B=\{\mathsf T=Z=0\}\cong\mathbb R^{2m}
\]
the metric is $-\dd\mathsf T^2+\dd Z^2+\sum_rh_r$.
Thus the two hypersurfaces remain null there, $\mathcal B$ is
spacelike with induced metric $\sum_rh_r$, and $K$ vanishes
precisely on $\mathcal B$.

The boundary of the original cone consists exactly of the portions
$\mathsf T\ge0$ of these hypersurfaces.  Every point of this
boundary is timelike accessible from the cone.  For fixed horizontal
coordinates, the curve $(\mathsf T,Z)=(\lambda,0)$ has squared
tangent norm $-H(\lambda^2)<0$ and reaches $\mathcal B$ as
$\lambda\downarrow0$.  For $u>0$ and either sign, the curves
$(\mathsf T,Z)=(u+\lambda,\pm u)$ have
$s=\lambda(2u+\lambda)>0$ and squared tangent norm
\[
 -\frac{H(s)\mathsf T^2}{s}
       +\frac{Z^2}{sH(s)}
 =-H(s)-\frac{(H(s)^2-1)Z^2}{sH(s)}<0.
\]
Here $H(s)\ge1$, and $\dd s/\dd\lambda=2(u+\lambda)>0$,
so these curves are future directed in the original region.
They reach every regular point of its past boundary.

Finally write $U=\mathsf T+Z$ and $V=\mathsf T-Z$.
Left translation by a Heisenberg element $(p,q,r)$, with
$p,q\in\mathbb R^m$ and $r\in\mathbb R$, induces
\[
 (U,V,x,y)\longmapsto
 \bigl(e^{A_0(r+p\cdot y)}U,
       e^{-A_0(r+p\cdot y)}V,x+p,y+q\bigr).
\]
This analytic map preserves the metric on the original cone and
therefore everywhere.  Every central element $(0,0,r)$ fixes
$U=V=0$ pointwise.  As proved in
Proposition~\ref{prop:heisenberg-vacuum-dichotomy}, every lattice
meets the center in a nonzero discrete subgroup.  Each point of
$\mathcal B$ then has an infinite stabilizer.  The resulting action
is neither free nor properly discontinuous at $\mathcal B$,
which proves the asserted limitation on this quotient construction.
\end{proof}

\subsection{Uniform asymptotics near the four-dimensional horizon}
\label{app:uniform-vacuum-proof}

\begin{proof}[Proof of Proposition~\ref{prop:uniform-vacuum-horizon}]
Write $n=n(h)$, let $\alpha$ be given by
\eqref{eq:all-vacuum-data-formula}, and put
\[
 D=\sqrt{4\alpha^2+n^2},\qquad
 u=\sqrt{1+r^2},\qquad a=u-\tfrac12.
\]
Use the exact integration in
Proposition~\ref{prop:all-expanding-vacuum-data}, and set $w=k\tau$,
with $w_0$ its value at the selected regular slice.  The central
expansion eigenvalue gives
\[
 \alpha=-\frac{k}{2N}\tanh w_0,\qquad
 \frac{k}{N}=D,\qquad
 w_0=-\operatorname{arsinh}(2\alpha/n).
\]
The horizontal eigenvalue difference has the sign of $\beta-\gamma$.
Consequently the negative eigenvector of $\mathcal A$ corresponds
to the smaller of $\beta,\gamma$, and their two normalized values
are $(u-r)/2$ and $(u+r)/2$.
The exact scale factors therefore give, with $x=e^{2w}$ and
$x_0=e^{2w_0}$,
\begin{equation}
 \ell_\pm(w)=
 e^{a p_\pm(w-w_0)}
 \left(\frac{1+x}{1+x_0}\right)^{1/2}.
 \label{eq:uniform-horizon-exact-scales}
\end{equation}
These are lengths of vector fields that are unit at the regular
slice, so all coframe normalization constants have cancelled.

Define
\[
 F_a(x)=a\int_0^1 y^{a-1}\sqrt{1+xy^2}\,\dd y,
 \qquad x\ge0.
\]
Integration of the exact lapse from $\tau=-\infty$ gives
\begin{equation}
 t(w)=\frac{N_0}{\sqrt2ka}e^{aw}F_a(e^{2w}),\qquad
 \frac{t(w)}{t_{\mathrm{ref}}}
   =e^{a(w-w_0)}\frac{F_a(x)}{F_a(x_0)}.
 \label{eq:uniform-horizon-exact-time}
\end{equation}
In particular the regular proper-time age is determined by the data:
\[
 t_{\mathrm{ref}}=
 \frac{F_a(x_0)}{aD\sqrt{1+x_0}}.
\]
It is positive and varies continuously on the parameter sets in
the statement.  Combining
\eqref{eq:uniform-horizon-exact-scales} and
\eqref{eq:uniform-horizon-exact-time} yields the exact identity
\begin{equation}
 \ell_\pm(t)=B_\pm
 \left(\frac{t}{t_{\mathrm{ref}}}\right)^{p_\pm}
 \frac{\sqrt{1+x}}{F_a(x)^{p_\pm}},\qquad
 B_\pm=\frac{F_a(x_0)^{p_\pm}}{\sqrt{1+x_0}}.
 \label{eq:uniform-horizon-exact-amplitude}
\end{equation}

We now make all estimates uniform.  Shrinking $\varepsilon$ if
necessary, compactness of $\mathcal K$ gives
\[
 c|\mathcal A|_h\le r\le C|\mathcal A|_h,\qquad
 |p_\pm|\le Cr,\qquad \tfrac12\le a\le a_1
\]
for fixed $c,C,a_1>0$; moreover $w_0$ ranges over a compact interval.
The defining integral implies
\[
 1\le F_a(x)\le\sqrt{1+x}.
\]
For $z=t/t_{\mathrm{ref}}\in(0,1]$,
\eqref{eq:uniform-horizon-exact-time} consequently gives
\[
 x\le x_0F_a(x_0)^{2/a}z^{2/a}\le C z^\eta,
 \qquad \eta=2/a_1>0.
\]
All constants here are uniform, including when $\mathcal A=0$.
Furthermore,
\[
 \left|\tfrac12\log(1+x)-p_\pm\log F_a(x)\right|
 \le Cx\le Cz^\eta.
\]

At the projected datum $(h,s,0)$ one has
\[
 \alpha_0=\frac{n^2-s^2}{4s},\qquad
 w_{00}=\log(s/n),\qquad
 x_{00}=s^2/n^2,
\]
and $p_-=p_+=0$.  Thus its limiting horizontal length is
$(1+x_{00})^{-1/2}=B_{\mathrm H}$.
Since $\alpha-\alpha_0=|\mathcal A|_h^2/(2s)$,
smooth dependence on the compact parameter set gives
\[
 |x_0-x_{00}|\le Cr^2,\qquad
 \left|\log\frac{B_\pm}{B_{\mathrm H}}\right|
 =\left|p_\pm\log F_a(x_0)
       +\tfrac12\log\frac{1+x_{00}}{1+x_0}\right|
 \le Cr.
\]
Taking logarithms in~\eqref{eq:uniform-horizon-exact-amplitude}
proves~\eqref{eq:uniform-horizon-stretching}.

Finally $p_-=-r+O(r^2)$ and $p_+=r+O(r^2)$, with uniform
constants.  Substitute $z=e^{-\lambda/r}$ into
\eqref{eq:uniform-horizon-stretching}.  Uniformly for
$\lambda\in[\lambda_0,\lambda_1]$,
\[
 \log\frac{\ell_-(t_{\mathrm{ref}}e^{-\lambda/r})}
                {B_{\mathrm H}}
       =\lambda+O\bigl(r+e^{-\eta\lambda_0/r}\bigr),\qquad
 \log\frac{\ell_+(t_{\mathrm{ref}}e^{-\lambda/r})}
                {B_{\mathrm H}}
       =-\lambda+O\bigl(r+e^{-\eta\lambda_0/r}\bigr).
\]
The exponents remain in a fixed bounded interval.  Exponentiating
therefore proves~\eqref{eq:uniform-horizon-crossover}.
\end{proof}

\section{Coordinates at the four-dimensional Taub horizon}
\label{app:taub-coordinates}

The symmetric branch $\beta=\gamma=k/2$ of
\eqref{eq:scalar-family} is the $m=1$ specialization of the preceding
horizon construction.  Retain $A,B,C,N_0,k>0$, with $k=N_0A/(BC)$,
and put
\begin{equation}
 \ell_0=\frac{\sqrt2N_0}{k},\qquad
 \chi=\frac{Ak}{N_0},\qquad
 \varrho=\ell_0e^{k\tau/2},\qquad \zeta=\chi z.
 \label{eq:taub-variables}
\end{equation}
Writing $q=\varrho^4/\ell_0^4$,
$\mathcal C=\chi x\,\dd y$ and
$h_0=(B^2\dd x^2+C^2\dd y^2)/2$ gives
\begin{equation}
 g=-(1+q)\dd\varrho^2
   +\frac{\varrho^2}{1+q}(\dd\zeta-\mathcal C)^2
   +(1+q)h_0.
 \label{eq:taub-rho}
\end{equation}
Thus \eqref{eq:heisenberg-bifurcate-polar} applies with
$A_0=\chi$, $A_1=B/\sqrt2$, $A_2=C/\sqrt2$ and
$b_1=\ell_0^{-4}$, since $\chi\ell_0^2=2BC$.
This symmetric Ricci-flat family is classical
\cite{Taub1951,ChristodoulakisEtAl2001}.
On the universal cover, $\mathsf T=\varrho\cosh\zeta$ and
$Z=\varrho\sinh\zeta$ identify the original region with
$\{\mathsf T>|Z|\}$.  Proposition~\ref{prop:heisenberg-bifurcate}
gives its analytic bifurcate extension, with bifurcation metric $h_0$
and surface gravities $\pm1$ for
$K=Z\partial_{\mathsf T}+\mathsf T\partial_Z$.

For the quotient, write $U=\mathsf T+Z$, $V=\mathsf T-Z$.
A lattice element $(p,q_0,r)$ acts by
\[
 (U,V,x,y)\longmapsto
 \bigl(e^{\chi(r+py)}U,e^{-\chi(r+py)}V,x+p,y+q_0\bigr).
\]
On $U>0$ use $w=\chi^{-1}\log(U/\ell_0)$; on $V>0$ use
$w=-\chi^{-1}\log(V/\ell_0)$.  In both charts $s=UV$, and the
action is $(s,x,y,w)\mapsto(s,x+p,y+q_0,w+r+py)$.
With $\eta=\chi(\dd w-x\dd y)$ and $q=s^2/\ell_0^4$, the metrics are
\begin{equation}
 g_\pm=-\frac{s(2+q)}{4\ell_0^4(1+q)}\dd s^2
       \mp\frac{\eta\,\dd s}{1+q}
       +\frac{s}{1+q}\eta^2+(1+q)h_0.
 \label{eq:compact-taub-metric}
\end{equation}
Products of one-forms are symmetrized.  These agree with the one-sided
metrics of Proposition~\ref{prop:heisenberg-vacuum-dichotomy} after
analytic changes of the central coordinate.  For
$K=\chi^{-1}\partial_w$ one has $g_\pm(K,K)=s/(1+s^2/\ell_0^4)$.
The central lattice gives closed null generators at $s=0$ and closed
timelike orbits at $s<0$.  Its nonzero elements fix $U=V=0$, so the
bifurcate cover cannot be quotiented by this action at the bifurcation
surface.  For $\beta\ne\gamma$ the negative horizontal exponent instead
gives global $C^0$-inextendibility by
Proposition~\ref{prop:einstein-scalar}.

The compact horizons are circle bundles over a torus, in agreement with
the homogeneous horizon theory \cite{ChruscielRendall1995} and the
closed-generator classification \cite{BustamanteReiris2021}.
The normalization and Killing-field results of
\cite{ReirisBustamante2021,PetersenRacz2023} give the corresponding
smooth rigidity context.

\paragraph{Acknowledgments.}
The author is deeply grateful to his family for their steady support and
encouragement.

\paragraph{Funding.}
This work was funded by the Indonesian Endowment Fund for Education (LPDP),
on behalf of the Indonesian Ministry of Higher Education, Science and
Technology, and managed under the EQUITY Program
(Contract No.~4298/B3/DT.03.08/2025).

\paragraph{Data availability.}
No data were generated or analyzed in this study.

\paragraph{Competing interests.}
The author declares no competing interests.

\end{document}